\crefname{equation}{}{}
\newcommand{\R}{\mathbb{R}}
\newcommand{\N}{\mathbb{N}}
\newcommand{\C}{\mathbb{C}}
\newcommand{\Ccut}{\mathbb{C}_{-}} % complex plane with a cut
\newcommand{\HH}{\mathbb{H}} %upper and lower half-plane
\newcommand{\ee}{\mathrm{e}}
\DeclareDocumentCommand\dd{ o g d() }{
	\IfNoValueTF{#2}{
		\IfNoValueTF{#3}
			{\mathrm{d}\IfNoValueTF{#1}{}{^{#1}}}
			{\mathinner{\mathrm{d}\IfNoValueTF{#1}{}{^{#1}}\argopen(#3\argclose)}}
		}
		{\mathinner{\mathrm{d}\IfNoValueTF{#1}{}{^{#1}}#2} \IfNoValueTF{#3}{}{(#3)}}
	}
\newcommand{\dx}{\dd{x}}
\newcommand{\dy}{\dd{y}}
\newcommand{\dz}{\dd{z}}
\newcommand{\dr}{\dd{r}}
\newcommand{\deta}{\dd{\eta}}
\newcommand{\dsig}{\dd{\sigma}}
\newcommand{\dxi}{\dd{\xi}}
\newcommand{\ds}{\dd{s}}
\newcommand{\dt}{\dd{t}}
\newcommand{\dtau}{\dd{\tau}}
\newcommand{\del}{\partial}
\newcommand{\eps}{\varepsilon}
\newcommand{\im}{\mathrm{i}}
\newcommand{\sgn}{\operatorname{sgn}}
\newcommand{\blp}{\mathfrak{a}} %parameter for the boundary layer estimate more precisely the integral estimate on Q0
\newcommand{\W}{W} 
\newcommand{\Qo}{\mathfrak{H}}
\newcommand{\U}{U}
\newcommand{\mom}{\mathfrak{m}}
\newcommand{\Ker}{\mathfrak{W}} % inverse 'Laplace transform' of W
\newcommand{\hol}{\mathfrak{r}} %Hölder exponent for W
\newcommand{\cut}{\eta} %cut-off function for boundary layer estimate 
\newcommand{\coup}{\gamma} %cut-off function for boundary layer estimate 
\newcommand{\T}{\mathcal{T}} %Laplace transform
\newcommand{\M}{\mathcal{M}} %space of measures
\newcommand{\LL}{\mathcal{L}} %linearised coagulation operator
\newcommand{\Pert}{\mathcal{N}} %nonlinear operator in uniform convergence proof
\renewcommand{\Re}{\operatorname{Re}}
\renewcommand{\Im}{\operatorname{Im}}
\DeclarePairedDelimiterXPP\lsnorm[4]{}{\lbrack}{\rbrack}{_{#1,#2,#3}}{#4}
\DeclarePairedDelimiterXPP\lfnorm[4]{}{\lVert}{\rVert}{_{#1,#2,#3}}{#4}
\newcommand{\X}[3]{X_{#1,#2,#3}}
\newcommand{\weight}[2]{\Lambda_{#1,#2}} % to simplify the notation, encodes the growth properties of the weight
\newcommand{\vcc}{\vcentcolon}
\DeclarePairedDelimiter\abs{\lvert}{\rvert}
\theoremstyle{plain}
\newtheorem{theorem}{Theorem}[section]
\newtheorem{lemma}[theorem]{Lemma}
\newtheorem{proposition}[theorem]{Proposition}
\theoremstyle{definition}
\newtheorem{definition}[theorem]{Definition}
\theoremstyle{remark}
\newtheorem{remark}[theorem]{Remark}
\numberwithin{equation}{section}
\title{Uniqueness of fat-tailed self-similar profiles to Smoluchowski's coagulation equation for a perturbation of the constant kernel}
\author{Sebastian Throm
  \thanks{E-mail: \texttt{throm@ma.tum.de}}}
\affil{Faculty of Mathematics, Technical University of Munich, Boltzmannstraße~3, 85748 Garching bei München, Germany}
\date{}
\begin{document}
 
\maketitle

\begin{abstract}
 This article is concerned with the question of uniqueness of self-similar profiles for Smoluchowski's coagulation equation which exhibit algebraic decay (fat tails) at infinity. More precisely, we consider a rate kernel $K$ which can be written as $K=2+\eps W$. The perturbation is assumed to have homogeneity zero and might also be singular both at zero and at infinity. Under further regularity assumptions on $W$, we will show that for sufficiently small  $\eps$ there exists, up to normalisation of the tail behaviour at infinity, at most one self-similar profile.
\end{abstract}

\section{Introduction}
 
 In this article, we study Smoluchowski's coagulation equation which reads as
\begin{equation}\label{eq:Smol}
 \del_{t}\phi(x,t)=\frac{1}{2}\int_{0}^{x}K(x-y,x)\phi(x-y,t)\phi(y,t)\dy-\phi(x,t)\int_{0}^{\infty}K(x,y)\phi(y,t)\dy.
\end{equation}
This model is used to describe a system of aggregating particles and has been derived by Marian von Smoluchowski in~\cite{Smo17}. The common interpretation of~\eqref{eq:Smol} is the following. The quantity $\phi(x,t)$ represents the number density of particles of size $x$ at time $t$, while the evolution of $\phi$ is given by the two integral operators on the right-hand side. The first one of these, the gain term, accounts for the creation of particles of size $x$ due to the collision of two particles of sizes $y$ and $x-y$ while we have to divide by two because of the symmetry of this process. On the other hand, particles of size $x$ will disappear from the system because they aggregate with other clusters and this is considered by the second operator on the right-hand side of~\eqref{eq:Smol}. The specific example which Smoluchowski had in mind when deriving this equation is the coagulation of gold particles in a colloidal solution under the effect of Brownian motion. Assuming that all the clusters in the system can at least be approximated by spheres, Smoluchowski obtained the integral kernel
\begin{equation}\label{eq:Smol:kernel}
 K(x,y)=\bigl(x^{1/3}+y^{1/3}\bigr)\bigl(x^{-1/3}+y^{-1/3}\bigr).
\end{equation}
Here, $x$ denotes the mass/volume of the cluster, while $x^{1/3}$ then corresponds, up to a constant, to the radius. The first factor in this expression is thus proportional to the interaction length of two clusters $x$ and $y$ while the second factor accounts for their diffusion constants and scales as inverse radii.

Since the original derivation by Smoluchowski, this model has been used to describe a broad range of different phenomena in many different branches of the natural sciences and also on very different length scales ranging from microscopic phenomena like aerosol physics (e.g.\@ \cite{Fri00,PrK10}) up to the formation of stars and galaxies in astrophysics (\cite{Dra72}). On the other hand, this equation is also used in biology to describe growth processes (e.g.\@ \cite{AcF97}) and in chemistry to model polymerisation (\cite{Zif80}) for example. More details and several references to these and further applications may be found in the review article~\cite{LaM04} as well as in~\cite{Dra72}.

The well-posedness of~\eqref{eq:Smol} is by now established for a large class of coagulation kernels $K$ that cover most of the cases that one typically finds in applications and especially also~\eqref{eq:Smol:kernel}. Results on the existence and uniqueness of solutions for~\eqref{eq:Smol} may be found for example in~\cite{FoL06a,Nor99} while more references can be found in~\cite{LaM04}.

\subsection{Long-time behaviour and self-similarity}\label{Sec:profiles}

In most applications the coagulation kernel $K$ occurring in~\eqref{eq:Smol} is a homogeneous function of a certain degree $\lambda$, while~\eqref{eq:Smol:kernel} for example satisfies this condition with $\lambda=0$. For such kernels, one fundamental problem in the context of Smoluchowski's coagulation equation concerns the long-time behaviour of solutions to~\eqref{eq:Smol}. On the one hand, physical considerations and numerical simulations suggest that the solutions of~\eqref{eq:Smol} for large times converge towards a self-similar shape $f$ in the sense that 
\begin{equation}\label{eq:longtime:convergence}
 \bigl(s(t)\bigr)^{1+\rho}\phi(s(t)x,t)\longrightarrow f(x)\quad \text{for }t\longrightarrow \infty
\end{equation}
with a scaling function $s(t)\to \infty$ as $t\to\infty$ (\cite{LaM04}). Here, the self-similar profile $f$ is expected to be given as a special solution to~\eqref{eq:Smol} of the form
\begin{equation}\label{eq:ansatz}
 \phi(x,t)=\frac{1}{(s(t))^{1+\rho}}f\biggl(\frac{x}{s(t)}\biggr).
\end{equation}
This conjecture is commonly known as the scaling hypothesis while a proof is still lacking in almost all cases.  The so far only exceptions are the three so-called solvable kernels $K=2$, $K(x,y)=x+y$ and $K(x,y)=xy$ as well as the diagonal kernel $K(x,y)=x^{1+\lambda}\delta(x-y)$. In the case of the solvable kernels solution formulas in terms of the Laplace transform can be computed explicitly and as a consequence the self-similar profiles exist and they are in particular unique, while also~\eqref{eq:longtime:convergence} could be established in the sense of weak convergence of measures (see~\cite{MeP04}). We will consider the constant kernel $K=2$ in some more detail below since this case is of particular interest for this work.

On the other hand, for the diagonal kernel no explicit formulas are available but the structure of the kernel only allows for clusters of exactly the same size to aggregate which reduces the corresponding integral equation to some non-local ordinary differential equation which is much easier to treat (\cite{LNV16}).

\subsection{The equation for self-similar profiles}\label{Sec:equation:profiles}

In the following, we will only restrict to the case $\lambda<1$. The reason for this is that for kernels with $\lambda>1$ a phenomenon known as gelation occurs which we do not want to consider here (for more details see for example~\cite{LaM04}) and $\lambda=1$ represents the borderline which thus has also a special behaviour (see for example~\cite{HNV17}).

To obtain the equation that has to be satisfied by the self-similar profile $f$, one formally plugs the ansatz~\eqref{eq:ansatz} into~\eqref{eq:Smol}, which leads to
\begin{equation}\label{eq:formal:self:sim}
 (1+\rho) f(y)+yf'(y)+\frac{1}{2}\int_{0}^{y}K(y-z,z)f(y-z)f(z)\dz-f(y)\int_{0}^{\infty}K(y,z)f(z)\dz=0.
\end{equation}
Moreover, one also finds
\begin{equation}\label{eq:scaling}
 s(t)=\bigl((\rho-\lambda)t\bigr)^{1/(\rho-\lambda)},
\end{equation}
while we note that in order to obtain $s(t)\to\infty$ we have to require $\rho>\lambda$.

It turns out to be more convenient to work with a more regularised version of this equation. Precisely, this means that one multiplies~\eqref{eq:formal:self:sim} by $y$ and integrates over $(0,x)$ which, after some elementary manipulations, leads to
\begin{equation}\label{eq:self:sim}
 x^{2}f(x)=(1-\rho)\int_{0}^{x}yf(y)\dy+\int_{0}^{x}\int_{x-y}^{\infty}yK(y,z)f(y)f(z)\dz\dy.
\end{equation}
This is the equation that we will study in this work and more precisely we will show that for a particular class of kernels there exists up to a certain normalisation at most one solution.

\subsection{Finite mass, fat-tailed profiles and scale invariance}\label{Sec:scale:invariance}

One fundamental property of~\eqref{eq:Smol} is the formal conservation of the total mass. More precisely this means that the first moment $\int_{(0,\infty)}x\phi(x,t)\dx$, i.e.\@ the total mass, of a solution $\phi$ of~\eqref{eq:Smol} is at least on a formal level constant in time if the same quantity is finite initially, i.e.\@ at $t=0$. If this is the case, one should also expect, that the first moment of the ansatz~\eqref{eq:ansatz} equals this constant which immediately leads to $\rho=1$.

However, in the case of the solvable kernels it has been shown in~\cite{MeP04} that besides these profiles with finite mass, which decay exponentially at infinity, there exists a whole family of self-similar profiles which have in general algebraic decay at infinity. In particular, for $K=2$ which has homogeneity $\lambda=0$, these profiles are parametrised by the parameter $\rho\in(0,1)$ and it holds $f_{\rho}(x)\sim C_{f_{\rho}}x^{-1-\rho}$ for $x\to\infty$ such that the total mass of $f_{\rho}$, i.e. the first moment, is infinite (see Section~\ref{Sec:constant:kernel}).

At first sight, it seems unphysical to consider self-similar profiles with infinite mass since in applications, the mass density should always be finite. However, it has been shown in the case of the constant kernel in~\cite{MeP04} that a solution $\phi$ to~\eqref{eq:Smol} is attracted by $f_{\rho}$ if and only if it holds $\int_{0}^{x}y\phi(y,0)\dy\sim x^{1-\rho}L(x)$ for $x\to\infty$ where $L$ is a function which is slowly varying at infinity. This means that the long-time behaviour of a solution $\phi$ of~\eqref{eq:Smol} depends only on the tail-behaviour of the initial condition at infinity. On the other hand, it is also clear that~\eqref{eq:Smol} cannot be valid for extremely large cluster sizes $x\gg 1$ in real applications. Thus, fat-tailed profiles might still be important to describe the long-time behaviour of coagulation processes. To emphasise this point let us consider the two initial conditions $\phi_{0}$ and $\widehat{\phi}_{0}=\phi_{0}\chi_{[0,R]}$ with $R\gg 1$ and $\phi_{0}(x)\sim x^{-1-\rho}$ for $x\to\infty$. If we denote by $\phi$ and $\widehat{\phi}$ the corresponding solutions of~\eqref{eq:Smol} it holds for the constant kernel $K=2$ that $\phi$ is attracted by the profile $f_{\rho}$ while $\widehat{\phi}$ is attracted by the finite-mass profile $f_{1}(x)=C_{1}\ee^{-x}$. However, as mentioned before, \eqref{eq:Smol} cannot capture the behaviour of the system for arbitrarily large clusters and thus, there is a priori no reason why the long-time asymptotics of $\widehat{\phi}$ should be better described by $f_{1}$ than by $f_{\rho}$.

Another important point, especially in the context of uniqueness of self-similar profiles, is a certain scale invariance of~\eqref{eq:self:sim}. Precisely, this means that for each $f$ which solves~\eqref{eq:self:sim} also the rescaled function $f_{a}(x)=af(ax)$ is a solution of~\eqref{eq:self:sim}. Due to this, uniqueness of profiles can only hold up to normalisation and it turns out to be convenient to fix the free parameter $a$ by specifying the tail-behaviour of the self-similar solutions at infinity (see Theorem~\ref{Thm:asymptotics}).

\subsection{Existence and uniqueness of self-similar profiles}

The existence of self-similar profiles for non-solvable kernels with both, finite and infinite mass, is by now quite well understood for large classes of kernels with $\lambda<1$. More precisely, in the case of finite mass, existence of profiles has been shown for example in~\cite{FoL05,EMR05}. 

Moreover, in~\cite{NiV13a,NTV16a} the existence of self-similar profiles with fat tails could be established for large classes of non-solvable coagulation kernels with homogeneity $\lambda<1$ while~\cite{NTV16a} in particular covers the case of~\eqref{eq:Smol:kernel}.

Concerning the uniqueness of these profiles much less is known. As already mentioned, in the case of solvable kernels, one also obtains uniqueness of profiles due to the fact that they can be computed explicitly in terms of the Laplace transform. However, for non-solvable kernels there are very few and quite recent results available. One of these results concerns the diagonal kernel in the case of finite mass (\cite{HNV17}). However, this kernel is a quite specific example since it simplifies the problem significantly by localising the integral equation in a certain sense. 

Apart from this, the to our knowledge only coagulation kernel for which uniqueness of self-similar profiles has been established completely is the perturbation of the constant kernel which we consider in this work (see Section~\ref{Sec:Assumptions:W} for the precise assumptions). More precisely, for this model uniqueness of self-similar profiles could be established first in the case of finite mass in~\cite{NiV14a,NTV15,NTV16}. However, as shown in~\cite{Thr16}, this result can further be extended to cover also the case of fat-tailed profiles and it is exactly the aim of this work to present the corresponding proof as contained in~\cite{Thr16}. We also note that, to our knowledge, this is the only uniqueness result for fat-tailed profiles which is currently available.

Finally, let us mention that a partial uniqueness result in the case of finite mass has also been given in~\cite{CaM11} where kernels of the form $K(x,y)=x^{a}y^{b}+x^{b}y^{a}$ with $a,b\in(-1,1)$ and $a+b<1$ have been considered. However, the result requires that the moments of order $a$ and $b$ of the self-similar profiles are invariant, i.e.\@ they do not depend on the self-similar solution. Unfortunately, such a property is yet also not known for such kernels.

\subsection{The constant kernel $K=2$}\label{Sec:constant:kernel}

Since we will consider in this article a perturbation $K=2+\eps W$ of the solvable constant kernel, we will collect for the latter several formulas on the self-similar profile on which we will rely in this work. Except for a different normalisation factor these expressions can be found in~\cite{MeP04}.

For $K=2$ and a fixed parameter $\rho\in(0,1)$ there exists up to normalisation exactly one self-similar profile which solves~\eqref{eq:self:sim}. As already indicated in Section~\ref{Sec:scale:invariance}, we will normalise the self-similar solutions according to their tail behaviour. More precisely, for $K=2$ and fixed $\rho\in(0,1)$ we choose the unique solution $\bar{f}$ of~\eqref{eq:self:sim} which satisfies
\begin{equation*}
 \bar{f}(x)\sim \frac{\rho^2}{\Gamma(1-\rho)}x^{-1-\rho}\quad \text{as }x\longrightarrow\infty.
\end{equation*}
This choice of the constant turns out to be convenient in the later calculations. Additionally, it holds $\bar{f}(x)\sim \rho^2/\Gamma(2-\rho)x^{\rho-1}$ as $x\to 0$. Moreover, the normalisation of the tail at infinity directly translates into the asymptotic behaviour at zero in Laplace variables. Precisely, if we define for $\bar{f}$ the Laplace transform $\bar{F}(q)\vcc=\int_{0}^{\infty}\ee^{-qx}\bar{f}(x)\dx$ and the desingualarised Laplace transform $\bar{Q}(q)\vcc=\int_{0}^{\infty}(1-\ee^{-qx})\bar{f}(x)\dx$ one has the formulas
\begin{equation}\label{eq:explicit:bar:quantities}
 \bar{F}(q)=\frac{\rho}{1+q^{\rho}}\quad \text{and}\qquad \bar{Q}(q)=\frac{\rho q^{\rho}}{1+q^{\rho}}.
\end{equation}
Thus, one can directly check that it holds
\begin{equation*}
 -\bar{F}'(q)=\bar{Q}'(q)\sim \rho^{2}q^{\rho-1}\quad \text{as }q\longrightarrow 0.
\end{equation*}

\subsection{Assumptions on the kernel $K$}\label{Sec:Assumptions:W}

We now collect the assumptions that we will have to impose on the integral kernel $K$ in order to show uniqueness of self-similar profiles while the same conditions have already been used in~\cite{NTV15,Thr16}.

Throughout this work we assume that the integral kernel $K$ can be written as
\begin{equation}\label{eq:Ass:basic}
 K(x,y)=2+\eps W(x,y) \quad \text{with}\quad 0\leq W(x,y)\leq \biggl(\Bigl(\frac{x}{y}\Bigr)^{\alpha}+\Bigl(\frac{y}{x}\Bigr)^{\alpha}\biggr) \quad\text{for some }\alpha\in[0,1),
\end{equation}
where $W$ is assumed to be a continuous, symmetric function which is homogeneous of degree zero. For simplicity we also assume $\eps\leq 1$.

For parts of our proofs we need much stronger regularity assumptions on $W$ because we have to represent $W$ as a Laplace integral. Therefore, we have to require that $W$ has an analytic extension to $\Ccut$ where we define
\begin{equation*}
 \Ccut\vcc=\C\setminus(-\infty,0].
\end{equation*}
Precisely, we need that
\begin{equation}\label{eq:Ass:analytic}
 \begin{split}
  W(\cdot,1)\text{ can be extended analytically to }\Ccut\text{ such that }\Re\bigl(W(\xi,1)\bigr)\geq 0 \text{ for }\Re(\xi)\geq 0,\\
  W(\xi,1)=W(\xi^{-1},1)\text{ and } \abs*{W(\xi,1)}\leq C\bigl(\abs*{\xi}^{-\alpha}+\abs*{\xi}^{\alpha}\bigr)\quad \text{for } \xi\in\Ccut.
 \end{split}
\end{equation}
Moreover, in order to represent $W$ as the Laplace transform of a certain kernel $\Ker$, we further need some Hölder condition on $W$. Therefore, we denote the upper and lower half-plane in the complex plane as
\begin{equation*}
 \HH_{+}\vcc=\{\xi\in\C\;|\;\Im(\xi)>0\}\quad \text{and}\quad \HH_{-}\vcc=\{\xi\in\C\;|\;\Im(\xi)<0\}
\end{equation*}
and we define the restrictions $W_{\pm}(\xi)\vcc=W(\xi,1)$ for $\xi\in\HH_{\pm}$. We then assume that there exists a Hölder exponent $\hol\in(0,1)$ such that 
\begin{equation}\label{eq:Ass:hoelder:1}
 W_{\pm} \text{ can be extended as } C^{1,\hol}\text{-function to } \overline{\HH}_{\pm}\setminus\{0\}. 
\end{equation}
This means precisely that $W_{\pm}$ is differentiable and moreover it holds for $z_1,z_2\in\HH_{\pm}$ with $\abs*{z_{1}-z_{2}}\leq \frac{1}{2}\min\{\abs{z_{1}},\abs{z_{2}}\}$ that
\begin{equation}\label{eq:Ass:hoelder:2}
 \frac{\abs*{W_{\pm}'(z_{1})-W_{\pm}'(z_{2})}}{\abs*{z_{1}-z_{2}}^{\hol}}\leq C\begin{cases}
                                                                                \min\{\abs{z_{1}},\abs{z_{2}}\}^{-\alpha-1-\hol} & \text{if }\min\{\abs{z_{1}},\abs{z_2}\}\leq 1\\
                                                                                \min\{\abs{z_{1}},\abs{z_{2}}\}^{\alpha-1-\hol} & \text{if }\min\{\abs{z_{1}},\abs{z_2}\}\geq 1.
                                                                               \end{cases}
\end{equation}
Moreover, we have to control the growth of $W_{\pm}$ and its derivative at zero and at infinity in the sense that
\begin{equation}\label{eq:Ass:growth}
 \abs{W_{\pm}(z)}\leq C\bigl(\abs{z}^{-\alpha}+\abs{z}^{\alpha}\bigr)\qquad \text{and}\qquad \abs*{W_{\pm}'}\leq C\bigl(\abs{z}^{-\alpha-1}+\abs{z}^{\alpha-1}\bigr).
\end{equation}

These rather strong regularity assumptions are necessary in order to be able to represent the perturbation $W$ as Laplace integral as stated by the following Proposition which has also been used in~\cite[Proposition~2.2]{NTV15} and~\cite[Proposition~7.15]{Thr16}.

\begin{proposition}\label{Prop:W:representation}
 If $W$ satisfies~\cref{eq:Ass:basic,eq:Ass:analytic,eq:Ass:hoelder:1,eq:Ass:hoelder:2,eq:Ass:growth} for some $\alpha\in(0,1)$, there exists a symmetric measure $\Ker$ on $\R_{+}\times\R_{+}$ such that it holds
 \begin{equation*}
  \frac{W(x,y)}{x+y}=\int_{0}^{\infty}\int_{0}^{\infty}\Ker(\xi,\eta)\ee^{-\xi x-\eta y}\dxi\deta.
 \end{equation*}
Moreover, $\Ker$ is homogeneous of degree $-1$ and splits as $\Ker(\xi,\eta)=\widetilde{\Ker}(\xi,\eta)+W_{\pm}(-1)\delta(\xi-\eta)$, where $\widetilde{\Ker}\colon \R_{+}\times\R_{+}\to\R$ is continuous and satisfies
\begin{equation*}
 \abs*{\widetilde{\Ker}(\xi,\eta)}\leq \frac{C}{(\xi+\eta)^{1-\alpha}}\biggl(\frac{1}{\xi^{\alpha}}+\frac{1}{\eta^{\alpha}}\biggr),
\end{equation*}
while $\delta(\cdot)$ denotes the Dirac measure.
\end{proposition}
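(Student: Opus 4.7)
The plan is to reduce the two-variable representation problem to an inverse Stieltjes-transform problem in one variable. Since any measure on $\R_+\times\R_+$ homogeneous of degree $-1$ has the form $\Ker(\xi,\eta)=\eta^{-1}p(\xi/\eta)$ for a measure $p$ on $(0,\infty)$, the substitution $\xi=u\eta$ followed by the $\eta$-integration yields
\begin{equation*}
 \int_0^\infty\int_0^\infty\Ker(\xi,\eta)\ee^{-\xi x-\eta y}\dxi\deta=\int_0^\infty\frac{p(u)}{ux+y}\du.
\end{equation*}
Specialising to $x=1,y=z$ and using $W(1,z)=W(z,1)$ from the symmetry of~\eqref{eq:Ass:basic}, the desired identity becomes the Stieltjes-transform relation
\begin{equation*}
 \phi(z)\vcc=\frac{W(z,1)}{1+z}=\int_0^\infty\frac{p(u)}{u+z}\du.
\end{equation*}
The additional symmetry $W(\xi,1)=W(\xi^{-1},1)$ from~\eqref{eq:Ass:analytic} translates, via the change of variables $u\mapsto 1/u$ in the Stieltjes integral, into the self-consistency relation $p(1/u)=u\,p(u)$, which is precisely what is needed to ensure $\Ker(\xi,\eta)=\Ker(\eta,\xi)$.

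I would then recover $p$ by inverting this Stieltjes transform via the Sokhotski--Plemelj jump formula. By~\cref{eq:Ass:analytic,eq:Ass:hoelder:1,eq:Ass:hoelder:2,eq:Ass:growth} the function $\phi$ is analytic on $\Ccut\setminus\{-1\}$ with Hölder-continuous boundary values on the cut $(-\infty,0)$. Applied near $\xi=-1$, the symmetry $W(\xi,1)=W(\xi^{-1},1)$ forces $W_+(-1)=W_-(-1)$ (since $(-1+\im\eps)^{-1}$ lies in the opposite half-plane $\HH_-$), so $z=-1$ is a genuine simple pole of $\phi$ with single-valued residue $W_\pm(-1)$. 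Away from this pole, the jump formula yields
\begin{equation*}
 \widetilde p(u)\vcc=\frac{\phi(-u-\im 0)-\phi(-u+\im 0)}{2\pi\im}=\frac{W_-(-u)-W_+(-u)}{2\pi\im(1-u)},\qquad u\in(0,\infty)\setminus\{1\},
\end{equation*}
while the pole contributes an atom $W_\pm(-1)\delta(u-1)$. The numerator vanishes at $u=1$ (because $W_+(-1)=W_-(-1)$), and the $C^{1,\hol}$ bound~\eqref{eq:Ass:hoelder:2} gives a Hölder-continuous extension of $\widetilde p$ through $u=1$.

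Setting $\widetilde\Ker(\xi,\eta)\vcc=\eta^{-1}\widetilde p(\xi/\eta)$ and using the scaling identity $\eta^{-1}\delta(\xi/\eta-1)=\delta(\xi-\eta)$, I obtain the decomposition $\Ker=\widetilde\Ker+W_\pm(-1)\delta(\xi-\eta)$ with the required homogeneity and symmetry. The pointwise bound on $\widetilde\Ker$ reduces to $\abs{\widetilde p(u)}\leq C(1+u^{-\alpha})/(1+u)^{1-\alpha}$, which follows upon substituting the growth estimate $\abs{W_\pm(-u)}\leq C(u^{-\alpha}+u^\alpha)$ from~\eqref{eq:Ass:growth} into the jump formula above; re-expressing in the coordinates $(\xi,\eta)=(u\eta,\eta)$ reproduces the inequality claimed in the proposition.

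The main obstacle will be the rigorous justification of Stieltjes inversion and of the Fubini interchange in the first step: one must verify that $\widetilde p\,\du+W_\pm(-1)\delta(u-1)$ genuinely reconstructs $\phi$ on all of $\Ccut$, that the double Laplace integral converges absolutely, and that near $z=-1$ the boundary behaviour produces exactly one Dirac atom with the stated coefficient. The regularity and growth hypotheses~\cref{eq:Ass:hoelder:1,eq:Ass:hoelder:2,eq:Ass:growth} are calibrated precisely so that $\widetilde p$ is continuous on $(0,\infty)$ with integrable behaviour at $0$ and $\infty$, making all these manipulations go through.
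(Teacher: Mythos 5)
Your proposal follows essentially the same strategy as the paper's: reduce via homogeneity to a one-variable Stieltjes inversion, obtain the candidate density from the Sokhotski--Plemelj jump of the boundary values $W_\pm$, and read off the atom $W_\pm(-1)\delta(u-1)$ from the $1/(1-u)$ singularity. The only presentational difference is bookkeeping of that atom: the paper first subtracts $W(-1)/(1+z)$ (whose Stieltjes preimage is $\delta(\cdot-1)$) so that the jump formula produces a genuine function, whereas you keep the full $\phi=W(z,1)/(1+z)$ and let the $\mp\im 0$ prescriptions in the Plemelj limits generate the $\delta$-term distributionally; the two computations are equivalent. One small imprecision: $z=-1$ is not a pole of $\phi$ in the meromorphic sense, since it lies on the branch cut $(-\infty,0]$ where $W(\cdot,1)$ is not analytic (even though the two boundary values agree there); what you actually use is only the distributional jump, which is correct. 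The real remaining work, which you correctly flag but do not carry out, is the paper's Step~3: verifying that the candidate measure actually reconstructs $\phi$ on all of $\Ccut$. The paper does this by showing that $\Phi(z)=\frac{W(z)-W(-1)}{1+z}-\int_0^\infty\frac{\phi(\eta)}{z+\eta}\,d\eta$ extends analytically across the cut (Morera, using the Hölder estimates), then over $0$ (Riemann removable singularity via the growth bounds), and is therefore entire; since it decays at infinity, Liouville gives $\Phi\equiv 0$. Without that argument, the jump formula only yields a \emph{candidate} for $p$ — the Plemelj formula does not by itself certify that an arbitrary boundary-Hölder function on $\Ccut$ is a Stieltjes transform — so the proof is incomplete without the Liouville step, which is the actual technical content of \cref{Prop:W:representation}.
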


\begin{remark}\label{Rem:existence:Wminusone}
 Note that~\cref{eq:Ass:analytic,eq:Ass:hoelder:1} ensure that $W_{\pm}(-1)$ exists and it holds $W_{+}(-1)=W_{-}(-1)$.
\end{remark}
\begin{remark}
 For $W(x,y)=(x/y)^{\alpha}+(y/x)^{\alpha}$ one obtains the explicit formula $\Ker(\xi,\eta)=\frac{\sin(\pi \alpha)}{\pi}\frac{(\xi/\eta)^{\alpha}-(\eta/\xi)^{\alpha}}{\xi-\eta}+2\cos(\pi \alpha)\delta(\xi-\eta)$.
\end{remark}

For kernels $W$ which have a singular behaviour at the origin, i.e.\@ $\alpha\in(0,1)$ there is a further technicality arising in form of a boundary layer for self-similar profiles close to the origin (see Section~\ref{Sec:bd:layer:formal}). In order to deal with this, we have to impose additionally the precise behaviour of $W$ close to the origin. In more detail this means that we require that
\begin{equation}\label{eq:Ass:asymptotic}
 W(\xi,1)\sim C_{W}\xi^{-\alpha}\quad \text{for } \xi\longrightarrow 0.
\end{equation}
Note that the notation $\sim$ here means that an analytic function $\varphi\colon \Ccut\to \C$ satisfies $\varphi\sim Az^{-\alpha}$ if and only if there is a further function $\tau\colon \Ccut\to \R_{\geq 0}$ such that it holds
\begin{equation}\label{eq:def:asymptotics:complex:plane}
 \abs*{z^{\alpha}\varphi(z)-A}\leq \tau(z) \quad \text{with}\quad \lim_{r\to 0^{+}}\sup_{\stackrel{\abs*{\xi}=r}{\xi\in\Ccut}} \tau(\xi)=0.
\end{equation}

\begin{remark}
 Note that~\eqref{eq:Smol:kernel} can be rewritten as $K(x,y)=2+(x/y)^{1/3}+(y/x)^{1/3}$. Thus, for $\alpha=1/3$ and $\eps=1$ this kernel has exactly the form~\eqref{eq:Ass:basic} and satisfies also~\cref{eq:Ass:analytic,eq:Ass:growth,eq:Ass:asymptotic,eq:Ass:hoelder:1,eq:Ass:hoelder:2} which might be seen as a motivation for our assumptions. However, we also emphasise that we have to choose $\eps$ in general rather small such that we cannot expect to obtain uniqueness of profiles in the case~\eqref{eq:Smol:kernel}.
\end{remark}

\subsection{Preliminary work and main result}

Throughout this work we use the following definition of self-similar profiles which is an adaptation of the corresponding definition in~\cite[Definition~7.2]{Thr16}.

\begin{definition}\label{Def:profile}
 Let $K$ be an integral kernel satisfying~\eqref{eq:Ass:basic}. For $\rho\in(\alpha,1)$, a function $f\in L^{1}_{\text{loc}}\bigl((0,\infty)\bigr)$ is denoted a self-similar profile of~\eqref{eq:Smol} or equivalently a solution to~\eqref{eq:self:sim}, provided that $f$ is non-negative, $f\not\equiv 0$ and $f$ satisfies~\eqref{eq:self:sim} almost everywhere. Furthermore, we require that $xf(x)\in L^{1}_{\text{loc}}\bigl([0,\infty)\bigr)$ and that
 \begin{equation*}
  \int_{1}^{\infty}x^{\alpha}f(x)\dx\leq C_{f}
 \end{equation*}
 for some constant $C_{f}>0$.
\end{definition}

As already indicated in Section~\ref{Sec:scale:invariance} the scale invariance of~\eqref{eq:self:sim} requires a normalisation condition for self-similar profiles. The approach that we will take here is to specify the tail behaviour of self-similar solutions in such a way that the profiles for the perturbed kernel $K=2+\eps W$ exhibit the same tail behaviour as $\bar{f}$. The following result, which summarises~\cite[Proposition~1.9 \& Theorem~1.10]{Thr17}, shows that this approach is always possible under the assumption~\eqref{eq:Ass:basic}.

\begin{theorem}\label{Thm:asymptotics}
 Let $K$ satisfy~\eqref{eq:Ass:basic}. Then each self-similar profile $f$ can be rescaled as $\tilde{f}(x)\vcc=af(ax)$ such that it holds
 \begin{equation*}
  \int_{0}^{R}y\tilde{f}(y)\dy\leq \frac{\rho^2}{\Gamma(2-\rho)}R^{1-\rho} \quad \text{and}\quad \int_{0}^{R}y\tilde{f}(y)\dy\sim \frac{\rho^2}{\Gamma(2-\rho)}R^{1-\rho}\quad \text{for }R\longrightarrow \infty.
 \end{equation*} 
 Moreover, $\tilde{f}$ is continuous on $(0,\infty)$ and satisfies $\tilde{f}(x)\sim \bar{f}(x)$ for $x\to \infty$.
\end{theorem}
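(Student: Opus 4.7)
The plan is to pass to Laplace variables and exploit the explicit formulas~\eqref{eq:explicit:bar:quantities} for the constant-kernel profile $\bar f$. For a self-similar profile $f$ in the sense of Definition~\ref{Def:profile}, introduce the desingularised Laplace transform $Q(q) \vcc = \int_{0}^{\infty}(1-\ee^{-qx})f(x)\,\dx$. This is finite: on $(0,1)$ one uses $1-\ee^{-qx}\leq qx$ together with $xf\in L^{1}_{\text{loc}}([0,\infty))$, and on $(1,\infty)$ one bounds $1-\ee^{-qx}$ by a constant multiple of $(qx)^{\alpha}$ combined with $\int_{1}^{\infty}x^{\alpha}f\,\dx\leq C_{f}$. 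Crucially, the scale invariance $f\mapsto f_{a}(x)=af(ax)$ of~\eqref{eq:self:sim} translates into $Q_{a}(q)=Q(q/a)$, so that a putative leading asymptotic $Q(q)\sim Cq^{\rho}$ as $q\to 0^{+}$ can always be matched to $\bar Q(q)\sim \rho q^{\rho}$ by a unique choice of $a$.

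Next I would derive a closed nonlinear equation for $Q$ out of~\eqref{eq:self:sim}. The contribution from the constant-kernel part $K=2$ produces quadratic expressions in $F$ and $Q$ after Laplace transformation, reproducing exactly the fixed-point relation solved by $\bar Q$. For the perturbation $\eps W$, Proposition~\ref{Prop:W:representation} writes $W(x,y)/(x+y)$ as a double Laplace integral with kernel $\Ker$; substituting this representation, the $\eps W$-contribution becomes a double integral of $F$ at shifted arguments, weighted by $\Ker$. The outcome is schematically a relation $\bar{\mathcal L}[Q]=\eps\,\mathcal N[Q]$, where $\bar Q$ is the fixed point of $\bar{\mathcal L}$ and $\mathcal N$ is a nonlinear operator inheriting its decay from the bound on $\widetilde{\Ker}$ in Proposition~\ref{Prop:W:representation}.

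The technical heart is to upgrade the two-sided a priori bound $Q(q)\asymp q^{\rho}$, which follows from the existence theory of~\cite{NTV16a} and standard Tauberian comparisons, to a genuine asymptotic $Q(q)=Cq^{\rho}(1+o(1))$ with a definite constant $C>0$. Writing $Q(q)=q^{\rho}L(q)$ with $L$ slowly varying and bounded, I would substitute into $\bar{\mathcal L}[Q]=\eps\,\mathcal N[Q]$ and exploit the decay of $\widetilde{\Ker}$ together with the smallness of $\eps$ to force the oscillation of $L$ on the scale $q\leq\delta$ to contract geometrically as $\delta\to 0$. This produces a finite positive limit $C=\lim_{q\to 0^+}L(q)$, which is then fixed to $C=\rho$ via the scale invariance by choosing $a$ appropriately. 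Karamata's Tauberian theorem applied to $\tilde Q'(q)\sim \rho^{2}q^{\rho-1}$ converts this into $\int_{0}^{R}y\tilde f(y)\,\dy\sim \rho^{2}/\Gamma(2-\rho)\cdot R^{1-\rho}$.

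The universal upper bound is then obtained by a monotonicity observation read off directly from~\eqref{eq:self:sim}: since the double-integral term is non-negative one has $R^{2}\tilde f(R)\geq (1-\rho)\int_{0}^{R}y\tilde f(y)\,\dy$, which yields that $R^{\rho-1}\int_{0}^{R}y\tilde f(y)\,\dy$ is non-decreasing in $R$, so that its supremum coincides with its limit at infinity. Continuity of $\tilde f$ on $(0,\infty)$ is again read off from~\eqref{eq:self:sim}, whose right-hand side is differentiable in $x$ once the above bounds are in hand, and the pointwise asymptotic $\tilde f(x)\sim \bar f(x)$ then follows via a monotone-density refinement of Karamata, using this continuity together with a local Hölder-type control that can be deduced from~\eqref{eq:self:sim}. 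The main obstacle, and the place where the strong analytic assumptions~\cref{eq:Ass:analytic,eq:Ass:hoelder:1,eq:Ass:hoelder:2,eq:Ass:growth} on $W$ are essentially used, is the contraction argument of the third paragraph: extracting a definite constant from an a priori only slowly varying correction requires the full quantitative Laplace representation of $W$ combined with the smallness of $\eps$.
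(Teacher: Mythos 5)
The theorem you are asked to prove is not actually proved in this paper: it is a citation of \cite[Proposition~1.9 \& Theorem~1.10]{Thr17}, so there is no ``paper's own proof'' here to compare against literally. I will therefore judge your sketch on its own merits, and it has a serious conceptual flaw.

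The statement of Theorem~\ref{Thm:asymptotics} assumes \emph{only}~\eqref{eq:Ass:basic}: no analyticity, no H\"older control on $W$, and crucially \emph{no smallness of~$\eps$}. The tail asymptotics and normalisation are therefore meant to hold for every profile of a perturbed kernel of this rough quantitative form. Your contraction argument, as you yourself note in the last paragraph, ``essentially uses'' the strong analyticity and H\"older assumptions~\cref{eq:Ass:analytic,eq:Ass:hoelder:1,eq:Ass:hoelder:2,eq:Ass:growth} via Proposition~\ref{Prop:W:representation}, together with smallness of $\eps$. This means that, even if all the analytic steps went through, you would have proved a strictly weaker theorem than the one stated, valid only for the special class of kernels to which the uniqueness machinery later applies. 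The strategy in the present paper is precisely to keep these two layers separate: the crude tail behaviour (Theorem~\ref{Thm:asymptotics}) comes from~\cite{Thr17} under~\eqref{eq:Ass:basic} alone and provides the normalisation, on top of which the Laplace-transform machinery (Lemmas~\ref{Lem:equation:desingualarised} and~\ref{Lem:apriori}, Proposition~\ref{Prop:W:representation}, etc.) is then erected. You are in danger of circularity: the very identity $\lim_{q\to0}qQ'(q)=0$ and the uniform bounds on $Q$ that you would need in your integration argument are precisely the conclusions that, in the present paper, are deduced \emph{from} the normalisation of Theorem~\ref{Thm:asymptotics} (see Remark~\ref{Rem:Q:asymptotics} and the proof of Lemma~\ref{Lem:apriori}, which begins by invoking $q^{-\rho}Q(q)\to\rho$).

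Two further points. First, the two-sided a priori bound $Q(q)\asymp q^{\rho}$ that you treat as input does not ``follow from the existence theory of~\cite{NTV16a}'': that result constructs \emph{some} profiles with prescribed tails; it does not constrain all solutions in the sense of Definition~\ref{Def:profile}. The dichotomy coming from integrating $-qQ'\geq -\rho Q+Q^2$ only tells you that $q^{-\rho}Q(q)$ either converges to a finite positive limit or diverges to $+\infty$; ruling out divergence needs a genuine argument, not Tauberian bookkeeping. Second, the ``contraction'' picture is not the right mechanism. Once one knows $L(q)\vcc=q^{-\rho}Q(q)$ is bounded, the ODE $-qQ'=-\rho Q+Q^2+\eps\Pert(f,f)$ rewrites as $L'(q)=-q^{\rho-1}L(q)^2-\eps q^{-\rho-1}\Pert(f,f)(q)$, and after checking that the right-hand side is absolutely integrable at $q=0$ (which under~\eqref{eq:Ass:basic} needs only the moment control from Definition~\ref{Def:profile}, not the Laplace representation of $W$), one integrates directly to conclude $L$ has a limit at $0^{+}$. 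No contraction, no analyticity, no smallness of $\eps$: the real obstacle is the a priori boundedness of $L$, not extracting a constant once boundedness is known. Your monotonicity observation $R\mapsto R^{\rho-1}\int_{0}^{R}yf\,\dy$ nondecreasing is correct and is the right elementary tool in this direction; it should have been put at the centre rather than relegated to the end.
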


\begin{remark}
 Note that in~\cite{Thr17} a different normalisation has been used. In this work, we will always assume that all profiles are normalised according to Theorem~\ref{Thm:asymptotics} while we also recall from Section~\ref{Sec:scale:invariance} that $\tilde{f}$ is again a self-similar profile.
\end{remark}

Moreover, we will need the following result which provides uniform estimates for certain moments of self-similar profiles and which is also shown in~\cite[Lemma~3.3]{Thr17}.

\begin{lemma}\label{Lem:moment:est:1}
 For any $\beta\in(0,\rho)$ there exists a constant $C_{\beta}>0$ such that it holds
 \begin{equation*}
  \int_{0}^{\infty}x^{\beta}f(x)\dx\leq C_{\beta}
 \end{equation*}
 for all solutions $f$ of~\eqref{eq:self:sim} which are normalised according to Theorem~\ref{Thm:asymptotics}.
\end{lemma}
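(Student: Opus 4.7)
The plan is to derive a moment identity by multiplying the differential self-similar equation~\eqref{eq:formal:self:sim} by $y^{\beta}$ and integrating over $(0,\infty)$. After symmetrising the gain and loss collision integrals and integrating by parts the $yf'(y)$ term (the boundary contribution $[y^{\beta+1}f(y)]_{0}^{\infty}$ vanishes thanks to the tail $f\sim\bar{f}$ at infinity coming from Theorem~\ref{Thm:asymptotics} together with $yf\in L^{1}_{\text{loc}}[0,\infty)$ at the origin), one obtains
\[
(\rho-\beta)M_{\beta}=\frac{1}{2}\int_{0}^{\infty}\!\!\int_{0}^{\infty}\bigl[y^{\beta}+z^{\beta}-(y+z)^{\beta}\bigr]K(y,z)f(y)f(z)\dy\dz,
\]
where $M_{\beta}:=\int_{0}^{\infty}y^{\beta}f(y)\dy$. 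The prefactor $\rho-\beta>0$ is precisely where the restriction $\beta<\rho$ enters.

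Using the elementary inequality $y^{\beta}+z^{\beta}-(y+z)^{\beta}\leq\min(y,z)^{\beta}$ together with $K(y,z)\leq 2+(y/z)^{\alpha}+(z/y)^{\alpha}$ from~\eqref{eq:Ass:basic} (so that $K(y,z)\leq 3+(z/y)^{\alpha}$ whenever $y\leq z$), the moment identity reduces by symmetry to
\[
(\rho-\beta)M_{\beta}\leq 3\int_{0}^{\infty}y^{\beta}f(y)N(y)\dy+\int_{0}^{\infty}y^{\beta-\alpha}f(y)\int_{y}^{\infty}z^{\alpha}f(z)\dz\dy,
\]
with $N(y):=\int_{y}^{\infty}f(z)\dz$. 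Each integral is then split at a cutoff $R_{0}$: the tail $y\geq R_{0}$ is handled by the uniform decay $f(y)\leq Cy^{-1-\rho}$ extracted from Theorem~\ref{Thm:asymptotics}, while the bulk is rewritten via Fubini in terms of the distribution function $\mom(R):=\int_{0}^{R}yf(y)\dy$ and estimated using the normalisation $\mom(R)\leq\tfrac{\rho^{2}}{\Gamma(2-\rho)}R^{1-\rho}$.

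The main obstacle lies in the contribution near $y=0$, because the normalisation bound on $\mom$ is not sharp as $R\to 0$ (the true behaviour, by analogy with $\bar{f}$, is $\mom(R)\sim CR^{1+\rho}$) and a direct application produces a divergent integral at the origin. To circumvent this I would rewrite~\eqref{eq:self:sim} as the linear first-order ODE $x\mom'(x)=(1-\rho)\mom(x)+\mathcal{I}(x)$, where $\mathcal{I}(x)=\int_{0}^{x}\int_{x-y}^{\infty}yK(y,z)f(y)f(z)\dz\dy$ is the collision term, and integrate with the factor $x^{\rho-1}$ to obtain the representation $\mom(x)=x^{1-\rho}\int_{0}^{x}t^{\rho-2}\mathcal{I}(t)\dt$. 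A bootstrap argument estimating $\mathcal{I}(t)$ by the growth of $K$ together with the \emph{a priori} bound $\mom(t)\leq Ct^{1-\rho}$ (and exploiting $\rho>\alpha$ so that the terms $y^{\beta-\alpha}$ and $z^{\alpha}$ above are integrable against $f$) should yield the improved uniform estimate $\mom(x)\leq Cx^{1+\rho}$ near $0$. Feeding this back into the split estimate then closes the argument for every $\beta\in(0,\rho)$; the hardest part is precisely this bootstrap, where one must carefully track the interplay between the contributions of the constant kernel and of the perturbation $\eps W$.
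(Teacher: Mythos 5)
The paper itself does not prove this lemma; it is imported verbatim from \cite[Lemma~3.3]{Thr17} and used throughout as a black box, so there is no internal argument to compare against. Assessed on its own terms, your moment identity is derived correctly, the bound $y^{\beta}+z^{\beta}-(y+z)^{\beta}\leq\min(y,z)^{\beta}$ is valid for $\beta\in(0,1)$, and you have rightly located the difficulty at the origin: the dyadic estimate $\int_{2^{-\ell-1}}^{2^{-\ell}}x^{\beta}f\dx\leq C2^{\ell(\rho-\beta)}$ implied by the normalisation $\int_{0}^{R}yf\leq CR^{1-\rho}$ diverges when summed over $\ell\geq 0$, so the tail $x\geq 1$ is easy while $x\leq 1$ needs an improvement of $\mom(R)=\int_{0}^{R}yf(y)\dy$ beyond $CR^{1-\rho}$.

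There is, however, a genuine and unresolved circularity. After symmetrising and using $K(y,z)\leq 3+(z/y)^{\alpha}$ on $\{y\leq z\}$, the right-hand side of your identity reduces to $\int_{0}^{\infty}y^{\beta}f(y)\bigl(\int_{y}^{\infty}f(z)\dz\bigr)\dy$ and $\int_{0}^{\infty}y^{\beta-\alpha}f(y)\bigl(\int_{y}^{\infty}z^{\alpha}f(z)\dz\bigr)\dy$. Bounding these uniformly requires $\int_{0}^{1}f<\infty$ and $\int_{0}^{1}z^{\alpha}f<\infty$; since $\alpha<\rho$, the latter is an instance of the very lemma you are proving, and neither follows from the normalisation alone. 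The same problem recurs in the bootstrap: estimating $\mathcal{I}(t)\leq\int_{0}^{t}yf(y)\int_{0}^{\infty}K(y,z)f(z)\dz\dy$ requires $\int_{0}^{\infty}K(y,z)f(z)\dz$, i.e.\@ $\mom_{0}$, $\mom_{\alpha}$, $\mom_{-\alpha}$, none of which is known to be finite at this stage. Your phrase ``exploiting $\rho>\alpha$ so that the terms $y^{\beta-\alpha}$ and $z^{\alpha}$ above are integrable against $f$'' assumes the conclusion. The a priori moment bounds that do appear in the paper (for instance $\int_{0}^{\infty}f\leq\rho$ in \cref{Lem:apriori}) cannot be invoked either, because their proofs already rely on the present lemma via \cref{Lem:Pert:estimate:Laplace:var}. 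One would need to start from a quantity that is provably finite without this lemma, or derive a pointwise bound on $f$ near the origin directly from \eqref{eq:self:sim}, and neither step is carried out. Finally, replacing the integrated form \eqref{eq:self:sim} by the differential form \eqref{eq:formal:self:sim} and integrating by parts is not justified: the profiles are only known to be continuous, and $y^{\beta+1}f(y)\to 0$ as $y\to 0$ does not follow from $yf\in L^{1}_{\text{loc}}$ alone.
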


The main statement that we will show in this work is the following uniqueness result for self-similar profiles with fat tails (see also~\cite{Thr16}).

\begin{theorem}\label{Thm:main}
 Let the coagulation kernel $K$ satisfy assumptions~\cref{eq:Ass:analytic,eq:Ass:basic,eq:Ass:growth,eq:Ass:hoelder:1,eq:Ass:hoelder:2,eq:Ass:asymptotic} for some parameter $\alpha\in[0,1/2)$ and let $\rho\in(1/2,1)$. Then, for $\eps>0$ sufficiently small, there exists at most one self-similar profile which is normalised according to Theorem~\ref{Thm:asymptotics}.
\end{theorem}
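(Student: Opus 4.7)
The plan is to recast the self-similar equation~\eqref{eq:self:sim} as a fixed-point problem in Laplace variables centred at the explicit solvable profile $\bar f$, and then to exploit the smallness of $\eps$ to close a contraction argument. For a normalised profile $f$ I introduce the (desingularised) Laplace transforms $F$ and $Q$; by Theorem~\ref{Thm:asymptotics} each such $F$, $Q$ shares with $\bar F$, $\bar Q$ the same leading asymptotics as $q\to 0$, namely $Q'(q)\sim\rho^{2}q^{\rho-1}$, so that $U:=Q-\bar Q$ is of strictly lower order than $\bar Q$ near the origin. Using Proposition~\ref{Prop:W:representation} to rewrite the bilinear term involving $W$ as a double Laplace integral against the kernel $\Ker$, the equation~\eqref{eq:self:sim} becomes an identity of the schematic form
\[
 \LL[U] \;=\; \eps\,\Pert[Q,Q],
\]
where $\LL$ is the linearisation of the constant-kernel operator around $\bar Q$ and $\Pert$ is a bilinear nonlocal operator controlled through $\widetilde{\Ker}$.

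The first step is the explicit derivation of this fixed-point formulation and the choice of a weighted Banach space $\mathcal{Y}$ encoding both the allowed behaviour of $U$ at $q=0$ — strictly subleading with respect to $q^{\rho-1}$ in the derivative, thanks to the normalisation of Theorem~\ref{Thm:asymptotics} — and a matching behaviour at $q=\infty$. The restrictions $\rho>1/2$ and $\alpha<1/2$ enter precisely so that the relevant weighted integrals converge: Lemma~\ref{Lem:moment:est:1} furnishes uniform moment bounds for any $\beta\in(0,\rho)$, and together with the growth bound on $\widetilde{\Ker}$ from Proposition~\ref{Prop:W:representation} this allows control of $\Pert$ in $\mathcal{Y}$ by the $\mathcal{Y}$-norm of its arguments.

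The second step is to invert $\LL$ on $\mathcal{Y}$. Since $\bar Q(q)=\rho q^{\rho}/(1+q^{\rho})$ is explicit, $\LL$ reduces to a first-order linear operator in $q$ with a coefficient that is a rational function of $q^{\rho}$. Its kernel on the unrestricted space is one-dimensional, spanned by the infinitesimal generator of the scaling symmetry $f\mapsto af(a\cdot)$; the tail normalisation from Theorem~\ref{Thm:asymptotics} kills exactly this direction, so $\LL$ becomes boundedly invertible on the codimension-one subspace in which $U$ lives, with an operator norm independent of $\eps$. Combined with Lipschitz bounds for $\Pert$ uniform in $\eps\leq 1$, a standard contraction argument then yields $U=O(\eps)$ and at most one such $U$, hence at most one normalised profile.

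The main obstacle is the boundary layer at the origin (equivalently at $q=\infty$) when $\alpha>0$. On scales $x\lesssim\eps^{1/\alpha}$ the singular contribution of $W$ is no longer a small perturbation of $K=2$, so pointwise smallness of $U$ cannot hold there; one must subtract an explicit inner correction built from the precise asymptotics~\eqref{eq:Ass:asymptotic} and apply $\LL$ only to the residual. Making this correction compatible with the Hölder bounds~\eqref{eq:Ass:hoelder:1}--\eqref{eq:Ass:hoelder:2}, with the decomposition $\Ker=\widetilde{\Ker}+W_{\pm}(-1)\delta(\xi-\eta)$ of Proposition~\ref{Prop:W:representation}, and with the weighted norm on $\mathcal{Y}$ is the technically delicate point of the argument; once accomplished, the rest of the contraction scheme is essentially mechanical.
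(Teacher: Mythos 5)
Your proposal reproduces the paper's high-level strategy (Laplace variables, linearisation around $\bar f$, inversion of $\LL$, smallness of $\eps$, and the difficulty at the boundary layer), but there is a genuine gap that makes the contraction scheme, as you have stated it, insufficient for the theorem. You set up $\LL[U]=\eps\,\Pert[Q,Q]$ for $U=Q-\bar Q$ and argue that a contraction yields $U=O(\eps)$ and hence at most one such $U$. A contraction in a small ball around $\bar Q$ only gives uniqueness among profiles that are \emph{a priori} known to be close to $\bar f$; nothing in your argument shows that every normalised self-similar profile lies in that ball. Closing exactly this loophole is what the paper's Section~\ref{Sec:uniform:convergence} does: starting from the ODE $-qQ'(q)=-\rho Q(q)+Q^2(q)+\eps\Pert(f,f)(q)$ (\cref{Lem:equation:desingualarised}), it derives the uniform bound $\lfnorm*{2}{0}{\theta}{\T f}\leq C$ (\cref{Prop:norm:boundedness}) and, crucially, the uniform convergence $\lfnorm*{2}{\mu}{\theta}{\T(f-\bar f)}\to 0$ as $\eps\to 0$ (\cref{Prop:closeness:two:norm}) for \emph{all} normalised solutions, via Gr{\"o}nwall-type and monotonicity arguments that are independent of the fixed-point formulation. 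Without these a priori estimates, the contraction does not close.

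Two further points in the proposal differ from the paper in ways worth flagging. First, the paper's contraction is run on the difference $m=f_1-f_2$ of two solutions after applying $\T$ and $\LL^{-1}$, and it separates the boundary layer via the cut-off $\zeta(x)=\ee^{-x}$, splitting $m=\zeta m+(1-\zeta)m$; it does \emph{not} subtract an explicit inner correction. Your inner-correction idea is left at a level where it is unclear whether the residual, after subtracting such a correction, really lands in the domain where $\LL$ is invertible with the required bounds; the paper's boundary-layer estimate \cref{Prop:boundary:layer} is instead proved by integrating~\eqref{eq:self:sim} with the integrating factor $x^{-1-\rho-\kappa_j}\exp(\Phi_j(x))$, Laplace-transforming, and controlling the representation kernel $\Qo_0$ of the resulting $H_0$ — this is the bulk of Sections~\ref{Sec:bd:layer}--\ref{Sec:asymptotics} and is by no means ``essentially mechanical''. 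Second, the claim that $\ker\LL$ is one-dimensional and killed by the tail normalisation is not how \cref{Prop:linearised:operator} proceeds: the paper solves $\LL G=H$ explicitly, fixes the constant $G(0)=-H(0)/\rho$ from the required decay at $q\to\infty$, and then verifies boundedness of the resulting formula directly on $\X{k}{\mu}{\chi}$; no quotient by a scaling mode is taken.
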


\begin{remark}
 We note that $\alpha=0$ in the previous statement represents a particular case since the problem of the boundary layer, which will be discussed in Section~\ref{Sec:bd:layer:formal} does not occur in this special situation of bounded perturbations. However, the proof of Theorem~\ref{Thm:main} relies heavily on Proposition~\ref{Prop:W:representation} which in this form is only valid for $\alpha\in(0,1)$. In fact, one could recover this statement also for $\alpha=0$ but then logarithmic corrections would occur. Instead, we mention that a bounded perturbation $\widetilde{W}$ still satisfies the assumptions~\cref{eq:Ass:analytic,eq:Ass:basic,eq:Ass:growth,eq:Ass:hoelder:1,eq:Ass:hoelder:2} with any small value $\tilde{\alpha}\in(0,1)$. The only exception is~\eqref{eq:Ass:asymptotic} which cannot be recovered for $\tilde{\alpha}\in(0,1)$ if $\widetilde{W}$ is bounded. However, this assumption is only required in the proof of the boundary layer estimate (Proposition~\ref{Prop:boundary:layer}) and it turns out that one only needs this restriction for sufficiently large values of $\alpha$, while for $\alpha$ sufficiently close to zero this requirement can be skipped. Due to the fact that the corresponding proof of Proposition~\ref{Prop:boundary:layer} is already quite long and technical and covers at least half of this work, we will not go into further details on how to adapt the proofs in the case of bounded perturbations $\widetilde{W}$ but we will only consider the case of $\alpha\in(0,1)$ which is more delicate. Some comments on the necessary adaptations for $\alpha=0$ are contained in~\cite{Thr16}.
\end{remark}

\begin{remark}\label{Rem:restrictions:alpha:rho}
 The decay properties of self-similar profiles and assumption~\eqref{eq:Ass:basic} necessarily require that it holds $\rho>\alpha$ to obtain finite integrals in~\eqref{eq:self:sim}. In principle one would then expect that Theorem~\ref{Thm:main} is valid for $\rho\in(\alpha,1)$. However, we were not able to obtain this stronger result while it seems that this is a technical issue in the proof of Proposition~\ref{Prop:boundary:layer} which stems from the singular behaviour of profiles close to zero.
 
 Similarly, one would in general expect that Theorem~\ref{Thm:main} also holds for $\alpha\in[1/2,1)$ and in fact, except for one estimate in the proof of Proposition~\ref{Prop:boundary:layer}, all main results either already hold for $\alpha\in[1/2,1)$ or could at least be recovered with some additional effort. Again, it is expected that this problem is mainly technical.
\end{remark}

\begin{remark}
 We note that the existence result of self-similar profiles with fat tails in~\cite{NTV16a} does not directly apply in general under the present conditions, since there also a precise lower bound on the kernel $K$ has been used. However, in the special case $W(x,y)=(x/y)^{\alpha}+(y/x)^{\alpha}$, the statement in~\cite{NTV16a} directly guarantees existence of self-similar solutions. Moreover, due to the condition~\eqref{eq:Ass:asymptotic} it should also be possible to adapt the existence proof in~\cite{NTV16a} to the present conditions for $W$.
\end{remark}

\subsection{The boundary layer at zero}\label{Sec:bd:layer:formal}

The main difficulty in the proof of Theorem~\ref{Thm:main} is the occurrence of a boundary layer for small cluster sizes. This problem originates directly from the singular behaviour of the perturbation $W$ which leads to a completely different asymptotic behaviour at zero of the perturbed self-similar profiles compared to $\bar{f}$. To illustrate this phenomenon, we will give a quite formal derivation of the asymptotic behaviour of profiles $f$ for $K=2+\eps W$ with $W(x,y)=\bigl((x/y)^{\alpha}+(y/x)^{\alpha}\bigr)$. Therefore, we rewrite equation~\eqref{eq:self:sim} as
\begin{multline*}
 x^{2}f(x)=(1-\rho)\int_{0}^{x}yf(y)\dy+\int_{0}^{x}yf(y)\int_{0}^{\infty}K(y,z)f(z)\dz\dy\\*
 -\int_{0}^{x}\int_{0}^{x-y}K(y,z)yf(y)f(z)\dz\dy.
\end{multline*}
The last integral on the right-hand side is of lower order for small values of $x$, thus we neglect this term and obtain that the asymptotic behaviour to the leading order is given by the equation
\begin{equation*}
 x^{2}f(x)=(1-\rho)\int_{0}^{x}yf(y)\dy+\int_{0}^{x}yf(y)\int_{0}^{\infty}K(y,z)f(z)\dz\dy.
\end{equation*}
We can rearrange this by differentiating on both sides to get
\begin{equation*}
 \del_{x}\bigl(x^{2}f(x)\bigr)=\frac{1}{x}\biggl((1-\rho)+\int_{0}^{\infty}K(x,z)f(z)\dz\biggr)\bigl(x^2f(x)\bigr).
\end{equation*}
After integrating this equation we find
\begin{equation*}
 x^{2}f(x)=Cx^{1-\rho}\exp\biggl(-\int_{x}^{1}t^{-1}\int_{0}^{\infty}K(t,z)f(z)\dz\dt\biggr).
\end{equation*}
We next evaluate the integral on the right-hand side and for this, we use $\mom_{\gamma}$ to denote the moment of order $\gamma$ of $f$ as defined in~\eqref{eq:def:moment}. With this, it holds
\begin{equation*}
 \int_{x}^{1}t^{-1}\int_{0}^{\infty}K(t,z)f(z)\dz\dt=-2\mom_{0}\log(x)+\frac{\eps}{\alpha}\mom_{\alpha}x^{-\alpha}+B(x,f)
\end{equation*}
with a remainder $B(x,f)=-(\eps/\alpha)\mom_{\alpha}+(\eps/\alpha)\mom_{-\alpha}(1-x^{\alpha})$ which is bounded for small values of $x$. Summarising, we thus have to the leading order that
\begin{equation*}
 f(x)=Cx^{2\mom_{0}-1-\rho}\exp\Bigl(-\frac{\eps}{\alpha}\mom_{\alpha}x^{-\alpha}-B(x,f)\Bigr).
\end{equation*}
It will turn out (see Lemma~\ref{Lem:moments:convergence}) that $\mom_{0}\to \bar{\mom}_{0}=\rho$ as $\eps\to 0$ which means that for small values of $\eps>0$ the singular behaviour of $x^{2\mom_{0}-1-\rho}$ at $x=0$ is similar to that one of $\bar{f}$. However, there is an additional correction to the asymptotic behaviour of $f$ close to zero which leads to an exponential decay of $f$ in a neighbourhood of order $\eps^{1/\alpha}$ of the origin. As a consequence, for positive $\eps$ each self-similar profile is globally bounded while the unperturbed profile $\bar{f}$ has a singularity at zero.

The most technical and intricate part of this work is exactly the careful study of this boundary layer and to obtain precise estimates on the regularising effect of the exponential correction. Unfortunately, the method that we use here requires that we additionally restrict to values $\rho\in(1/2,1)$ which means that the singularity of $\bar{f}$ at zero is weaker than $x^{-1/2}$. We strongly believe that this is only a technical problem and that our result still remains true if $\rho\in(\alpha,1/2]$ but a proof is still lacking. Moreover, it is exactly in the consideration of the boundary layer where the assumption $\alpha<1/2$ turns out to be essential, at least for the method that we use here (see also Remark~\ref{Rem:restrictions:alpha:rho}).

\subsection{Outline of the main ideas and strategy of the proof}

In the remainder of this article, we will present the proof of Theorem~\ref{Thm:main} as given in~\cite{Thr16}. The general idea consists in showing a contraction inequality for the difference of self-similar profiles in Laplace variables. More precisely, we rewrite the self-similar solutions as perturbations of the explicitly known profile $\bar{f}$ and we apply the Laplace transform to~\eqref{eq:self:sim}. Then, we invert the linearised coagulation operator and invoke some implicit function theorem like method. The advantage of this approach is that the unperturbed equation behaves well under the Laplace transform and we can explicitly invert the transformed linearised operator. On the other hand, working with the Laplace transform requires to represent all functionals as Laplace integrals and thus, we have to derive the corresponding inverse Laplace transform and suitable estimates for it (e.g.\@ \cref{Prop:W:representation,Prop:rep:H0,Prop:Q0:int:est}).

Moreover, the singular behaviour of the perturbation $W$ requires to consider the regions far and close to zero separately, while away from zero, we readily obtain the desired estimates. On the other hand, close to zero one has to study carefully the effect of the boundary layer as discussed in Section~\ref{Sec:bd:layer:formal}. If one combines the estimates for both regions, we finally obtain a contraction property for the difference of two self-similar profiles and sufficiently small $\eps>0$. We also note, that this general abstract approach has already been used in the finite-mass-case in~\cite{NTV15,NTV16,NiV14a} and we follow the main general idea here, while in the present situation several adaptations have been necessary since the fat-tailed profiles behave more singular both at zero and at infinity. On the other hand, compared to~\cite{NTV15} several proofs could be simplified. 

The article is structured as follows. In Section~\ref{Sec:function:spaces} we define a suitable norm and we provide some notation that will be useful in the following.

In Section~\ref{Sec:proof:main}, we will collect the key results of this work and based on those, we will then give the proof of Theorem~\ref{Thm:main}.

The remainder of the article is then concerned with the proof of the key ingredients and we will first prove several continuity estimates for the coagulation operator in self-similar variables in Section~\ref{Sec:continuity:estimates}. 

Section~\ref{Sec:linearised:operator} is then concerned with the study of the linearised coagulation operator.

In Section~\ref{Sec:uniform:convergence}, we will collect several uniform bounds on self-similar profiles and we will show that the profiles are on the one hand uniformly bounded with respect to the norms that we defined before while moreover, we also get that two self-similar profiles are close together in the topology induced by these norms, provided that $\eps$ is sufficiently small.

The remaining sections are then concerned with the proof of the boundary layer estimate which is the longest and most technical part of this article. More precisely, we will prove the boundary layer estimate in Section~\ref{Sec:bd:layer} while the proofs of several auxiliary results are then postponed to \cref{Sec:H0:representation,Sec:Q0:est,Sec:asymptotics}. We also note that the pictures contained in these sections have been reused from~\cite{Thr16} and~\cite{NTV15} respectively.

In Appendix~\ref{Sec:elementary:results} we collect several elementary results connected to the norms that we use here, while in Appendix~\ref{Sec:Proof:Gamma} we sketch the proof of Proposition~\ref{Prop:W:representation} and provide several integral estimates on the representation kernel $\Ker$.

\section{Functional setup and preliminaries}\label{Sec:function:spaces}

\subsection{Function spaces and norms}

In what follows, we will typically use lower-case letters to denote usual functions and measures, while the corresponding quantities on the level of the Laplace variables will be denoted by capital letters. In particular, let $g\in\M^{\text{fin}}(0,\infty)$ be a finite measure, then we denote by
\begin{equation*}
 G(q)\vcc=\int_{0}^{\infty}g(x)\ee^{-qx}\dx
\end{equation*}
the Laplace transform of $g$ which is well defined for all $q\geq 0$. Moreover, we denote by $\T$ the operator which maps $g\in\M^{\text{fin}}(0,\infty)$ to $G$ and we note that it holds
\begin{equation}\label{eq:Def:T}
 \begin{split}
  \T\colon \M^{\text{fin}}(0,\infty)&\longrightarrow C^{\infty}(0,\infty)\\
  g&\longmapsto G.
 \end{split}
\end{equation}

We now introduce a norm on the level of Laplace-transformed finite measures, i.e.\@ we fix two parameters $\chi>0$ and $\mu\geq 0$ and a function $G\in C^{k}\bigl((0,\infty)\bigr)\cap C\bigl([0,\infty)\bigr)$ and we set
\begin{equation}\label{eq:def:seminorm}
 \lsnorm*{k}{\mu}{\chi}{G}\vcc=\sup_{q>0}\Bigl((1+q)^{\chi+\mu+\rho}q^{k-\rho-\mu}\abs[\big]{\del_{q}^{k}G(q)}\Bigr)\qquad \text{for }k=0,1,2.
\end{equation}
\begin{remark}
 For $k=0$ this in fact defines a norm. Thus, we also write $\lfnorm*{0}{\mu}{\chi}{\cdot}\vcc=\lsnorm*{0}{\mu}{\chi}{\cdot}$. 
\end{remark}
In order to get also a norm for $k=1$ and $k=2$, we further define
\begin{equation*}
 \lfnorm*{k}{\mu}{\chi}{G}\vcc=\lfnorm*{0}{-\rho}{\chi}{G}+\sum_{\ell=1}^{k}\lsnorm*{\ell}{\mu}{\chi}{G}\quad \text{for }k=1,2.
\end{equation*}

Associated to these norms, we can also define sup-(Banach) spaces of $C^{k}\bigl((0,\infty)\bigr)\cap C\bigl([0,\infty)\bigr)$ as
\begin{equation*}
 \X{k}{\mu}{\chi}\vcc=\Bigl\{G\in C^{k}\bigl((0,\infty)\bigr)\cap C\bigl([0,\infty)\bigr)\; \Big| \; \lfnorm*{k}{\mu}{\chi}{G}<\infty\Bigr\}\quad \text{for }k=1,2.
\end{equation*}

\begin{remark}\label{Rem:explicit:profiles}
 From the formulas in~\eqref{eq:explicit:bar:quantities} one immediately verifies that it holds $\lfnorm{2}{0}{\rho}{\T\bar{f}}<\infty$ and thus $(\T\bar{f})\in\X{2}{0}{\theta}$ for all $\theta\in(0,\rho]$. Moreover, we note that the definition of $\lfnorm{k}{0}{\chi}{\cdot}$ is explicitly motivated by the scaling of $\bar{F}=\T\bar{f}$ and its derivatives.
\end{remark}

We further note, that most of the time, we use these norms with a fixed parameter $\chi$ which we will then denote by $\theta$. Since most of our results here hold for $\rho\in(0,1)$ the minimal assumption we have to impose is
\begin{equation}\label{eq:choice:theta:standard}
 \theta\in(\alpha,\min\{\rho,1/2\}).
\end{equation}
However, the proof of the boundary layer estimate (Proposition~\ref{Prop:boundary:layer} below) requires the further restriction $\rho\in(1/2,1)$ which makes it necessary to also refine the choice of $\theta$ as
\begin{equation}\label{eq:choice:theta:bl}
 \theta\in\bigl(\max\{\alpha,1-\rho\},1/2\bigr).
\end{equation}
Note that for $\rho\in(1/2,1)$ the condition~\eqref{eq:choice:theta:bl} really implies~\eqref{eq:choice:theta:standard}.

Moreover, we have to restrict the parameter $\mu$ in some places and the general upper bound will be given by
\begin{equation*}
 \mu_{*}\vcc=\min\{\rho,1-\rho\}
\end{equation*}
while we note that it holds $\mu_{*}>0$.

\subsection{Transforming the equation to Laplace variables}

In this section we will apply the Laplace transform to~\eqref{eq:self:sim}. We note that at this stage this is purely formal, since we do not know yet if $\T f$ really exists for each self-similar profile. More precisely, Theorem~\ref{Thm:asymptotics} ensures that $\ee^{-q\cdot} f(\cdot)$ is integrable at infinity, while it still remains to show, that we also have integrability at zero. This will be done in Section~\ref{Sec:uniform:convergence} (see in particular Lemma~\ref{Lem:moment:est:2}).

For an integral kernel $\widetilde{K}\in\{2,W\}$ let us formally define the (bi-)linear forms
\begin{equation*}
 \begin{split}
  A_{\rho}(g)\vcc=\frac{1-\rho}{x^2}\int_{0}^{x}yg(y)\dy\quad \text{and}\quad
  B_{\widetilde{K}}(g,h)\vcc=\frac{1}{x^2}\int_{0}^{x}\int_{x-y}^{\infty}y\widetilde{K}(y,z)g(y)h(z)\dz\dy.
 \end{split}
\end{equation*}
With this, we may rewrite~\eqref{eq:self:sim} as
\begin{equation}\label{eq:abstract:self:sim}
 f=A_{\rho}(f)+ B_{2}(f,f)+\eps  B_{W}(f,f).
\end{equation}
Formally, we can then transform this equation by applying the Laplace transform $\T$. Considering the expressions on the right-hand side separately, we find together with $\ee^{-qx}x^{-2}=\int_{q}^{\infty}\int_{p}^{\infty}\ee^{-rx}\dr\dd{p}$, Fubini's Theorem and $-(\T f)'(r)=\int_{0}^{\infty}yf(y)\ee^{-ry}\dy$ that 
\begin{multline*}
 \bigl(\T A_{\rho}(f)\bigr)(q)=(1-\rho)\int_{q}^{\infty}\int_{p}^{\infty}\int_{0}^{\infty}yf(y)\int_{y}^{\infty}\ee^{-rx}\dx\dy\dr\dd{p}\\*
 =(1-\rho)\int_{q}^{\infty}\int_{p}^{\infty}\int_{0}^{\infty}yf(y)\int_{y}^{\infty}\ee^{-rx}\dx\dy\dr\dd{p}\\*
 =-(1-\rho)\int_{q}^{\infty}\int_{p}^{\infty}\frac{1}{r}(\T f)'(r)\dr\dd{p}.
\end{multline*}
Similarly, we obtain
\begin{multline*}
 \bigl(\T B_{2}(g,h)\bigr)=\int_{q}^{\infty}\int_{p}^{\infty}\frac{1}{r}\int_{0}^{\infty}\int_{0}^{\infty}2yg(y)h(z)\ee^{-ry}\bigl(1-\ee^{-rz}\bigr)\dz\dy\dr\dd{p}\\*
 =-\int_{q}^{\infty}\int_{p}^{\infty}\frac{2}{r}(\T g)'(r)\bigl((\T h)(0)-(\T h)(r)\bigr)\dr\dd{p}.
\end{multline*}
This motivates to define
\begin{align*}
 \mathcal{A}_{\rho}(G)(q)&\vcc=-\int_{q}^{\infty}\int_{p}^{\infty}\frac{1-\rho}{r}G'(r)\dr\dd{p}\\
 \shortintertext{and}
 \mathcal{B}_{2}(G,H)(q)&\vcc=-\int_{q}^{\infty}\int_{p}^{\infty}\frac{2}{r}G'(r)\bigl(H(0)-H(r)\bigr)\dr\dd{p}.
\end{align*}
For $B_{W}(g,h)$ it follows by an analogous calculation that
\begin{equation*}
 \bigl(\T B_{W}(g,h)\bigr)(q)=\int_{q}^{\infty}\int_{p}^{\infty}\frac{1}{r}\int_{0}^{\infty}\int_{0}^{\infty}yW(y,z)g(y)h(z)\ee^{-ry}\bigl(1-\ee^{-rz}\bigr)\dz\dy\dr\dd{p}.
\end{equation*}
Together with Proposition~\ref{Prop:W:representation} this can be further rewritten as
\begin{multline*}
 \bigl(\T B_{W}(g,h)\bigr)(q)=\int_{q}^{\infty}\int_{p}^{\infty}\frac{1}{r}\int_{0}^{\infty}\int_{0}^{\infty}\Ker(\xi,\eta)\cdot\\*
 \cdot\biggl(\int_{0}^{\infty}\int_{0}^{\infty}y(y+z)g(y)h(z)\ee^{-(r+\xi)y}\bigl(\ee^{-\eta z}-\ee^{-(\eta+r)z}\bigr)\dz\dy\biggr)\deta\dxi\dr\dd{p}.
\end{multline*}
In terms of the Laplace transforms $(\T g)$ and $(\T h)$ this reads as
\begin{multline*}
 \bigl(\T B_{W}(g,h)\bigr)(q)\\*
 =\int_{q}^{\infty}\int_{p}^{\infty}\frac{1}{r}\int_{0}^{\infty}\int_{0}^{\infty}\Ker(\xi,\eta)\Bigl((\T g)''(r+\xi)\bigl((\T h)(\eta)-(\T h)(\eta+r)\bigr)\\*
 +(\T g)'(r+\xi)\bigl((\T h)'(\eta)-(\T h)'(\eta+r)\bigr)\Bigr)\dxi\deta.
\end{multline*}
This then motivates to define
\begin{equation}\label{eq:kernel:BW}
 \begin{aligned}
  N_{1}[G,H](r)&\vcc=\frac{1}{r}\int_{0}^{\infty}\int_{0}^{\infty}\Ker(\xi,\eta)G''(r+\xi)\bigl(H(\eta)-H(\eta+r)\bigr)\dxi\deta\\
  N_{2}[G,H](r)&\vcc=\frac{1}{r}\int_{0}^{\infty}\int_{0}^{\infty}\Ker(\xi,\eta)G'(r+\xi)\bigl(H'(\eta)-H'(\eta+r)\bigr)\dxi\deta\\
  N[G,H]&\vcc=N_{1}[G,H]+N_{2}[G,H].
 \end{aligned}
\end{equation}
as well as
\begin{equation}\label{eq:BW:with:kernel}
 \mathcal{B}_{W}(G,H)=\int_{q}^{\infty}\int_{p}^{\infty}N[G,H](r)\dr\dd{p}.
\end{equation}
We then note the relations
\begin{equation}\label{eq:transformed:forms}
 \T \bigl(A_{\rho}(g)\bigr)=\mathcal{A}_{\rho}(\T g),\quad \T\bigl(B_{2}(g,h)\bigr)=\mathcal{B}_{2}(\T g,\T h)\quad \text{and}\quad  \T\bigl(B_{W}(g,h)\bigr)=\mathcal{B}_{W}(\T g,\T h).
\end{equation}
Moreover, equation~\eqref{eq:self:sim} for a self-similar profile $f$ can be written in terms of the Laplace transform $(\T f)$ as
\begin{equation}\label{eq:self:sim:Lap}
 (\T f)(q)=\mathcal{A}_{\rho}(\T f)+\mathcal{B}_{2}(\T f,\T f)+\eps \mathcal{B}_{W}(\T f,\T f).
\end{equation}
We will also have to consider the linearised operator in self-similar variables which is given by
\begin{equation*}
  L(g)\vcc=g-A_{\rho}(g)- B_{2}(\bar{f},g)- B_{2}(g,\bar{f})
\end{equation*}
while the corresponding Laplace-transformed operator reads as
\begin{equation}\label{eq:def:lin:op}
 \bigl(\LL(G)\bigr)(q)\vcc=G(q)-\mathcal{A}_{\rho}(G)(q)-\mathcal{B}_{2}(\bar{F},G)(q)-\mathcal{B}_{2}(G,\bar{F})(q).
\end{equation}

\begin{remark}\label{Rem:transform:justification}
 As already mentioned, these calculations at this stage are purely formal. However, the considerations in Section~\ref{Sec:uniform:convergence} will later justify the previous manipulations rigorously, at least for all solutions $f$ of~\eqref{eq:self:sim} and finite linear combinations of such solutions, provided $\eps$ is sufficiently small. In this context we refer especially to \cref{Rem:existence:Laplace:transform,Lem:moment:est:2} which provide the existence of the Laplace transform for each self-similar profile and the required integrability properties for the previous manipulations.
\end{remark}

\subsection{Notation and elementary properties of $\T$}

We introduce the notation 
\begin{equation*}
 \zeta(x)\vcc=\ee^{-x}
\end{equation*}
since this function will turn out to be quite useful in dealing with Laplace transforms in several places. Moreover, we note the elementary relations
\begin{equation}\label{eq:Laplace:easy:shift}
 \T (\zeta g)(\cdot)=(\T g)(\cdot+1)\quad \text{and}\qquad \T\bigl((1-\zeta)g\bigr)(\cdot)=(\T g)(\cdot)-(\T g)(\cdot+1).
\end{equation}

\begin{remark}\label{Rem:norms:behave:well:under:shifts}
 We note that the norms $\lfnorm*{k}{\mu}{\chi}{\cdot}$ behave well under these shifts in the sense that we have the following estimates
 \begin{align*}
  \lfnorm*{0}{-\rho}{\chi}{\T(\zeta g)} &\leq \lfnorm*{0}{-\rho}{\chi}{\T g} &\text{and} && \lfnorm*{0}{-\rho}{\chi}{\T\bigl((1-\zeta) g\bigr)} &\leq 2\lfnorm*{0}{-\rho}{\chi}{\T g}\\
  \lsnorm*{k}{\mu}{\chi}{\T(\zeta g)} &\leq \lsnorm*{k}{\mu}{\chi}{\T g} &\text{and} && \lsnorm*{k}{\mu}{\chi}{\T\bigl((1-\zeta) g\bigr)} &\leq 2\lsnorm*{k}{\mu}{\chi}{\T g}
 \end{align*}
 for each $g\in\M^{\text{fin}}(0,\infty)$ with $(\T g)\in \X{k}{\mu}{\chi}$.
\end{remark}
To simplify the notation at several places, we define the function $\weight{a}{b}\colon (0,\infty)\to (0,\infty)$ as
\begin{equation*}
 \weight{a}{b}(q)=\begin{cases}
                   q^{a} & \text{if }q\leq 1\\
                   q^{-b} & \text{if } q\geq 1.
                  \end{cases}
\end{equation*}
Furthermore, we introduce the moment of order $\gamma$ of a self-similar profile $f$ as
\begin{equation}\label{eq:def:moment}
 \mom_{\gamma}\vcc=\int_{0}^{\infty}x^{\gamma}f(x)\dx.
\end{equation}
 We note, that it will be shown in Lemma~\ref{Lem:moment:est:2} that this quantity is well-defined for $\gamma\in(-\rho,\rho)$ provided that we choose $\eps>0$ sufficiently small. Moreover, the corresponding moment of order $\gamma$ of the profile $\bar{f}$, i.e.\@ for $\eps=0$ will be denoted by $\bar{\mom}_{\gamma}$. Note that we might also use the notation $\mom_{\gamma}(f)$ to stress the dependence on a certain self-similar profile $f$ is certain places.

\section{Uniqueness of profiles -- Proof of Theorem~\ref{Thm:main}}\label{Sec:proof:main}

\subsection{Key ingredients for the proof}

In this section we will collect the main estimates that we will need to prove Theorem~\ref{Thm:main}, while we postpone the corresponding proofs then to the remaining sections.

The statements in \cref{Lem:est:Arho,Lem:est:B2,Prop:est:BW,Prop:improved:regularity,Prop:linearised:operator} deal with abstract mapping properties of the (bi-)linear operators $\mathcal{A}_{\rho}$, $\mathcal{B}_{2}$, $\mathcal{B}_{W}$ and $\LL$ with respect to our function spaces $\X{k}{\mu}{\chi}$. On the other hand, \cref{Prop:norm:boundedness,Prop:closeness:two:norm,Prop:boundary:layer} provide precise estimates on solutions of~\eqref{eq:self:sim} within our functional setup.

\begin{lemma}\label{Lem:est:Arho}
 For each $\mu\in[0,\mu_{*})$ and $\chi>0$ the operator $\mathcal{A}_{\rho}\colon \X{1}{\mu}{\chi}\to\X{2}{\mu}{\chi}$ is well-defined and continuous, i.e.\@ we have
 \begin{equation}\label{eq:est:Arho}
  \lfnorm*{2}{\mu}{\chi}{\mathcal{A}_{\rho}(G)}\leq C\lfnorm*{1}{\mu}{\chi}{G}
 \end{equation}
 for every $G\in\X{1}{\mu}{\chi}$.
\end{lemma}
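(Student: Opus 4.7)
The plan is a direct computation: the operator $\mathcal{A}_{\rho}$ is simply a double indefinite integral of $\frac{1-\rho}{r}G'(r)$ from the right, so differentiating twice gives back (a multiple of) $G'/q$. Concretely,
\begin{equation*}
 \del_{q}^{2}\mathcal{A}_{\rho}(G)(q) = -\tfrac{1-\rho}{q}G'(q),\qquad \del_{q}\mathcal{A}_{\rho}(G)(q) = \int_{q}^{\infty}\tfrac{1-\rho}{r}G'(r)\dr.
\end{equation*}
So the whole proof reduces to three bookkeeping estimates on nested integrals of $G'$, each obtained by plugging in the pointwise bound $\abs{G'(r)}\leq \lsnorm*{1}{\mu}{\chi}{G}\cdot r^{\rho+\mu-1}(1+r)^{-\chi-\mu-\rho}$ from the definition of $\lsnorm*{1}{\mu}{\chi}{\cdot}$, and then splitting the domain into $\{q\leq 1\}$ and $\{q\geq 1\}$.

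\emph{Step 1 ($k=2$).} From the explicit formula for $\del_{q}^{2}\mathcal{A}_{\rho}(G)$ one sees immediately that
\begin{equation*}
 (1+q)^{\chi+\mu+\rho}q^{2-\rho-\mu}\abs[\big]{\del_{q}^{2}\mathcal{A}_{\rho}(G)(q)} \leq (1-\rho)\lsnorm*{1}{\mu}{\chi}{G},
\end{equation*}
so $\lsnorm*{2}{\mu}{\chi}{\mathcal{A}_{\rho}(G)}\leq (1-\rho)\lsnorm*{1}{\mu}{\chi}{G}$ with no work at all.

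\emph{Step 2 ($k=1$).} Here I would insert the pointwise bound on $G'$ into the first-derivative formula and estimate
\begin{equation*}
 \abs[\big]{\del_{q}\mathcal{A}_{\rho}(G)(q)}\leq C\lsnorm*{1}{\mu}{\chi}{G}\int_{q}^{\infty}\frac{r^{\rho+\mu-2}}{(1+r)^{\chi+\mu+\rho}}\dr.
\end{equation*}
Because $\mu<\mu_{*}\leq 1-\rho$ the exponent $\rho+\mu-2$ is below $-1$, so on $\{q\leq 1\}$ the integral is dominated by $q^{\rho+\mu-1}$, while on $\{q\geq 1\}$ one uses $(1+r)^{-\chi-\mu-\rho}\leq r^{-\chi-\mu-\rho}$ to get a bound $C q^{-\chi-1}$. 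Multiplying by the weight $(1+q)^{\chi+\mu+\rho}q^{1-\rho-\mu}$ and checking both regimes gives $\lsnorm*{1}{\mu}{\chi}{\mathcal{A}_{\rho}(G)}\leq C\lsnorm*{1}{\mu}{\chi}{G}$.

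\emph{Step 3 ($k=0$, i.e.\ the $\lfnorm*{0}{-\rho}{\chi}{\cdot}$-part).} The first-order estimate from Step~2 already produces an integrable-in-$p$ bound on $\int_{p}^{\infty}r^{-1}G'(r)\dr$, namely $p^{\rho+\mu-1}$ near $0$ and $p^{-\chi-1}$ at infinity. Since $\rho+\mu>0$ the first is integrable at $0$, and since $\chi>0$ the second is integrable at $\infty$, so integrating once more in $p$ gives $\abs{\mathcal{A}_{\rho}(G)(q)}\leq C\lsnorm*{1}{\mu}{\chi}{G}$ for $q\leq 1$ and $\abs{\mathcal{A}_{\rho}(G)(q)}\leq C\lsnorm*{1}{\mu}{\chi}{G}\cdot q^{-\chi}$ for $q\geq 1$. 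This is exactly the bound needed for $\lfnorm*{0}{-\rho}{\chi}{\mathcal{A}_{\rho}(G)}$.

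Summing the three contributions gives \eqref{eq:est:Arho}. The same estimates show the defining integrals converge, so $\mathcal{A}_{\rho}(G)\in C^{2}((0,\infty))\cap C([0,\infty))$ and the operator is well-defined into $\X{2}{\mu}{\chi}$. There is no real obstacle here: the only point to watch is that one uses both $\mu<1-\rho$ (for integrability of $r^{\rho+\mu-2}$ near the origin after multiplication by $1/r$) and $\chi>0$ (for integrability at infinity), both of which are in the hypothesis.
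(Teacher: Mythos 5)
Your proof is correct and follows essentially the same route as the paper's. The only presentational difference is that the paper cites its Lemma~\ref{Lem:seminorm:improved}, which asserts $\lfnorm*{k}{\mu}{\chi}{G}\leq C\lsnorm*{k}{\mu}{\chi}{G}$, to reduce immediately to your Step~1 (the pointwise bound on $\del_{q}^{2}\mathcal{A}_{\rho}(G)$), whereas your Steps~2 and~3 re-derive the lower-order bounds inline, which is precisely the content of that lemma applied to $\mathcal{A}_{\rho}(G)$.
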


\begin{remark}
 Note that we gain an additional derivative here, i.e.\@ the operator $\mathcal{A}_{\rho}$ is regularising.
\end{remark}

\begin{lemma}\label{Lem:est:B2}
 For each $\mu\in[0,\mu_{*})$ and $\chi>0$ the map $\mathcal{B}_{2}\colon \X{1}{0}{\chi}\times \X{1}{0}{\chi}\to \X{2}{\mu}{\chi}$ is well-defined and continuous in the sense that it holds
 \begin{equation}\label{eq:est:B2}
  \lfnorm*{2}{\mu}{\chi}{\mathcal{B}_{2}(G,H)}\leq C\lsnorm*{1}{0}{\chi}{G}\lfnorm*{1}{0}{\chi}{H} 
 \end{equation}
 for all $G,H\in\X{1}{0}{\chi}$.
\end{lemma}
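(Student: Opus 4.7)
\emph{Plan.} I would follow the same strategy as for Lemma~\ref{Lem:est:Arho}: differentiate $\mathcal{B}_{2}(G,H)$ and estimate each piece pointwise against the weights. Direct differentiation under the integral gives
\begin{equation*}
 \del_{q}\mathcal{B}_{2}(G,H)(q)=\int_{q}^{\infty}\tfrac{2}{r}G'(r)\bigl(H(0)-H(r)\bigr)\dr,\qquad \del_{q}^{2}\mathcal{B}_{2}(G,H)(q)=-\tfrac{2}{q}G'(q)\bigl(H(0)-H(q)\bigr),
\end{equation*}
while for the zeroth-order term I would apply Fubini to write $\mathcal{B}_{2}(G,H)(q)=-\int_{q}^{\infty}\tfrac{2(r-q)}{r}G'(r)\bigl(H(0)-H(r)\bigr)\dr$. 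The main inputs are then the standard pointwise bound $\abs*{G'(r)}\leq \lsnorm*{1}{0}{\chi}{G}\,r^{\rho-1}(1+r)^{-\chi-\rho}$ coming from the seminorm, together with the telescoping estimate
\begin{equation*}
 \abs*{H(0)-H(r)}\leq \int_{0}^{r}\abs*{H'(s)}\ds\leq C\lsnorm*{1}{0}{\chi}{H}\,\min\{r^{\rho},1\},
\end{equation*}
where the last inequality uses $\chi>0$ after splitting at $s=1$.

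The $k=2$ piece is then immediate: inserting the pointwise bounds into the $\lsnorm*{2}{\mu}{\chi}{\cdot}$-weight leaves a factor $(1+q)^{\mu}q^{-\mu}\min\{q^{\rho},1\}$, which is uniformly bounded thanks to $\mu<\rho$. For $k=1$ I would estimate $\abs*{\del_{q}\mathcal{B}_{2}(G,H)(q)}$ by $\int_{q}^{\infty}r^{-1}\abs*{G'(r)}\abs*{H(0)-H(r)}\dr$ and split at $r=1$: the piece on $(q,1)$ has integrand of order $r^{2\rho-2}$, so after multiplying by the outer weight $q^{1-\rho-\mu}$ the contribution is of order $q^{\rho-\mu}$, bounded since $\mu<\rho$; the piece on $(1,\infty)$ produces $q^{-1-\chi}$ for $q\geq 1$, which is absorbed by $(1+q)^{\chi+\mu+\rho}q^{1-\rho-\mu}$ using $\mu<1-\rho$. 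The $k=0$ estimate follows from the same split after observing $(r-q)/r\leq 1$, and is slightly better behaved near zero due to the extra factor of $r$.

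The argument is essentially bookkeeping; the main obstacle is only to verify that every combination of weights remains bounded uniformly in $q>0$, which is precisely what the restriction $\mu\in[0,\mu_{*})$ with $\mu_{*}=\min\{\rho,1-\rho\}$, together with $\chi>0$, is designed to ensure. The full norm $\lfnorm*{1}{0}{\chi}{H}$ rather than the seminorm appears on the right because it is cleanest in the regime $r\gtrsim 1$ to bound $\abs*{H(0)-H(r)}\leq 2\lfnorm*{0}{-\rho}{\chi}{H}$, which uses the zeroth-order part of the norm.
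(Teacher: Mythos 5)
Your estimates are correct, but the route is longer than the paper's. The paper invokes Lemma~\ref{Lem:seminorm:improved}: because $\mathcal{B}_{2}(G,H)$ has the double-integral structure $\int_{q}^{\infty}\int_{p}^{\infty}(\cdots)\dr\dd{p}$ and therefore decays at infinity, the full norm $\lfnorm*{2}{\mu}{\chi}{\cdot}$ is controlled by the top-order seminorm $\lsnorm*{2}{\mu}{\chi}{\cdot}$ alone, so it suffices to bound $\del_{q}^{2}\mathcal{B}_{2}(G,H)(q)=-\tfrac{2}{q}G'(q)\bigl(H(0)-H(q)\bigr)$. Your $k=2$ computation coincides with this; the additional case analysis you do for $k=0,1$ essentially re-derives the proof of Lemma~\ref{Lem:seminorm:improved} in this particular instance, which is fine but not necessary. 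Two smaller points. First, your telescoping bound $\abs*{H(0)-H(r)}\leq C\lsnorm*{1}{0}{\chi}{H}\min\{r^{\rho},1\}$ is exactly Lemma~\ref{Lem:difference:regularised} with $\mu=0$, and it already uses only the seminorm of $H$; so your closing remark that the full norm $\lfnorm*{1}{0}{\chi}{H}$ is needed in the regime $r\gtrsim1$ is not quite right --- the paper's proof in fact yields the slightly sharper bound $C\lsnorm*{1}{0}{\chi}{G}\lsnorm*{1}{0}{\chi}{H}$, with the full norm appearing in the statement of the lemma only as a harmless weakening. Second, the restriction $\mu<1-\rho$ (which follows from $\mu<\mu_{*}$) is what keeps the weight factor $q^{1-\rho-\mu}$ under control on $(0,1]$; for $q\geq1$ the exponents cancel exactly, so no restriction on $\mu$ is used there.
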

 \begin{remark}
  Note that there is again a regularising effect of the operator in the sense that we gain an additional derivative on the right-hand side. But additionally, we also obtain additional regularity of order $q^{\mu}$ close to zero.
 \end{remark}
 
 The general goal would now be to obtain an analogous estimate to that one in Lemma~\ref{Lem:est:B2} also for $\mathcal{B}_{W}$. Unfortunately, this is not directly possible due to the singular behaviour of the kernel $W$ which causes some loss of decay of order $q^{\alpha}$ at infinity which is measured by the third parameter in the norm. Precisely, we have the following statement.
 
 \begin{proposition}\label{Prop:est:BW}
 For each $\alpha\in(0,1)$ and $\mu\in[0,\mu_{*})$ the map $\mathcal{B}_{W}\colon\X{2}{0}{\theta}\times \X{2}{0}{\theta}\to \X{2}{\mu}{\theta-\alpha}$ is well-defined and continuous, i.e.\@ there exists a constant $C_{\mu,\alpha}>0$ such that it holds
 \begin{equation}\label{eq:continuity:BW}
  \lfnorm*{2}{\mu}{\theta-\alpha}{\mathcal{B}_{W}(G,H)}\leq C_{\mu,\alpha}\lfnorm*{2}{0}{\theta}{G}\lfnorm*{2}{0}{\theta}{H}
 \end{equation}
 for all $G,H\in\X{2}{0}{\theta}$.
\end{proposition}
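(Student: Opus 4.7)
The plan is to reduce all three pieces of $\lfnorm{2}{\mu}{\theta-\alpha}{\mathcal{B}_W(G,H)}$ to a single pointwise bound on $N[G,H]$. Indeed, from \eqref{eq:BW:with:kernel} we read off
$$\partial_q^2\mathcal{B}_W(G,H)(q)=N[G,H](q),\qquad \partial_q\mathcal{B}_W(G,H)(q)=-\int_q^\infty N[G,H](r)\,\dr,$$
and Fubini yields $\mathcal{B}_W(G,H)(q)=\int_q^\infty(r-q)\,N[G,H](r)\,\dr$. Hence it suffices to prove the pointwise estimate
$$|N[G,H](r)|\leq C\,\lfnorm{2}{0}{\theta}{G}\,\lfnorm{2}{0}{\theta}{H}\,\weight{\rho+\mu-2}{\theta-\alpha+2}(r);$$
one and two integrations over $(q,\infty)$, followed by multiplication by the weights $q^{k-\rho-\mu}(1+q)^{\theta-\alpha+\rho+\mu}$ for $k=1,2$, respectively by $(1+q)^{\theta-\alpha}$ for $k=0$, then recover the three pieces of $\lfnorm{2}{\mu}{\theta-\alpha}{\cdot}$. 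The resulting integrals converge at $r=\infty$ because $\theta>\alpha$ by~\eqref{eq:choice:theta:standard}, and at $r=0$ because $\rho+\mu>0$.

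To produce the pointwise bound on $N=N_1+N_2$, I would first invoke Proposition~\ref{Prop:W:representation} to decompose $\Ker=\widetilde{\Ker}+W_\pm(-1)\delta(\xi-\eta)$. The $\delta$-contribution collapses to a one-dimensional integral and is straightforward. For the $\widetilde{\Ker}$-part I use the explicit pointwise bound $|\widetilde{\Ker}(\xi,\eta)|\leq C(\xi+\eta)^{-(1-\alpha)}(\xi^{-\alpha}+\eta^{-\alpha})$ together with the derivative bounds
$$|G^{(k)}(s)|\leq\lfnorm{2}{0}{\theta}{G}\,s^{\rho-k}(1+s)^{-\theta-\rho},\quad k=1,2,$$
that are read off directly from the definition of $\lfnorm{2}{0}{\theta}{G}$, and analogously for $H$. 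The differences $H(\eta)-H(\eta+r)$ in $N_1$ and $H'(\eta)-H'(\eta+r)$ in $N_2$ I estimate by splitting the $\eta$-integration at $\eta=r$: on $\{\eta\leq r\}$ by the triangle inequality plus the pointwise bounds, and on $\{\eta\geq r\}$ by the mean value theorem $|H^{(k)}(\eta)-H^{(k)}(\eta+r)|\leq r\sup_{s\in[\eta,\eta+r]}|H^{(k+1)}(s)|$. The extra factor of $r$ supplied by the mean value estimate, interpolated against the raw triangle bound, is what produces the additional $r^\mu$ regularity at the origin for $\mu<\mu_{*}$. The $\xi$- and $\eta$-integrations are then handled by splitting dyadically at $r$ and at $1$, the integrand being a pure power on each piece; in the small-$r$ regime the homogeneity of $\widetilde{\Ker}$ suggests the rescaling $\xi=r\tilde\xi$, $\eta=r\tilde\eta$, which reduces matters to standard beta-type integrals in $\tilde\xi,\tilde\eta$.

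The main obstacle, and the source of the shift $\theta\mapsto\theta-\alpha$ in the prescribed decay at infinity, is the interplay between the singularity $\xi^{-\alpha}$ of $\widetilde{\Ker}$ at zero and the decay of $G^{(k)}(r+\xi)$ for $r$ large. On the subregion $\xi\ll r$ one has $\widetilde{\Ker}(\xi,\eta)\asymp\xi^{-\alpha}r^{-1+\alpha}$ and $G^{(k)}(r+\xi)\asymp G^{(k)}(r)$, so integrating $\xi^{-\alpha}$ over $(0,r)$ produces a factor $r^{1-\alpha}$; combining with the $r^{-1+\alpha}$ from the kernel leaves an unavoidable residual $r^{\alpha}$ compared to the $K=2$ case, which degrades the large-$r$ decay of $N$ by exactly $r^{\alpha}$ and hence forces the loss $\theta\mapsto\theta-\alpha$. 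Tracking this loss uniformly across all subregions of $(\xi,\eta)$-space and in both $N_{1}$ and $N_{2}$, without letting it spill over into the small-$r$ budget $r^{\rho+\mu-2}$, is the technical heart of the argument.
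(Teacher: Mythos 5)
Your plan matches the paper's overall structure: reduce to the two-regime pointwise estimate $|N[G,H](r)|\leq C\lfnorm{2}{0}{\theta}{G}\lfnorm{2}{0}{\theta}{H}\weight{\rho+\mu-2}{\theta-\alpha+2}(r)$ on $N=\partial_q^2\mathcal{B}_W(G,H)$ (this is exactly Lemmas~\ref{Lem:est:N:small} and~\ref{Lem:N:est:large}) and then recover the $k=0,1$ pieces of the norm by integrating up — the paper packages that last step abstractly in Lemma~\ref{Lem:seminorm:improved}, you do it by hand, which is equivalent. Your explanation of the $\theta\mapsto\theta-\alpha$ loss at infinity is also the paper's. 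The one point where you depart is the mechanism for the $r^\mu$ gain at the origin: you propose splitting the $\eta$-integral at $\eta=r$ and interpolating the mean-value bound against the raw triangle bound on $H(\eta)-H(\eta+r)$, whereas the paper neither splits nor interpolates — it uses the single uniform difference estimate~\eqref{eq:est:norm:difference} (which already carries the full factor of $r$), pulls out the spare $1/r$ from $G''$ via $(\xi+r)^{\rho-2}\leq r^{-1}(\xi+r)^{\rho-1}$, and then generates the $r^{\rho+\mu-1}$ gain entirely inside the kernel double-integral estimate of Lemma~\ref{Lem:kernel:est:small:1}, where the restriction $\mu<\mu_{*}$ serves to absorb the logarithmic corrections appearing there. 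Your interpolation scheme can be made to produce the same exponent after a rescaling $\xi,\eta\mapsto r\xi,r\eta$ (take exponent $\lambda=\mu/\rho$), but the paper's bookkeeping is cleaner precisely because it concentrates the whole $\mu$-budget in one kernel lemma rather than spreading it across two $\eta$-regions; in particular, the paper never needs to isolate the $\delta(\xi-\eta)$ part of $\Ker$ — it is carried along inside the kernel lemmas of Appendix~\ref{Sec:Proof:Gamma}, so you should check that your ``straightforward'' one-dimensional $\delta$-contribution really does satisfy the same two-regime bound, rather than assuming it.
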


 The loss of decay at infinity that one encounters in Proposition~\ref{Prop:est:BW} can be compensated if one considers differences as stated in the following result.
 
\begin{proposition}\label{Prop:improved:regularity}
 For all $\alpha\in(0,1)$ and $\mu\in[0,\mu_{*})$ there exists a constant $C_{\mu}>0$ such that it holds
 \begin{equation*}
  \lfnorm*{2}{\mu}{\theta}{\mathcal{B}_{W}(G,H)(\cdot)-\mathcal{B}_{W}(G,H)(\cdot+1)}\leq C_{\mu}\lfnorm*{2}{0}{\theta}{G}\lfnorm*{2}{0}{\theta}{H}
 \end{equation*}
 for all $G,H\in\X{2}{0}{\theta}$.
\end{proposition}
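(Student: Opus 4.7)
The plan is to establish that forming the difference $U(q) := V(q) - V(q+1)$, with $V := \mathcal{B}_W(G,H)$, gains essentially one order of decay at infinity over $V$ itself; since Proposition~\ref{Prop:est:BW} loses $\alpha<1$ orders in the decay parameter $\chi$, this gain is exactly enough to recover $\theta$ in the target norm. Swapping the order of integration in~\eqref{eq:BW:with:kernel} yields the convenient form $V(q)=\int_q^{\infty}(r-q)N[G,H](r)\dr$, from which one reads off
\begin{equation*}
 U(q)=\int_q^{q+1}(r-q)N(r)\dr+\int_{q+1}^{\infty}N(r)\dr,\quad U'(q)=-\int_q^{q+1}N(r)\dr,\quad U''(q)=N(q)-N(q+1).
\end{equation*}
The argument then splits into the regions $q\in(0,2]$ and $q\geq 2$.

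On the bounded region $q\in(0,2]$, the weights $(1+q)^{\theta}$ and $(1+q)^{\theta-\alpha}$ are comparable up to a constant, and $V(q+1),V'(q+1),V''(q+1)$ are uniformly bounded by Proposition~\ref{Prop:est:BW}. Combining the trivial estimate $|V^{(k)}(q)-V^{(k)}(q+1)|\leq|V^{(k)}(q)|+|V^{(k)}(q+1)|$ with Proposition~\ref{Prop:est:BW} then delivers the required bounds on $U,U',U''$ over this range.

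For $q\geq 2$ the bounds on $U$ and $U'$ follow essentially for free from Proposition~\ref{Prop:est:BW} applied to $V'$ and $V''=N$, respectively, since the outer integration over a unit interval costs only a constant: $|U(q)|\leq\int_q^{q+1}|V'(p)|\dd{p}\leq Cq^{\alpha-\theta-1}\leq Cq^{-\theta}$ and analogously $|U'(q)|\leq\int_q^{q+1}|N(p)|\dd{p}\leq C q^{\alpha-\theta-2}\leq Cq^{-\theta-1}$, both relying on $\alpha<1$. The tight case is $U''(q)=N(q)-N(q+1)$, where the naive bound gives only $|U''(q)|\leq 2\sup_{p\in[q,q+1]}|N(p)|\leq Cq^{\alpha-\theta-2}$, a factor $q^{\alpha}$ too large.

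To recover the missing $q^{-\alpha}$ in the second-order estimate I will decompose the integrand of $N_i(q)-N_i(q+1)$ telescopically. Writing
\begin{equation*}
 N_1(r)=\frac{1}{r}\int_0^{\infty}\int_0^{\infty}\Ker(\xi,\eta)G''(r+\xi)[H(\eta)-H(\eta+r)]\dxi\deta,
\end{equation*}
the difference of integrands splits into three pieces: (i) a prefactor piece arising from $\tfrac{1}{q}-\tfrac{1}{q+1}=\tfrac{1}{q(q+1)}$, which is explicitly $O(q^{-1})$ smaller; (ii) an $H$-difference $H(\eta+q+1)-H(\eta+q)=\int_0^1 H'(\eta+q+t)\dt$, gaining one derivative on $H$ (which is controlled by our norm); and (iii) a $G''$-difference $G''(q+\xi)-G''(q+1+\xi)$. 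The main obstacle lies in piece~(iii): the norm $\lfnorm*{2}{0}{\theta}{G}$ controls $G''$ only pointwise in $C^0$, with no direct bound on $G'''$. The plan is to circumvent this by integrating by parts in $\xi$ against the integrable part $\widetilde{\Ker}$ of Proposition~\ref{Prop:W:representation}, writing $G''(r+\xi)=\partial_\xi G'(r+\xi)$ and interpreting the $\xi^{-\alpha}$ singularity of $\widetilde{\Ker}$ at $\xi=0$ in a principal-value sense, so that the divergent boundary contribution cancels against a matching divergence in $\int\partial_\xi\widetilde{\Ker}\cdot G'\dxi$. Integrating each of the three pieces against $\Ker$ with the estimates of Proposition~\ref{Prop:W:representation} then yields $|U''(q)|\leq Cq^{-\theta-2}$ for $q\geq 2$; the analogous argument for $N_2$ uses the same scheme with $G'$ and $H'$ playing the roles of $G''$ and $H$. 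Summing the contributions completes the proposition.
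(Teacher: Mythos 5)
Your reduction to a pointwise estimate on $U''(q)=N[G,H](q)-N[G,H](q+1)$ via Lemma~\ref{Lem:seminorm:improved}, the handling of the bounded region, and the observation that $U$ and $U'$ cost only a constant factor at infinity are all correct, and the telescoping idea for $N(q)-N(q+1)$ is in the right spirit. But the step you yourself flag as the crux --- piece~(iii), the $G''$-difference --- contains a genuine gap. You propose to write $G''(r+\xi)=\partial_\xi G'(r+\xi)$ and integrate by parts in $\xi$ against $\widetilde{\Ker}$, invoking a principal-value cancellation. Proposition~\ref{Prop:W:representation} gives a size bound on $\widetilde{\Ker}$ but no control whatsoever on $\partial_\xi\widetilde{\Ker}$, which is strictly more singular (in the model case $\widetilde{\Ker}(\xi,\eta)\sim\tfrac{(\xi/\eta)^\alpha-(\eta/\xi)^\alpha}{\xi-\eta}$ the $\xi$-derivative produces non-integrable $\xi^{-\alpha-1}$ and $(\xi-\eta)^{-2}$ singularities), and the Dirac part $W_\pm(-1)\delta(\xi-\eta)$ admits no such manipulation. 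The claimed cancellation of divergent boundary contributions is asserted, not established, and there is no reason to expect it.

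More importantly, the obstacle you identify is not actually present, so the detour is unnecessary. Two observations disentangle the two halves of $N=N_1+N_2$. First, for $N_1$ the naive estimate on the $G''$-difference already suffices: bounding $|G''(\xi+r)-G''(\xi+r+1)|$ by the sum of the two terms, each of size $(\xi+r)^{-2-\theta}$, and pairing this with $|H(\eta)-H(\eta+r)|\lesssim(1+\eta)^{-\theta}$ and Lemma~\ref{Lem:Ker:est:large:2}, one already gets $|(\text{piece iii for }N_1)|\lesssim r^{-2-\theta}$ --- no gain on $G''$ is needed because the $\eta$-weight in the $N_1$ term is already good. The $r^{\alpha-\theta-2}$ loss in Lemma~\ref{Lem:N:est:large} comes entirely from $N_2$. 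Second, for $N_2$ the dangerous term involves a difference of $G'$, not of $G''$, and the gain comes simply from the fundamental theorem of calculus, $|G'(\xi+r)-G'(\xi+r+1)|\leq\sup_{s\in[\xi+r,\xi+r+1]}|G''(s)|\lesssim\lsnorm{2}{0}{\theta}{G}(\xi+r)^{-2-\theta}$ --- i.e.\ precisely estimate~\eqref{eq:est:norm:difference}, which trades a unit difference of $G'$ for the norm of $G''$, and this IS controlled by $\lfnorm{2}{0}{\theta}{G}$. This replaces the $(\xi+r)^{-1-\theta}$ decay by $(\xi+r)^{-2-\theta}$, and the extra power of $r$ (absorbing the $r^{\alpha}$) comes from Lemma~\ref{Lem:Ker:est:large:1}, with no integration by parts. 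This is exactly how Lemmas~\ref{Lem:N1:difference} and~\ref{Lem:N2:difference} in the paper proceed; your telescoping is close, but you misdiagnosed which of $G''$ or $G'$ carries the difference and which mechanism supplies the gain, and the fix you invented is not sound.
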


 The next proposition summarises certain properties of the linearised operator $\LL$, namely that it maps $\X{k}{\mu}{\chi}$ continuously into itself for appropriate choices of $k$, $\mu$ and $\chi$ and moreover, $\LL$ is in particular invertible with bounded inverse. 
 
 \begin{proposition}\label{Prop:linearised:operator}
  For each $\mu\in(0,\mu_{*})$, $\chi\in(0,\rho)$ and $k=1,2$ the operator $\LL\colon \X{k}{\mu}{\chi}\to \X{k}{\mu}{\chi}$ is well-defined and bounded. Moreover, the operator $\LL$ is also invertible with bounded inverse $\LL^{-1}\colon \X{k}{\mu}{\chi}\to \X{k}{\mu}{\chi}$. Precisely, this means that we have the estimates
  \begin{equation*}
   \lfnorm*{k}{\mu}{\chi}{\LL G}\leq C_1 \lfnorm*{k}{\mu}{\chi}{G}\quad \text{and}\quad \lfnorm*{k}{\mu}{\chi}{\LL^{-1}H}\leq C_{2}\lfnorm*{k}{\mu}{\chi}{H}
  \end{equation*}
  for all $G,H\in\X{k}{\mu}{\chi}$ and constants $C_1,C_2>0$.
 \end{proposition}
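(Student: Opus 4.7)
The bound $\lfnorm*{k}{\mu}{\chi}{\LL G}\le C\lfnorm*{k}{\mu}{\chi}{G}$ follows at once by combining \cref{Lem:est:Arho,Lem:est:B2}. By Remark~\ref{Rem:explicit:profiles} we have $\bar{F}=\T\bar{f}\in\X{2}{0}{\chi}$ for every $\chi\in(0,\rho]$, and in particular $\lfnorm*{1}{0}{\chi}{\bar{F}}<\infty$. A direct comparison of the weights shows $\lfnorm*{1}{0}{\chi}{G}\le\lfnorm*{k}{\mu}{\chi}{G}$ for $\mu\ge 0$ and $k\ge 1$, so \cref{Lem:est:Arho,Lem:est:B2} give $\mathcal{A}_{\rho}(G),\mathcal{B}_{2}(\bar{F},G),\mathcal{B}_{2}(G,\bar{F})\in\X{2}{\mu}{\chi}\subseteq\X{k}{\mu}{\chi}$ with the desired bound.

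\textbf{Reduction to a first-order ODE.} For invertibility I would apply $\del_{q}^{2}$ to $\LL(G)=H$. Using the identities $\del_{q}^{2}\mathcal{A}_{\rho}(G)=-(1-\rho)G'/q$ and $\del_{q}^{2}\mathcal{B}_{2}(F_{1},F_{2})=-(2/q)F_{1}'(F_{2}(0)-F_{2}(\cdot))$, which read off from the double-integral representations, the combination $Z(q)\vcc=qG'(q)+(\rho-2\bar{F}(q))G(q)$ satisfies $Z'(q)=qH''(q)-2\bar{F}'(q)G(0)$. Integrating from $0$ to $q$ (using $\bar{F}(0)=\rho$ and $\lim_{q\to 0}qG'(q)=0$, which is valid for $G\in\X{k}{\mu}{\chi}$ with $\mu>0$) and rearranging yields the first-order linear equation
\begin{equation*}
 q\tilde{G}'(q)+(\rho-2\bar{F}(q))\tilde{G}(q)=qH'(q)-\tilde{H}(q)
\end{equation*}
for $\tilde{G}\vcc=G-G(0)$ and $\tilde{H}\vcc=H-H(0)$. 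Exploiting the explicit form $\bar{F}(q)=\rho/(1+q^{\rho})$ from~\eqref{eq:explicit:bar:quantities}, a short computation produces the integrating factor $I(q)=(1+q^{\rho})^{2}/q^{\rho}$ and hence the closed-form representation
\begin{equation*}
 \tilde{G}(q)=\frac{1}{I(q)}\int_{0}^{q}I(s)\bigl(\tilde{H}'(s)-\tilde{H}(s)/s\bigr)\ds,
\end{equation*}
with $G(0)$ then fixed by the requirement $G(q)\to 0$ as $q\to\infty$.

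\textbf{Uniqueness, existence, and the main obstacle.} The homogeneous ODE has a two-dimensional solution space spanned by the scale-invariance mode $G_{\mathrm{h}}(q)=q^{\rho}/(1+q^{\rho})^{2}$ (which corresponds on the Laplace side to $\bar{f}_{a}(x)=a\bar{f}(ax)$) and a second solution that tends to distinct nonzero limits as $q\to 0$ and as $q\to\infty$. The scaling mode is excluded from the space because $G_{\mathrm{h}}'(q)\sim\rho q^{\rho-1}$ near the origin and hence $\lsnorm*{1}{\mu}{\chi}{G_{\mathrm{h}}}=\infty$ whenever $\mu>0$, while the second mode fails $\lfnorm*{0}{-\rho}{\chi}{\cdot}<\infty$ whenever $\chi>0$ since it does not decay at infinity; one easily checks that no nontrivial linear combination can satisfy both requirements, which gives uniqueness in $\X{k}{\mu}{\chi}$. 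For existence the representation above furnishes the candidate $\LL^{-1}H$, and the main technical step, which is the remaining obstacle, is to extract from it the weighted bound $\lfnorm*{k}{\mu}{\chi}{\LL^{-1}H}\le C\lfnorm*{k}{\mu}{\chi}{H}$. I would do so by splitting the integral into the regimes $s\le 1$ and $s\ge 1$: near the origin $I(q)\sim q^{-\rho}$ and the integrand is of order $s^{\mu-1}\lfnorm*{k}{\mu}{\chi}{H}$, so integrability at zero and the correct weight on the output are ensured precisely by $\mu>0$; at infinity $I(q)\sim q^{\rho}$, and the decay of $H$ at rate $(1+q)^{-\chi}$ together with the strict inequality $\chi<\rho$ transfers to $\tilde{G}$ and forces convergence of $\tilde{G}(q)$ as $q\to\infty$, bounding $G(0)$ by $C\lfnorm*{k}{\mu}{\chi}{H}$. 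The matching bounds for $G'$ and $G''$ then follow by direct differentiation of the representation and analogous case distinctions, the assumption $\chi<\rho$ being used repeatedly to absorb the polynomial growth of $I$ at infinity.
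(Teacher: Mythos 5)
Your proposal follows the same strategy as the paper: boundedness of $\LL$ is an immediate consequence of \cref{Lem:est:Arho,Lem:est:B2} and \cref{Rem:explicit:profiles}; invertibility is obtained by differentiating $\LL G = H$ twice, reducing to the first-order equation for $G-G(0)$ via the integrating factor $I(q)=(1+q^\rho)^2/q^\rho$, and determining $G(0)=-H(0)/\rho$ from the decay at infinity. Your ODE derivation and closed-form representation agree with the paper's, and the explicit identification of the two homogeneous modes (the scale-invariance mode $q^\rho/(1+q^\rho)^2$, excluded by $\mu>0$, and the second solution tending to distinct nonzero limits at $0$ and $\infty$, excluded by $\chi>0$) is a helpful addition to the paper's more implicit imposition of boundary conditions along the way.

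However, there is a genuine gap in the final step, which you yourself flag as the remaining obstacle. The representation $\tilde G(q)=I(q)^{-1}\int_0^q I(s)\bigl(\tilde H'(s)-\tilde H(s)/s\bigr)\ds$ does \emph{not} yield the weighted bound by a naive split into $s\le 1$ and $s\ge 1$. At infinity $I(s)/s\sim s^{\rho-1}$ and $\tilde H(s)=H(s)-H(0)\to -H(0)$, so $I(q)^{-1}\int_0^q (I(s)/s)\tilde H(s)\ds$ tends to a nonzero constant as $q\to\infty$; the constant $G(0)=-H(0)/\rho$ is likewise nonzero. Only their \emph{sum} decays, and to extract the rate you must make the cancellation explicit rather than bound the two contributions separately. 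The paper does so by splitting $(1+r^\rho)^2=r^{2\rho}+(2r^\rho+1)$ in front of $H(0)-H(r)$ (see~\eqref{eq:inverse:operator}), which absorbs the $G(0)$ contribution into the boundary factor $-\frac{H(0)}{\rho}\frac{2q^\rho+1}{(1+q^\rho)^2}\sim -\frac{2H(0)}{\rho}q^{-\rho}$; the residual integral is then estimated term by term against $\weight{\rho+\mu-k}{\chi+k}(q)$. Without a rearrangement of this kind the estimate $\abs*{\LL^{-1}H(q)}\lesssim \lfnorm*{k}{\mu}{\chi}{H}\,(1+q)^{-\chi}$ does not follow from your regime splitting, so you would need to build this cancellation into Step~3 before the argument closes.
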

 
 The remaining three statements now provide the necessary estimates of solutions to~\eqref{eq:self:sim} with respect to the norms $\lfnorm*{k}{\mu}{\theta}{\cdot}$. The first proposition gives the uniform boundedness of $(\T f)$ in $\X{2}{0}{\theta}$ for each self-similar profile $f$.

 \begin{proposition}\label{Prop:norm:boundedness}
 For sufficiently small $\eps>0$ there exists a constant $C>0$ such that it holds
 \begin{equation*}
  \lfnorm*{2}{0}{\theta}{\T f}<C
 \end{equation*}
 for any solution $f$ of~\eqref{eq:self:sim}. In particular this shows $\T f\in\X{2}{0}{\theta}$.
\end{proposition}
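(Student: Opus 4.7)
The plan is to combine a priori pointwise information on $f$ with a bootstrap on the Laplace-transformed linearised equation. Writing $g \vcc= f - \bar{f}$, subtracting $\bar{F} = \mathcal{A}_{\rho}(\bar{F}) + \mathcal{B}_{2}(\bar{F},\bar{F})$ from \eqref{eq:self:sim:Lap} produces
\begin{equation*}
 \LL(\T g) = \mathcal{B}_{2}(\T g,\T g) + \eps\,\mathcal{B}_{W}(\T f,\T f).
\end{equation*}
Since $\T\bar{f}\in\X{2}{0}{\theta}$ by Remark~\ref{Rem:explicit:profiles}, it suffices to produce a uniform bound on $\T g$ in $\X{2}{\mu}{\theta}$ for some $\mu\in(0,\mu_{*})$. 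This embeds continuously into $\X{2}{0}{\theta}$, because enlarging the $\mu$-parameter sharpens the norm on the derivative pieces while the $k=0$ contribution to $\lfnorm*{2}{\mu}{\theta}{\cdot}$ coincides by definition with $\lfnorm*{0}{-\rho}{\theta}{\cdot}$ irrespective of~$\mu$.

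As a preparatory step I would use Theorem~\ref{Thm:asymptotics} (which gives $f\sim\bar{f}$ at infinity and the sharp bound on $\int_{0}^{R}y f(y)\,\dy$) together with Lemma~\ref{Lem:moment:est:1} (uniform moment control for $\beta\in(0,\rho)$) to produce the basic estimates on $\T f$ that are needed in order to apply \cref{Lem:est:B2,Prop:est:BW,Prop:improved:regularity} with finite norms. Splitting the Laplace integrals defining $\T f$ and its derivatives at $x\sim 1/q$ and estimating each piece separately yields rough $\eps$-dependent bounds sufficient to make every term in the linearised equation well-defined.

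The main step is then to invert $\LL$ using Proposition~\ref{Prop:linearised:operator} and arrive at
\begin{equation*}
 \T g = \LL^{-1}\Bigl(\mathcal{B}_{2}(\T g,\T g) + \eps\,\mathcal{B}_{W}(\T f,\T f)\Bigr).
\end{equation*}
Lemma~\ref{Lem:est:B2} controls $\mathcal{B}_{2}(\T g,\T g)$ in $\X{2}{\mu}{\theta}$ by $C\lfnorm*{1}{0}{\theta}{\T g}^{2}$, so this term is admissible as long as $\T g$ itself is small. The serious difficulty is the $\mathcal{B}_{W}$ contribution: Proposition~\ref{Prop:est:BW} places it only in $\X{2}{\mu}{\theta-\alpha}$, losing a factor $q^{\alpha}$ of decay at infinity. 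To recover the missing decay I would use the splitting
\begin{equation*}
 \mathcal{B}_{W}(\T f,\T f)(q) = \Bigl[\mathcal{B}_{W}(\T f,\T f)(q) - \mathcal{B}_{W}(\T f,\T f)(q+1)\Bigr] + \mathcal{B}_{W}(\T f,\T f)(q+1).
\end{equation*}
The difference sits in $\X{2}{\mu}{\theta}$ with full decay by Proposition~\ref{Prop:improved:regularity}. For the shifted piece, \eqref{eq:Laplace:easy:shift} and Remark~\ref{Rem:norms:behave:well:under:shifts} identify it (in effect) with a Laplace transform carrying an additional factor $\zeta=\ee^{-\cdot}$, so the extra exponential damping absorbs the $q^{\alpha}$ loss for $q\geq 1$, while for $q\leq 1$ the two weights differ only by bounded constants. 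Feeding this decomposition back through $\LL^{-1}$ yields an estimate of schematic form $\lfnorm*{2}{\mu}{\theta}{\T g}\leq C\lfnorm*{2}{\mu}{\theta}{\T g}^{2} + C\eps$, which for $\eps$ sufficiently small closes by a continuity/fixed-point argument anchored at the rough preliminary bound.

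The main obstacle is precisely this decay loss in $\mathcal{B}_{W}$: without the difference/shift decomposition one cannot bootstrap from $\X{2}{\mu}{\theta-\alpha}$ back to $\X{2}{\mu}{\theta}$, and thus cannot produce a bound uniform in $\eps$. A secondary technical point is that the invertibility of $\LL$ requires $\mu>0$, so the uniform bound must first be obtained in $\X{2}{\mu}{\theta}$ and only then transferred to the target space $\X{2}{0}{\theta}$ via the embedding described above.
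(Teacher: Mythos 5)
Your approach has a genuine gap, and it also misses the key ingredient that makes the paper's proof work. The bootstrap through $\LL^{-1}$ cannot close: to apply Proposition~\ref{Prop:linearised:operator} you need $\T g\in\X{2}{\mu}{\theta}$ with $\mu>0$, and to apply \cref{Prop:est:BW,Prop:improved:regularity} to the $\eps\,\mathcal{B}_{W}(\T f,\T f)$ term you need a finite $\lfnorm*{2}{0}{\theta}{\T f}$; both of these are essentially what the proposition asserts (or what \cref{Prop:closeness:two:norm}, which is proved \emph{after} and \emph{from} this result, provides). The ``rough $\eps$-dependent bounds'' you invoke do not resolve this: if the preliminary constant $C(\eps)$ may blow up as $\eps\to 0$ then the $\eps\,\mathcal{B}_{W}$ contribution is of size $\eps\,C(\eps)^{2}$, which is not a priori small, and the resulting schematic inequality $X\leq CX^{2}+C\eps$ admits a large root. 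Selecting the small root would require a continuity-in-$\eps$ argument on the set of solutions to~\eqref{eq:self:sim}, which is not available at this stage and would itself be a result at least as deep as the one you are proving.

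The paper's proof avoids the linearised equation entirely by exploiting the non-negativity of $f$, which you never use. The a priori estimates of Lemma~\ref{Lem:apriori} on the desingularised transform $Q$, together with $(\T f)'=-Q'$, give a uniform first-derivative bound for $q\leq 1$; Lemma~\ref{Lem:convergence:zero:norm} and Remark~\ref{Rem:est:der:by:zero:norm} (which also uses $f\geq 0$) handle $q\geq 1$, so $\lfnorm*{1}{0}{\theta}{\T f}\leq C$ uniformly in $\eps$. The crucial step is then Lemma~\ref{Lem:norm:non:neg:meas}: since $f\geq 0$, one has $\abs*{\del_{q}^{2}(\T f)(q)}\leq q^{-1}\abs*{\del_{q}(\T f)(q/2)}$ (Lemma~\ref{Lem:non:neg:measures}), hence $\lsnorm*{2}{0}{\theta}{\T f}\leq C\lsnorm*{1}{0}{\theta}{\T f}$, and the second-derivative bound follows for free. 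Without this sign information you are forced into the self-referential bootstrap above; with it, no appeal to $\LL^{-1}$, $\mathcal{B}_{2}$, or $\mathcal{B}_{W}$ is needed at all.
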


The next proposition states that for $\eps\to 0$ the Laplace transform $\T f$ of a self-similar profile converges to $\T \bar{f}$ with respect to $\lfnorm*{2}{\mu}{\theta}{\cdot}$ for certain $\mu>0$, i.e.\@ we gain certain regularity close to zero in Laplace variables.

 \begin{proposition}\label{Prop:closeness:two:norm}
  For all $\mu\in[0,\mu_{*})$ and $\delta>0$ it holds for sufficiently small $\eps>0$ that 
  \begin{equation*}
   \lfnorm*{2}{\mu}{\theta}{\T (f-\bar{f})}\leq \delta
  \end{equation*}
 for each solution $f$ of~\eqref{eq:self:sim}. In particular, we have $\T (f-\bar{f})\in\X{2}{\mu}{\theta}$.
 \end{proposition}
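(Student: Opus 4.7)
The plan is to argue by contradiction combined with a compactness argument, and then to bootstrap from a weak form of convergence to the strong $\X{2}{\mu}{\theta}$ norm via the fixed-point equation. Suppose the statement fails: there exist $\delta > 0$, a sequence $\eps_n \to 0^+$ and self-similar profiles $f_n$ of $K_n = 2 + \eps_n W$ such that $\lfnorm*{2}{\mu}{\theta}{\T(f_n - \bar{f})} > \delta$ for all $n$. By Proposition~\ref{Prop:norm:boundedness}, the family $\{\T f_n\}$ is uniformly bounded in $\X{2}{0}{\theta}$; in particular $\T f_n$, $(\T f_n)'$ and $(\T f_n)''$ are equibounded on every compact subset of $(0,\infty)$, so the Arzelà-Ascoli theorem yields a subsequence (not relabelled) with $\T f_n \to F_\infty$ locally uniformly in $C^1_{\mathrm{loc}}((0,\infty))$.

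To identify the limit, I would pass to the limit in~\eqref{eq:self:sim:Lap}. The continuity of $\mathcal{A}_\rho$ and $\mathcal{B}_2$ from \cref{Lem:est:Arho,Lem:est:B2} together with the uniform $\X{2}{\mu}{\theta-\alpha}$-bound on $\mathcal{B}_W(\T f_n, \T f_n)$ from Proposition~\ref{Prop:est:BW} and $\eps_n \to 0$ show that $F_\infty$ is the Laplace transform of some self-similar profile for the constant kernel $K = 2$, satisfying the tail normalisation from Theorem~\ref{Thm:asymptotics}. The uniqueness of profiles recalled in Section~\ref{Sec:constant:kernel} then forces $F_\infty = \bar{F}$. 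Since every subsequence of $\{\T f_n\}$ has a further subsequence with this property, the full sequence converges locally uniformly in $C^1$ to $\bar{F}$.

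Setting $D_n := \T f_n - \bar{F}$ and subtracting the equations satisfied by $\T f_n$ and $\bar{F}$, the bilinearity of $\mathcal{B}_2$ and the definition~\eqref{eq:def:lin:op} of $\LL$ yield
\begin{equation*}
 \LL(D_n) = \mathcal{B}_2(D_n, D_n) + \eps_n\mathcal{B}_W(\T f_n, \T f_n),
\end{equation*}
and Proposition~\ref{Prop:linearised:operator} supplies a bounded inverse $\LL^{-1}$ on $\X{2}{\mu}{\theta}$, reducing matters to estimating the right-hand side in this norm. For the first summand, Lemma~\ref{Lem:est:B2} gives $\lfnorm*{2}{\mu}{\theta}{\mathcal{B}_2(D_n, D_n)} \leq C\lsnorm*{1}{0}{\theta}{D_n}\lfnorm*{1}{0}{\theta}{D_n}$, and combining the local-uniform $C^1$-convergence $D_n \to 0$ with the uniform $\X{2}{0}{\theta}$-bound (which forces smallness of the relevant weighted quantities for $q$ near $0$ and $q$ large) yields $\lfnorm*{1}{0}{\theta}{D_n} \to 0$. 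Hence this quadratic contribution vanishes in $\X{2}{\mu}{\theta}$.

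The main obstacle is the residual term $\eps_n\mathcal{B}_W(\T f_n, \T f_n)$. Proposition~\ref{Prop:est:BW} only places it in the weaker space $\X{2}{\mu}{\theta-\alpha}$, with a loss of $q^{\alpha}$ decay at infinity, so its $\X{2}{\mu}{\theta}$-norm need not even be finite. The key tool designed for exactly this difficulty is Proposition~\ref{Prop:improved:regularity}. I would use the telescoping identity
\begin{equation*}
 \mathcal{B}_W(\T f_n, \T f_n)(q) = \sum_{k=0}^{\infty}\bigl(\mathcal{B}_W(\T f_n, \T f_n)(q+k) - \mathcal{B}_W(\T f_n, \T f_n)(q+k+1)\bigr),
\end{equation*}
which converges since $\mathcal{B}_W(\T f_n, \T f_n)(q) \to 0$ as $q \to \infty$. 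Each increment carries a finite $\X{2}{\mu}{\theta}$-norm controlled quadratically by $\lfnorm*{2}{0}{\theta}{\T f_n}$ through Proposition~\ref{Prop:improved:regularity}, with the shifts absorbed via Remark~\ref{Rem:norms:behave:well:under:shifts}, and the uniform decay of $\T f_n$ at infinity yields the summability in $k$. Multiplying by $\eps_n \to 0$ then makes this entire contribution arbitrarily small in $\X{2}{\mu}{\theta}$, contradicting $\lfnorm*{2}{\mu}{\theta}{D_n} > \delta$ for $n$ large and completing the proof.
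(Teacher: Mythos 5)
Your proposal takes a genuinely different route (compactness + contradiction + bootstrap) from the paper's direct iterative argument, but the bootstrap step contains two genuine gaps.

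First, the claim that local uniform $C^1$-convergence $D_n\to 0$ together with the uniform $\X{2}{0}{\theta}$-bound gives $\lfnorm*{1}{0}{\theta}{D_n}\to 0$ does not hold automatically. The uniform bound yields $\abs{D_n'(q)}\lesssim q^{\rho-1}(1+q)^{-\theta-\rho}$, so the weighted quantity $(1+q)^{\theta+\rho}q^{1-\rho}\abs{D_n'(q)}$ is \emph{bounded} near $q=0$ and $q=\infty$, but boundedness is not smallness; local uniform convergence on compacta of $(0,\infty)$ says nothing about those endpoint regimes. The paper obtains the smallness there through a separate mechanism: near $q=0$ it subtracts the ODEs for $Q$ and $\bar Q$ and uses the $\eps$-prefactor and the extra decay of $\Pert(f,f)$ at zero (Lemma~\ref{Lem:local:uniform:conergence}), while for large $q$ it uses the uniform tail asymptotics of $\T f$ established via $Q(\infty)$ (\cref{Lem:improvement:zero:Laplace,Lem:help:norm:convergence}). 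Without those inputs the step from pointwise to weighted-norm convergence is not justified.

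Second, the telescoping sum for $\mathcal{B}_W$ does not converge in $\X{2}{\mu}{\theta}$. Proposition~\ref{Prop:improved:regularity} gives a bound on \emph{each} increment $\mathcal{B}_W(G,G)(\cdot+k)-\mathcal{B}_W(G,G)(\cdot+k+1)$ that is uniform in $k$ (shifting by $k$ can only decrease the $\lfnorm*{2}{\mu}{\theta}{\cdot}$-norm, and one checks directly that $\lfnorm*{2}{\mu}{\theta}{F(\cdot+k)}$ is of order one rather than summable, with the sup attained near $q\asymp k$). You assert summability in $k$ by appeal to decay of $\T f_n$ at infinity, but that decay is already built into Proposition~\ref{Prop:improved:regularity} and does not produce a $k$-dependent gain. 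So the series of norm bounds diverges and the telescoping buys you nothing; indeed Proposition~\ref{Prop:est:BW} tells you a priori only $\mathcal{B}_W(\T f_n,\T f_n)\in\X{2}{\mu}{\theta-\alpha}$. The paper sidesteps this entirely: it differentiates the self-similar equation twice, restricts to a bounded interval $q<q_*$ where the loss of $q^{\alpha}$ decay at infinity is absorbed into a $q_*$-dependent constant, and handles $q\geq q_*$ via the independently derived asymptotics of Lemma~\ref{Lem:help:norm:convergence}. To repair your argument you would need exactly those two ingredients — the $Q$-equation analysis and the large-$q$ split — at which point the compactness scaffold is superfluous and the proof collapses to the paper's.
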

 
 The next statement gives an estimate on the boundary layer and the corresponding proof will be the by far most technical and lengthy part of this work. In principle it states that the remainder term, emerging from the application of Proposition~\ref{Prop:improved:regularity} in order to obtain enough decay at infinity, can still be controlled in a suitable way to obtain a global contraction estimate.
 
 \begin{proposition}\label{Prop:boundary:layer}
 For all $\alpha\in(0,1/2)$, $\rho\in(1/2,1)$ and $\mu\in[0,1]$ it holds that for each $\delta_{*}>0$ there exists a constant $C_{\delta_{*}}>0$ such that for sufficiently small $\eps$ the estimate
 \begin{equation*}
  \lfnorm*{2}{\mu}{\theta}{\T\bigl(\zeta(f_1-f_2)\bigr)}\leq \delta_{*}\lfnorm*{2}{0}{\theta}{\T(f_1-f_2)}+C_{\delta_{*}}\lfnorm*{1}{0}{\theta}{\T\bigl((1-\zeta)(f_1-f_2)\bigr)}
 \end{equation*}
 holds for all solutions $f_1$, $f_2$ of~\eqref{eq:self:sim}.
\end{proposition}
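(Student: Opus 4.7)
The strategy is to exploit the exponential boundary-layer regularization described in Section~\ref{Sec:bd:layer:formal}: each self-similar profile $f_i$ is, for small $x$, of size $x^{2\mom_0(f_i)-1-\rho}\exp(-(\eps/\alpha)\mom_\alpha(f_i)x^{-\alpha})$ up to bounded factors, so in the boundary-layer region $x\lesssim\eps^{1/\alpha}$ both profiles, and hence their difference $g=f_1-f_2$, are exponentially small. Since $\T(\zeta g)(q)=(\T g)(q+1)$ and $\del_q^{k}\T(\zeta g)(q)=(-1)^k\int_0^\infty x^k\ee^{-(q+1)x}g(x)\dx$, the norm $\lfnorm*{2}{\mu}{\theta}{\T(\zeta g)}$ reduces to weighted bounds on the three integrals $\int_0^\infty x^k\ee^{-(q+1)x}\abs{g(x)}\dx$ for $k=0,1,2$. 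The extra regularity factor $q^{\mu}$ that distinguishes this norm from $\lfnorm*{2}{0}{\theta}{\T g}$ must therefore come from smallness of $g$ at small $x$, which the boundary layer supplies.

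\textbf{First step (splitting).} I would split each of these integrals at a threshold $x=R$, to be chosen once $\delta_*$ is fixed and independently of $\eps$. On $\{x\geq R\}$ one has $1-\zeta(x)\geq 1-\ee^{-R}$, so $\abs{g(x)}\leq (1-\ee^{-R})^{-1}(1-\zeta(x))\abs{g(x)}$; after Laplace-transforming, this piece is dominated by $\lfnorm*{1}{0}{\theta}{\T((1-\zeta)g)}$ with a constant $C_{\delta_*}$ depending only on $R$, which yields the second term of the proposition.

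\textbf{Second step (boundary layer).} For the inner contribution $\{x\leq R\}$ I would rigorously derive the boundary-layer representation
\[
f_i(x)=x^{2\mom_0(f_i)-1-\rho}\exp\Bigl(-\tfrac{\eps}{\alpha}\mom_\alpha(f_i)x^{-\alpha}\Bigr)\,E_i(x),\qquad x\in(0,R],
\]
from~\eqref{eq:self:sim}, following the formal computation of Section~\ref{Sec:bd:layer:formal} and using~\eqref{eq:Ass:asymptotic} to isolate the leading singular part of $\int_0^\infty K(x,z)f_i(z)\dz$ as $x\to 0$, with $E_i$ a bounded and Hölder-continuous remainder controlled uniformly in $\eps$. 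Writing $g$ as (difference of prefactors $E_i$)$\,\times\,$(common exponential-polynomial weight) plus $E_2\,\times\,$(difference of weights), pointwise bounds on $g$ on $[0,R]$ reduce to the differences $\mom_0(f_1)-\mom_0(f_2)$, $\mom_{\pm\alpha}(f_1)-\mom_{\pm\alpha}(f_2)$ and $E_1-E_2$. Each moment difference equals $\int_0^\infty x^\gamma g(x)\dx$, which I would split once more as $\int x^\gamma\zeta g\,\dx+\int x^\gamma(1-\zeta)g\,\dx$ and bound by the corresponding Laplace norms of $\zeta g$ and $(1-\zeta)g$; the restrictions $\alpha<1/2$ and $\rho>1/2$ are used here so that the admissible exponents $\gamma\in\{-\alpha,0,\alpha\}$ stay inside the range where such moments are controlled by our norms. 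The exponential weight $\exp(-(\eps/\alpha)\mom_\alpha(f_i)x^{-\alpha})$ produces an overall small prefactor after integration which, for $\eps$ small enough, lies below $\delta_*$, thereby absorbing the inner contribution into $\delta_*\lfnorm*{2}{0}{\theta}{\T g}$.

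\textbf{Main obstacle.} The hardest part is the second step, i.e.\ rigorously justifying the boundary-layer representation together with quantitative, uniform control of $E_i$ and $E_1-E_2$. This requires the complex-analytic framework provided by Proposition~\ref{Prop:W:representation} together with fine asymptotics of $\T f_i$ near zero — precisely the machinery anticipated in the outline and deferred to the later sections. The restrictions $\alpha<1/2$ and $\rho>1/2$ enter critically through the bookkeeping of exponents when translating pointwise estimates inside the boundary layer into Laplace-norm estimates of the moment differences; as flagged in Remark~\ref{Rem:restrictions:alpha:rho}, it is exactly one exponent inequality in this step that forces these sharp restrictions.
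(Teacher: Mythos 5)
Your proposal diverges substantially from the paper's proof and, more importantly, has a structural gap that would prevent it from going through.

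The critical flaw is in your second step, where you claim that the exponential boundary-layer weight ``produces an overall small prefactor'' on the inner region $\{x\leq R\}$ with $R$ fixed once $\delta_*$ is chosen. This cannot work: the regularizing factor $\exp(-(\eps/\alpha)\mom_\alpha x^{-\alpha})$ is close to~$1$ as soon as $x\gg\eps^{1/\alpha}$, so for $\eps$ small the boundary layer occupies only a vanishingly thin neighbourhood of zero inside $[0,R]$. Over the remaining bulk $\eps^{1/\alpha}\ll x\leq R$ the profiles behave like $\bar f(x)\sim x^{\rho-1}$ with no exponential suppression, so the contribution from $[0,R]$ does not become small with $\eps$ for fixed $R$. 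Your splitting therefore produces neither a constant $C_{\delta_*}$ independent of $\eps$ nor a small $\delta_*$ coefficient on the full norm $\lfnorm*{2}{0}{\theta}{\T g}$. In addition, the pointwise representation $f_i(x)=x^{2\mom_0-1-\rho}\exp(-\cdots)E_i(x)$ with $E_i$ and $E_1-E_2$ controlled ``uniformly in $\eps$'' is asserted but not obtained from the machinery you cite; establishing it rigorously, and in particular estimating $E_1-E_2$, is at least as hard as the original statement and risks circularity, since the $E_i$ are determined nonlocally by the very profiles whose difference one is trying to bound.

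The paper's actual argument avoids any pointwise asymptotics of $f_i$. Starting from~\eqref{eq:self:sim}, it derives the exact integral identity~\eqref{eq:self:sim:bd:layer:intermediate} by means of an integrating factor involving $\Phi_j$, multiplies by $\zeta(x)\ee^{-qx}$, integrates, and takes the difference $f_1-f_2$ to obtain~\eqref{eq:bd:layer:splitting:1}, a decomposition of $\bigl(\T(\zeta\Delta f)\bigr)''$ into nine structured terms $K_1,\dots,K_{4,1},K_{4,0},J_1,\dots,J_{3,1},J_{3,0}$. Each term is then estimated directly in the Laplace variable $q$ using: the Laplace representation of $W$ (Proposition~\ref{Prop:W:representation}) and of the auxiliary function $H_0$ (Propositions~\ref{Prop:rep:H0} and~\ref{Prop:Q0:int:est}); Lipschitz-type bounds on $\Phi_1-\Phi_2$ and $\beta_W(\cdot,f_1)-\beta_W(\cdot,f_2)$ in terms of $\lfnorm*{1}{0}{\theta}{\T\Delta f}$ (Lemmas~\ref{Lem:bd:layer:Phi:difference} and~\ref{Lem:bd:layer:beta:difference}); and an interpolation device ($1=\ee^{-nx}\ee^{-nz}+\dots$) in the terms $J_{3,1},J_{3,0}$ to split any remaining occurrence of $\lfnorm*{\cdot}{\cdot}{\theta}{\T\Delta f}$ into a $\delta$-small multiple plus a $(1-\zeta)$-part. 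Nowhere does the paper split the $x$-integral at a fixed threshold $R$; the smallness comes from the prefactors $\eps$ that appear multiplicatively in several terms and from the interpolation argument, not from pointwise smallness on a fixed inner region. Your intuition about the role of $\kappa_j=\beta_2(f_j)-2\rho$ and moment differences has the right flavour (they are controlled exactly by $\T\Delta f$ split into $\zeta$- and $(1-\zeta)$-parts, see Lemma~\ref{Lem:kappa:difference}), but in the paper this enters as one of nine pieces of a Laplace-side decomposition rather than as part of a pointwise factorization of $g$.
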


\subsection{Proof of Theorem~\ref{Thm:main}}

Based on the key results collected in the previous section, we will now give the proof of our main result, Theorem~\ref{Thm:main}. Therefore let us first introduce some notation to simplify the presentation. Precisely, we will consider two solutions $f_1$ and $f_2$ of~\eqref{eq:self:sim} and we more precisely look on their difference from the explicitly known solution $\bar{f}$ in the case $\eps=0$, i.e.\@ we define $m_{j}\vcc=f_{j}-\bar{f}$ for $j=1,2$. Moreover, the difference of these two deviations will be denoted by $m$, i.e.\@ $m\vcc=m_1-m_2$ and we also note that it holds $m=f_1-f_2$. 

Taking into account that $\bar{f}$ solves $\bar{f}=A_{\rho}(\bar{f})+ B_{2}(\bar{f},\bar{f})$ and recalling  that $f_{j}$ solves~\eqref{eq:abstract:self:sim} for $j=1,2$ one obtains, by taking the difference of these equations, that $m_j$ satisfies
 \begin{equation*}
  m_{j}=A_{\rho}(m_j)+ B_{2}(\bar{f},m_{j})+ B_{2}(m_j,\bar{f})+ B_{2}(m_j,m_j)+\eps B_{W}(f_j,f_j).
 \end{equation*}
 Subtracting this equation for $j=2$ from that one for $j=1$ it follows
 \begin{equation*}
  m=A_{\rho}(m)+B_{2}(\bar{f},m)+B_{2}(m,\bar{f})+B_{2}(m,m_1)+B_{2}(m_2,m)+\eps B_{W}(m,f_1)+\eps B_{W}(f_2,m).
 \end{equation*}
 Together with the operator $L$ this can be further rearranged as
 \begin{equation*}
  L(m)=B_{2}(m,m_1)+ B_{2}(m_2,m)+\eps B_{W}(m,f_1)+\eps B_{W}(f_2,m).
 \end{equation*}
 We then apply first the Laplace transform operator $\T$ to the equation, use $\T( L(m))=\LL(\T m)$ and apply also $\LL^{-1}$ which yields
 \begin{equation*}
  \T m =\LL^{-1}\T \bigl( B_{2}(m,m_1)+ B_{2}(m_2,m)\bigr)+\eps \bigl( B_{W}(m,f_1)+\eps B_{W}(f_2,m)\bigr).
 \end{equation*}
Note also Remark~\ref{Rem:transform:justification} which comments on the justification of this step. Using~\eqref{eq:Laplace:easy:shift}, we can take the difference of the previous equation and the same relation shifted by one which can then be written as
\begin{multline*}
 \T\bigl((1-\zeta)m\bigr)=\LL^{-1}\T\Bigl((1-\zeta)\bigl( B_{2}(m,m_1)+ B_{2}(m_2,m)\bigr)\Bigr)\\*
 +\eps\LL^{-1}\T \Bigl((1-\zeta)\bigl( B_{W}(m,f_1)+\eps B_{W}(f_2,m)\bigr)\Bigr).
\end{multline*}
For any fixed $\mu\in(0,\mu_{*})$ we now apply $\lfnorm*{2}{\mu}{\theta}{\cdot}$ on both sides which yields together with the boundedness of $\LL^{-1}$ as given by Proposition~\ref{Prop:linearised:operator} that
\begin{multline*}
 \lfnorm*{2}{\mu}{\theta}{\T\bigl((1-\zeta)m\bigr)}\leq C\Bigl(\lfnorm*{2}{\mu}{\theta}{\T\bigl((1-\zeta) B_{2}(m,m_1)\bigr)}+\lfnorm*{2}{\mu}{\theta}{\T\bigl((1-\zeta) B_{2}(m_2,m)\bigr)}\Bigr)\\*
 +C\eps\Bigl(\lfnorm*{2}{\mu}{\theta}{\T\bigl((1-\zeta) B_{W}(m,f_1)\bigr)}+\lfnorm*{2}{\mu}{\theta}{\T\bigl((1-\zeta) B_{W}(f_2,m)\bigr)}\Bigr).
\end{multline*}
The first two terms on the right-hand side can be bounded due to \cref{Lem:est:B2} together with \cref{Rem:norms:behave:well:under:shifts,eq:transformed:forms}, while for the remaining two terms we rely on \cref{Prop:improved:regularity,eq:transformed:forms,eq:Laplace:easy:shift} to find
\begin{multline*}
 \lfnorm*{2}{\mu}{\theta}{\T\bigl((1-\zeta)m\bigr)}\leq C\Bigl(\lfnorm*{1}{0}{\theta}{\T m_1}+\lfnorm*{1}{0}{\theta}{\T m_2}\Bigr)\lfnorm*{1}{0}{\theta}{\T m}\\*
 +C\eps \Bigl(\lfnorm*{2}{0}{\theta}{\T f_1}+\lfnorm*{2}{0}{\theta}{\T f_2}\Bigr)\lfnorm*{2}{0}{\theta}{\T m}.
\end{multline*}
From \cref{Prop:norm:boundedness,Prop:closeness:two:norm} we then deduce
\begin{equation}\label{eq:uniqueness:easy:part}
 \lfnorm*{2}{\mu}{\theta}{\T\bigl((1-\zeta)m\bigr)}\leq C(\delta_{\eps}+\eps)\lfnorm*{2}{0}{\theta}{\T m} \quad \text{with }\delta_{\eps}\to 0 \text{ for }\eps\to 0.
\end{equation}
To conclude the proof, we split the quantity $m$ as $m=\zeta m+(1-\zeta)m$ which yields
\begin{equation*}
 \lfnorm*{2}{\mu}{\theta}{\T m}\leq \lfnorm*{2}{\mu}{\theta}{\T(\zeta m)}+\lfnorm*{2}{\mu}{\theta}{\T \bigl((1-\zeta)m\bigr)}.
\end{equation*}
Since we also have $m=f_1-f_2$ the boundary layer estimate from Proposition~\ref{Prop:boundary:layer} together with Lemma~\ref{Lem:comparison:norms} implies
\begin{equation*}
 \lfnorm*{2}{\mu}{\theta}{\T m}\leq \delta_{*}\lfnorm*{2}{0}{\theta}{\T m}+(1+C_{\delta_{*}})\lfnorm*{2}{\mu}{\theta}{\T\bigl((1-\zeta)m\bigr)}
\end{equation*}
for all $\delta_{*}>0$ provided $\eps>0$ is sufficiently small. If we combine this with~\eqref{eq:uniqueness:easy:part} we can conclude that
\begin{equation*}
 \lfnorm*{2}{\mu}{\theta}{\T m}\leq \bigl(C(1+C_{\delta_{*}})(\delta_{\eps}+\eps)+\delta_{*}\bigr)\lfnorm*{2}{0}{\theta}{\T m}.
\end{equation*}
The claim finally follows by choosing first $\delta_{*}<1/4$ and then $\eps>0$ sufficiently small such that $C(1+C_{\delta_{*}})(\delta_{\eps}+\eps)<1/4$ which together with Lemma~\ref{Lem:comparison:norms} leads to $\lfnorm*{2}{\mu}{\theta}{\T m}\leq 1/2 \lfnorm*{2}{\mu}{\theta}{\T m}$. This shows $\T m=0$ and thus also $m=0$ due to the fact that the Laplace transform defines a finite measure uniquely. Since $m=f_1-f_2$ this then implies uniqueness of solutions to~\eqref{eq:self:sim} and finishes the proof of Theorem~\ref{Thm:main}.

\section{Continuity estimates}\label{Sec:continuity:estimates}

In this section, we establish several estimates on the (bi-)linear forms $\mathcal{A}_{\rho}$, $\mathcal{B}_{2}$ and $\mathcal{B}_{W}$ and show that they are well-defined and continuous on the spaces $\X{k}{\mu}{\chi}$ for appropriate choices of the parameters $k,\mu,\chi$. In particular, we will give the proofs of \cref{Lem:est:Arho,Lem:est:B2,Prop:est:BW,Prop:improved:regularity}. 

\subsection{Proof of \cref{Lem:est:Arho,Lem:est:B2}}

The statements of \cref{Lem:est:Arho,Lem:est:B2} can be derived rather directly from the definitions of $\mathcal{A}_{\rho}$ and $\mathcal{B}_{2}$.

\begin{proof}[Proof of Lemma~\ref{Lem:est:Arho}]
 Due to the definition of $\mathcal{A}_{\rho}$ it suffices to show the estimate~\eqref{eq:est:Arho}, while we conclude from Lemma~\ref{Lem:seminorm:improved} that it is in fact enough to show $\lsnorm*{2}{\mu}{\theta}{\mathcal{A}_{\rho}(G)}\leq C\lfnorm*{1}{\mu}{\chi}{G}$. The latter however follows immediately, i.e.\@ we have
 \begin{equation*}
  \abs*{\del_{q}^{2}\mathcal{A}_{\rho}(G)}=\abs*{\frac{1-\rho}{r}G'(r)}\leq \frac{(1-\rho)\lsnorm*{1}{\mu}{\chi}{G}}{r^{2-\rho-\mu}(r+1)^{\chi+\rho+\mu}}
 \end{equation*}
 which concludes the proof due to the definition of the norm.
\end{proof}

\begin{proof}[Proof of Lemma~\ref{Lem:est:B2}]
 The proof is similar to that one of Lemma~\ref{Lem:est:Arho}, i.e.\@ due to the structure of $\mathcal{B}_{2}$ it suffices to prove only the estimate~\eqref{eq:est:B2} which, due to Lemma~\ref{Lem:seminorm:improved} and~\eqref{eq:est:weight} reduces to showing
 \begin{equation*}
  \frac{1}{r}\abs*{G'(r)\bigl(H(0)-H(r)\bigr)}\leq C\lsnorm*{1}{0}{\chi}{G}\lsnorm*{1}{0}{\chi}{H}\weight{\rho+\mu}{\chi}(r)r^{-2}.
 \end{equation*}
 The latter estimate now follows from~\cref{eq:weight:mult,eq:est:by:norm,eq:weight:parmono,Lem:difference:regularised}, i.e.\@ we have
 \begin{equation*}
  \begin{split}
   \frac{1}{r}\abs*{G'(r)\bigl(H(0)-H(r)\bigr)}&\leq C\lsnorm*{1}{0}{\chi}{G}\lsnorm*{1}{0}{\chi}{H}\weight{\rho}{\chi}(r)\weight{\rho}{0}(r)r^{-2}=C\lsnorm*{1}{0}{\chi}{G}\lsnorm*{1}{0}{\chi}{H}\weight{2\rho}{\chi}(r)r^{-2}\\
   &\leq C\lsnorm*{1}{0}{\chi}{G}\lsnorm*{1}{0}{\chi}{H}\weight{\rho+\mu}{\chi}(r)r^{-2}
  \end{split}
 \end{equation*}
 which ends the proof.
\end{proof}

\subsection{Proof of Proposition~\ref{Prop:est:BW}}

Before we come to the proof of Proposition~\ref{Prop:est:BW}, we will collect several preliminary estimates.

\begin{lemma}\label{Lem:est:N:small}
 For every $\alpha\in(0,1)$ and $\mu\in[0,\mu_{*})$ there exists a constant $C_{\mu,\alpha}>0$ such that it holds
 \begin{equation*}
  \sup_{0<r<1}r^{2-\rho-\mu}\abs*{N[G,H](r)}\leq C_{\mu,\alpha}\lfnorm*{2}{0}{\theta}{G}\lfnorm*{2}{0}{\theta}{H}
 \end{equation*}
 for all $G,H\in \X{2}{0}{\theta}$.
\end{lemma}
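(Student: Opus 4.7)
The plan is to split the kernel $\Ker$ according to Proposition~\ref{Prop:W:representation}, writing $\Ker(\xi,\eta) = \widetilde{\Ker}(\xi,\eta) + W_\pm(-1)\,\delta(\xi-\eta)$, and to estimate the contributions of the two parts to $N_1$ and $N_2$ separately. For the diagonal Dirac contribution the double integral collapses to a single one-dimensional integral in $\xi$, which is considerably easier and handled last; the bulk of the work concerns the continuous piece $\widetilde{\Ker}$, for which we use the pointwise bound $\abs{\widetilde\Ker(\xi,\eta)}\leq C(\xi+\eta)^{\alpha-1}(\xi^{-\alpha}+\eta^{-\alpha})$.

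The second step is to expand, from the definition of $\lfnorm*{2}{0}{\theta}{\cdot}$, the pointwise bounds
\begin{equation*}
\abs{G(q)}\leq \frac{\lfnorm*{2}{0}{\theta}{G}}{(1+q)^{\theta}},\quad \abs{G'(q)}\leq \lfnorm*{2}{0}{\theta}{G}\,q^{\rho-1}(1+q)^{-\theta-\rho},\quad \abs{G''(q)}\leq \lfnorm*{2}{0}{\theta}{G}\,q^{\rho-2}(1+q)^{-\theta-\rho},
\end{equation*}
and analogous bounds for $H$. The differences $H(\eta)-H(\eta+r)$ in $N_1$ and $H'(\eta)-H'(\eta+r)$ in $N_2$ are then estimated in two ways: either trivially by the sum of the two values, or via the mean value theorem by $r\sup_{[\eta,\eta+r]}\abs{H'}$ respectively $r\sup_{[\eta,\eta+r]}\abs{H''}$. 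The right choice depends on whether $r$ is small or large compared with $\eta$.

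Next, for each $r\in(0,1)$, I decompose the integration domain $(0,\infty)^2$ into the regions determined by the thresholds $r$ and $1$ in each of the variables $\xi,\eta$ (so roughly nine sub-regions). In every region I choose the best available bound for $\abs{G''(r+\xi)}$ (resp.\ $\abs{G'(r+\xi)}$), for the $H$-difference, and for $\widetilde{\Ker}(\xi,\eta)$, and then just count powers: each region yields an integral of the form $\int\int \xi^{a}\eta^{b}(\xi+\eta)^{c}\dxi\deta$ over an explicit rectangle, where the exponents $a,b,c$ are expressions in $\alpha,\rho,\theta$. The conditions $\alpha<1$, $\theta\in(\alpha,\min\{\rho,1/2\})$, and $\mu<\mu_{*}=\min\{\rho,1-\rho\}$ are used precisely to keep all of these elementary integrals convergent near $0$ and at $\infty$ and to obtain a total power of $r$ of the form $r^{\rho+\mu-2}$ (or better). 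The prefactor $1/r$ in the definition of $N_j$ is always absorbed at the end.

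The main obstacle I expect is the book-keeping of the scaling exponents in the sub-region $\xi,\eta\lesssim r$, where the kernel $\widetilde\Ker$ contributes its worst singularity, $G$ and its derivatives are evaluated close to the origin, and the mean value estimate on the $H$-differences must be used in the sharp form. In this region one obtains, heuristically, $r^{-1}\cdot r^{-1}\cdot r^{\rho-2}\cdot r^{\rho}\cdot r^{2}=r^{2\rho-2}$ from $N_1$, which is dominated by $r^{\rho+\mu-2}$ thanks to $\mu<\rho$; the analogous bookkeeping for $N_2$ and for the mixed and large regions uses the decay at infinity given by the weight $(1+q)^{-\theta}$ and the fact that $\theta>\alpha$ absorbs the singularity in $\widetilde\Ker$. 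Gathering the contributions from all regions and from the Dirac part gives the claimed bound with a constant $C_{\mu,\alpha}$ that blows up only as $\alpha\to 1$ or $\mu\to\mu_{*}$.
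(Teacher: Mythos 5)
Your plan is correct and would produce the stated bound, but it is organised rather differently from the paper's proof. The paper first converts the pointwise factors into a single canonical form and then delegates all of the kernel bookkeeping to a standalone lemma, \cref{Lem:kernel:est:small:1}. Concretely, for $N_1$ it uses the mean-value-type bound \eqref{eq:est:norm:difference} for the difference $H(\eta)-H(\eta+r)$ (not a region-dependent choice between the trivial bound and the mean value theorem) and the trivial norm bound for $G''$, and then applies the reduction $(\xi+r)^{\rho-2}\le r^{-1}(\xi+r)^{\rho-1}$; this extracts one power $r^{-1}$ and turns the $N_1$ integrand into exactly the same shape $(\xi+r)^{\rho-1}(\xi+r+1)^{-\theta-\rho}\eta^{\rho-1}(\eta+1)^{-\theta-\rho}$ as arises for $N_2$ from \eqref{eq:difference:simple}. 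The common kernel integral is then estimated once, by \cref{Lem:kernel:est:small:1}, yielding $r^{\rho+\mu-1}$ and hence $r^{\rho+\mu-2}$ after dividing by the remaining $r$. Your version instead decomposes $(0,\infty)^2$ into the roughly nine rectangles cut out by the thresholds $r$ and $1$, uses the split $\Ker=\widetilde{\Ker}+W_\pm(-1)\delta$ explicitly, and chooses, region by region, between the trivial and mean-value estimates for the differences. That is essentially opening up \cref{Lem:kernel:est:small:1} (and, implicitly, \cref{Lem:kernel:est:partial:1}) by hand, so it buys nothing structurally and costs you the ability to reuse the kernel estimate for the other $N$-lemmas in \cref{Sec:continuity:estimates}; but your worst-region heuristic $r^{-1}\cdot r^{-1}\cdot r^{\rho-2}\cdot r^{\rho}\cdot r^{2}=r^{2\rho-2}\le r^{\rho+\mu-2}$ is indeed correct (using $\mu<\rho$), and the remaining regions are controlled exactly by the constraints $\theta>\alpha$ and $\mu<1-\rho$ as you say. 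One small caution if you carry this out: in the borderline case $\rho+\alpha=1$ the sub-region integrals produce logarithmic factors, which \cref{Lem:kernel:est:small:1} absorbs using $\mu<\mu_{*}$; make sure your region-by-region bookkeeping does not silently drop those logarithms.
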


\begin{proof}
 We assume $r\in (0,1)$ and recall from~\eqref{eq:kernel:BW} that $N=N_1+N_2$ such that we can treat $N_1$ and $N_2$ separately. For $N_{1}$ we find together with~\eqref{eq:est:norm:difference} that
 \begin{equation*}
  \abs*{G''(\xi+r)}\leq \frac{\lsnorm*{2}{0}{\theta}{G}}{(\xi+r)^{2-\rho}(\xi+r+1)^{\theta+\rho}}\quad \text{and}\quad \frac{\abs*{H(\eta)-H(\eta+r)}}{r}\leq C\frac{\lsnorm*{1}{0}{\theta}{H}}{\eta^{1-\rho}(\eta+1)^{\theta+\rho}}.
 \end{equation*}
 From this we deduce together with the definition of $N_{1}$ and $(\xi+r)^{\rho-2}\leq r^{-1}(\xi+r)^{\rho-1}$ that
 \begin{equation*}
  \begin{split}
   \abs*{N_{1}[G,H](r)}&\leq \int_{0}^{\infty}\int_{0}^{\infty}\abs*{\Ker(\xi,\eta)}\abs*{G''(\xi+r)}\frac{\abs*{H(\eta)-H(\eta+r)}}{r}\deta\dxi\\
   &\leq C\lfnorm*{2}{0}{\theta}{G}\lfnorm*{1}{0}{\theta}{H}\int_{0}^{\infty}\int_{0}^{\infty}\frac{\abs*{\Ker(\xi,\eta)}}{(\xi+r)^{2-\rho}(\xi+r+1)^{\theta+\rho}\eta^{1-\rho}(\eta+1)^{\theta+\rho}}\deta\dxi\\
   &\leq C\frac{\lfnorm*{2}{0}{\theta}{G}\lfnorm*{1}{0}{\theta}{H}}{r}\int_{0}^{\infty}\int_{0}^{\infty}\frac{\abs*{\Ker(\xi,\eta)}}{(\xi+r)^{1-\rho}(\xi+r+1)^{\theta+\rho}\eta^{1-\rho}(\eta+1)^{\theta+\rho}}\deta\dxi.
  \end{split}
 \end{equation*}
Lemma~\ref{Lem:kernel:est:small:1} then implies for $\mu\in[0,\mu_{*})$ that
\begin{equation}\label{eq:est:N1:small}
 \abs*{N_{1}[G,H](r)}\leq C_{\mu,\alpha}\lfnorm*{2}{0}{\theta}{G}\lfnorm*{1}{0}{\theta}{H} r^{\rho+\mu-2}.
\end{equation}
From~\eqref{eq:difference:simple} one obtains that it holds
\begin{equation*}
 \abs*{G'(\xi+r)}\leq \frac{\lsnorm*{1}{0}{\theta}{G}}{(\xi+r)^{1-\rho}(\xi+r+1)^{\theta+\rho}}\quad \text{and}\quad \abs*{H'(\eta)-H'(\eta+\tau)}\leq C\frac{\lsnorm*{1}{0}{\theta}{H}}{\eta^{1-\rho}(\eta+1)^{\theta+\rho}}.
\end{equation*}
With these estimates, one can deduce for each $\mu\in [0,\mu_{*})$ in the same way as for $N_{1}[G,H]$ that
\begin{equation*}
 \abs*{N_{2}[G,H](r)}\leq C_{\alpha,\mu}\lfnorm*{1}{0}{\theta}{G}\lfnorm*{1}{0}{\theta}{H}r^{\rho+\mu-2}.
\end{equation*}
Combining this with~\eqref{eq:est:N1:small} the claim directly follows.
\end{proof}

\begin{lemma}\label{Lem:N:est:large}
 For each $\alpha\in(0,1)$ there exists a constant $C_{\alpha}>0$ such that it holds
 \begin{equation*}
  \sup_{r\geq 1}r^{2+\theta-\alpha}\abs*{N[G,H](r)}\leq C_{\alpha}\lfnorm*{2}{0}{\theta}{G}\lfnorm*{2}{0}{\theta}{H}
 \end{equation*}
for all $G,H\in\X{2}{0}{\theta}$.
\end{lemma}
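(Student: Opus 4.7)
The approach parallels the proof of \cref{Lem:est:N:small}, but now in the complementary regime $r\geq 1$, where we want polynomial decay at infinity rather than integrability at zero. I would split $N=N_1+N_2$ as in~\eqref{eq:kernel:BW} and estimate each piece separately, using pointwise bounds derived from the definition of the norms $\lfnorm*{k}{0}{\theta}{\cdot}$ and then invoking integral estimates on the representation kernel $\Ker$.

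For $N_1$, since $r\geq 1$ and hence $r+\xi\geq 1$ one has $(r+\xi+1)^{\theta+\rho}\asymp(r+\xi)^{\theta+\rho}$, so the norm bound gives
\begin{equation*}
 \abs*{G''(r+\xi)}\leq \frac{C\lsnorm*{2}{0}{\theta}{G}}{(r+\xi)^{2+\theta}}.
\end{equation*}
For the difference $(H(\eta)-H(\eta+r))/r$ I would reuse the same bound as in the proof of \cref{Lem:est:N:small} (obtained by the mean value theorem together with the seminorm $\lsnorm*{1}{0}{\theta}{H}$). Extracting a factor $r^{-1}$ from $(r+\xi)^{-2-\theta}\leq r^{-1}(r+\xi)^{-1-\theta}$ yields
\begin{equation*}
 \abs*{N_{1}[G,H](r)}\leq \frac{C\lfnorm*{2}{0}{\theta}{G}\lfnorm*{1}{0}{\theta}{H}}{r^{2}}\int_{0}^{\infty}\int_{0}^{\infty}\frac{\abs*{\Ker(\xi,\eta)}}{(r+\xi)^{1+\theta}\eta^{1-\rho}(1+\eta)^{\theta+\rho}}\deta\dxi.
\end{equation*}
The remaining double integral I would bound by $C_{\alpha} r^{\alpha-\theta}$ using the large-$r$ counterpart of \cref{Lem:kernel:est:small:1} from the appendix; the heuristic is that the factor $(r+\xi)^{-1-\theta}$ combined with the $\xi^{-\alpha}$ piece of $\Ker$ produces $r^{\alpha-\theta}$ after integration in $\xi$, and the roles of $\xi$ and $\eta$ (together with the Dirac contribution $W_\pm(-1)\delta(\xi-\eta)$ from \cref{Prop:W:representation}) are handled symmetrically. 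This gives the desired bound $\abs*{N_{1}[G,H](r)}\leq C_{\alpha}\lfnorm*{2}{0}{\theta}{G}\lfnorm*{2}{0}{\theta}{H}\,r^{-(2+\theta-\alpha)}$.

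For $N_{2}$ I would proceed analogously: use $\abs*{G'(r+\xi)}\leq C\lsnorm*{1}{0}{\theta}{G}(r+\xi)^{-1-\theta}$ on the regime $r+\xi\geq 1$, and bound $\abs*{H'(\eta)-H'(\eta+r)}$ via the seminorm $\lsnorm*{1}{0}{\theta}{H}$ just as in \cref{Lem:est:N:small}. After extracting $r^{-1}$ one arrives at the same type of kernel integral and invokes the same appendix estimate. Summing the two contributions and taking the supremum over $r\geq 1$ yields the claim.

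The main obstacle is the kernel integral estimate itself: one has to show that, against $(r+\xi)^{-1-\theta}$ and the behaviour $\eta^{\rho-1}(1+\eta)^{-\theta-\rho}$, the bound on $\Ker$ from \cref{Prop:W:representation} produces precisely the decay $r^{\alpha-\theta}$, not the naive $r^{-\theta}$ one would obtain for a bounded kernel. The loss of the full power is exactly due to the singularity $\xi^{-\alpha}$ of $\Ker$ near $\xi=0$, which is also the structural reason why the loss $q^{\alpha}$ at infinity appears in the third index of the norm in \cref{Prop:est:BW}. Once this kernel integral is in hand, assembling everything is routine.
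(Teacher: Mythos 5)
Your overall strategy is correct and closely mirrors the paper's: split $N=N_1+N_2$ as in~\eqref{eq:kernel:BW}, plug in pointwise bounds for $G$, $H$ and their derivatives that hold when $r+\xi\geq 1$, and then invoke the kernel integral estimates from the appendix. For $N_2$ your argument is essentially identical to the paper's, and the bound follows from Lemma~\ref{Lem:Ker:est:large:1} with $k=1$.

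For $N_1$ your treatment genuinely differs from the paper's in one detail. The paper bounds $\abs*{H(\eta)-H(\eta+r)}$ directly via~\eqref{eq:difference:simple}, obtaining the weight $(\eta+1)^{-\theta}$ in $\eta$ and keeping the original factor $r^{-1}$ of $N_1$ outside, which leads to a kernel integral against $(\xi+r)^{-2-\theta}(\eta+1)^{-\theta}$ handled by Lemma~\ref{Lem:Ker:est:large:2}. You instead divide the $H$ difference by $r$ and use~\eqref{eq:est:norm:difference}, producing the weight $\eta^{\rho-1}(\eta+1)^{-\theta-\rho}$ — the same one as for $N_2$ — so that only Lemma~\ref{Lem:Ker:est:large:1} is needed for both pieces. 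Both routes work; yours is marginally more economical in that a single kernel estimate suffices.

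There is, however, a bookkeeping slip worth noting. The $r^{-1}$ in the definition of $N_1$ is consumed when you replace $H(\eta)-H(\eta+r)$ by $(H(\eta)-H(\eta+r))/r$, and the extraction $(\xi+r)^{-2-\theta}\leq r^{-1}(\xi+r)^{-1-\theta}$ contributes one further factor of $r^{-1}$, so the prefactor in your displayed estimate should be $r^{-1}$, not $r^{-2}$. Correspondingly, Lemma~\ref{Lem:Ker:est:large:1} applied with $k=1$ shows that the remaining double integral decays like $r^{-(1+\theta-\alpha)}$, not $r^{\alpha-\theta}$. The two discrepancies cancel, so the final exponent $r^{-(2+\theta-\alpha)}$ is correct; but the intermediate claims as written are each off by one power of $r$.
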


\begin{proof}
 We only have to consider $r\geq 1$ and due to~\cref{eq:est:weight,eq:est:by:norm,eq:difference:simple} we get the estimates
 \begin{align*}
  \abs*{G''(\xi+r)}&\leq \frac{\lsnorm*{2}{0}{\theta}{G}}{(\xi+r)^{2+\theta}},& \abs*{G'(\xi+r)}&\leq \frac{\lsnorm*{1}{0}{\theta}{G}}{(\xi+r)^{1+\theta}}\\
  \abs*{H(\eta)-H(\eta+r)}&\leq C\frac{\lfnorm*{0}{-\rho}{\theta}{H}}{(\eta+1)^{\theta}},& \abs*{H'(\eta)-H'(\eta+r)}&\leq C\frac{\lsnorm*{1}{0}{\theta}{H}}{(1+\eta)^{\theta+\rho}\eta^{1-\rho}}.
 \end{align*}
 Recalling~\eqref{eq:kernel:BW} we thus deduce
 \begin{multline*}  
  \abs*{N[G,H](r)}\\*
  \leq \frac{\lfnorm*{2}{0}{\theta}{G}\lfnorm*{2}{0}{\theta}{H}}{r}\int_{0}^{\infty}\int_{0}^{\infty}\biggl(\frac{\abs*{\Ker(\xi,\eta)}}{(\xi+r)^{2+\theta}(\eta+1)^{\theta}}+\frac{\abs*{\Ker(\xi,\eta}}{(\xi+r)^{1+\theta}(1+\eta)^{\theta+\rho}\eta^{1-\rho}}\biggr)\deta\dxi.
 \end{multline*}
Finally, it follows from \cref{Lem:Ker:est:large:1,Lem:Ker:est:large:2} and $r\geq 1$ that
\begin{equation*}
 \abs*{N[G,H](r)}\leq C\frac{\lfnorm*{2}{0}{\theta}{G}\lfnorm*{2}{0}{\theta}{H}}{r}\biggl(\frac{1}{r^{1+\theta}}+\frac{1}{r^{1+\theta-\alpha}}\biggr)\leq C\lfnorm*{2}{0}{\theta}{G}\lfnorm*{2}{0}{\theta}{H}r^{\alpha-\theta-2}
\end{equation*}
which ends the proof.
\end{proof}

\begin{proof}[Proof of Proposition~\ref{Prop:est:BW}]
 It suffices to prove that~\eqref{eq:continuity:BW} holds, which is however and immediate consequence of \cref{Lem:N:est:large,Lem:est:N:small} taking also \cref{eq:est:weight,Lem:seminorm:improved} and the structure of $\mathcal{B}_{W}$ into account.
\end{proof}

\subsection{Estimates for differences -- Proof of Proposition~\ref{Prop:improved:regularity}}

Again, we collect several preliminary estimates that will be used in the proof of Proposition~\ref{Prop:improved:regularity}.

\begin{lemma}\label{Lem:N1:difference}
 For each $\alpha\in(0,1)$ there exists a constant $C_{\alpha}>0$ such that it holds
 \begin{equation*}
  \sup_{r\geq 1}\Bigl(r^{2+\theta}\abs*{N_{1}[G,H](r)-N_{1}[G,H](r+1)}\Bigr)\leq C_{\alpha}\lfnorm*{2}{0}{\theta}{G}\lfnorm*{2}{0}{\theta}{H},
 \end{equation*}
for all $G,H\in\X{2}{0}{\theta}$.
\end{lemma}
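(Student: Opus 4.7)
The plan is to telescope the increment $N_1[G,H](r) - N_1[G,H](r+1)$ by isolating the change in each of the three varying factors $\tfrac{1}{r}$, $G''(r+\xi)$, and $H(\eta+r)$ inside the double integral. Inserting two intermediate quantities yields the decomposition
\[
 N_1[G,H](r) - N_1[G,H](r+1) = I_1(r) + I_2(r) + I_3(r),
\]
with
\begin{align*}
 I_1(r) &= \tfrac{1}{r(r+1)}\int_0^\infty\!\!\int_0^\infty \Ker(\xi,\eta)\, G''(r+\xi)\bigl(H(\eta)-H(\eta+r)\bigr)\dxi\deta,\\
 I_2(r) &= \tfrac{1}{r+1}\int_0^\infty\!\!\int_0^\infty \Ker(\xi,\eta)\,\bigl(G''(r+\xi)-G''(r+1+\xi)\bigr)\bigl(H(\eta)-H(\eta+r)\bigr)\dxi\deta,\\
 I_3(r) &= \tfrac{1}{r+1}\int_0^\infty\!\!\int_0^\infty \Ker(\xi,\eta)\, G''(r+1+\xi)\bigl(H(\eta+r+1)-H(\eta+r)\bigr)\dxi\deta.
\end{align*}
The target is to show $\abs*{I_j(r)} \leq C_\alpha r^{-2-\theta}\lfnorm*{2}{0}{\theta}{G}\lfnorm*{2}{0}{\theta}{H}$ for $j=1,2,3$ and $r\geq 1$, each of which improves by a factor of at least $r^{-\alpha}$ upon the bound of Lemma~\ref{Lem:N:est:large}.

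For $I_1$ the work has essentially been done: the inner integral is $r\,N_1[G,H](r)$, which from the argument producing Lemma~\ref{Lem:N:est:large} is bounded by $C r^{\alpha-\theta-1}\lfnorm*{2}{0}{\theta}{G}\lfnorm*{2}{0}{\theta}{H}$, so the prefactor $\tfrac{1}{r(r+1)}$ gives $\abs*{I_1(r)}\leq C r^{\alpha-\theta-3}$, which beats $r^{-2-\theta}$ since $\alpha<1$. For $I_3$ the key observation is that $H(\eta+r+1)-H(\eta+r)$ samples $H$ at the large argument $\eta+r\geq 1$: the mean value inequality together with the envelope $\abs*{H'(q)}\leq C\lsnorm*{1}{0}{\theta}{H}q^{-1-\theta}$ (valid for $q\geq 1$, straight from the definition of the seminorm) yields $\abs*{H(\eta+r+1)-H(\eta+r)}\leq C\lsnorm*{1}{0}{\theta}{H}(\eta+r)^{-1-\theta}$. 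Combined with $\abs*{G''(r+1+\xi)}\leq \lsnorm*{2}{0}{\theta}{G}/(r+\xi)^{2+\theta}$ and the kernel integral bounds underlying Lemma~\ref{Lem:N:est:large} (\cref{Lem:Ker:est:large:1,Lem:Ker:est:large:2}), a direct computation that is entirely parallel to the proof of Lemma~\ref{Lem:N:est:large}, but with one extra power of decay supplied by the improved $H$-estimate, produces $\abs*{I_3(r)}\leq C r^{-2-\theta}\lfnorm*{2}{0}{\theta}{G}\lfnorm*{2}{0}{\theta}{H}$.

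The main obstacle is $I_2(r)$: the naive bound $\abs*{G''(r+\xi)-G''(r+1+\xi)}\leq 2\abs*{G''(r+\xi)}$ merely reproduces the Lemma~\ref{Lem:N:est:large} rate $r^{\alpha-\theta-2}$, short of the target by exactly the factor $r^\alpha$ that has to be won back. The gain must come from cancellation between the two values of $G''$, which is a form of $G'''$-control that the norm does not supply directly. My plan is to write $G''(r+\xi)-G''(r+1+\xi) = -\int_0^1\partial_\xi G''(r+s+\xi)\ds$, use Fubini to pull the $s$-integral outside, and then integrate by parts in $\xi$ so as to transfer the derivative from $G''$ onto $\Ker(\xi,\eta)$. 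The boundary term at $\xi=\infty$ vanishes by the decay of $G''$, while the contribution at $\xi=0$ has to be tracked separately, with careful use of the delta-mass component of $\Ker$ and of the pointwise values of $G''$ at $r+s$. What remains is an integrand of the form $\partial_\xi\Ker(\xi,\eta)\,G''(r+s+\xi)\bigl(H(\eta)-H(\eta+r)\bigr)$, in which the effective extra $\xi^{-1}$ supplied by $\partial_\xi\Ker$ (cf.\ the kernel estimates collected in Appendix~\ref{Sec:Proof:Gamma}) is precisely the mechanism through which the missing factor $r^{-\alpha}$ is recovered after the $\xi$-integration. Inserting the $H$-difference bound $\abs*{H(\eta)-H(\eta+r)}\leq C\lfnorm*{0}{-\rho}{\theta}{H}(1+\eta)^{-\theta}$ and carrying out the $\eta$-integral as in Lemma~\ref{Lem:N:est:large} then yields $\abs*{I_2(r)}\leq Cr^{-2-\theta}\lfnorm*{2}{0}{\theta}{G}\lfnorm*{2}{0}{\theta}{H}$ and concludes the proof.
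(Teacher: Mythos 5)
Your decomposition into $I_1,I_2,I_3$ is essentially the paper's $(III),(I),(II)$, and your treatment of $I_1$ works (if pessimistically). The handling of $I_3$ is incompletely argued --- the relevant kernel integral $\int\!\!\int\abs{\Ker(\xi,\eta)}(\xi+r)^{-2-\theta}(\eta+r)^{-1-\theta}\deta\dxi$ is not of the form covered by \cref{Lem:Ker:est:large:1,Lem:Ker:est:large:2}; the paper treats it by rescaling $\xi\mapsto r\xi,\eta\mapsto r\eta$ and invoking Lemma~\ref{Lem:Ker:est:most:general} --- but this is a gap you could plausibly fill.

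The real problem is $I_2$, where your proposal runs aground on a misdiagnosis. You claim the naive bound
\[
\abs{G''(r+\xi)-G''(r+1+\xi)}\leq C\frac{\lsnorm*{2}{0}{\theta}{G}}{(\xi+r)^{2+\theta}}
\]
only yields the rate $r^{\alpha-\theta-2}$, ``short by exactly $r^{\alpha}$.'' This is false. The $r^{\alpha}$ loss in Lemma~\ref{Lem:N:est:large} comes entirely from the $N_2$ contribution, which involves the kernel integral $\int\!\!\int\abs{\Ker}(\xi+r)^{-1-\theta}\eta^{\rho-1}(\eta+1)^{-\theta-\rho}$ and Lemma~\ref{Lem:Ker:est:large:1}. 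For $I_2$ the relevant integral is the $N_1$-type one, $\int\!\!\int\abs{\Ker}(\xi+r)^{-2-\theta}(\eta+1)^{-\theta}$, and Lemma~\ref{Lem:Ker:est:large:2} bounds \emph{that} by $Cr^{-1-\theta-(\theta+\alpha)/2}$ --- already better than $r^{-1-\theta}$. Combined with the $1/r$ prefactor this gives $\abs{I_2}\leq Cr^{-2-\theta-(\theta+\alpha)/2}\leq Cr^{-2-\theta}$ for $r\geq 1$. No cancellation in $G''$ is needed, and the paper indeed bounds $(I)$ (your $I_2$) exactly this way.

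Your proposed workaround --- write $G''(r+\xi)-G''(r+1+\xi)$ as an integral, integrate by parts in $\xi$, and transfer the derivative onto $\Ker$ --- is therefore unnecessary, and worse, it is infeasible with the estimates the paper provides: there are no bounds on $\partial_{\xi}\Ker$ anywhere, and $\Ker$ carries a singular part $W_{\pm}(-1)\,\delta(\xi-\eta)$ whose $\xi$-derivative is not a locally integrable kernel. You should abandon that route and simply apply Lemma~\ref{Lem:Ker:est:large:2} to the naive bound on $I_2$.
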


\begin{proof}
 We assume $r\geq 1$ and note that an elementary calculation shows that we can rewrite
 \begin{multline*}
  \phantom{{}={}}N_{1}[G,H](r)-N_{1}[G,H](r+1)\\*
  \shoveleft{=\frac{1}{r}\int_{0}^{\infty}\int_{0}^{\infty}\Ker(\xi,\eta)\Bigl(\bigl(G''(\xi+r)-G''(\xi+r+1)\bigr)\bigl(H(\eta)-H(\eta+r)\bigr)\Bigr)\deta\dxi}\\*
  +\frac{1}{r}\int_{0}^{\infty}\int_{0}^{\infty}\Ker(\xi,\eta)G''(\xi+r+1)\bigl(H(\eta+r+1)-H(\eta+r)\bigr)\deta\dxi\\*
  +\frac{1}{r(r+1)}\int_{0}^{\infty}\int_{0}^{\infty}\Ker(\xi,\eta)G''(\xi+r+1)\bigl(H(\eta)-H(\eta+r+1)\bigr)\deta\dxi=\vcc(I)+(II)+(III).
 \end{multline*}
 To simplify the presentation, we estimate the three terms on the right-hand side separately. To start with $(I)$ we note that~\cref{eq:est:weight,eq:difference:simple} together with $r\geq 1$ yields
 \begin{equation*}
  \abs*{G''(\xi+r)-G''(\xi+r+1)}\leq C\frac{\lsnorm*{2}{0}{\theta}{G}}{(\xi+r)^{2+\theta}}\quad \text{and}\quad \abs*{H(\eta)-H(\eta+r)}\leq C\frac{\lfnorm*{0}{-\rho}{\theta}{H}}{(\eta+1)^{\theta}}.
 \end{equation*}
 Combining this with Lemma~\ref{Lem:Ker:est:large:2} it follows
 \begin{equation}\label{eq:N1:diff:1}
  \abs*{(I)}\leq C\frac{\lfnorm*{2}{0}{\theta}{G}\lfnorm*{0}{-\rho}{\theta}{H}}{r}\int_{0}^{\infty}\int_{0}^{\infty}\frac{\abs*{\Ker(\xi,\eta)}}{(\xi+r)^{2+\theta}(\eta+1)^{\theta}}\deta\dxi\leq C\frac{\lfnorm*{2}{0}{\theta}{G}\lfnorm*{0}{-\rho}{\theta}{H}}{r^{2+\theta}}.
 \end{equation}
 We next consider $(II)$ and note that~\eqref{eq:est:norm:difference} additionally implies
 \begin{equation*}
  \abs*{H(\eta+r+1)-H(\eta+r)}\leq C\lsnorm*{1}{0}{\theta}{H}(\eta+r)^{-1-\theta}.
 \end{equation*}
 To estimate $(II)$ we now change variables $\xi\mapsto r\xi$ and $\eta\mapsto r\eta$, use the homogeneity of $\Ker$ and invoke Lemma~\ref{Lem:Ker:est:most:general} which together yields
 \begin{equation}\label{eq:N1:diff:2}
  \begin{split}
   \abs*{(II)}&\leq C\frac{\lfnorm*{2}{0}{\theta}{G}\lfnorm*{1}{0}{\theta}{H}}{r}\int_{0}^{\infty}\int_{0}^{\infty}\frac{\abs*{\Ker(\xi,\eta)}}{(\xi+r)^{2+\theta}(\eta+r)^{1+\theta}}\deta\dxi\\
   &\leq C\frac{\lfnorm*{2}{0}{\theta}{G}\lfnorm*{1}{0}{\theta}{H}}{r^{3+2\theta}}\int_{0}^{\infty}\int_{0}^{\infty}\frac{\abs*{\Ker(\xi,\eta)}}{(\xi+1)^{2+\theta}(\eta+1)^{1+\theta}}\deta\dxi\leq C\frac{\lfnorm*{2}{0}{\theta}{G}\lfnorm*{1}{0}{\theta}{H}}{r^{3+2\theta}}.
  \end{split}
 \end{equation}
 It remains to consider $(III)$ for which we first recall from~\eqref{eq:est:norm:difference} that
 \begin{equation*}
  \abs*{H(\eta)-H(\eta+r+1)}\leq C\lfnorm*{0}{-\rho}{\theta}{H}(\eta+1)^{-\theta}.
 \end{equation*}
 Since $r\geq 1$ we thus obtain together with Lemma~\ref{Lem:Ker:est:large:2} that
 \begin{equation}\label{eq:N1:diff:3}
  \abs*{(III)}\leq C\frac{\lfnorm*{2}{0}{\theta}{G}\lfnorm*{0}{-\rho}{\theta}{H}}{r^2}\int_{0}^{\infty}\int_{0}^{\infty}\frac{\abs*{\Ker(\xi,\eta)}}{(\xi+r)^{2+\theta}(\eta+1)^{\theta}}\deta\dxi\leq C\frac{\lfnorm*{2}{0}{\theta}{G}\lfnorm*{0}{-\rho}{\theta}{H}}{r^{3+\theta}}.
 \end{equation}
 If we combine~\cref{eq:N1:diff:1,eq:N1:diff:2,eq:N1:diff:3} the claim follows because $\theta>0$.
\end{proof}

\begin{lemma}\label{Lem:N2:difference}
 For each $\alpha\in(0,1)$ there exists a constant $C_{\alpha}>0$ such that it holds
 \begin{equation*}
  \sup_{r\geq 1}\Bigl(r^{2+\theta}\abs*{N_{2}[G,H](r)-N_{2}[G,H](r+1)}\Bigr)\leq C_{\alpha}\lfnorm*{2}{0}{\theta}{G}\lfnorm*{2}{0}{\theta}{H},
 \end{equation*}
for all $G,H\in\X{2}{0}{\theta}$.
\end{lemma}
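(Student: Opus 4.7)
The plan is to mirror the three-term decomposition used in the proof of \cref{Lem:N1:difference}. Using the identity $\frac{1}{r}=\frac{1}{r+1}+\frac{1}{r(r+1)}$ together with adding and subtracting the mixed expression $G'(\xi+r+1)(H'(\eta)-H'(\eta+r))$ inside the integral defining $N_{2}$, I would write $N_{2}[G,H](r) - N_{2}[G,H](r+1) = (I)+(II)+(III)$ with
\begin{align*}
(I)&=\frac{1}{r}\int_{0}^{\infty}\int_{0}^{\infty}\Ker(\xi,\eta)\bigl(G'(\xi+r)-G'(\xi+r+1)\bigr)\bigl(H'(\eta)-H'(\eta+r)\bigr)\deta\dxi,\\
(II)&=\frac{1}{r}\int_{0}^{\infty}\int_{0}^{\infty}\Ker(\xi,\eta)G'(\xi+r+1)\bigl(H'(\eta+r+1)-H'(\eta+r)\bigr)\deta\dxi,\\
(III)&=\frac{1}{r(r+1)}\int_{0}^{\infty}\int_{0}^{\infty}\Ker(\xi,\eta)G'(\xi+r+1)\bigl(H'(\eta)-H'(\eta+r+1)\bigr)\deta\dxi.
\end{align*}
The verification that $(I)+(II)+(III)$ equals $N_{2}[G,H](r)-N_{2}[G,H](r+1)$ is a telescoping algebraic identity identical in structure to the one carried out for \cref{Lem:N1:difference}.

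To estimate the three pieces I would proceed as follows. For $(I)$, the mean value theorem applied to $G'$ together with the bound on $G''$ gives $\abs{G'(\xi+r)-G'(\xi+r+1)}\leq C\lsnorm*{2}{0}{\theta}{G}(\xi+r)^{-2-\theta}$ for $r\geq 1$, while the factor $H'(\eta)-H'(\eta+r)$ is controlled by the triangle-inequality bound $C\lsnorm*{1}{0}{\theta}{H}\eta^{\rho-1}(1+\eta)^{-\theta-\rho}$ already used in \cref{Lem:N:est:large}. For $(II)$ the roles of $G$ and $H$ swap: mean value applied to $H'$ yields $\abs{H'(\eta+r+1)-H'(\eta+r)}\leq C\lsnorm*{2}{0}{\theta}{H}(\eta+r)^{-2-\theta}$, and after combining with the standard bound on $G'(\xi+r+1)$ I would rescale $\xi=r\xi'$, $\eta=r\eta'$ and use the $(-1)$-homogeneity of $\Ker$ together with \cref{Lem:Ker:est:most:general}, exactly as for $(II)$ in \cref{Lem:N1:difference}; this yields a bound of order $r^{-3-2\theta}$. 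For $(III)$ only the standard bounds on $G'(\xi+r+1)$ and on the difference $H'(\eta)-H'(\eta+r+1)$ are needed, and the additional prefactor $1/(r+1)\leq 1/r$ in front gains precisely one power of $r$ over the $N_{2}$-estimate from \cref{Lem:N:est:large}.

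The only bookkeeping needed is the kernel estimate
\begin{equation*}
\int_{0}^{\infty}\int_{0}^{\infty}\frac{\abs{\Ker(\xi,\eta)}}{(\xi+r)^{k+\theta}\eta^{1-\rho}(1+\eta)^{\theta+\rho}}\deta\dxi\leq Cr^{-k-\theta+\alpha},
\end{equation*}
which for $k=1$ is the integral already controlled in the proof of \cref{Lem:N:est:large} for $N_{2}$, and for $k=2$ (needed in $(I)$) follows by the same case-split $\eta'\lessgtr 1/r$ and scaling argument. Combining, each of $(I)$, $(II)$, $(III)$ is bounded by $C_{\alpha}\lfnorm*{2}{0}{\theta}{G}\lfnorm*{2}{0}{\theta}{H}\,r^{-2-\theta}$ for $r\geq 1$, using $\alpha<1$ to absorb the $r^{\alpha}$-loss from the $\Ker$-integral. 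Morally, the extra derivative gained by mean value in $(I)$ and $(II)$, and the extra $r^{-1}$ in $(III)$, each compensate exactly the $r^{\alpha}$-loss that was present in the $N_{2}$-bound of \cref{Lem:N:est:large} relative to the target $r^{-2-\theta}$; no genuinely new estimate beyond those already developed for the $N_{1}$-difference is required, and so the only honest obstacle is the careful exponent-bookkeeping just described.
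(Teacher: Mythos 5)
Your proof is correct, but it uses a different decomposition from the paper's. The paper writes $N_{2}[G,H](r)-N_{2}[G,H](r+1)=(I)+(II)$ with only two terms: $(I)$ is identical to yours, while the paper's $(II)$ equals the sum of your $(II)$ and $(III)$, namely
\begin{equation*}
 \int_{0}^{\infty}\int_{0}^{\infty}\Ker(\xi,\eta)\,G'(\xi+r+1)\biggl(\frac{H'(\eta)-H'(\eta+r)}{r}-\frac{H'(\eta)-H'(\eta+r+1)}{r+1}\biggr)\deta\dxi,
\end{equation*}
which is then re-expressed as $\int_{\eta+r}^{\eta+r+1}\del_{s}\bigl(H'(s)/(s-\eta)\bigr)\ds+H'(\eta)/(r(r+1))$ before estimating. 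Your three-term decomposition reproduces verbatim the structure used for $N_{1}$ in \cref{Lem:N1:difference}, which makes the two lemmas uniform, avoids the extra calculus identity, and isolates exactly which piece costs a second derivative of $H$ (your $(II)$, via the mean-value bound $\abs{H'(\eta+r+1)-H'(\eta+r)}\leq C\lsnorm*{2}{0}{\theta}{H}(\eta+r)^{-2-\theta}$) and which pieces only use first-derivative information. Both approaches end up invoking $\lfnorm*{2}{0}{\theta}{H}$ --- the paper because $H''$ appears in $\del_{s}\bigl(H'(s)/(s-\eta)\bigr)$, you through the mean-value estimate --- so the regularity cost is the same. The kernel integrals you need are exactly those of \cref{Lem:Ker:est:large:1} (for $k=1,2$) and \cref{Lem:Ker:est:most:general}, so nothing new is required, and your exponent bookkeeping (using $r\geq 1$ and $\alpha<1$ so that the extra negative power of $r$ absorbs the $r^{\alpha}$ loss from the $\Ker$-integral) lands precisely on the target decay $r^{-2-\theta}$.
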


\begin{proof}
 We assume $r\geq 1$ and recall~\eqref{eq:kernel:BW} to rewrite
 \begin{equation}\label{eq:N2:diff:0}
  \begin{split}
   &\phantom{{}={}}N_{2}[G,H](r)-N_{2}[G,H](r+1)\\
   &=\int_{0}^{\infty}\int_{0}^{\infty}\Ker(\xi,\eta)\bigl(G'(\xi+r)-G'(\xi+r+1)\bigr)\frac{H'(\eta)-H'(\eta+r)}{r}\deta\dxi\\
   &\qquad +\int_{0}^{\infty}\int_{0}^{\infty}\Ker(\xi,\eta)G'(\xi+r+1)\biggl(\frac{H'(\eta)-H'(\eta+r)}{r}-\frac{H'(\eta)-H'(\eta+r+1)}{r+1}\biggr)\deta\dxi\\
   &=\vcc (I)+(II).
  \end{split}
 \end{equation}
To estimate the first integral $(I)$ on the right-hand side, we note that due to~\cref{eq:difference:simple,eq:est:norm:difference} it holds
\begin{equation*}
 \abs*{G'(\xi+r+1)-G'(\xi+r)}\leq C\frac{\lsnorm*{2}{0}{\theta}{G}}{(\xi+r)^{2+\theta}}\quad \text{and}\quad \abs*{H'(\eta)-H'(\eta+r)}\leq C\frac{\lsnorm*{1}{0}{\theta}{H}}{\eta^{1-\rho}(1+\eta)^{\theta+\rho}}.
\end{equation*}
Together with Lemma~\ref{Lem:Ker:est:large:1} this now yields
\begin{equation}\label{eq:N2:diff:1}
 \begin{split}
  \abs*{(I)}&\leq \frac{\lfnorm*{2}{0}{\theta}{G}\lfnorm*{1}{0}{\theta}{H}}{r}\int_{0}^{\infty}\int_{0}^{\infty}\frac{\abs*{\Ker(\xi,\eta)}}{(\xi+r)^{2+\theta}\eta^{1-\rho}(\eta+1)^{\theta+\rho}}\deta\dxi\leq \frac{\lfnorm*{2}{0}{\theta}{G}\lfnorm*{1}{0}{\theta}{H}}{r^{3+\theta-\alpha}}.
 \end{split}
\end{equation}
To estimate the second expression, i.e.\@ $(II)$, we have to rewrite further. Precisely, it holds
\begin{multline*}
 \frac{H'(\eta)-H'(\eta+r)}{r}-\frac{H'(\eta)-H'(\eta+r+1)}{r+1}\\*
 =\int_{\eta+r}^{\eta+r+1}\del_{s}\biggl(\frac{H'(s)}{s-\eta}\biggr)\ds+\frac{H'(\eta)}{r(r+1)}=\int_{\eta+r}^{\eta+r+1}\frac{H''(s)}{s-\eta}-\frac{H'(s)}{(s-\eta)^2}\ds+\frac{H'(\eta)}{r(r+1)}.
\end{multline*}
Together with~\cref{eq:est:weight,eq:est:by:norm} we thus find
\begin{multline*}
 \phantom{{}\leq{}}\abs*{\frac{H'(\eta)-H'(\eta+r)}{r}-\frac{H'(\eta)-H'(\eta+r+1)}{r+1}}\\*
 \shoveleft{\leq \lfnorm*{2}{0}{\theta}{H}\int_{\eta+r}^{\eta+r+1}\frac{1}{(s-\eta)s^{2+\theta}}+\frac{1}{(s-\eta)^{2}s^{1+\theta}}\ds+C\frac{\lsnorm*{1}{0}{\theta}{H}}{r^2\eta^{1-\rho}(1+\eta)^{\theta+\rho}}}\\*
 \leq C\lfnorm*{2}{0}{\theta}{H}\biggl(\frac{1}{r(\eta+r)^{2+\theta}}+\frac{1}{r^{2}(\eta+r)^{1+\theta}}+\frac{1}{r^{2}\eta^{1-\rho}(\eta+1)^{\theta+\rho}}\biggr).
\end{multline*}
Together with $(\eta+r)^{-1}\leq r^{-1}$ and $(\eta+r)^{-1-\theta}\leq (\eta+1)^{-\theta-\rho}\eta^{\rho-1}$ for $r\geq 1$ and $\rho<1$ we finally get
\begin{equation*}
 \abs*{\frac{H'(\eta)-H'(\eta+r)}{r}-\frac{H'(\eta)-H'(\eta+r+1)}{r+1}}\leq C\frac{\lfnorm*{2}{0}{\theta}{H}}{r^2}\frac{1}{\eta^{1-\rho}(\eta+1)^{\theta+\rho}}.
\end{equation*}
To estimate the integral $(II)$ we take further into account that~\eqref{eq:est:by:norm} yields $\abs*{G'(\xi+r+1)}\leq C\lsnorm*{1}{0}{\theta}{G}(\xi+r)^{-1-\theta}$ for $r\geq 1$ such that we can conclude with Lemma~\ref{Lem:Ker:est:large:1} that
\begin{equation*}
 \begin{split}
  \abs*{(II)}\leq C\frac{\lsnorm*{1}{0}{\theta}{G}\lfnorm*{2}{0}{\theta}{H}}{r^2}\int_{0}^{\infty}\int_{0}^{\infty}\frac{\abs*{\Ker(\xi,\eta)}}{(\xi+r)^{1+\theta}\eta^{1-\rho}(\eta+1)^{\theta+\rho}}\deta\dxi\leq C\frac{\lsnorm*{1}{0}{\theta}{G}\lfnorm*{2}{0}{\theta}{H}}{r^{3+\theta-\alpha}}.
 \end{split}
\end{equation*}
Since $\alpha<1$ and $r\geq 1$ the claim now follows if we combine this with~\cref{eq:N2:diff:0,eq:N2:diff:1}.
\end{proof}

\begin{proof}[Proof of Proposition~\ref{Prop:improved:regularity}]
 If we recall~\eqref{eq:BW:with:kernel} one easily checks that 
 \begin{equation*}
  \mathcal{B}_{W}(G,H)(q)-\mathcal{B}_{W}(G,H)(q+1)=\int_{q}^{\infty}\int_{p}^{\infty}\bigl(N[G,H](r)-N[G,H](r+1)\bigr)\dr\dd{p}.
 \end{equation*}
 From the structure of the right-hand side together with Lemma~\ref{Lem:seminorm:improved} we deduce that it suffices to prove the estimate
 \begin{equation*}
  \abs*{N[G,H](r)-N[G,H](r+1)}\leq C_{\nu}\lfnorm*{2}{0}{\theta}{G}\lfnorm*{2}{0}{\theta}{H}\weight{\rho+\mu-2}{\theta+2}(r)\quad \text{for all }r>0.
 \end{equation*}
 To do so, we consider $r\leq 1$ and $r>1$ separately and rely on \cref{Lem:est:N:small,Lem:N:est:large,Lem:N1:difference,Lem:N2:difference}. In more detail, for $r\leq 1$ it follows from \cref{Lem:est:N:small,Lem:N:est:large} and estimating by the most singular term that
 \begin{equation*}
  \begin{split}
   \abs*{N[G,H](r)-N[G,H](r+1)}&\leq C_{\mu}\lfnorm*{2}{0}{\theta}{G}\lfnorm*{2}{0}{\theta}{H}\bigl(r^{\rho+\mu-2}+(r+1)^{\alpha-\theta-2}\bigr)\\
   &\leq C_{\mu}\lfnorm*{2}{0}{\theta}{G}\lfnorm*{2}{0}{\theta}{H}r^{\rho+\mu-2}.
  \end{split}
 \end{equation*}
 Conversely, \cref{Lem:N1:difference,Lem:N2:difference} together already yield
 \begin{equation*}
  \abs*{N[G,H](r)-N[G,H](r+1)}\leq C_{\mu}\lfnorm*{2}{0}{\theta}{G}\lfnorm*{2}{0}{\theta}{H}r^{-2-\theta}\quad \text{for }r\geq 1.
 \end{equation*}
 This then finishes the proof.
\end{proof}

\section{Linearised coagulation operator -- Proof of Proposition~\ref{Prop:linearised:operator}}\label{Sec:linearised:operator}

 This section is devoted to the proof of Proposition~\ref{Prop:linearised:operator} which states that $\LL$ is continuous and invertible with bounded inverse.

 \begin{proof}[Proof of Proposition~\ref{Prop:linearised:operator}]
  We split the proof in three main steps. First, we will show that $\LL$ is a well defined continuous operator from $\X{k}{\mu}{\chi}$ into itself. In the second step, we will compute an explicit formula for the inverse operator $\LL^{-1}$ while in the last step we then show that the inverse is again bounded as operator from $\X{k}{\mu}{\chi}$ into itself.
  
  \paragraph{Step 1:}    
   The boundedness and well-definedness of $\LL$ will be a straightforward consequence of~\cref{Lem:est:Arho,Lem:est:B2,Lem:comparison:norms}. Precisely, we have
   \begin{equation*}
    \begin{split}
     \lfnorm*{k}{\mu}{\chi}{\LL(G)}&\leq \lfnorm*{k}{\mu}{\chi}{G}+\lfnorm*{k}{\mu}{\chi}{\mathcal{A}_{\rho}(G)}+\lfnorm*{k}{\mu}{\chi}{\mathcal{B}_{2}(\bar{F},G)}+\lfnorm*{k}{\mu}{\chi}{\mathcal{B}_{2}(G,\bar{F})}\\
     &\leq C\Bigl(1+\lfnorm*{1}{0}{\chi}{\bar{F}}\Bigr)\lfnorm*{k}{\mu}{\chi}{G}.
    \end{split}
   \end{equation*}
 Since $\lfnorm*{1}{0}{\chi}{\bar{F}}\leq C$ due to Remark~\ref{Rem:explicit:profiles}, it follows
 \begin{equation*}
  \lfnorm*{k}{\mu}{\chi}{\LL(G)}\leq C\lfnorm*{k}{\mu}{\chi}{G}.
 \end{equation*}
 This shows that $\LL$ is well-defined and bounded.
 
 \paragraph{Step 2:} We will now derive and explicit formula for the inverse of $\LL$. More precisely, we will solve the equation $\LL(G)=H$. To do so we recall the definition of $\LL$ in~\eqref{eq:def:lin:op}, plug in the integral expressions for $\mathcal{A}_{\rho}$ and $\mathcal{B}_{2}$ and take the second derivative on both sides which leads to the following non-local ordinary differential equation
 \begin{equation}\label{eq:inversion:ode}
  G''(q)+\frac{1-\rho}{q}G'(q)-2\rho^2\frac{q^{\rho-2}}{(1+q^{\rho})^2}\bigl(G(0)-G(q)\bigr)+2\rho\frac{q^{\rho-1}}{1+q^{\rho}}G'(q)=H''(q).
 \end{equation}
 As already mentioned, the general strategy to get a formula for $\LL^{-1}$ will be to solve this equation for $G$. However, at this stage, this is only possible on a formal level, i.e.\@ to solve this equation, we have to impose certain regularity/boundary conditions on $G$ to obtain a consistent solution in $\X{k}{\mu}{\chi}$ while we have to verify later on in Step~3 that these assumptions are really satisfied by our formula.

 We first define the new unknown function $U(q)\vcc=G(q)-G(0)$ and note that by elementary manipulations~\eqref{eq:inversion:ode} can be rewritten as
 \begin{equation*}
  \frac{\dd}{\dd{q}}\bigl(qU'(q)\bigr)-\rho U'(q)+2\rho\frac{\dd}{\dd{q}}\biggl(\frac{q^{\rho}}{1+q^{\rho}}U(q)\biggr)=\frac{\dd}{\dd{q}}\bigl(qH'(q)\bigr)-H'(q).
 \end{equation*}
 The assumption $G\in\X{k}{\mu}{\chi}$ yields in particular that $qU'(q),U(q)\to 0$ for $q\to 0$ which allows to integrate the previous equation over $[0,q]$ and we thus find
 \begin{equation}\label{eq:inversion:ode:2}
  qU'(q)-\rho U(q)+2\rho \frac{q^{\rho}}{1+q^{\rho}}U(q)=qG'(q)-G(q)+G(0).
 \end{equation}
 The integrating factor $q^{-1-\rho}(1+q^{\rho})^2$ allows to rewrite this further as
 \begin{equation*}
  \frac{\dd}{\dd{q}}\biggl(\frac{(q^{\rho}+1)^{2}}{q^{\rho}}U(q)\biggr)=\frac{(q^{\rho}+1)^2}{q^{\rho}}H'(q)+\frac{(q^{\rho}+1)^{2}}{q^{\rho+1}}\bigl(H(q)-H(0)\bigr).
 \end{equation*}
 Since $\mu>0$ we can again exploit the assumption $G\in\X{k}{\mu}{\chi}$ to see that $q^{-\rho}U(q)\to 0$ for $q\to 0$ such that we can integrate once more over $[0,q]$ which yields
 \begin{equation}\label{eq:inversion:ode:3}
  U(q)=\frac{q^{\rho}}{(1+q^{\rho})^2}\int_{0}^{q}\frac{(1+r^{\rho})^2}{r^{\rho}}H'(r)+\frac{(1+r^{\rho})^2}{r^{1+\rho}}\bigl(H(0)-H(r)\bigr)\dr.
 \end{equation}
 We note that for $G\in\X{k}{\mu}{\chi}$ the integral on the right-hand side is well-defined as will be shown in Step~3. This is not yet the desired expression for $\LL^{-1}$ since we still have to recover from this a formula for $G(q)$. This however, requires to determine the value of $G(0)$. To do so, we note that by assumption $H\in\X{k}{\mu}{\chi}$ which yields $qH'(q),H(q)\to 0$ for $q\to\infty$. Since we want $G$ to satisfy $G\in\X{k}{\mu}{\chi}$, we expect in the same way that $qG'(q),G(q)\to 0$ for $q\to \infty$. Thus, it holds by definition of $U$ that $U(q)\to -G(0)$ for $q\to\infty$. If we now note that $U'=G'$ and take the limit $q\to\infty$ in~\eqref{eq:inversion:ode:2} it follows that for a consistent solutions we should choose
 \begin{equation*}
  G(0)=-\frac{H(0)}{\rho}.
 \end{equation*}
 With this, $U(q)=G(q)-G(0)$ and~\eqref{eq:inversion:ode:3} it follows that the inverse operator $\LL^{-1}$ is formally given as
 \begin{equation*}
  \LL^{-1}(H)=-\frac{H(0)}{\rho}+\frac{q^{\rho}}{(1+q^{\rho})^2}\int_{0}^{q}\frac{(1+r^{\rho})^2}{r^{\rho}}H'(r)+\frac{(1+r^{\rho})^2}{r^{1+\rho}}\bigl(H(0)-H(r)\bigr)\dr.
 \end{equation*}
 We now further rewrite this expression because we have have to exploit some cancellation that takes place for large values of $q$ when we want to show the boundedness of $\LL^{-1}$. More precisely, we split the prefactor $(1+r^{\rho})^2=r^{2\rho}+(2r^{\rho}+1)$ in front of $(H(0)-H(r))$ which yields after some elementary rearrangement that
 \begin{multline}\label{eq:inverse:operator}
  \LL^{-1}(H)(q)=-\frac{H(0)}{\rho}\frac{2q^{\rho}+1}{(1+q^{\rho})^2}\\*
  +\frac{q^{\rho}}{(1+q^{\rho})^2}\int_{0}^{q}\frac{(1+r^{\rho})^2}{r^{\rho}}H'(r)-r^{\rho-1}H(r)+\frac{2r^{\rho}+1}{r^{1+\rho}}\bigl(H(0)-H(r)\bigr)\dr.
 \end{multline}

 \paragraph{Step 3:} 
 To conclude the proof we show that the formula~\eqref{eq:inverse:operator} that we derived in the previous step is well-defined and really defines a bounded operator on $\X{k}{\mu}{\chi}$. For this aim we introduce some notation, namely
 \begin{equation}\label{eq:inverse:def:A:B}
  A(q)\vcc=\frac{2q^{\rho}+1}{(1+q^{\rho})^2}\quad \text{and}\quad B(q)\vcc=\frac{q^{\rho}}{(1+q^{\rho})^2}.
 \end{equation}
 Moreover, we note that we have the following estimates which can be verified by an explicit computation. First of all it holds
 \begin{equation}\label{eq:invers:A:1}
  \abs*{A(q)}\leq C\weight{0}{\rho}(q)\quad \text{and}\quad \abs{\del_{q}^{k}A(q)}\leq C\weight{2\rho-k}{\rho+k}(q)\quad \text{for }k=1,2.
 \end{equation}
 Moreover, we have
 \begin{equation}\label{eq:inverse:B:1}
  \abs[\big]{\bigl(B(q)\bigr)^{-1}}\leq C\weight{-\rho}{-\rho}(q)\quad \text{and}\quad \abs*{\del_{q}^{k}B(q)}\leq C\weight{\rho-k}{\rho+k}(q)\quad \text{for }k=0,1,2.
 \end{equation}
 In terms of $A$ and $B$, we can now rewrite~\eqref{eq:inverse:operator} as
 \begin{equation}\label{eq:inverse:operator:simple}
  \LL^{-1}(H)(q)=-\frac{H(0)}{\rho}A(q)+B(q)\int_{0}^{q}\bigl(B(r)\bigr)^{-1}H'(r)-r^{\rho-1}H(r)+\frac{2r^{\rho}+1}{r^{1+\rho}}\bigl(H(0)-H(r)\bigr)\dr.
 \end{equation}
 To show that $\LL^{-1}$ is well-defined and bounded, it suffices to prove that
 \begin{equation*}
  \abs{\LL^{-1}(H)(q)}\leq C\lfnorm*{1}{\mu}{\chi}{H}\weight{0}{\chi}(q)\quad \text{and}\quad \abs{\del_{q}^{k}\LL^{-1}(H)(q)}\leq C\lfnorm*{k}{\mu}{\chi}{H}\weight{\rho+\mu-k}{\chi+k}(q)
 \end{equation*}
 hold true for $k=1,2$ because of~\eqref{eq:est:weight}.
 
 Due to~\eqref{eq:invers:A:1} we immediately get
 \begin{equation*}
  \abs*{\frac{H(0)}{\rho}A(q)}\leq C\lfnorm*{0}{-\rho}{\chi}{H}\weight{0}{\rho}(q)\; \text{ and }\; \abs*{\frac{H(0)}{\rho}\del_{q}^{k}A(q)}\leq C\lfnorm*{0}{-\rho}{\chi}{H}\weight{2\rho-k}{\rho+k}(q)\;\text{for }k=1,2.
 \end{equation*}
 In view of Lemm~\ref{Lem:comparison:norms} and the assumptions $\mu\in(0,\mu_{*})$ and $\chi\in(0,\rho)$ we only have to consider the integral on the right-hand side of~\eqref{eq:inverse:operator:simple}. More precisely, it is sufficient if we prove the estimate
 \begin{multline}\label{eq:inverse:bounded:1}
   \abs*{\del_{q}^{k}\biggl(B(q)\int_{0}^{q}\bigl(B(r)\bigr)^{-1}H'(r)-r^{\rho-1}H(r)+\frac{2r^{\rho}+1}{r^{1+\rho}}\bigl(H(0)-H(r)\bigr)\dr\biggr)}\\*
   \leq C\lfnorm*{\max\{1,k\}}{\mu}{\chi}{H}\weight{\rho+\mu-k}{\chi+k}(q)
 \end{multline}
for $k=0,1,2$. We also remark that for $k=0$ it would even be enough to estimate the left-hand side by $C\lfnorm*{1}{\mu}{\chi}{H}\weight{0}{\chi}(q)$ but it turns out that we in fact get the better estimate shown in~\eqref{eq:inverse:bounded:1}.

In order to verify~\eqref{eq:inverse:bounded:1} we note that~\cref{eq:weight:mult,eq:est:weight,eq:inverse:B:1,Lem:difference:regularised} imply
\begin{equation}\label{eq:inverse:est:integrand}
 \begin{split}
  &\phantom{{}\leq{}}\abs*{\bigl(B(r)\bigr)^{-1}H'(r)-r^{\rho-1}H(r)+\frac{2r^{\rho}+1}{r^{1+\rho}}\bigl(H(0)-H(r)\bigr)}\\
  &\leq C\lfnorm*{1}{\mu}{\chi}{H}\Bigl(\weight{-\rho}{-\rho}(r)\weight{\rho+\mu-1}{\chi+1}(r)+\weight{\rho-1}{1-\rho}(r)\weight{0}{\chi}(r)+\weight{-\rho-1}{1}(r)\weight{\rho+\mu}{0}(r)\Bigr)\\
  &=C\lfnorm*{1}{\mu}{\chi}{H}\Bigl(\weight{\mu-1}{\chi+1-\rho}(r)+\weight{\rho-1}{\chi+1-\rho}(r)+\weight{\mu-1}{1}(r)\Bigr)\leq C\lfnorm*{1}{\mu}{\chi}{H}\weight{\mu-1}{\chi+1-\rho}(r).
 \end{split}
\end{equation}
In the last step we additionally used~\eqref{eq:weight:monotonicity} as well as $\mu\in(0,\mu_{*})$ and $\chi\in(0,\rho)$. Together with~\cref{eq:weight:int:low,eq:weight:mult,eq:inverse:B:1} we thus deduce that it holds
\begin{multline*}
 \abs*{B(q)\int_{0}^{q}\bigl(B(r)\bigr)^{-1}H'(r)-r^{\rho-1}H(r)+\frac{2r^{\rho}+1}{r^{1+\rho}}\bigl(H(0)-H(r)\bigr)\dr}\\*
 \leq C\lfnorm*{1}{\mu}{\chi}{H}\weight{\rho}{\rho}(q)\weight{\mu}{\chi-\rho}(q)\leq C\lfnorm*{1}{\mu}{\chi}{H}\weight{\rho+\mu}{\chi}(q).
\end{multline*}
This then shows~\eqref{eq:inverse:bounded:1} for $k=0$.

To treat the case $k=1$, we compute the first derivative and recall also~\eqref{eq:inverse:def:A:B} to obtain
\begin{multline*}
 \del_{q}\biggl(B(q)\int_{0}^{q}\bigl(B(r)\bigr)^{-1}H'(r)-r^{\rho-1}H(r)+\frac{2r^{\rho}+1}{r^{1+\rho}}\bigl(H(0)-H(r)\bigr)\dr\biggr)\\*
 =B'(q)\int_{0}^{q}\bigl(B(r)\bigr)^{-1}H'(r)-r^{\rho-1}H(r)+\frac{2r^{\rho}+1}{r^{1+\rho}}\bigl(H(0)-H(r)\bigr)\dr\\*
 +H'(q)-\frac{q^{2\rho-2}}{(1+q^{\rho})^2}H(q)+\frac{2q^{\rho}+1}{q(1+q^{\rho})^2}\bigl(H(0)-H(q)\bigr).
\end{multline*}
The estimate~\eqref{eq:inverse:bounded:1} follows now similarly as in the case $k=0$ by means of~\cref{eq:weight:mult,eq:est:by:norm,eq:inverse:B:1,eq:inverse:est:integrand,Lem:difference:regularised}.

The case $k=2$ can be obtained in the same way, i.e.\@ one first computes the second derivative explicitly and then estimates the different terms separately.  
 \end{proof}

\section{Uniform bounds on self-similar profiles}\label{Sec:uniform:convergence}

In this section we will show the uniform convergence of perturbed self-similar profiles to the unperturbed one in Laplace variables. For this we follow a similar approach as in~\cite{NiV14a} (see also~\cite{NTV15}) and therefore we also also keep most of the notation as there. Precisely, for a solution $f$ of~\eqref{eq:self:sim} we denote by $Q$ the desingularised Laplace transform, which is defined by
\begin{equation*}
 Q(q)=\int_{0}^{\infty}(1-\ee^{-qx})f(x)\dx.
\end{equation*}
Due to Definition~\ref{Def:profile} the function $Q$ is well-defined for $q\in[0,\infty)$ and moreover twice differentiable for $q\in(0,\infty)$ with derivatives
\begin{equation}\label{eq:des:Lap:trans:der}
 Q'(q)=\int_{0}^{\infty}\ee^{-qx}xf(x)\dx\quad \text{and}\quad Q''(q)=-\int_{0}^{\infty}\ee^{-qx}x^2f(x)\dx.
\end{equation}
For later use, we also note that $Q(0)=0$ and $Q(q),Q'(q)>0$ for all $q>0$, i.e.\@ $Q$ is in particular strictly increasing.

We also recall the analogous definition for $\bar{f}$ in~\eqref{eq:explicit:bar:quantities}. Moreover, we recall the definition of the Laplace transform $\T$ in~\eqref{eq:Def:T} and note that it holds $\bar{F}=\T \bar{f}$.
\begin{remark}
 Note that at this point, i.e.\@ from Definition~\ref{Def:profile} it is not clear that $\T f$ exists for each self-similar profile $f$ but we will show in Lemma~\ref{Lem:apriori} that this is really the case.
\end{remark}

Finally we introduce the following non-linear operator
\begin{equation*}
 \Pert(f,f)(q)\vcc=\frac{1}{2}\int_{0}^{\infty}\int_{0}^{\infty}W(x,y)f(x)f(y)(1-\ee^{-qx})(1-\ee^{-qy})\dy\dx,
\end{equation*}
which arises naturally when one applies the Laplace transform to equation~\eqref{eq:self:sim}. Again, it follows immediately from Definition~\ref{Def:profile} that $\Pert(f,f)$ is well-defined for any self-similar profile and it holds $\Pert(f,f)\geq 0$.

\subsection{A priori estimates for self-similar profiles}

As a first step, we derive an estimate on the non-linear operator $\Pert(f,f)$ in Laplace variables.

\begin{lemma}\label{Lem:Pert:estimate:Laplace:var}
 For each $\nu\in(0,\rho-\alpha)$ there exists a constant $C_{\nu}>0$ such that it holds
 \begin{equation*}
  \Pert(f,f)(q)\leq C_{\nu}q^{\min\{2\rho-2\nu,1+\rho-\alpha-\nu\}} \quad \text{for all }q>0
 \end{equation*}
 and for each solution $f$ of~\eqref{eq:self:sim}. In particular it holds $\Pert(f,f)(q)\to 0$ for $q\to 0$.
\end{lemma}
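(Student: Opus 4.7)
The plan is to reduce the double integral defining $\Pert(f,f)$ to the product of two one-dimensional moment integrals, which is possible thanks to the product structure of the upper bound on $W$ in~\eqref{eq:Ass:basic}. After symmetrising in $(x,y)$ and using $W(x,y)\le(x/y)^{\alpha}+(y/x)^{\alpha}$, I would obtain
\begin{equation*}
 \Pert(f,f)(q)\leq \biggl(\int_{0}^{\infty}x^{\alpha}f(x)(1-\ee^{-qx})\dx\biggr)\biggl(\int_{0}^{\infty}y^{-\alpha}f(y)(1-\ee^{-qy})\dy\biggr).
\end{equation*}

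Next, I would use the elementary inequality $1-\ee^{-t}\leq t^{\beta}$ valid for every $\beta\in[0,1]$ and $t\geq 0$ (which follows from $1-\ee^{-t}\leq\min\{t,1\}\leq t^{\beta}$), applied with two exponents $\beta,\beta'\in[0,1]$ chosen independently in the two factors. This immediately gives
\begin{equation*}
 \Pert(f,f)(q)\leq q^{\beta+\beta'}\mom_{\alpha+\beta}\mom_{\beta'-\alpha},
\end{equation*}
and Lemma~\ref{Lem:moment:est:1} controls the two moments uniformly along the family of self-similar profiles as long as $\alpha+\beta,\beta'-\alpha\in(0,\rho)$.

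What then remains is to optimise $\beta+\beta'$ under the constraints $\beta\in[0,\rho-\alpha)$, $\beta'\in(\alpha,\rho+\alpha)$ and $\beta,\beta'\in[0,1]$. Setting $\beta=\rho-\alpha-\nu$ and $\beta'=\min\{1,\rho+\alpha-\nu\}$ produces precisely the exponent $\min\{2\rho-2\nu,\,1+\rho-\alpha-\nu\}$: if $\rho+\alpha\leq 1$ both choices are interior, whereas if $\rho+\alpha>1$ (so in particular $\rho>1-\alpha$, which is exactly the condition ensuring $\mom_{1-\alpha}$ is still controlled by Lemma~\ref{Lem:moment:est:1}) the bound $\beta'=1$ becomes optimal. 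In either regime both candidate exponents are strictly positive for $\nu$ small, and the statement $\Pert(f,f)(q)\to 0$ as $q\to 0$ is then immediate.

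No step really presents an obstacle; the only subtlety is keeping track of the two regimes in the optimisation so that simultaneously the elementary inequality applies (requiring $\beta'\leq 1$) and the moment is finite (requiring $\beta'-\alpha<\rho$), which is exactly what produces the two terms in the minimum.
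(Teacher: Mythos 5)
Your proof is correct and follows essentially the same route as the paper: the same symmetrisation producing the product of moment-type integrals, the same elementary inequality $1-\ee^{-t}\le t^{\beta}$ applied with the two exponents $\rho-\alpha-\nu$ and $\min\{1,\rho+\alpha-\nu\}$, and the same appeal to Lemma~\ref{Lem:moment:est:1}. The only difference is presentational: you phrase the choice of exponents as an optimisation under the admissibility constraints, which nicely explains where the $\min$ in the final exponent comes from, but the resulting bound and its justification coincide with the paper's.
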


\begin{proof}
 Due to the non-negativity of $f$ and~\eqref{eq:Ass:basic} it follows by means of a symmetry argument that
 \begin{equation*}
  \begin{split}
   \Pert(f,f)&\leq \frac{1}{2}\int_{0}^{\infty}\int_{0}^{\infty}\biggl(\Bigl(\frac{x}{y}\Bigr)^{\alpha}+\Bigl(\frac{y}{x}\Bigr)^{\alpha}\biggr)f(x)f(y)(1-\ee^{-qx})(1-\ee^{-qy})\dy\dx\\
   &=\int_{0}^{\infty}x^{\alpha}f(x)(1-\ee^{-qx})\dx\int_{0}^{\infty}y^{-\alpha}f(y)(1-\ee^{-qy})\dy.
  \end{split}
 \end{equation*}
 The estimate $(1-\ee^{-qz})\leq (qz)^{\beta}$ one time with $\beta=\rho-\alpha-\nu$ and one time with $\beta=\min\{\rho+\alpha-\nu,1\}$ then yields together with Lemma~\ref{Lem:moment:est:1} that
 \begin{equation*}
  \begin{split}
   \Pert(f,f)(q)&\leq q^{\min\{2\rho-2\nu,1+\rho-\alpha-\nu\}}\int_{0}^{\infty}x^{\rho-\nu}f(x)\dx\int_{0}^{\infty}y^{-\alpha+\min\{\rho+\alpha-\nu,1\}}f(y)\dy\\
   &\leq C_{\nu}q^{\min\{2\rho-2\nu,1+\rho-\alpha-\nu\}}
  \end{split}
 \end{equation*}
which shows the claim.
\end{proof}

\begin{remark}\label{Rem:Q:asymptotics}
 Due to the normalisation of self-similar profiles according to their tail-behaviour at infinity, as given in Theorem~\ref{Thm:asymptotics}, it is well-known that this directly translates to the asymptotic behaviour at zero in Laplace variables. Precisely, it holds $Q'(q)\sim \rho^{2}q^{\rho-1}$ as $q\to 0$.
\end{remark}

We now continue, by applying the Laplace transform to~\eqref{eq:self:sim} which yields a differential equation for $Q$.

\begin{lemma}\label{Lem:equation:desingualarised}
 For any solution $f$ of~\eqref{eq:self:sim} the corresponding desingualarised Laplace transform $Q$ satisfies the equation
 \begin{equation}\label{eq:Q}
  -qQ'(q)=-\rho Q(q)+Q^2(q)+\eps\Pert(f,f)(q)
 \end{equation}
 pointwise for each $q>0$.
\end{lemma}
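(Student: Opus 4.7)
The plan is to apply a Laplace transform to equation~\eqref{eq:self:sim} and then recognise the resulting identity as the $q$-derivative of~\eqref{eq:Q}. Multiplying~\eqref{eq:self:sim} by $e^{-qx}$ and integrating over $x\in(0,\infty)$, the left-hand side becomes $\int_0^\infty x^2 e^{-qx} f(x)\dx$, which by~\eqref{eq:des:Lap:trans:der} equals $-Q''(q)$. On the right, Fubini's theorem converts the first term into $(1-\rho)\,q^{-1} Q'(q)$ by interchanging the $x$- and $y$-integrations and using $\int_y^\infty e^{-qx}\dx = q^{-1}e^{-qy}$. For the bilinear term the relevant region of integration is $\{(x,y,z):0\leq y\leq x,\, z\geq x-y\}$, which after Fubini can be parametrised by $x\in[y,y+z]$ for fixed $(y,z)$. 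Using $\int_y^{y+z} e^{-qx}\dx = q^{-1}(e^{-qy}-e^{-q(y+z)})$ this gives
\[
\frac{1}{q}\int_0^\infty \int_0^\infty y\, e^{-qy}\bigl(1-e^{-qz}\bigr) K(y,z) f(y) f(z)\dz\dy.
\]

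Next, I split $K=2+\eps W$. The constant part factorises as $2q^{-1}Q(q)Q'(q)$ thanks to the product structure of the integrand. The $W$-part is, after exploiting the symmetry of $W$ and relabelling one of the two terms in the derivative, precisely $q^{-1}\eps\bigl(\Pert(f,f)\bigr)'(q)$; this matches since
\[
\bigl(\Pert(f,f)\bigr)'(q)=\int_0^\infty\int_0^\infty W(x,y)f(x)f(y)\,x e^{-qx}\bigl(1-e^{-qy}\bigr)\dx\dy.
\]
Multiplying through by $q$ therefore yields
\[
-qQ''(q)=(1-\rho)Q'(q)+2Q(q)Q'(q)+\eps \bigl(\Pert(f,f)\bigr)'(q).
\]

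To conclude, I integrate this identity from $0$ to $q$. Integration by parts on the left gives $\int_0^q -rQ''(r)\dr=-qQ'(q)+Q(q)$, with the boundary term at $r=0$ vanishing because Remark~\ref{Rem:Q:asymptotics} implies $rQ'(r)=O(r^\rho)\to 0$. The three terms on the right integrate to $(1-\rho)Q(q)$, $Q(q)^2$ and $\eps\Pert(f,f)(q)$ respectively, using $Q(0)=0$ and $\Pert(f,f)(0)=0$ (the latter from Lemma~\ref{Lem:Pert:estimate:Laplace:var}). Rearranging produces~\eqref{eq:Q}.

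The main technical task is to justify the Fubini exchanges and the differentiation of $\Pert(f,f)$ under the integral sign. Absolute convergence of all the relevant integrals for each $q>0$ follows from the bound $W(y,z)\leq (y/z)^\alpha+(z/y)^\alpha$ in~\eqref{eq:Ass:basic}, the moment bounds of Lemma~\ref{Lem:moment:est:1}, the local integrability $xf(x)\in L^{1}_{\mathrm{loc}}([0,\infty))$ from Definition~\ref{Def:profile} and the exponential damping factor $e^{-qy}$ for $q>0$; these are essentially routine verifications and do not require new ideas.
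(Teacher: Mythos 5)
Your proof is correct and follows essentially the same line as the paper's: apply the Laplace transform to~\eqref{eq:self:sim}, use Fubini and $K=2+\eps W$ to obtain the differential identity $-qQ''(q)=(1-\rho)Q'(q)+2Q(q)Q'(q)+\eps(\Pert(f,f))'(q)$, then integrate over $[0,q]$ using $Q(0)=0$, $\Pert(f,f)(0)=0$ and $\lim_{r\to 0}rQ'(r)=0$. The only cosmetic difference is that you make the integration by parts on $\int_0^q -rQ''\dr$ explicit, whereas the paper first packages the left-hand side as $-\del_q(qQ'(q))$ and all right-hand terms as exact derivatives before integrating.
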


\begin{proof}
 The proof is a straightforward computation, i.e.\@ we first multiply~\eqref{eq:self:sim} by $\ee^{-qx}$ and integrate over $(0,\infty)$ to find
 \begin{multline*}
  \int_{0}^{\infty} x^2f(x)\ee^{-qx}\dx=(1-\rho)\int_{0}^{\infty}\ee^{-qx}\int_{0}^{x}yf(y)\dy\dx\\*
  +\int_{0}^{\infty}\ee^{-qx}\int_{0}^{x}\int_{x-y}^{\infty}yK(x-y)f(y)f(z)\dz\dy\dx.
 \end{multline*}
 Fubini's Theorem together with~\eqref{eq:des:Lap:trans:der} yields then after evaluating the $x$-integrals that
 \begin{equation*}
  -Q''(q)=\frac{1-\rho}{q}\int_{0}^{\infty}yf(y)\dy+\frac{1}{q}\int_{0}^{\infty}\int_{0}^{\infty}yK(y,z)f(y)f(z)\ee^{-qy}(1-\ee^{-qz})\dz\dy.
 \end{equation*}
 Taking the representation $K=2+\eps W$ into account it follows once more from~\eqref{eq:des:Lap:trans:der} that
 \begin{equation*}
  -qQ''(q)=(1-\rho)Q'(q)+2Q'(q)Q(q)+\eps\int_{0}^{\infty}\int_{0}^{\infty}yW(y,z)f(y)f(z)\ee^{-qy}(1-\ee^{-qz})\dz\dy.
 \end{equation*}
 From the symmetry of the kernel $W$ it follows
 \begin{equation*}
  \frac{\dd}{\dd{q}}\Pert(f,f)(q)=\int_{0}^{\infty}\int_{0}^{\infty}yW(y,z)f(y)f(z)\ee^{-qy}(1-\ee^{-qz})\dz\dy
 \end{equation*}
 which, together with $\del_{q}\bigl(qQ'(q)\bigr)=Q'(q)+qQ''(q)$ yields
 \begin{equation*}
  -\del_{q}\bigl(qQ'(q)\bigr)=-\rho Q'(q)+\del_{q}\bigl(Q^2(q)\bigr)+\eps\del_{q}\Pert(f,f)(q).
 \end{equation*}
Now we use that $Q(0)=0$ and $\Pert(f,f)(q)\to 0$ for $q\to 0$, according to Lemma~\ref{Lem:Pert:estimate:Laplace:var}, as well as $\lim_{q\to 0} qQ'(q)=0$, according to Remark~\ref{Rem:Q:asymptotics}, to integrate this equation over $[0,q]$. It then follows
\begin{equation*}
 -qQ'(q)=-\rho Q(q)+Q^2(q)+\eps \Pert(f,f)(q),
\end{equation*}
 which is the claimed relation.
\end{proof}

As a consequence of Lemma~\ref{Lem:equation:desingualarised} we can now derive several a priori estimates for self-similar profiles which are summarised in the following lemma.

\begin{lemma}\label{Lem:apriori}
 Let $f$ be a solution of~\eqref{eq:self:sim}. The corresponding desingularised Laplace transform $Q$ satisfies
 \begin{itemize}
  \item $Q(q)\leq \frac{\rho q^{\rho}}{1+q^{\rho}}\leq \min\{\rho,\rho q^{\rho}\}$,
  \item $\lim_{q\to\infty} Q(q)\leq \rho$ and thus in particular $\int_{0}^{\infty}f(x)\dx\leq \rho$,
  \item $\sup_{q>0}\abs{q^{1-\rho}Q'(q)}\leq\rho^2$ and $\sup_{q>0}\abs{qQ'(q)}\leq \rho^2$,
  \item $\int_{0}^{\infty}\int_{0}^{\infty}K(x,y)f(x)f(y)\dx\dy<\infty$,
  \item $\lim_{q\to \infty}\Pert(f,f)(q)=\frac{1}{2}\int_{0}^{\infty}\int_{0}^{\infty}W(x,y)f(x)f(y)\dx\dy<\infty$.
 \end{itemize}
\end{lemma}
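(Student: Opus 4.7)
The plan is to derive all five bullets from the single ODE in Lemma~\ref{Lem:equation:desingualarised}, namely $qQ'(q)=\rho Q(q)-Q^{2}(q)-\eps\Pert(f,f)(q)$, exploiting three structural facts: $\Pert(f,f)\geq 0$, $Q'(q)=\int_{0}^{\infty}\ee^{-qx}xf(x)\dx>0$, and the normalisation asymptotic $Q(q)\sim \rho q^{\rho}$ as $q\to 0$ from Remark~\ref{Rem:Q:asymptotics}. First I would verify that $Q(q)<\rho$ on $(0,\infty)$: since $Q(0)=0$ and $Q$ is continuous and strictly increasing, if $Q$ ever attained $\rho$ at a first point $q_{0}$, the ODE would give $q_{0}Q'(q_{0})=-\eps\Pert(f,f)(q_{0})\leq 0$, contradicting the strict positivity of $Q'(q_{0})$.

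To upgrade this to the sharp bound $Q(q)\leq \rho q^{\rho}/(1+q^{\rho})$, I would introduce $\phi(q)\vcc=Q(q)/(\rho-Q(q))$ (positive since $0<Q<\rho$) and compute, using partial fractions and the ODE, that $(\ln\phi)'(q)=\rho Q'(q)/[Q(q)(\rho-Q(q))]\leq \rho/q$. Thus $\ln\phi(q)-\rho\ln q$ is non-increasing; Remark~\ref{Rem:Q:asymptotics} yields $\phi(q)/q^{\rho}\to 1$ at the origin, so this quantity has limit $0$ there and is therefore non-positive throughout $(0,\infty)$, which rearranges to the desired inequality. The pointwise estimate $\rho q^{\rho}/(1+q^{\rho})\leq\min\{\rho,\rho q^{\rho}\}$ is elementary; letting $q\to\infty$ gives $\lim Q\leq \rho$, and monotone convergence in $Q(q)=\int(1-\ee^{-qx})f(x)\dx$ identifies the limit with $\int_{0}^{\infty}f(x)\dx$, proving the second bullet. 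For the third bullet I rewrite the ODE as $qQ'(q)+\eps\Pert(f,f)(q)=Q(q)(\rho-Q(q))$; since both left-hand terms are non-negative, $qQ'(q)\leq Q(\rho-Q)$. Using $Q\leq\rho$ yields $qQ'(q)\leq\rho^{2}$, while using $Q\leq \rho q^{\rho}$ from the first bullet together with $\rho-Q\leq\rho$ gives $q^{1-\rho}Q'(q)\leq\rho^{2}$. For the fifth bullet, $\Pert(f,f)(q)$ is monotone non-decreasing in $q$ (as $(1-\ee^{-qx})(1-\ee^{-qy})$ is) and uniformly bounded by $\eps^{-1}Q(\rho-Q)\leq \rho^{2}/(4\eps)$, so monotone convergence produces the limit, which by Tonelli equals $\tfrac{1}{2}\int_{0}^{\infty}\!\int_{0}^{\infty}W(x,y)f(x)f(y)\dx\dy$. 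The fourth bullet follows by splitting $\int\!\int Kf(x)f(y)\dx\dy=2\bigl(\int f\bigr)^{2}+2\eps\lim_{q\to\infty}\Pert(f,f)(q)$, both summands now being finite.

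The only non-routine step is the comparison argument at the heart of the first bullet: merely knowing $Q(q)=O(q^{\rho})$ would only yield $Q(q)\leq C\rho q^{\rho}/(1+q^{\rho})$ for some constant $C$, not the sharp value $C=1$. What makes the argument close is that Remark~\ref{Rem:Q:asymptotics} pins down the leading coefficient to be exactly $\rho^{2}$, so $\phi(q)/q^{\rho}$ tends to $1$ rather than to an unknown constant; this is a direct consequence of the fixed tail normalisation imposed via Theorem~\ref{Thm:asymptotics}. Everything else is a routine exploitation of the ODE combined with monotone convergence.
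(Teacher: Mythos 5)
Your proof is correct and parallels the paper's argument closely. Both proofs derive all five bullets from the equation $qQ'(q)=\rho Q(q)-Q^2(q)-\eps\Pert(f,f)(q)$ of Lemma~\ref{Lem:equation:desingualarised}, exploiting the non-negativity of $\Pert$ and of $Q^2$, the positivity of $Q'$, and the normalisation asymptotic $Q(q)\sim\rho q^{\rho}$ at the origin. The one genuine difference is how you integrate the Bernoulli-type inequality $qQ'\leq Q(\rho-Q)$ to obtain the first bullet: the paper substitutes $u=q^{-\rho}Q$, passes to $\del_q(u^{-1})\geq q^{\rho-1}$, and integrates, which needs no preliminary observation because $u>0$ is automatic; you instead set $\phi=Q/(\rho-Q)$ and show $\ln\phi-\rho\ln q$ is non-increasing with limit $0$ at the origin, which forces you to first establish the strict bound $Q<\rho$ on $(0,\infty)$ via a first-crossing argument. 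That argument is valid (it uses precisely that $Q'>0$ and $\Pert\geq 0$), but it is an extra step the paper's choice of variable sidesteps. For the third bullet you keep the sharper inequality $qQ'\leq Q(\rho-Q)$ where the paper discards the $Q^2$-term and uses $qQ'\leq\rho Q$; either gives $\rho^2$. For bullets four and five you reverse the paper's order, first bounding $\eps\Pert\leq Q(\rho-Q)\leq\rho^2/4$ directly from the ODE and then writing $\int_0^\infty\!\int_0^\infty K f f=2\bigl(\int f\bigr)^2+2\eps\lim_{q\to\infty}\Pert(f,f)$, whereas the paper first bounds $\int\!\int K(1-\ee^{-qx})(1-\ee^{-qy})ff$ uniformly and then expresses $\Pert$ in terms of it; both are equally elementary applications of monotone convergence.
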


\begin{remark}
 Note that these estimates are independent of the value of $\eps\geq 0$ and thus are also true for $\bar{Q}$, i.e.\@ for $\eps=0$.
\end{remark}

\begin{proof}[Proof of Lemma~\ref{Lem:apriori}]
 Plugging the explicit form of $\Pert(f,f)$ into~\eqref{eq:Q} the function $Q$ satisfies
 \begin{equation*}
  -qQ'(q)=-\rho Q(q)+Q^2(q)+\frac{\eps}{2}\int_{0}^{\infty}\int_{0}^{\infty}W(x,y)f(x)f(y)(1-\ee^{-qx})(1-\ee^{-qy})\dy\dx.
 \end{equation*}
 Since $\eps W(x,y)=K(x,y)-2$ and $Q^2(q)=\int_{0}^{\infty}\int_{0}^{\infty}f(x)f(y)(1-\ee^{-qx})(1-\ee^{-qy})\dy\dx$, this equation can be rewritten as
 \begin{equation}\label{eq:apriori:0}
  -\rho Q'(q)=-\rho Q(q)+\frac{1}{2}\int_{0}^{\infty}\int_{0}^{\infty}K(x,y)f(x)f(y)(1-\ee^{-qx})(1-\ee^{-qy})\dy\dx.
 \end{equation}
 Since $f$ is non-negative and $K$ satisfies the lower bound $K\geq 2$ we deduce from this that
 \begin{equation}\label{eq:apriori:1}
  -\rho Q'(q)\geq -\rho Q(q)+\int_{0}^{\infty}\int_{0}^{\infty}f(x)f(y)(1-\ee^{-qx})(1-\ee^{-qy})\dy\dx=-\rho Q(q)+Q^2(q).
 \end{equation}
 With the integrating factor $q^{-\rho-1}$ we rewrite this first as $-\del_{q}(q^{-\rho}Q(q))\geq q^{-\rho-1}Q^2(q)$. Exploiting the fact that $Q$ is strictly positive and strictly increasing we finally get
 \begin{equation*}
  \del_{q}\Bigl(\bigl(q^{-\rho}Q(q)\bigr)^{-1}\Bigr)\geq q^{\rho-1}.
 \end{equation*}
 Due to the normalisation of the self-similar profiles, an application of l'H{\^o}pital's rule yields $q^{\rho}Q(q)\to \rho$ for $q\to 0$ such that an integration over $[0,q]$ of the previous inequality gives
 \begin{equation*}
  \bigl(q^{-\rho}Q(q)\bigr)^{-1}-\frac{1}{\rho}\geq \frac{1}{\rho}q^{\rho}.
 \end{equation*}
 This can now be rearranged such that we end up with
 \begin{equation}\label{eq:apriori:2}
  Q(q)\leq \frac{\rho q^{\rho}}{1+q^{\rho}}\leq \min\{\rho,\rho q^{\rho}\}
 \end{equation}
 which gives the uniform boundedness of $Q$. Note, that $Q$ is bounded by zero from below due to the non-negativity of $f$. 
 
 Since $Q$ is also strictly increasing we additionally get that the limit $\lim_{q\to\infty} Q(q)$ exists and is finite which yields by monotone convergence that the integral of $f$ is finite, i.e.\@ $\int_{0}^{\infty}f(x)\dx<\infty$.
 
 From~\eqref{eq:apriori:1} together with $Q^2(q)\geq 0$ we further deduce that $Q'(q)\leq \frac{\rho}{q}Q(q)$. Since $Q'$ is non-negative the uniform bound~\eqref{eq:apriori:2} yields
 \begin{equation*}
  \sup_{q>0}\abs{q^{1-\rho}Q'(q)}\leq q^{1-\rho}\frac{\rho}{q}\rho q^{\rho}=2\rho^2\quad \text{and}\quad \sup_{q>0}\abs{qQ'(q)}\leq \rho^2.
 \end{equation*}
 Moreover, these estimates together with~\cref{eq:apriori:0,eq:apriori:2} and the non-negativity of $f$ yield
 \begin{equation}\label{eq:apriori:3}
  0\leq \int_{0}^{\infty}\int_{0}^{\infty}K(x,y)f(x)f(y)(1-\ee^{-qx})(1-\ee^{-qy})\dy\dx<C\quad \text{for all }q\geq 0.
 \end{equation}
 Monotone convergence then implies that
 \begin{equation*}
  \int_{0}^{\infty}\int_{0}^{\infty}K(x,y)f(x)f(y)\dx\dy\leq C.
 \end{equation*}
 Finally we note that the relation
 \begin{equation*}
  \Pert(f,f)(q)=\frac{1}{2}\int_{0}^{\infty}\int_{0}^{\infty}K(x,y)f(x)f(y)(1-\ee^{-qx})(1-\ee^{-qy})\dy\dx-Q^{2}(q)
 \end{equation*}
 together with~\cref{eq:apriori:2,eq:apriori:3} gives that $\Pert(f,f)$ is uniformly bounded and $\lim_{q\to\infty}\Pert(f,f)(q)$ exists.
\end{proof}

\begin{remark}\label{Rem:existence:Laplace:transform}
 By dominated convergence, Lemma~\ref{Lem:apriori} in particular shows that the Laplace transform $F=\T f$ exists on $[0,\infty)$ for any self-similar profile. 
\end{remark}

\subsection{Uniform convergence in Laplace variables}

In this section we will show on the one hand that the desingularised Laplace transform $Q$ of any solution $f$ of~\eqref{eq:self:sim} converges  uniformly to $\bar{Q}$ for $\eps\to 0$. Additionally, we will obtain an improvement of the moment estimate in Lemma~\ref{Lem:moment:est:1}.

As a first technical step we will show the locally uniform convergence of $Q$ to $\bar{Q}$.

\begin{lemma}\label{Lem:local:uniform:conergence}
 For any $\nu\in(0,\min\{\rho/2,\rho-\alpha\})$ there exists a constant $C_{\nu}>0$ such that it holds
 \begin{equation}\label{eq:Gronwall:bound}
  \abs*{Q(q)-\bar{Q}(q)}\leq \frac{C_{\nu}\eps}{\min\{\rho-2\nu,1-\alpha-\nu\}}q^{\min\{2\rho-2\nu,1+\rho-\alpha-\nu\}}\exp(2q^{\rho})\quad \text{for all }q\geq 0
 \end{equation}
 for the desingularised Laplace transform $Q$ of any solution $f$ of~\eqref{eq:self:sim}.
 
 In particular this implies that $Q$ converges locally uniformly to $\bar{Q}$ for $\eps\to 0$ which precisely means that for any compact set $D\subset[0,\infty)$  and $\delta>0$ there exists $\eps_{\delta}>0$ such that
 \begin{equation*}
  \sup_{q\in D}\abs{Q(q)-\bar{Q}(q)}\leq \delta \quad \text{for }\eps\leq \eps_{\delta}.
 \end{equation*}
\end{lemma}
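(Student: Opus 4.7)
The plan is a Gronwall argument on the difference $D \vcc= Q - \bar{Q}$. Writing~\eqref{eq:Q} once for $Q$ and once for $\bar{Q}$ (the latter with $\eps = 0$) and subtracting, the factorisation $Q^{2} - \bar{Q}^{2} = (Q + \bar{Q}) D$ produces the linear inhomogeneous ODE
\[
  qD'(q) - \bigl(\rho - Q(q) - \bar{Q}(q)\bigr) D(q) = -\eps\,\Pert(f,f)(q).
\]
Dividing by $q^{\rho+1}$ to absorb the singular $-\rho/q$ coefficient at the origin recasts this as
\[
  \frac{\dd}{\dd{q}}\bigl(q^{-\rho} D(q)\bigr) = -\frac{(Q + \bar{Q})(q)}{q^{\rho+1}}\, D(q) - \frac{\eps\,\Pert(f,f)(q)}{q^{\rho+1}}.
\]

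Next I would integrate this identity over $[0,q]$. The boundary value at zero vanishes: Remark~\ref{Rem:Q:asymptotics} combined with L'Hôpital gives $Q(q) \sim \rho q^{\rho}$ as $q \to 0$, and the explicit formula~\eqref{eq:explicit:bar:quantities} gives the same asymptotic for $\bar{Q}$, so $q^{-\rho} D(q) \to 0$. Setting $\tilde{D}(q) \vcc= q^{-\rho} |D(q)|$, taking absolute values, and using the non-negativity of $Q + \bar{Q}$ and $\Pert(f,f)$, together with $(Q + \bar{Q})(s) \leq 2\rho s^{\rho}/(1 + s^{\rho})$ from Lemma~\ref{Lem:apriori} and $\Pert(f,f)(s) \leq C_{\nu} s^{\gamma}$ with $\gamma \vcc= \min\{2\rho - 2\nu,\, 1 + \rho - \alpha - \nu\}$ from Lemma~\ref{Lem:Pert:estimate:Laplace:var}, I obtain
\[
  \tilde{D}(q) \leq \int_{0}^{q} \frac{2\rho\, s^{\rho-1}}{1 + s^{\rho}}\, \tilde{D}(s)\,\ds + \frac{C_{\nu}\eps}{\gamma - \rho}\, q^{\gamma - \rho}.
\]
Note that $\gamma > \rho$ under the assumption $\nu < \min\{\rho/2,\, \rho - \alpha\}$, so the inhomogeneous term is non-decreasing in $q$ and the standard form of Gronwall's inequality applies.

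Gronwall then yields
\[
  \tilde{D}(q) \leq \frac{C_{\nu}\eps}{\gamma - \rho}\, q^{\gamma - \rho} \exp\left(\int_{0}^{q} \frac{2\rho\, s^{\rho-1}}{1 + s^{\rho}}\,\ds\right) = \frac{C_{\nu}\eps}{\gamma - \rho}\, q^{\gamma - \rho} (1 + q^{\rho})^{2},
\]
and the elementary inequality $(1 + q^{\rho})^{2} \leq \exp(2 q^{\rho})$ combined with $|D(q)| = q^{\rho}\tilde{D}(q)$ produces precisely~\eqref{eq:Gronwall:bound}. The local uniform convergence is then a direct corollary: on any compact $D \subset [0, \infty)$ the factor $q^{\gamma}\exp(2 q^{\rho})$ is bounded, so the right-hand side of~\eqref{eq:Gronwall:bound} tends uniformly to zero as $\eps \to 0$.

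The only genuinely delicate point is verifying the decay $q^{-\rho} D(q) \to 0$ at the origin, which relies on the tail normalisation of self-similar profiles built into Theorem~\ref{Thm:asymptotics} and propagated to Laplace variables via Remark~\ref{Rem:Q:asymptotics}; the remainder is a textbook linear Gronwall argument powered by the a priori bounds of Lemmas~\ref{Lem:apriori} and~\ref{Lem:Pert:estimate:Laplace:var}.
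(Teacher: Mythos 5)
Your argument is correct and follows the paper's proof essentially verbatim: subtract the two ODEs from Lemma~\ref{Lem:equation:desingualarised}, rewrite as a derivative of $q^{-\rho}(Q-\bar Q)$, integrate from $0$ using the normalisation to kill the boundary term, bound the integrands via Lemmas~\ref{Lem:apriori} and~\ref{Lem:Pert:estimate:Laplace:var}, and close with Gr\"onwall. The only cosmetic difference is that the paper drops the factor $(1+s^\rho)^{-1}$ and so obtains $\exp(2q^\rho)$ directly, whereas you retain it to get $(1+q^\rho)^2$ and then bound this by $\exp(2q^\rho)$; both are valid.
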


\begin{proof}
 The claim follows essentially by an application of Gr{\"o}nwall's inequality. More precisely, we take the difference of the equations $-qQ'(q)=-\rho Q(q)+Q^2(q)+\eps \Pert(f,f)(q)$ and $-q\bar{Q}'(q)=-\rho \bar{Q}(q)+\bar{Q}^2(q)$ satisfied by $Q$ and $\bar{Q}$ and obtain after some elementary rearrangement that
 \begin{equation*}
  \frac{\dd}{\dd{q}}\Bigl(q^{-\rho}\bigl(Q(q)-\bar{Q}(q)\bigr)\Bigr)=q^{-\rho-1}\bigl(Q(q)-\bar{Q}(q)\bigr)\bigl(Q(q)+\bar{Q}(q)\bigr)-\eps q^{-\rho-1}\Pert(f,f)(q).
 \end{equation*}
 The normalisation of the profiles yields that $q^{-\rho}(Q(q)-\bar{Q}(q))\to 0$ for $q\to 0$ such that an integration over $[0,q]$ gives
 \begin{equation*}
  \frac{Q(q)-\bar{Q}(q)}{q^{\rho}}=-\int_{0}^{q}\frac{(Q(r)-\bar{Q}(r))(Q(r)+\bar{Q}(r))}{r^{1+\rho}}\dr-\eps\int_{0}^{q}\frac{\Pert(f,f)(r)}{r^{1+\rho}}\dr.
 \end{equation*}
 We now recall \cref{Lem:Pert:estimate:Laplace:var,Lem:apriori} which imply $Q(q)+\bar{Q}(q)\leq 2\rho q^{\rho}$ as well as $\Pert(f,f)(q)\leq C_{\nu}q^{\min\{2\rho-2\nu,1+\rho-\alpha-\nu\}}$ for every $\nu\in(0,\rho-\alpha)$. Thus, the previous integrals are well-defined and we obtain
 \begin{equation*}
  \frac{\abs{Q(q)-\bar{Q}(q)}}{q^{\rho}}\leq 2\rho\int_{0}^{q}\frac{\abs{Q(r)-\bar{Q}(r)}}{r^{\rho}}r^{\rho-1}\dr+C_{\nu}\eps\int_{0}^{q}r^{\min\{\rho-2\nu-1,-\alpha-\nu\}}\dr.
 \end{equation*}
 For $0<\nu<\min\{\rho/2,1-\alpha\}$ we obtain $\int_{0}^{q}r^{\min\{\rho-2\nu-1,-\alpha-\nu\}}\dr\leq C_{\nu,\rho,\alpha}q^{\min\{\rho-2\nu,1-\alpha-\nu\}}$ such that an application of Gr{\"o}nwall's inequality finally yields the desired estimate~\eqref{eq:Gronwall:bound}.
\end{proof}

The following lemma gives a bound on the decay of the Laplace transform $(\T f)(q)$ of a self-similar profile $f$ for large values of $q$. 

\begin{lemma}\label{Lem:improvement:zero:Laplace}
 For any $\nu>0$ there exist constants $q_{\nu},C_{\nu}>0$ such that it holds
 \begin{equation*}
  (\T f)(q)\leq C_{\nu}q^{\nu-\rho}\quad \text{if } q\geq q_{\nu}
 \end{equation*}
 for each solution $f$ of~\eqref{eq:self:sim}. Moreover, we obtain that for each $\delta>0$ there exists $\eps_{\delta}>0$ such that
 \begin{equation*}
  \abs*{(\T f)(0)-(\T \bar{f})(0)}\leq \delta\quad \text{for }\eps\leq \eps_{\delta}.
 \end{equation*}
 In terms of the desingualarised Laplace transform the latter result can equivalently be written as $\abs*{Q(\infty)-\bar{Q}(\infty)}\leq \delta$ for $\eps\leq \eps_{\delta}$.
\end{lemma}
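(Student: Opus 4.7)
The plan is to extract both claims from the ODE of Lemma~\ref{Lem:equation:desingualarised} by working with $F(q)\vcc=(\T f)(q)=Q_\infty-Q(q)$, where $Q_\infty\vcc=\lim_{q\to\infty}Q(q)\leq\rho$ exists by Lemma~\ref{Lem:apriori}. As a preliminary step I would pass to the limit $q\to\infty$ in the ODE $qQ'(q)=Q(q)(\rho-Q(q))-\eps\Pert(f,f)(q)$: the right-hand side converges by Lemma~\ref{Lem:apriori}, and boundedness of $Q$ forces $qQ'(q)\to 0$ (otherwise $Q'(q)\sim L/q$ would make $Q$ grow at least logarithmically). This pins down the algebraic identity $Q_\infty(\rho-Q_\infty)=\eps\Pert_\infty$.

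For the second claim I would combine this identity with Lemma~\ref{Lem:local:uniform:conergence}. Since $\bar Q(q)=\rho q^\rho/(1+q^\rho)\nearrow\rho$, fix $q_0$ with $\bar Q(q_0)>\rho-\delta/2$; locally uniform convergence then gives $Q(q_0)>\rho-\delta$ for $\eps$ small, and combining the monotonicity of $Q$ with $Q_\infty\leq\rho$ yields $|Q_\infty-\rho|\leq\delta$. In particular $\beta\vcc=2Q_\infty-\rho\to\rho$ as $\eps\to 0$, uniformly over self-similar profiles.

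For the first claim, substituting $Q=Q_\infty-F$ into the ODE and invoking the algebraic identity above gives
\[
qF'(q)+\beta F(q)=F^2(q)-\eps h(q),\qquad h(q)\vcc=\Pert_\infty-\Pert(f,f)(q)\geq 0.
\]
The same argument used for the second claim (applied with $\delta=\eta$) shows that for any $\eta>0$ there is $q_\eta$, uniform in small $\eps$, with $F(q_\eta)\leq\eta$; since $F'=-Q'\leq 0$, this bound propagates to $F(q)\leq\eta$ for all $q\geq q_\eta$. On this interval I would drop the non-positive term $-\eps h$ and use $F^2\leq\eta F$ to obtain the differential inequality $qF'(q)\leq(\eta-\beta)F(q)$, which for $\eta<\beta$ integrates by a Gronwall step to
\[
F(q)\leq F(q_\eta)\Bigl(\frac{q_\eta}{q}\Bigr)^{\beta-\eta}.
\]

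To conclude, given $\nu>0$ I would choose $\eta\vcc=\nu/3$ and restrict $\eps$ so that $\beta\geq\rho-\nu/3$, which forces $\beta-\eta\geq\rho-2\nu/3>\rho-\nu$ and therefore $F(q)\leq C_\nu q^{\nu-\rho}$ for $q\geq q_\nu$. The main obstacle is that the linear coefficient $\beta$ depends on the unknown $Q_\infty$ and generically differs from the unperturbed value $\rho$ by $O(\eps)$; this discrepancy is precisely what is responsible for the small exponent defect $q^\nu$ in the statement and prevents recovering the sharp Laplace decay rate $q^{-\rho}$ by this ODE-based argument alone.
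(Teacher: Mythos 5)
Your argument is correct and matches the paper's own proof almost step for step: the sandwich argument at a fixed point via Lemma~\ref{Lem:local:uniform:conergence} together with monotonicity of $Q$ yields the second claim, and a Gronwall integration of the ODE for $F=\T f$ with coefficient $2Q(\infty)-\rho$ close to $\rho$ yields the first. The only cosmetic differences are that you isolate the algebraic identity $Q(\infty)\bigl(\rho-Q(\infty)\bigr)=\eps\Pert(f,f)(\infty)$ explicitly (the paper simply remarks that the constant bracket in the rewritten ODE vanishes upon passing to the limit), and you deduce $qQ'(q)\to 0$ indirectly from the ODE and boundedness of $Q$, whereas the paper applies dominated convergence to $qQ'(q)=\int_{0}^{\infty}qx\,\ee^{-qx}f(x)\dx$.
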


\begin{proof}
 We first note that the Laplace transform $\T f$ exists for any self-similar profile according to Remark~\ref{Rem:existence:Laplace:transform} and we have the relation $(\T f)(q)=Q(\infty)-Q(q)$. The proof is divided in two main steps, namely we first show that $(\T f)(q)$ becomes uniformly small for large values of $q$, while in the second step, we derive a differential inequality for $F$ which we integrate explicitly to obtain the desired bound.
 
 \paragraph{Step 1:}
 For $\delta>0$ given, we fix a constant $q_{\delta}>0$ such that $\abs*{\bar{Q}(\infty)-\bar{Q}(q)}\leq \delta/2$ for $q\geq q_{\delta}$. Moreover, we choose $\eps_{\delta}>0$ according to Lemma~\ref{Lem:local:uniform:conergence} such that $\abs*{\bar{Q}(q_{\delta})-Q(q_{\delta})}\leq \delta/2$ for $\eps\leq \eps_{\delta}$. For $q\geq q_{\delta}$ the monotonicity of $Q$ and Lemma~\ref{Lem:apriori} imply that $Q(q_{\delta})\leq Q(q)\leq Q(\infty)\leq \bar{Q}(\infty)=\rho$ such that we obtain for $q\geq q_{\delta}$ and $\eps\leq \eps_{\delta}$ that
 \begin{equation}\label{eq:sandwich:argument}
  \abs*{(\T f)(q)}=\abs*{Q(\infty)-Q(q)}\leq \abs{\bar{Q}(\infty)-Q(q_{\delta})}\leq \abs{\bar{Q}(\infty)-\bar{Q}(q_{\delta})}+\abs{\bar{Q}(q_{\delta})-Q(q_{\delta})}\leq \delta.
 \end{equation}
This yields in particular the estimate
\begin{equation}\label{eq:desing:Lap:trans:clos:infinity}
 \abs{Q(\infty)-\bar{Q}(\infty)}\leq \delta \quad \text{for }\eps\leq \eps_{\delta},
\end{equation}
i.e.\@ the second part of the statement.

\paragraph{Step 2:}

In order to derive a differential inequality for $(\T f)$, we rewrite~\eqref{eq:Q} in terms of $(\T f)$ by substituting $Q(q)=(\T f)(0)-(\T f)(q)=Q(\infty)-(\T f)(q)$, which, after some rearrangement, leads to 
\begin{multline}\label{eq:diff:ODE:F:1}
 q(\T f)'(q)=(\T f)^2(q)+\bigl(\rho-2Q(\infty)\bigr)(\T f)(q)+\eps\bigl(\Pert(f,f)(q)-\Pert(f,f)(\infty)\bigr)\\*
 +\bigl[-\rho Q(\infty)+Q^2(\infty)+\eps\Pert(f,f)(\infty)\bigr].
\end{multline}
We now note that due to Lebesgue's Theorem it holds $qQ'(q)=\int_{0}^{\infty}qx\ee^{-qx}f(x)\to 0$ for $q\to\infty$ such that we can pass to the limit $q\to\infty$ in~\eqref{eq:Q} to find $-\rho Q(\infty)+Q^2(\infty)+\eps\Pert(f,f)(\infty)=0$, i.e.\@ the term in brackets on the right-hand side of~\eqref{eq:diff:ODE:F:1} vanishes. Moreover, since $f$ is non-negative, $\Pert(f,f)(\cdot)$ is monotonously increasing such that it holds $\Pert(f,f)(q)-\Pert(f,f)(\infty)\leq 0$. Thus, together it follows from~\eqref{eq:diff:ODE:F:1} that
\begin{equation*}
 q(\T f)'(q)\leq \bigl(\rho-2Q(\infty)+(\T f)(q)\bigr)(\T f)(q).
\end{equation*}
To estimate this further, we note that $\bar{Q}(\infty)=\rho$ such that it follows from~\eqref{eq:desing:Lap:trans:clos:infinity} for $q\geq q_{\delta}$ and $\eps<\eps_{\delta}$ that $\rho-2Q(\infty)=2(\bar{Q}(\infty)-Q(\infty))-\rho\leq 2\delta-\rho$. Together with~\eqref{eq:sandwich:argument} we thus obtain under the same conditions on $q$ and $\eps$ that
\begin{equation*}
 q(\T f)'(q)\leq (3\delta-\rho)(\T f)(q).
\end{equation*}
This inequality however can be integrated readily over $[q_{\delta},q]$ and one obtains after some rearrangement that $(\T f)(q)\leq q_{\delta}^{\rho-3\delta}(\T f)(q_{\delta})q^{3\delta-\rho}$. From~\eqref{eq:sandwich:argument} it follows that $q_{\delta}^{\rho-3\delta}(\T f)(q_{\delta})\leq \delta q_{\delta}^{\rho-3\delta}\vcc=C_{\delta}$ such that we finally have
\begin{equation*}
 (\T f)(q)\leq C_{\delta}q^{3\delta-\rho}\quad\text{for }q\geq q_{\delta}\text{ and }\eps\leq \eps_{\delta}.
\end{equation*}
To finish the proof it then suffices to choose $\delta=\nu/3$.
\end{proof}

We now use the previous result to show that in the limit $\eps\to 0$ self-similar profiles behave at least in average not worse than $x^{\rho-1}$ close to zero.

\begin{lemma}\label{Lem:improved:reg:zero}
 For every $\nu>0$ there exist constants $r_{\nu}>0$, $\eps_{\nu}>0$ and $C_{\nu}>0$ such that it holds
 \begin{equation*}
  \int_{r}^{2r}f(x)\dx\leq C_{\nu}r^{\rho-\nu}\quad \text{for all }r\in(0,r_{\nu}]
 \end{equation*}
 and each solution $f$ of~\eqref{eq:self:sim}, provided $\eps\leq \eps_{\nu}$.
\end{lemma}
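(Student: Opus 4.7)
The plan is to derive this from the decay estimate on the Laplace transform furnished by Lemma~\ref{Lem:improvement:zero:Laplace}, via the standard elementary Abelian-type argument: if $f\ge 0$ then integrals of $f$ over dyadic annuli near zero can be controlled by $(\T f)(q)$ evaluated at the reciprocal scale, because the exponential factor $\ee^{-qx}$ is bounded below by a universal constant on such an annulus when $q$ matches the scale.

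Concretely, given $\nu>0$, I would invoke Lemma~\ref{Lem:improvement:zero:Laplace} to obtain $q_{\nu}>0$, $\eps_{\nu}>0$ and $C_{\nu}>0$ such that
\begin{equation*}
 (\T f)(q)\leq C_{\nu}q^{\nu-\rho}\qquad\text{for all }q\geq q_{\nu},\ \eps\leq\eps_{\nu}
\end{equation*}
for every self-similar profile $f$. Set $r_{\nu}\vcc=1/q_{\nu}$, so that for $r\in(0,r_{\nu}]$ the value $q\vcc=1/r$ satisfies $q\geq q_{\nu}$.

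Next, for any $x\in[r,2r]$ with $q=1/r$ we have $qx\in[1,2]$, hence $\ee^{-qx}\geq \ee^{-2}$. Using the non-negativity of $f$ from Definition~\ref{Def:profile} together with Remark~\ref{Rem:existence:Laplace:transform} (which guarantees $(\T f)(q)$ is finite), this yields
\begin{equation*}
 \int_{r}^{2r}f(x)\dx\leq \ee^{2}\int_{r}^{2r}\ee^{-qx}f(x)\dx\leq \ee^{2}\int_{0}^{\infty}\ee^{-qx}f(x)\dx=\ee^{2}(\T f)(q).
\end{equation*}
Combining the two displays gives
\begin{equation*}
 \int_{r}^{2r}f(x)\dx\leq \ee^{2}C_{\nu}q^{\nu-\rho}=\ee^{2}C_{\nu}r^{\rho-\nu},
\end{equation*}
which is the claim (with the constant $C_{\nu}$ replaced by $\ee^{2}C_{\nu}$). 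There is no real obstacle here — the entire content is pushed into Lemma~\ref{Lem:improvement:zero:Laplace}, and the remaining step is just the one-line Chebyshev-type inequality bounding the truncated integral by the Laplace transform at the matching scale.
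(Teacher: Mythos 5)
Your proof is correct and is essentially identical to the paper's own argument: both set $r_{\nu}=1/q_{\nu}$ and bound $\int_{r}^{2r}f\,\dx$ by $\ee^{2}(\T f)(1/r)$ using the lower bound $\ee^{-x/r}\geq\ee^{-2}$ on $[r,2r]$, then invoke Lemma~\ref{Lem:improvement:zero:Laplace}.
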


\begin{proof}
 We use the notation of Lemma~\ref{Lem:improvement:zero:Laplace} and define $r_{\nu}\vcc=1/q_{\nu}$. Then Lemma~\ref{Lem:improvement:zero:Laplace} implies for $q=1/r$ with $r\leq r_{\nu}$ that
 \begin{equation*}
  \int_{r}^{2r}f(x)\dx=\int_{r}^{2r}f(x)\ee^{-\frac{x}{r}}\ee^{\frac{x}{r}}\dx\leq \ee^{2}\int_{0}^{\infty}f(x)\ee^{-\frac{x}{r}}\dx\leq C_{\nu}r^{\rho-\nu}
 \end{equation*}
 which finishes the proof.
\end{proof}

\begin{remark}\label{Rem:regularity:improvement}
 Since we have already seen in Lemma~\ref{Lem:apriori} that $\int_{0}^{\infty}f(x)\dx<C$, we can always choose $r_{\nu}=1$ in the previous statement, if we also enlarge the constant $C_{\nu}$ if necessary.
\end{remark}

We are now prepared to give an improved moment estimate for self-similar profiles which will turn out to be quite useful.

\begin{lemma}\label{Lem:moment:est:2}
 Let $\gamma\in(-\rho,\rho)$. For sufficiently small $\eps>0$ there exists a constant $C>0$ such that it holds
 \begin{equation*}
  \int_{0}^{\infty}x^{\gamma}f(x)\dx<C
 \end{equation*}
 for every solution $f$ of~\eqref{eq:self:sim}.
\end{lemma}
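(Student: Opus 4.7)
The plan is to split the integral at $x=1$ and treat the two regions separately, handling the cases $\gamma \in [0,\rho)$ and $\gamma \in (-\rho,0)$ by different arguments.

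First, for $\gamma \in [0,\rho)$ the bound is essentially immediate: on $[0,1]$ we use $x^{\gamma}\leq 1$ together with $\int_{0}^{\infty} f(x)\dx \leq \rho$ from Lemma~\ref{Lem:apriori}, while on $[1,\infty)$ the moment of order $\gamma$ is finite by Lemma~\ref{Lem:moment:est:1} (applied with $\beta = \gamma$, or, for $\gamma=0$, again by Lemma~\ref{Lem:apriori}).

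The interesting case is $\gamma \in (-\rho,0)$, where near the origin $x^{\gamma}$ is singular. For the tail we simply have $\int_{1}^{\infty} x^{\gamma} f(x)\dx \leq \int_{1}^{\infty} f(x)\dx \leq \rho$ since $\gamma<0$. For the region $[0,1]$ the idea is to combine the dyadic decay estimate from Lemma~\ref{Lem:improved:reg:zero} (with $r_\nu = 1$, see Remark~\ref{Rem:regularity:improvement}) with a dyadic decomposition. Writing $[0,1] = \bigcup_{k\geq 0} [2^{-k-1}, 2^{-k}]$, on each dyadic annulus we have $x^{\gamma} \leq 2^{(k+1)|\gamma|}$ since $\gamma<0$, so
\begin{equation*}
\int_{0}^{1} x^{\gamma} f(x)\dx \leq \sum_{k=0}^{\infty} 2^{(k+1)|\gamma|} \int_{2^{-k-1}}^{2^{-k}} f(x)\dx \leq C_{\nu}\sum_{k=0}^{\infty} 2^{(k+1)|\gamma|}\, 2^{-(k+1)(\rho-\nu)}.
\end{equation*}
Since $|\gamma|<\rho$, we may choose $\nu>0$ so small that $\rho-\nu-|\gamma|>0$, after which the geometric series converges to a finite constant $C$. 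The condition on $\eps$ comes from the constraint that Lemma~\ref{Lem:improved:reg:zero} requires $\eps \leq \eps_{\nu}$ for the chosen $\nu$.

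There is no real obstacle here; the only point that needs attention is the uniformity: the constant $C_{\nu}$ in Lemma~\ref{Lem:improved:reg:zero} is uniform over all self-similar profiles $f$, and likewise the constants from Lemma~\ref{Lem:moment:est:1} and Lemma~\ref{Lem:apriori} are independent of the particular profile. Combining the two regions gives a bound on $\int_{0}^{\infty} x^{\gamma} f(x)\dx$ uniform in $f$ and in sufficiently small $\eps$, which is the claim.
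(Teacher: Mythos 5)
Your proof is correct and rests on the same key ingredient as the paper: a dyadic decomposition of $[0,1]$ controlled by Lemma~\ref{Lem:improved:reg:zero} together with Remark~\ref{Rem:regularity:improvement}. You depart from the paper's proof in two small but genuine ways. First, you observe that for $\gamma\in[0,\rho)$ the claim already follows from Lemma~\ref{Lem:moment:est:1} (for $\gamma>0$) and Lemma~\ref{Lem:apriori} (for $\gamma=0$), so the new content of the lemma is isolated to $\gamma\in(-\rho,0)$; the paper instead runs a single argument for all $\gamma\in(-\rho,\rho)$ without this reduction. Second, for the tail $\int_1^\infty x^\gamma f\,\dx$ you use only the trivial bound $x^\gamma\leq 1$ together with $\int_0^\infty f\leq\rho$, which suffices precisely because you have already restricted to $\gamma<0$; the paper instead performs a second dyadic decomposition on $[1,\infty)$, writing $x^\gamma f=x^{\gamma-1}\cdot xf$ and invoking the first-moment bound $\int_0^R yf\,\dy\leq CR^{1-\rho}$ from Theorem~\ref{Thm:asymptotics}, because its unified treatment must also cover $\gamma\geq 0$ in the tail. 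Both routes are sound and use the same underlying uniform-in-$f$ constants; yours is marginally more economical by offloading the non-negative exponents to the already-established moment estimates, at the cost of a case split that the paper avoids.
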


\begin{proof}
 The proof essentially relies on a dyadic argument together with Lemma~\ref{Lem:improved:reg:zero} which has been used frequently in the context of self-similarity for Smoluchowski's equation (see for example~\cite{NTV16a,NiV13a}). Nevertheless, we sketch the argument here once for completeness. 
 
 We fix $\nu<\rho+\gamma$ and split the integral to consider as
 \begin{equation*}
  \int_{0}^{\infty}x^{\gamma}f(x)\dx=\int_{0}^{1}(\cdots)\dx+\int_{1}^{\infty}(\cdots)\dx.
 \end{equation*}
Together with \cref{Lem:improved:reg:zero,Rem:regularity:improvement,Thm:asymptotics} a dyadic decomposition then yields
\begin{equation*}
 \int_{0}^{1}x^{\gamma}f(x)\dx=\sum_{\ell=0}^{\infty}\int_{2^{-(\ell+1}}^{2^{-\ell}}x^{\gamma}f(x)\dx\leq C_{\nu}\sum_{\ell=0}^{\infty}2^{-\ell(\rho+\gamma-\nu)}\leq C(\gamma,\nu,\rho)
\end{equation*}
and
\begin{equation*}
 \int_{1}^{\infty}x^{\gamma}f(x)\dx=\sum_{\ell=}^{\infty}\int_{2^{\ell}}^{2^{\ell+1}}x^{\gamma-1}xf(x)\dx\leq C\sum_{\ell=0}^{\infty}2^{\ell(\gamma-\rho)}\leq C(\gamma,\rho).
\end{equation*}
Note that the constants are independent of the specific solution $f$ due to the normalisation condition.
\end{proof}

We can now show that the Laplace transform $\T f$ of any self-similar profile converges uniformly to $\bar{F}=\T\bar{f}$.

\begin{proposition}\label{Prop:uniform:convergence:Laplace:transform}
 For each $\delta>0$ it holds for sufficiently small $\eps>0$ that
 \begin{equation*}
  \sup_{q>0}\abs*{(\T f)(q)-(\T \bar{f})(q)}\leq \delta
 \end{equation*}
 for all solutions $f$ of~\eqref{eq:self:sim}.
\end{proposition}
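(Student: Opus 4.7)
The plan is to split the supremum into a compact region $q\in[0,R]$ and a tail region $q\geq R$, and to exploit that the quantities we need to compare can be written in terms of $Q$ and $Q(\infty)$ via the identity $(\T f)(q)=Q(\infty)-Q(q)$.

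First, using the explicit formula $\bar F(q)=\rho/(1+q^{\rho})$ from~\eqref{eq:explicit:bar:quantities}, I would pick $R=R(\delta)$ large enough so that $(\T\bar f)(q)\leq \delta/4$ for all $q\geq R$. On this tail I would then apply Lemma~\ref{Lem:improvement:zero:Laplace} with some fixed $\nu\in(0,\rho)$: choosing $R$ additionally so large that $C_{\nu}R^{\nu-\rho}\leq \delta/4$ and $R\geq q_{\nu}$, we obtain $(\T f)(q)\leq \delta/4$ for every $q\geq R$ whenever $\eps\leq \eps_{\nu}$. Combining the two bounds via the triangle inequality gives $|(\T f)(q)-(\T\bar f)(q)|\leq \delta/2$ for all $q\geq R$ and all sufficiently small~$\eps$.

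On the compact region $[0,R]$ I would exploit the identity
\begin{equation*}
(\T f)(q)-(\T\bar f)(q)=\bigl(Q(\infty)-\bar Q(\infty)\bigr)-\bigl(Q(q)-\bar Q(q)\bigr).
\end{equation*}
The second part of Lemma~\ref{Lem:improvement:zero:Laplace} shows that $|Q(\infty)-\bar Q(\infty)|\leq \delta/4$ once $\eps$ is small enough, uniformly in $f$. For the pointwise difference, Lemma~\ref{Lem:local:uniform:conergence} yields (with the choice of $\nu$ fixed above) the bound
\begin{equation*}
\sup_{q\in[0,R]}\abs{Q(q)-\bar Q(q)}\leq C_{\nu}\eps\,R^{\min\{2\rho-2\nu,\,1+\rho-\alpha-\nu\}}\exp(2R^{\rho}),
\end{equation*}
which is $\leq \delta/4$ for $\eps$ sufficiently small because $R$ has already been fixed. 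Hence $|(\T f)(q)-(\T\bar f)(q)|\leq \delta/2$ on $[0,R]$ as well.

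Putting the two estimates together, $\sup_{q\geq 0}|(\T f)(q)-(\T\bar f)(q)|\leq \delta$ for all sufficiently small $\eps>0$ and every self-similar profile $f$, as claimed. I do not expect a genuine obstacle here: the only subtle point is the order of quantifiers, namely that one must fix $R$ first (using only $\bar f$ and the tail estimate from Lemma~\ref{Lem:improvement:zero:Laplace}, whose constants depend on $\nu$ but not on $f$), and only afterwards shrink $\eps$ to absorb the Grönwall factor $\exp(2R^{\rho})$ from Lemma~\ref{Lem:local:uniform:conergence}.
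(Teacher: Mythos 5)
Your proposal is correct and follows essentially the same route as the paper: split the supremum into a bounded region and a tail, control the bounded region via the Gr\"onwall bound from Lemma~\ref{Lem:local:uniform:conergence}, and control the tail via the decay estimates from Lemma~\ref{Lem:improvement:zero:Laplace}. The paper organises the argument slightly differently (it first shows $\sup_{q>0}\abs{Q(q)-\bar Q(q)}\leq 3\delta$ and then passes to $\T f-\T\bar f$ via $(\T f)(q)=(\T f)(0)-Q(q)$, whereas you bound $\T f$ and $\T\bar f$ directly on the tail), and your reuse of the same $\nu$ in both lemmas implicitly needs $\nu<\min\{\rho/2,\rho-\alpha\}$ so that Lemma~\ref{Lem:local:uniform:conergence} applies, rather than merely $\nu\in(0,\rho)$ --- but this is cosmetic, and your handling of the order of quantifiers (fix $R$ first, then shrink $\eps$) is exactly the key point.
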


\begin{proof}
 We recall from the proof of Lemma~\ref{Lem:improvement:zero:Laplace} that for each $\delta>0$ there exist constants $q_{\delta}>0$ and $\eps_{\delta}>0$ such that 
 \begin{equation*}
  \abs*{Q(q)-Q(\infty)}\leq \delta,\quad \abs{\bar{Q}(q)-\bar{Q}(\infty)}\leq \delta\quad \text{and}\quad \abs{\bar{Q}(\infty)-Q(\infty)}\leq \delta\quad \text{if }q\geq q_{\delta}\text{ and }\eps\leq \eps_{\delta}.
 \end{equation*}
 Thus, the triangle inequality yields
 \begin{equation*}
  \abs{Q(q)-\bar{Q}(q)}\leq 3\delta\quad \text{for } q\geq q_{\delta}\text{ and }\eps\leq \eps_{\delta}.
 \end{equation*}
 By choosing $\eps_{\delta}$ maybe even smaller, we obtain additionally from Lemma~\ref{Lem:local:uniform:conergence} that
 \begin{equation*}
  \sup_{q\leq q_{\delta}}\abs{Q(q)-\bar{Q}(q)}\leq \delta\quad \text{for }\eps\leq \eps_{\delta}.
 \end{equation*}
Together, this yields that $\sup_{q>0}\abs{Q(q)-\bar{Q}(q)}\leq 3\delta$ for $\eps\leq \eps_{\delta}$. Since $(\T f)(q)=(\T f)(0)-Q(q)$, we can conclude together with Lemma~\ref{Lem:improvement:zero:Laplace} that
\begin{equation*}
 \sup_{q>0}\abs*{(\T f)(q)-(\T \bar{f})(q)}\leq \abs*{(\T f)(0)-(\T \bar{f})(0)}+\sup_{q>0}\abs*{Q(q)-\bar{Q}(q)}\leq 4\delta.
\end{equation*}
If we replace $\delta$ by $\delta/4$ in the proof above, the claim follows.
\end{proof}

\subsection{Proof of \cref{Prop:norm:boundedness,Prop:closeness:two:norm}}

In this section we will further improve the convergence result of self-similar profiles that we obtained in the previous section by showing that we in fact have convergence with respect to the norm $\lfnorm*{2}{\mu}{\theta}{\cdot}$. This will be done iteratively, starting with $\lfnorm*{0}{\mu}{\theta}{\cdot}$ and then extending to the derivatives of order one and two. Additionally, we will obtain the boundedness of self-similar profiles in the corresponding norms with parameter $\mu=0$. 

As a preliminary step, let us first show the following lemma which provides an estimate on the difference $(\T f)(q)-(\T\bar{f})(q)$ and the corresponding derivatives for large values of $q$.

\begin{lemma}\label{Lem:help:norm:convergence}
For every $\delta>0$ there exists a constant $q_{*}=q_{*}(\delta)>0$ such that it holds for sufficiently small $\eps$ that
\begin{equation*}
 \sup_{q\geq q_{*}}\Bigl(q^{k+\theta}\abs*{\del_{q}^{k}(\T f)(q)-\del_{q}^{k}(\T\bar{f})(q)}\Bigr)\leq \delta \quad \text{for }k=0,1,2
\end{equation*}
and all solutions $f$ of~\eqref{eq:self:sim}.
\end{lemma}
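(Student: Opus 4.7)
The plan is to prove the desired estimate separately for $\T f$ and for $\T\bar f$ and to combine them via the triangle inequality
\[
q^{k+\theta}\abs*{\del_q^k(\T f)(q)-\del_q^k(\T\bar f)(q)}
\leq q^{k+\theta}\abs*{\del_q^k(\T f)(q)} + q^{k+\theta}\abs*{\del_q^k(\T\bar f)(q)}.
\]
It then suffices to show that, for each $k\in\{0,1,2\}$, both summands on the right converge to $0$ as $q\to\infty$, at a rate that is uniform in the profile $f$ (for $\eps$ sufficiently small). The threshold $q_*(\delta)$ is then simply chosen large enough that each summand is bounded by $\delta/2$ for $q\geq q_*$.

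For $\T\bar f$ the bound is immediate from the explicit formula~\eqref{eq:explicit:bar:quantities}. Since $\bar F(q)=\rho/(1+q^\rho)$, direct differentiation gives $\abs*{\del_q^k \bar F(q)}\leq C\,q^{-k-\rho}$ for $q\geq 1$, hence
\[
q^{k+\theta}\abs*{\del_q^k\bar F(q)}\leq C\,q^{\theta-\rho}\longrightarrow 0 \quad\text{as } q\to\infty,
\]
where the decay uses $\theta<1/2<\rho$, which follows from~\eqref{eq:choice:theta:bl} together with $\rho\in(1/2,1)$.

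For $\T f$ the case $k=0$ is essentially already Lemma~\ref{Lem:improvement:zero:Laplace}: for any $\nu>0$ and $\eps$ sufficiently small one has $(\T f)(q)\leq C_\nu q^{\nu-\rho}$ for $q$ large; choosing $\nu\in(0,\rho-\theta)$, which is possible because $\theta<\rho$, yields $q^\theta(\T f)(q)\leq C_\nu q^{\nu+\theta-\rho}\to 0$ uniformly in $f$. For $k=1,2$ I would use the representation
\[
\abs*{\del_q^k(\T f)(q)} = \int_0^{\infty}x^k f(x)\,\ee^{-qx}\dx,
\]
together with the elementary inequality $\ee^{-qx}\leq C_\gamma(qx)^{-\gamma}$, valid for any $\gamma\geq 0$, which produces
\[
\abs*{\del_q^k(\T f)(q)} \leq C_\gamma\, q^{-\gamma}\int_0^{\infty}x^{k-\gamma}f(x)\dx.
\]
Taking $\gamma=k+\theta+\beta$ for sufficiently small $\beta>0$, the exponent $k-\gamma=-\theta-\beta$ lies in $(-\rho,\rho)$ (this only requires $\theta+\beta<\rho$, which is guaranteed by $\theta<\rho$), so Lemma~\ref{Lem:moment:est:2} bounds the remaining integral uniformly in $f$ for $\eps$ sufficiently small; the resulting estimate $q^{k+\theta}\abs*{\del_q^k(\T f)(q)}\leq C\,q^{-\beta}\to 0$ as $q\to\infty$ is uniform in $f$.

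No substantial obstacle arises: the only care needed is the elementary parameter accounting showing that $\nu$ and $\beta$ can be chosen small and positive, which is guaranteed throughout by $\theta<1/2<\rho$. In particular, all the $\eps$-uniformity comes directly from Lemmas~\ref{Lem:improvement:zero:Laplace} and~\ref{Lem:moment:est:2}, which themselves have been established only for $\eps$ below a certain threshold; combining the three cases via the triangle inequality then completes the proof.
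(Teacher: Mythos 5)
Your proposal is correct. It follows the same overall shape as the paper's argument — bound $\del_q^k(\T f)$ and $\del_q^k(\T\bar f)$ separately by quantities that decay faster than $q^{-k-\theta}$, uniformly in $f$ for small $\eps$, then choose $q_*$ — but obtains the derivative bounds on $\T f$ by a genuinely different mechanism. The paper first uses Lemma~\ref{Lem:improvement:zero:Laplace} to get $(\T f)(q)\leq C_\nu q^{\nu-\rho}$ for $q\geq q_\nu$, and then invokes Lemma~\ref{Lem:non:neg:measures}, which bootstraps any bound on $(\T f)$ to bounds on $\del_q(\T f)$ and $\del_q^2(\T f)$ by exploiting nonnegativity of $f$ (essentially $\abs*{\del_q(\T f)(q)}\leq (\T f)(q/2)/q$, iterated). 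You instead go straight from the integral representation $\abs*{\del_q^k(\T f)(q)}=\int_0^\infty x^k f(x)\ee^{-qx}\dx$, absorb the weight via $\ee^{-qx}\leq C_\gamma(qx)^{-\gamma}$, and quote the uniform moment bound of Lemma~\ref{Lem:moment:est:2}. Both routes rest on nonnegativity of $f$ and both give the needed $q^{-\beta}$-rate; yours trades Lemma~\ref{Lem:non:neg:measures} for the moment estimate, which is a perfectly legitimate substitution since both ingredients are available at this point in the paper. One cosmetic remark: the parameter accounting only needs $\theta<\rho$, which follows already from~\eqref{eq:choice:theta:standard} and holds for all $\rho\in(0,1)$; the stronger $\theta<1/2<\rho$ from~\eqref{eq:choice:theta:bl} is not actually required for this lemma.
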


\begin{proof}
 From Lemma~\ref{Lem:improvement:zero:Laplace} we know that for any $\nu>0$ and $\eps$ sufficiently small it holds
\begin{equation}\label{eq:help:norm:convergence:1}
 (\T f)(q)\leq C_{\nu}q^{\nu-\rho}\quad \text{for }q\geq q_{\nu}.
\end{equation}
Moreover, we have $(\T\bar{f})(q)=\rho(1+q^{\rho})^{-1}$ which also yields $(\T \bar{f})(q)\leq q^{-\rho}$ for all $q>0$. From Lemma~\ref{Lem:non:neg:measures} we thus infer for sufficiently small $\eps>0$ that 
\begin{equation*}
 \abs*{\del_{q}^{k}(\T f)(q)}\leq C_{\nu}q^{\nu-\rho-k}\quad \text{and}\quad \abs*{\del_{q}^{k}(\T \bar{f})(q)}\leq Cq^{-\rho-k}\quad \text{for }q\geq q_{\nu} \text{ and }k=0,1,2.
\end{equation*}
Since $\theta<\rho$, it follows that for a given constant $\delta>0$ it holds
\begin{equation*}
 \abs*{\del_{q}^{k}(\T f)(q)-\del_{q}^{k}(\T \bar{f})(q)}\leq C_{\nu}q^{\nu-\rho-k}\leq \delta q^{-\theta-k}\quad \text{for } q\geq q_{\nu,\delta}
\end{equation*}
provided that we first choose $\nu>0$ sufficiently small, i.e.\@ $\nu\in(0,\rho-\theta)$ and then $q_{\nu,\delta}$ sufficiently large, i.e.\@ such that $q_{\nu,\delta}^{\nu+\theta-\rho}\leq \delta$. The claim then holds with $q_{*}=q_{\nu,\delta}$.
\end{proof}

\begin{lemma}\label{Lem:convergence:zero:norm}
 For every $\delta>0$ there exists a constant $\eps_{\delta}>0$ such that it holds
 \begin{equation*}
  \lfnorm*{0}{-\rho}{\theta}{\T f-\T \bar{f}}\leq \delta
 \end{equation*}
for every solution $f$ of~\eqref{eq:self:sim} provided $\eps\leq \eps_{\delta}$.
\end{lemma}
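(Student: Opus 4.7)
The plan is to unpack the norm and split the supremum. Reading off~\eqref{eq:def:seminorm} with $k=0$, $\mu=-\rho$, $\chi=\theta$ gives $\lfnorm*{0}{-\rho}{\theta}{G}=\sup_{q>0}(1+q)^{\theta}\abs{G(q)}$, so I would partition $(0,\infty)$ into a bounded window $(0,q_{*}]$ and a tail $[q_{*},\infty)$ and use one of the two previously established convergence results in each regime. The two ingredients are exactly tailored to complementary $q$-ranges: Lemma~\ref{Lem:help:norm:convergence} provides fast decay at infinity but only past a threshold, while Proposition~\ref{Prop:uniform:convergence:Laplace:transform} gives genuine uniform smallness on the whole half-line.

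For the tail, I would apply Lemma~\ref{Lem:help:norm:convergence} with $k=0$: given any prescribed tolerance $\delta'>0$, it yields $q_{*}\geq 1$ and $\eps_{1}>0$ such that $q^{\theta}\abs{(\T f)(q)-(\T\bar{f})(q)}\leq \delta'$ for all $q\geq q_{*}$ and $\eps\leq \eps_{1}$. Since $q_{*}\geq 1$, the elementary inequality $(1+q)^{\theta}\leq 2^{\theta}q^{\theta}$ on $[q_{*},\infty)$ promotes this to
\begin{equation*}
\sup_{q\geq q_{*}}(1+q)^{\theta}\abs{(\T f)(q)-(\T\bar{f})(q)}\leq 2^{\theta}\delta'.
\end{equation*}

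For the window $(0,q_{*}]$ the weight $(1+q)^{\theta}$ is simply bounded by the constant $(1+q_{*})^{\theta}$, so I would invoke Proposition~\ref{Prop:uniform:convergence:Laplace:transform} to choose $\eps_{2}>0$ so small that $\sup_{q>0}\abs{(\T f)(q)-(\T\bar{f})(q)}\leq \delta/\bigl(2(1+q_{*})^{\theta}\bigr)$ for $\eps\leq \eps_{2}$. Setting $\delta':=\delta/(2\cdot 2^{\theta})$ in the tail step and $\eps_{\delta}:=\min\{\eps_{1},\eps_{2}\}$, the two bounds add up to $\lfnorm*{0}{-\rho}{\theta}{\T f-\T\bar{f}}\leq \delta$, as required.

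There is no serious obstacle here; the content of the proof is just a matching of the two available convergence statements to the two relevant regimes of $q$. The only thing to track carefully is the order in which the parameters are chosen: first $q_{*}$ and $\eps_{1}$ are fixed via Lemma~\ref{Lem:help:norm:convergence} (depending only on $\delta$ and $\theta$), and only afterwards $\eps_{2}$ is extracted from Proposition~\ref{Prop:uniform:convergence:Laplace:transform}, since the admissible uniform error on the window is allowed to depend on the already-fixed $q_{*}$.
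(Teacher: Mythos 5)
Your proposal is correct and takes essentially the same approach as the paper: split the supremum at a large $q_{*}$, handle the tail via Lemma~\ref{Lem:help:norm:convergence} with $k=0$, and handle the bounded window by combining Proposition~\ref{Prop:uniform:convergence:Laplace:transform} with the trivial bound $(1+q)^{\theta}\leq(1+q_{*})^{\theta}$. Your version is in fact marginally more careful than the paper's in spelling out the conversion from the $q^{\theta}$ weight supplied by Lemma~\ref{Lem:help:norm:convergence} to the $(1+q)^{\theta}$ weight of the norm via the factor $2^{\theta}$.
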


\begin{proof}
 From the definition of $\lfnorm*{0}{-\rho}{\theta}{\cdot}$ we have to show that
 \begin{equation*}
  \sup_{q>0}(1+q)^{\theta}\abs*{(\T f)(q)-(\T\bar{f})(q)}\leq \delta\quad \text{for } \eps\leq \eps_{\delta}.
 \end{equation*}
 To see this, we recall from Lemma~\ref{Lem:help:norm:convergence} that we can fix a constant $q_{*}>0$ sufficiently large such that it holds
 \begin{equation*}
  \sup_{q>q_{*}}(1+q)^{\theta}\abs*{(\T f)(q)-(\T \bar{f})(q)}\leq \delta/2 \quad \text{for }\eps \text{ sufficiently small.}
 \end{equation*}
 On the other hand, for $q\leq q_{*}$ and $\eps$ maybe even smaller it follows from Proposition~\ref{Prop:uniform:convergence:Laplace:transform} that 
 \begin{equation*}
  \sup_{q\leq q_{*}}(1+q)^{\theta}\abs*{(\T f)(q)-(\T \bar{f})(q)}\leq (1+q_{*})^{\theta}\sup_{q>0}\abs*{(\T f)(q)-(\T \bar{f})(q)}\leq \delta/2.
 \end{equation*}
 Taking both estimates together the claim follows.
\end{proof}

We are now already prepared to prove the boundedness of $\T f$ with respect to $\lfnorm*{2}{0}{\theta}{\cdot}$ for self-similar profiles $f$.

\begin{proof}[Proof of Proposition~\ref{Prop:norm:boundedness}]
 We recall from Remark~\ref{Rem:explicit:profiles} that $\lfnorm*{2}{0}{\theta}{\T\bar{f}}\leq C$ and $\T\bar{f}\in\X{2}{0}{\theta}$. From Lemma~\ref{Lem:convergence:zero:norm} we thus deduce that
 \begin{equation}\label{eq:norm:boundedness:1}
  \lfnorm*{0}{-\rho}{\theta}{\T f}\leq \lfnorm*{0}{-\rho}{\theta}{\T f-\T \bar{f}}+\lfnorm*{0}{-\rho}{\theta}{\T\bar{f}}\leq C \quad \text{for }\eps\text{ sufficiently small.}
 \end{equation}
 If we denote again by $Q$ the desingularised Laplace transform of $f$ it holds $(\T f)'(q)=-Q'(q)$ such that we infer from Lemma~\ref{Lem:apriori} that 
 \begin{equation}\label{eq:norm:boundedness:2}
  \sup_{0<q\leq 1}(1+q)^{\theta+\rho}q^{1-\rho}\abs*{(\T f)'(q)}\leq 2^{\theta+\rho}\sup_{0<q\leq 1}\abs*{q^{1-\rho}Q'(q)}\leq 2^{\theta+\rho}\rho^2.
 \end{equation}
 Since $f$ is non-negative, we note that Remark~\ref{Rem:est:der:by:zero:norm} gives
 \begin{equation}\label{eq:norm:boundedness:3}
  \sup_{q\geq 1}(1+q)^{\theta}q^{1-\rho}\abs*{(\T f)'(q)}\leq 2^{\rho}\sup_{q\geq 1}q(1+q)^{\theta}\abs*{(\T f)'(q)}\leq 2^{\rho+\theta}\lfnorm*{0}{-\rho}{\theta}{\T f}.
 \end{equation}
 Summarising~\cref{eq:norm:boundedness:1,eq:norm:boundedness:2,eq:norm:boundedness:3} it follows for sufficiently small $\eps>0$ that $\lfnorm*{1}{0}{\theta}{\T f}\leq C$ with a uniform constant $C>0$. The claimed statement follows then finally from Lemma~\ref{Lem:norm:non:neg:meas} since $f$ is non-negative.
 \end{proof}

 \begin{remark}
  As an immediate consequence of Proposition~\ref{Prop:norm:boundedness} we obtain that $\lfnorm*{2}{0}{\theta}{\T f-\T\bar{f}}$ is also uniformly bounded for any solution $f$ of~\eqref{eq:self:sim}.
 \end{remark}
 
 \begin{lemma}\label{Lem:convergence:one:norm}
  For each $\delta>0$ and $\mu\in[0,\mu_{*})$ there exists a constant $\eps_{\delta,\mu}>0$ such that 
  \begin{equation*}
   \lfnorm*{1}{\mu}{\theta}{\T f-\T\bar{f}}\leq \delta
  \end{equation*}
 holds for all solutions $f$ of~\eqref{eq:self:sim} provided that $\eps\leq \eps_{\delta,\mu}$.
 \end{lemma}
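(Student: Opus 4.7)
The plan is to split the norm as
\begin{equation*}
\lfnorm*{1}{\mu}{\theta}{\T f - \T\bar{f}} = \lfnorm*{0}{-\rho}{\theta}{\T f - \T\bar{f}} + \lsnorm*{1}{\mu}{\theta}{\T f - \T\bar{f}}
\end{equation*}
and treat the two pieces separately. The zero-order piece is bounded by $\delta/2$ via Lemma~\ref{Lem:convergence:zero:norm} for $\eps$ sufficiently small, so the substantive task is to control the weighted seminorm of $g'(q)\vcc=(\T f)'(q)-(\T\bar{f})'(q)=\bar{Q}'(q)-Q'(q)$, i.e.\@ to make $\sup_{q>0}(1+q)^{\theta+\mu+\rho}q^{1-\rho-\mu}\abs*{g'(q)}$ small.

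I would split the domain at some threshold $q_*\geq 1$ whose value will be fixed shortly. On the large-$q$ region $q\geq q_*$ I would invoke Lemma~\ref{Lem:help:norm:convergence} with $k=1$, which, once $q_*$ is chosen large enough, gives $q^{1+\theta}\abs*{g'(q)}\leq \delta/4$ for $\eps$ small; since the seminorm weight satisfies $(1+q)^{\theta+\mu+\rho}q^{1-\rho-\mu}\leq 2^{\theta+\mu+\rho}q^{1+\theta}$ for $q\geq 1$, this disposes of the large-$q$ tail.

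On the bounded region $q\in(0,q_*]$ I would exploit the ODE identity coming from Lemma~\ref{Lem:equation:desingualarised}. Subtracting the equation $-qQ'(q)=-\rho Q(q)+Q^2(q)+\eps\Pert(f,f)(q)$ from the corresponding one with $\eps=0$ for $\bar{Q}$ and rearranging produces
\begin{equation*}
Q'(q)-\bar{Q}'(q)=\frac{1}{q}\Bigl[\bigl(\rho-Q(q)-\bar{Q}(q)\bigr)\bigl(Q(q)-\bar{Q}(q)\bigr)-\eps\Pert(f,f)(q)\Bigr].
\end{equation*}
Lemma~\ref{Lem:apriori} bounds $\abs*{\rho-Q-\bar{Q}}$ by $3\rho$ uniformly, so combining with \cref{Lem:local:uniform:conergence,Lem:Pert:estimate:Laplace:var} yields, for every admissible $\nu\in(0,\rho-\alpha)$ and $q\in(0,q_*]$,
\begin{equation*}
\abs*{g'(q)}\leq C_{\nu,q_*}\,\eps\, q^{\min\{2\rho-2\nu,\,1+\rho-\alpha-\nu\}-1},
\end{equation*}
where the $q_*$-dependence of the constant absorbs the factor $\exp(2q^\rho)$ from Lemma~\ref{Lem:local:uniform:conergence}.

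The final step is the exponent bookkeeping: I need to choose $\nu>0$ so small that the power $\min\{2\rho-2\nu,\,1+\rho-\alpha-\nu\}$ is at least $\rho+\mu$, i.e.\@ $\nu\leq\min\{(\rho-\mu)/2,\,1-\alpha-\mu\}$. Both quantities are strictly positive: the first because $\mu<\mu_*\leq\rho$, the second because $\mu<\mu_*\leq 1-\rho$ and $\alpha<\rho$. With such a $\nu$ the bound becomes $q^{1-\rho-\mu}\abs*{g'(q)}\leq C_{\nu,q_*}\eps$ on $(0,q_*]$, and the factor $(1+q)^{\theta+\mu+\rho}$ is bounded there, so this contribution to the seminorm is $\leq\delta/4$ for $\eps$ sufficiently small. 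The only delicate point is this exponent bookkeeping, which crucially uses $\mu<\mu_*$ to create the room needed to extract the extra factor $q^\mu$ demanded by the seminorm near the origin; no additional compactness or interpolation argument is required.
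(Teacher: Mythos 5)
Your proposal matches the paper's proof essentially step for step: the same decomposition of the norm with Lemma~\ref{Lem:convergence:zero:norm} handling the zero-order piece, the same appeal to Lemma~\ref{Lem:help:norm:convergence} (with $k=1$) to dispose of the region $q\geq q_*$, the same ODE-difference identity derived from Lemma~\ref{Lem:equation:desingualarised} on $(0,q_*]$ bounded via \cref{Lem:apriori,Lem:local:uniform:conergence,Lem:Pert:estimate:Laplace:var}, and the same choice of $\nu$ small enough to dominate $\rho+\mu$. Your explicit exponent bookkeeping (requiring $\nu\leq\min\{(\rho-\mu)/2,\,1-\alpha-\mu\}$) is a slightly more detailed rendering of the paper's terse ``fix $\nu$ sufficiently small depending on $\mu$,'' and it is correct.
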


 \begin{proof}
  In view of Lemma~\ref{Lem:convergence:zero:norm} it only remains to show that $\lsnorm*{1}{\mu}{\theta}{\T f-\T \bar{f}}\leq \delta$ for $\eps\leq\eps_{\delta,\mu}$. As in the proof of Lemma~\ref{Lem:convergence:zero:norm} we rely on Lemma~\ref{Lem:help:norm:convergence} and fix a constant $q_{*}=q_{*}(\delta)>0$ such that it holds
  \begin{equation}\label{eq:convergence:one:norm:1}
   (1+q)^{\theta+\rho+\mu}q^{1-\rho-\mu}\abs*{(\T f)(q)-(\T \bar{f})(q)}\leq \delta \quad \text{for all }q\geq q_{*}.
  \end{equation}
  It remains to estimate the difference $(\T f)(q)-(\T \bar{f})(q)$ for values $q\leq q_{*}$ for which we rely again on~\eqref{eq:Q}. Therefore let $Q$ and $\bar{Q}$ denote again the desingualarised Laplace transforms of $f$ and $\bar{f}$. Due to the relations $Q'=-(\T f)'$ and $\bar{Q}'=-(\T\bar{f})'$ we recall from~\eqref{eq:Q} that
  \begin{equation*}
   \begin{split}
    (\T f)'(q)&=-\frac{\rho}{q}Q(q)+\frac{1}{q}Q^2(q)+\frac{\eps}{q}\Pert(f,f)(q)\\
    (\T\bar{f})'(q)&=-\frac{\rho}{q}\bar{Q}(q)+\frac{1}{q}\bar{Q}^2(q).
   \end{split}
  \end{equation*}
 We take the difference of these two equation and rearrange to find
 \begin{equation*}
  (\T f)'(q)-(\T\bar{f})'(q)=\frac{1}{q}\bigl(Q(q)-\bar{Q}(q)\bigr)\bigl(Q(q)+\bar{Q}(q)-\rho\bigr)+\frac{\eps}{q}\Pert(f,f)(q).
 \end{equation*}
 Due to Lemma~\ref{Lem:apriori} the expressions $Q$ and $\bar{Q}$ are uniformly bounded. If we use this, fix some $\nu\in(0,\rho-\alpha)$ and recall \cref{Lem:Pert:estimate:Laplace:var,Lem:local:uniform:conergence} it follows
 \begin{equation*}
  \abs*{(\T f)'(q)-(\T\bar{f})'(q)}\leq \frac{C_{\nu}(q_{*})\eps}{q}q^{\min\{2\rho-2\nu,1+\rho-\alpha-\nu\}}\quad \text{for all }q\in(0,q_{*}). 
 \end{equation*}
 Thus, if we fix $\nu$ sufficiently small depending on $\mu\in(0,\mu_{*})$ it follows for sufficiently small $\eps>0$ that
 \begin{equation*}
  (1+q)^{\theta+\rho+\mu}q^{1-\rho-\mu}\abs*{(\T f)'(q)-(\T\bar{f})'(q)}\leq \delta\quad \text{for }q\leq q_{*}.
 \end{equation*}
  Combining this with~\eqref{eq:convergence:one:norm:1} the claim follows in view of~\eqref{eq:est:weight}.
 \end{proof}

 We are now prepared to prove that $\T f$ is close to $\T \bar{f}$ for each self-similar profile $f$ with respect to $\lfnorm*{2}{\mu}{\theta}{\cdot}$.

\begin{proof}[Proof of Proposition~\ref{Prop:closeness:two:norm}]
 To simplify the notation, we define $m\vcc=f-\bar{f}$ and note that in view of Lemma~\ref{Lem:convergence:one:norm} it suffices to prove
 \begin{equation*}
  \lsnorm*{2}{\mu}{\theta}{\T m}\leq \delta \quad \text{for }\eps \text{ sufficiently small.}
 \end{equation*}
Due to Lemma~\ref{Lem:help:norm:convergence} we can fix a constant $q_{*}=q_{*}(\delta)>0$ such that
\begin{equation}\label{eq:closenenss:two:norm:1}
 (1+q)^{\theta+\rho+\mu}q^{2-\rho-\mu}\abs*{\del_{q}^{2}(\T m)(q)}\leq \delta\quad \text{for }q\geq q_{*}.
\end{equation}
To treat the region $q<q_{*}$ we note that in terms of $m$ we can rewrite~\eqref{eq:self:sim:Lap} as
\begin{multline*}
 (\T m)(q)=\mathcal{A}_{\rho}(\T m)(q)+\mathcal{B}_{2}(\T \bar{f},\T m)(q)+\mathcal{B}_{2}(\T m,\T\bar{f})\\*
 +\mathcal{B}_{2}(\T m,\T m)(q)+\eps \mathcal{B}_{W}(\T f,\T f)(q).
\end{multline*}
Note that we used additionally that $\T\bar{f}$ solves~\eqref{eq:self:sim:Lap} with $\eps=0$. We then differentiate this equation twice and recall \cref{Lem:est:Arho,Lem:est:B2,Prop:est:BW} as well as~\cref{eq:est:weight,eq:weight:parmono} to get for $q<q_{*}$ that
\begin{multline*}
  \abs*{\del_{q}^{2}(\T m)(q)}\leq \frac{C \lsnorm*{1}{\mu}{\theta}{\T m}}{q^{2-\rho-\mu}(1+q)^{\theta+\rho+\mu}}\\*
  +C\frac{(\lfnorm{1}{0}{\theta}{\T\bar{f}}+\lfnorm{1}{0}{\theta}{\T m})\lfnorm*{1}{0}{\theta}{\T m}}{q^{2-\rho-\mu}(1+q)^{\theta+\rho+\mu}}+C_{\mu}\eps\frac{\lfnorm*{2}{0}{\theta}{\T f}^2}{q^{2-\rho-\mu}(1+q)^{\theta-\alpha+\rho-\mu}}.
\end{multline*}
Due to Lemma~\ref{Lem:comparison:norms} and $q<q_{*}$ we further find
\begin{multline*}
 \abs*{\del_{q}^{2}(\T m)(q)}\leq C_{\mu}\bigl(1+\lfnorm{1}{0}{\theta}{\T\bar{f}}+\lfnorm*{1}{0}{\theta}{\T m}\bigr)\lfnorm*{1}{\mu}{\theta}{\T m}\frac{1}{q^{2-\rho-\mu}(1+q)^{\theta+\rho+\mu}}\\*
 +C_{\mu}(q_{*})\eps \lfnorm*{2}{0}{\theta}{\T f}^{2}\frac{1}{q^{2-\rho-\mu}(1+q)^{\theta+\rho+\mu}}.
\end{multline*}
The uniform boundedness of $\lfnorm*{1}{0}{\theta}{\T\bar{f}}$, $\lfnorm*{1}{0}{\theta}{\T m}$ and $\lfnorm*{2}{0}{\theta}{\T f}$ provided by \cref{Prop:norm:boundedness,Rem:explicit:profiles} yields in combination with Lemma~\ref{Lem:convergence:one:norm} that
\begin{equation*}
 \sup_{0<q<q_{*}}\Bigl((1+q)^{\theta+\rho+\mu}q^{2-\rho-\mu}\abs*{\del_{q}^{2}(\T m)(q)}\Bigr)\leq C_{\mu}\lfnorm*{1}{\mu}{\theta}{\T m}+C_{\mu}(q_{*})\eps\leq \delta,
\end{equation*}
provided that we choose $\eps>0$ sufficiently small. Combining this estimate with~\eqref{eq:closenenss:two:norm:1} the claim follows.
\end{proof}

\section{The boundary layer estimate}\label{Sec:bd:layer}

In this section, we will give the proof of Proposition~\ref{Prop:boundary:layer}. However, as already indicated before, this proof is rather technical and will essentially cover the rest of this work since the proofs of certain auxiliary results (e.g.\@ \cref{Prop:rep:H0,Prop:Q0:int:est}) have to be postponed to \cref{Sec:H0:representation,Sec:Q0:est,Sec:asymptotics}.

\subsection{Boundary layer equation}

Before we start with the proof itself let us first introduce some notation, while we mainly keep the notation already used in~\cite{NTV15,Thr16}. Precisely, we will consider two solutions $f_1$ and $f_2$ of~\eqref{eq:self:sim} and we define
\begin{equation}\label{eq:def:beta:and:R}
 \beta_{K}(y,f_{j})\vcc=\int_{0}^{\infty}K(y,z)f_{j}(z)\dz\quad \text{and} \quad R_{j}(x)\vcc =\int_{0}^{x}\int_{0}^{x-y}K(y,z)yf_{j}(y)f_{j}(z)\dz\dy.
\end{equation}
This enables us to rewrite~\eqref{eq:self:sim} as
\begin{equation}\label{eq:self:sim:bd:layer}
 x^2f_{j}(x)=(1-\rho)\int_{0}^{x}yf_{j}(y)\dy+\int_{0}^{x}\beta_{K}(y,f_{j})yf_{j}(y)\dy-R_{j}(x).
\end{equation}
Moreover, we introduce the exponent
\begin{equation}\label{eq:def:kappa}
 \kappa_{j}\vcc=\beta_{2}(f_{j})-2\rho
\end{equation}
that will appear frequently in the following calculations. Note that $\beta_{2}(f_{j})$ is a constant, i.e.\@ independent of $x$ such that we can omit the 'spatial' dependence.
\begin{remark}\label{Rem:kappa:smallness}
 We note that it holds $\kappa_{j}=2\T (f_{j}-\bar{f})(0)$. Proposition~\ref{Prop:closeness:two:norm} thus implies
 \begin{equation*}
  \abs*{\kappa_{j}}\leq 2\lfnorm{0}{-\rho}{0}{\T (f_{j}-\bar{f})}\longrightarrow 0\quad \text{for }\eps\longrightarrow 0,
 \end{equation*}
 i.e.\@ we can make the absolute value of $\kappa_{j}$ as small as we need provided we choose $\eps>0$ sufficiently small.
\end{remark}
Finally, since in the following, we have to consider frequently the difference $f_1-f_2$, we also define
\begin{equation*}
 \Delta f\vcc=f_1-f_2.
\end{equation*}
We now rewrite~\eqref{eq:self:sim} by differentiating the equation and plugging in $K=2+\eps W$, which leads to
\begin{equation*}
 \del_{x}\bigl(x^2 f_{j}(x)\bigr)=\frac{1-\rho}{x}x^2f_{j}(x)+\frac{\beta_{2}(f_{j})}{x}x^2f_{j}(x)+\eps\frac{\beta_{W}(x,f_{j})}{x}x^2 f_{j}(x)-\del_{x} R_{j}(x).
\end{equation*}
Next, we use the splitting $1=\ee^{-x}+1-\ee^{-x}$ in the term containing $\eps$ and also rewrite the first two terms on the right-hand side by means of~\eqref{eq:def:kappa} to obtain
\begin{multline}\label{eq:self:sim:bd:layer:1}
 \del_{x}\bigl(x^2 f_{j}(x)\bigr)=\frac{1+\rho+\kappa_{j}}{x}x^2f_{j}(x)+\eps\frac{\beta_{W}(x,f_{j})}{x}x^2 f_{j}(x)\ee^{-x}\\*
 +\eps\beta_{W}(x,f_{j})x f_{j}(x)(1-\ee^{-x})-\del_{x} R_{j}(x).
\end{multline}
It turns out to be convenient to introduce moreover the function
\begin{equation}\label{eq:def:Phi}
 \Phi(x,f_{j})\vcc=\eps\int_{x}^{\infty}\frac{\beta_{W}(y,f_{j})}{y}\ee^{-y}\dy\quad \text{with the abbreviation }\Phi_{j}(\cdot)\vcc=\Phi(\cdot,f_{j}).
\end{equation}
\begin{remark}\label{Rem:properties:beta:and:Phi}
 One immediately checks that it holds
 \begin{equation*}
  \abs*{\beta_{W}(x,f_{j})}\leq C\weight{-\alpha}{\alpha}(x)\quad \text{and}\quad \abs*{\Phi(x,f_{j})}\leq C\weight{-\alpha}{1-\alpha}(x)
 \end{equation*}
 for each solution $f_{j}$ of~\eqref{eq:self:sim} with uniform constants.
\end{remark}
We note that \cref{Rem:kappa:smallness,Rem:properties:beta:and:Phi} together with Theorem~\ref{Thm:asymptotics} imply for sufficiently small $\eps>0$ that $\lim_{x\to \infty}(x^{1+\rho-\kappa_{j}}\exp(\Phi(x,f_{j}))x^2f_{j}(x))=0$. Thus, by means of the integrating factor $(x^{-(1+\rho+\kappa_{j})}\exp(\Phi(x,f_{j})))$, we can integrate~\eqref{eq:self:sim:bd:layer:1} to get
\begin{multline*}
 x^{-1-\rho-\kappa_{j}}\exp(\Phi(x,f_{j}))x^2f_{j}(x)=-\int_{x}^{\infty}\eps \beta_{W}(z,f_{j})(1-\ee^{-z})f_{j}(z)z^{-\rho-\kappa_{j}}\exp(\Phi(z,f_{j}))\dz\\*
 +\int_{x}^{\infty}z^{-(1+\rho+\kappa_{j})}\exp(\Phi(z,f_{j}))\del_{z}R_{j}(z)\dz.
\end{multline*}
Since $\del_{z}R_{j}(z)=\int_{0}^{x}K(y,z-y)yf_{j}(y)f_{j}(z-y)\dy$ it follows after some rearrangement that 
\begin{multline}\label{eq:self:sim:bd:layer:intermediate}
  x^{2}f_{j}(x)=-\eps x^{1+\rho}\int_{x}^{\infty}\Bigl(\frac{x}{z}\Bigr)^{\kappa_{j}}\exp\bigl(\Phi(z,f_{j})-\Phi(x,f_{j})\bigr)\beta_{W}(z,f_{j})\frac{1-\ee^{-z}}{z^{\rho}}f_{j}(z)\dz\\*
  +x^{1+\rho}\int_{x}^{\infty}\Bigl(\frac{x}{z}\Bigr)^{\kappa_{j}}\frac{1}{z^{1+\rho}}\exp\bigl(\Phi(z,f_{j})-\Phi(x,f_{j})\bigr)\int_{0}^{z}K(y,z-y)yf_{j}(y)f_{j}(z-y)\dy\dz.
 \end{multline}
Multiplying the equation by $\zeta(x)\ee^{-qx}$ and integrating over $(0,\infty)$ we obtain together with Fubini's Theorem that
\begin{equation*}
 \begin{split}
  \bigl(\T (\zeta f_{j})\bigr)''(q)=&-\eps \int_{0}^{\infty}\int_{0}^{z}x^{1+\rho}\ee^{-(q+1)x}\Bigl(\frac{x}{z}\Bigr)^{\kappa_{j}}\frac{\ee^{\Phi_{j}(z)}}{\ee^{\Phi_{j}(x)}}\beta_{W}(z,f_{j})\frac{1-\ee^{-z}}{z^{\rho}}f_{j}(z)\dx\dz\\
  &+\int_{0}^{\infty}\int_{y}^{\infty}\int_{0}^{z}\frac{x^{1+\rho}\ee^{-(q+1)x}}{z^{1+\rho}}\Bigl(\frac{x}{z}\Bigr)^{\kappa_{j}}\frac{\ee^{\Phi_{j}(z)}}{\ee^{\Phi_{j}(x)}}K(y,z-y)yf_{j}(y)f_{j}(z-y)\dx\dz\dy.
 \end{split}
\end{equation*}
We finally change variables $z\mapsto z+y$ in the second integral on the right-hand side which leads after a further rearrangement to
\begin{multline*}
  \bigl(\T(\zeta f_{j})\bigr)''(q)=-\eps \int_{0}^{\infty}\int_{0}^{z}x^{1+\rho}\ee^{-(q+1)x}\Bigl(\frac{x}{z}\Bigr)^{\kappa_{j}}\frac{\ee^{\Phi_{j}(z)}}{\ee^{\Phi_{j}(x)}}\beta_{W}(z,f_{j})\frac{1-\ee^{-z}}{z^{\rho}}f_{j}(z)\dx\dz\\*
  +\int_{0}^{\infty}\int_{0}^{\infty}\int_{0}^{y+z}\Bigl(\frac{x}{y+z}\Bigr)^{\kappa_{j}}\frac{x^{1+\rho}}{(y+z)^{1+\rho}}\ee^{-(q+1)x}\frac{\ee^{\Phi_{j}(y+z)}}{\ee^{\Phi_{j}(x)}}K(y,z)yf_{j}(y)f_{j}(z)\dx\dz\dy.
 \end{multline*}
For two solutions $f_1$ and $f_2$ of ~\eqref{eq:self:sim} we take now the difference of the previous equation by itself for $j=1$ and $j=2$ which yields that
\begin{equation}\label{eq:bd:layer:splitting:1}
 \Bigl(\T\bigl(\zeta (f_{1}-f_{2})\bigr)\Bigr)''(q)=-\eps (K_1+K_2+K_3+K_4)+(J_1+J_2+J_3)
\end{equation}
with the terms
\begin{align*}
 K_{1}&=\int_{0}^{\infty}\int_{0}^{z}x^{1+\rho}\ee^{-(q+1)x}\biggl(\Bigl(\frac{x}{z}\Bigr)^{\kappa_{1}}-\Bigl(\frac{x}{z}\Bigr)^{\kappa_{2}}\biggr)\frac{\ee^{\Phi_{1}(z)}}{\ee^{\Phi_{1}(x)}}\beta_{W}(z,f_{1})\frac{1-\ee^{-z}}{z^{\rho}}f_{1}(z)\dx\dz,\\
 K_{2}&=\int_{0}^{\infty}\int_{0}^{z}x^{1+\rho}\ee^{-(q+1)x}\Bigl(\frac{x}{z}\Bigr)^{\kappa_{2}}\biggl(\frac{\ee^{\Phi_{1}(z)}}{\ee^{\Phi_{1}(x)}}-\frac{\ee^{\Phi_{2}(z)}}{\ee^{\Phi_{2}(x)}}\biggr)\beta_{W}(z,f_{j})\frac{1-\ee^{-z}}{z^{\rho}}f_{j}(z)\dx\dz,\\
 K_{3}&=\int_{0}^{\infty}\int_{0}^{z}x^{1+\rho}\ee^{-(q+1)x}\Bigl(\frac{x}{z}\Bigr)^{\kappa_{2}}\frac{\ee^{\Phi_{2}(z)}}{\ee^{\Phi_{2}(x)}}\bigl(\beta_{W}(z,f_{1})-\beta_{W}(z,f_{2})\bigr)\frac{1-\ee^{-z}}{z^{\rho}}f_{j}(z)\dx\dz,\\
 K_{4}&=\int_{0}^{\infty}\int_{0}^{z}x^{1+\rho}\ee^{-(q+1)x}\Bigl(\frac{x}{z}\Bigr)^{\kappa_{2}}\frac{\ee^{\Phi_{2}(z)}}{\ee^{\Phi_{2}(x)}}\beta_{W}(z,f_{2})\frac{1-\ee^{-z}}{z^{\rho}}\bigl(f_{1}(z)-f_{2}(z)\bigr)\dx\dz
\end{align*}
and, together with the abbreviation $\int_{\Sigma}(\cdots)=\int_{0}^{\infty}\int_{0}^{\infty}\int_{0}^{y+z}(\cdots)\dx\dz\dy$, furthermore
\begin{align*}
 J_{1}&=\int_{\Sigma}\frac{x^{1+\rho}}{(y+z)^{1+\rho}}\ee^{-(q+1)x}\biggl(\Bigl(\frac{x}{y+z}\Bigr)^{\kappa_{1}}-\Bigl(\frac{x}{y+z}\Bigr)^{\kappa_{2}}\biggr)\frac{\ee^{\Phi_{1}(y+z)}}{\ee^{\Phi_{1}(x)}}K(y,z)yf_{1}(y)f_{1}(z),\\
 J_{2}&=\int_{\Sigma}\frac{x^{1+\rho}}{(y+z)^{1+\rho}}\ee^{-(q+1)x}\Bigl(\frac{x}{y+z}\Bigr)^{\kappa_{2}}\biggl(\frac{\ee^{\Phi_{1}(y+z)}}{\ee^{\Phi_{1}(x)}}-\frac{\ee^{\Phi_{2}(y+z)}}{\ee^{\Phi_{2}(x)}}\biggr)K(y,z)yf_{1}(y)f_{1}(z),\\
 J_{3}&=\int_{\Sigma}\frac{x^{1+\rho}}{(y+z)^{1+\rho}}\ee^{-(q+1)x}\Bigl(\frac{x}{y+z}\Bigr)^{\kappa_{2}}\frac{\ee^{\Phi_{2}(y+z)}}{\ee^{\Phi_{2}(x)}}K(y,z)y\bigl(f_{1}(y)f_{1}(z)-f_{2}(y)f_{2}(z)\bigr).
\end{align*}
It turns out to be convenient to define the function
\begin{equation}\label{eq:def:H}
 H(y,q)\vcc=\frac{1}{y^{\rho}(1+y)}\int_{0}^{y}x^{1+\rho}\ee^{-(q+1)x}\Bigl(\frac{x}{y}\Bigr)^{\kappa_{2}}\frac{\ee^{\Phi_{2}(y)}}{\ee^{\Phi_{2}(x)}}\dx
\end{equation}
such that $K_4$ reads as
\begin{equation*}
 K_{4}=\int_{0}^{\infty}(1-\ee^{-z})(1+z)H(z,q)\beta_{W}(z,f_{2})\bigl(f_{1}(z)-f_{2}(z)\bigr)\dz.
\end{equation*}
Making additionally use of the symmetry in $y$ and $z$, the expression $J_{3}$ can be written by means of $H$ as 
\begin{equation*}
 J_{3}=\frac{1}{2}\int_{0}^{\infty}\int_{0}^{\infty}H(y+z,q)(1+y+z)K(y,z)\bigl(f_{1}(y)f_{1}(z)-f_{2}(y)f_{2}(z)\bigr)\dz\dy.
\end{equation*}
It will turn out that we have to rewrite the function $H(\cdot,q)$ further i.e.\@ integration by parts yields that
\begin{equation}\label{eq:H:splitting:1}
 \begin{split}
  H(y,q)&=-\frac{1}{(q+1)y^{\rho}(1+y)}\int_{0}^{y}\del_{x}\Bigl(\ee^{-(q+1)x}\Bigr)\biggl(\frac{x^{1+\rho+\kappa_{2}}}{y^{\kappa_{2}}}\frac{\ee^{\Phi_{2}(y)}}{\ee^{\Phi_{2}(x)}}\biggr)\dx\\
  &=-\frac{\ee^{-(q+1)y}}{q+1}\frac{y}{1+y}+H_{0}(y,q)
 \end{split}
\end{equation}
with the terms
\begin{equation}\label{eq:H:splitting:2}
 \begin{split}
  H_{0}(y,q)&\vcc=H_{0,1}(y,q)+H_{0,2}(y,q)\\
  H_{0,1}(y,q)&\vcc=\frac{1+\rho+\kappa_{2}}{(q+1)y^{\rho}(1+y)}\int_{0}^{y}\ee^{-(q+1)x}x^{\rho}\Bigl(\frac{x}{y}\Bigr)^{\kappa_{2}}\frac{\ee^{\Phi_{2}(y)}}{\ee^{\Phi_{2}(x)}}\dx\\
  H_{0,2}(y,q)&\vcc=-\frac{1}{(q+1)y^{\rho}(1+y)}\int_{0}^{y}\ee^{-(q+1)x}x^{1+\rho}\Phi_{2}'(x)\Bigl(\frac{x}{y}\Bigr)^{\kappa_{2}}\frac{\ee^{\Phi_{2}(y)}}{\ee^{\Phi_{2}(x)}}\dx.
 \end{split}
\end{equation}
The splitting of $H$ in~\eqref{eq:H:splitting:1} gives a corresponding separation of the terms $K_{4}$ and $J_{3}$ as 
\begin{equation}\label{eq:bd:layer:splitting:2}
 K_4=K_{4,1}+K_{4,0}\quad \text{and}\quad J_3=J_{3,1}+J_{3,0}
\end{equation}
with
\begin{align*}
 K_{4,1}&\vcc=-\frac{1}{q+1}\int_{0}^{\infty}(1-\ee^{-y})y\beta_{W}(y,f_{2})\bigl(f_{1}(y)-f_{2}(y)\bigr)\ee^{-(q+1)y}\dy,\\
 K_{4,0}&\vcc=\int_{0}^{\infty}(1-\ee^{-y})(1+y)H_{0}(y,q)\beta_{W}(y,f_{2})\bigl(f_{1}(y)-f_{2}(y)\bigr)\dy,\\
 J_{3,1}&\vcc=-\frac{1}{2(q+1)}\int_{0}^{\infty}\int_{0}^{\infty}\ee^{-(q+1)(y+z)}(y+z)K(y,z)\bigl(f_{1}(y)f_{1}(z)-f_{2}(y)f_{2}(z)\bigr)\dz\dy,\\
 J_{3,0}&\vcc=\frac{1}{2}\int_{0}^{\infty}\int_{0}^{\infty}H_{0}(y+z,q)(1+y+z)K(y,z)\bigl(f_{1}(y)f_{1}(z)-f_{2}(y)f_{2}(z)\bigr)\dz\dy.
\end{align*}
In order to prove Proposition~\ref{Prop:boundary:layer} we will estimate the expressions $K_1$--$K_3$, $J_1$, $J_2$, $K_{4,1}$, $K_{4,0}$, $J_{3,1}$ and $J_{3,0}$ separately. This will be done in Section~\ref{Sec:Proof:bd:layer} but before, we will collect several preliminary estimates that we will use.

\subsection{Preliminary estimates}

\begin{lemma}\label{Lem:bd:layer:est:f:close:zero}
 There exists a constant $C>0$ such that for all $\eps>0$ sufficiently small the estimate
 \begin{equation*}
  f_{j}(x)\leq C x^{\kappa_{j}+\rho-1} \quad \text{for all }x>0
 \end{equation*}
 holds for each solution $f_{j}$ of~\eqref{eq:self:sim}.
\end{lemma}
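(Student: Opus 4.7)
The plan is to exploit the integrated self-similar equation~\eqref{eq:self:sim:bd:layer:intermediate}. After pulling the factor $x^{\kappa_{j}}$ out of $(x/z)^{\kappa_{j}}$, this equation takes the form
\[
 x^{2}f_{j}(x)=x^{1+\rho+\kappa_{j}}\Bigl(-\eps I_{1}(x)+I_{2}(x)\Bigr)
\]
with
\[
 I_{1}(x)=\int_{x}^{\infty}z^{-\kappa_{j}}\ee^{\Phi_{j}(z)-\Phi_{j}(x)}\beta_{W}(z,f_{j})\frac{1-\ee^{-z}}{z^{\rho}}f_{j}(z)\dz\geq 0
\]
and
\[
 I_{2}(x)=\int_{x}^{\infty}z^{-\kappa_{j}-1-\rho}\ee^{\Phi_{j}(z)-\Phi_{j}(x)}R_{j}'(z)\dz\geq 0.
\]
Since $x^{2}f_{j}(x)\geq 0$, the non-positive $-\eps I_{1}$ term may be dropped, yielding $x^{2}f_{j}(x)\leq x^{1+\rho+\kappa_{j}}I_{2}(x)$. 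Observing from~\eqref{eq:def:Phi} that $\Phi_{j}$ is non-negative and non-increasing (because $\beta_{W}(y,f_{j})\geq 0$), one has $\ee^{\Phi_{j}(z)-\Phi_{j}(x)}\leq 1$ whenever $z\geq x$. The proof thus reduces to establishing the uniform bound
\[
 J\vcc=\int_{0}^{\infty}z^{-\kappa_{j}-1-\rho}R_{j}'(z)\dz\leq C,
\]
since then $I_{2}(x)\leq J$ for every $x>0$.

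By Fubini's theorem and the change of variables $\zeta=z-y$, the quantity $J$ transforms into the symmetric double integral
\[
 J=\int_{0}^{\infty}\int_{0}^{\infty}K(y,\zeta)\,y\,(y+\zeta)^{-\kappa_{j}-1-\rho}f_{j}(y)f_{j}(\zeta)\dd{\zeta}\dy.
\]
Splitting $K=2+\eps W$ via~\eqref{eq:Ass:basic}, the unperturbed contribution symmetrizes (using $y\leftrightarrow\zeta$) to $\int\int(y+\zeta)^{-\kappa_{j}-\rho}f_{j}(y)f_{j}(\zeta)\dd{\zeta}\dy$; by the inequality $(y+\zeta)^{-s}\leq C(y\zeta)^{-s/2}$, valid for $s>0$ because $y+\zeta\geq 2\sqrt{y\zeta}$, this is dominated by $C\bigl(\int y^{(-\kappa_{j}-\rho)/2}f_{j}(y)\dy\bigr)^{2}$. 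By Remark~\ref{Rem:kappa:smallness} we may take $|\kappa_{j}|$ as small as needed, so the exponent $(-\kappa_{j}-\rho)/2\approx-\rho/2$ lies strictly inside $(-\rho,\rho)$, and Lemma~\ref{Lem:moment:est:2} makes the moment finite.

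The $\eps W$ contribution, bounded via~\eqref{eq:Ass:basic} by $\int\int[(y/\zeta)^{\alpha}+(\zeta/y)^{\alpha}]\,y\,(y+\zeta)^{-\kappa_{j}-1-\rho}f_{j}(y)f_{j}(\zeta)\dd{\zeta}\dy$, is where the real difficulty lies: a direct AM--GM produces a too-negative exponent at the origin, since the singular factor $\zeta^{-\alpha}$ combined with $(y+\zeta)^{-\kappa_{j}-1-\rho}$ forces $\zeta$ below the admissible moment range. My strategy is to split the domain according to $\{y>\zeta\}$ versus $\{y\leq\zeta\}$ (together with the symmetric splitting obtained from $y\leftrightarrow\zeta$). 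On $\{y>\zeta\}$, the estimate $(y+\zeta)^{-\kappa_{j}-1-\rho}\leq y^{-\kappa_{j}-1-\rho}$ decouples the variables into a product of one-dimensional moments of $f_{j}$ whose orders lie in $(-\rho,\rho)$ for small $|\kappa_{j}|$ and $\alpha<\rho$. On $\{y\leq\zeta\}$, the estimate $(y+\zeta)^{-\kappa_{j}-1-\rho}\leq\zeta^{-\kappa_{j}-1-\rho}$ produces an inner integral of the form $\int_{0}^{\zeta}y^{1+\alpha}f_{j}(y)\dy$ of order larger than $\rho$, which is not controlled by Lemma~\ref{Lem:moment:est:2} directly; however, the refined near-zero control from Lemma~\ref{Lem:improved:reg:zero} together with a dyadic decomposition gives $\int_{0}^{\zeta}y^{1+\alpha}f_{j}(y)\dy\leq C_{\nu}\zeta^{1+\alpha+\rho-\nu}$ for arbitrarily small $\nu>0$. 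The outer integrand then reduces to $\zeta^{-\kappa_{j}-\nu}f_{j}(\zeta)$, whose integral is a moment of order close to zero, hence finite by Lemma~\ref{Lem:moment:est:2} once $\eps$ and $\nu$ are chosen small enough.

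Combining the three estimates yields $J\leq C$ uniformly in the choice of $f_{j}$, and hence $f_{j}(x)\leq Cx^{\kappa_{j}+\rho-1}$ for every $x>0$. For large $x$ the bound is automatic and much weaker than the asymptotic $f_{j}(x)\sim\bar{f}(x)\sim x^{-1-\rho}$ provided by Theorem~\ref{Thm:asymptotics}, since $\kappa_{j}+\rho-1>-1-\rho$ for $|\kappa_{j}|$ small. The main obstacle will clearly be the $\eps W$ piece on the region $\{y\leq\zeta\}$: plain AM--GM is insufficient, and it is essential to combine the refined near-zero integrability of Lemma~\ref{Lem:improved:reg:zero} with the smallness of $\kappa_{j}$ from Remark~\ref{Rem:kappa:smallness} in order to bring every arising moment order of $f_{j}$ into the admissible range $(-\rho,\rho)$ of Lemma~\ref{Lem:moment:est:2}.
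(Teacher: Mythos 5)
Your overall strategy matches the paper's: starting from~\eqref{eq:self:sim:bd:layer:intermediate}, pulling out $x^{1+\rho+\kappa_{j}}$, using the monotonicity of $\Phi_{j}$ to drop the exponential factor, and reducing the claim to the finiteness of a double integral against $K$ which is then controlled by the uniform moment bounds of Lemma~\ref{Lem:moment:est:2}. Dropping the non-positive $-\eps I_{1}$ term by sign, rather than bounding its modulus as the paper does, is a harmless small simplification.

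Where you deviate from the paper — and make the argument considerably longer than necessary — is in the bound on the $\eps W$ contribution to $J$. Your concern that ``a direct AM--GM produces a too-negative exponent'' applies only to the symmetric split $(y+\zeta)^{-s}\leq C(y\zeta)^{-s/2}$. If you instead symmetrize the integral in $y\leftrightarrow\zeta$ \emph{before} estimating — which you already do for the constant-kernel part — the factor $y$ becomes $(y+\zeta)/2$, so
\begin{equation*}
 J_{W}=\tfrac{1}{2}\int_{0}^{\infty}\int_{0}^{\infty}W(y,\zeta)(y+\zeta)^{-\rho-\kappa_{j}}f_{j}(y)f_{j}(\zeta)\,\dd{\zeta}\dy
 \leq C\int_{0}^{\infty}\int_{0}^{\infty}\Bigl(\tfrac{y}{\zeta}\Bigr)^{\alpha}y^{-\rho-\kappa_{j}}f_{j}(y)f_{j}(\zeta)\,\dd{\zeta}\dy,
\end{equation*}
where in the last step the two terms of $W$ were exchanged by symmetry and $(y+\zeta)^{-\rho-\kappa_{j}}\leq y^{-\rho-\kappa_{j}}$ was used. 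The double integral factors into $\mom_{\alpha-\rho-\kappa_{j}}\cdot\mom_{-\alpha}$, and both orders lie in $(-\rho,\rho)$ once $\abs{\kappa_{j}}$ is small and $\alpha<\rho$. This is why the paper can treat the bound as ``a direct consequence of Lemma~\ref{Lem:moment:est:2}''; it needs neither the domain split $\{y\lessgtr\zeta\}$ nor the refined near-zero control from Lemma~\ref{Lem:improved:reg:zero} and the accompanying dyadic decomposition. Your more elaborate route is not wrong, but if you retain it you should also make explicit how the dominant piece $(\zeta/y)^{\alpha}$ on $\{y\leq\zeta\}$ — which produces $\int_{0}^{\zeta}y^{1-\alpha}f_{j}(y)\dy$ rather than $\int_{0}^{\zeta}y^{1+\alpha}f_{j}(y)\dy$ — is handled; the same argument applies, but the statement as written only addresses the subdominant case.
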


\begin{proof}
 We recall from Remark~\ref{Rem:properties:beta:and:Phi} that $\abs*{\beta_{W}(z,f)}\leq C(z^{-\alpha}+z^{\alpha})$ and note that $z^{-\rho}(1-\ee^{-z})\leq 1$. Then, the monotonicity of $\Phi(\cdot,f_{j})$, Fubini's Theorem and the change of variables $z\mapsto z+y$ together with~\eqref{eq:self:sim:bd:layer:intermediate} imply that
 \begin{multline*}
  f_{j}(x)\leq C\eps x^{\rho-1+\kappa_{j}}\int_{0}^{\infty}z^{-\kappa_{j}}(z^{-\alpha}+z^{\alpha})f_{j}(z)\dz\\*
  +Cx^{\rho-1+\kappa_{j}}\int_{0}^{\infty}\int_{y}^{\infty}(y+z)^{-\rho-1-\kappa_{j}}K(y,z)yf_{j}(y)f_{j}(z)\dz\dy.
 \end{multline*}
 It thus suffices to show that the integrals on the right-hand side can be bounded uniformly. This is however a direct consequence of Lemma~\ref{Lem:moment:est:2} and the estimate $K(y,z)\leq C((y/z)^{\alpha}+(z/y)^{\alpha})$.
\end{proof}

\begin{lemma}\label{Lem:bd:layer:int:est:1}
 Let $\alpha\in(0,1)$ be given. For each $\gamma\in(-\rho,\rho)$ and each $p\in[1,\infty)$ with dual exponent $p'=p/(p-1)$ there exists a constant $C>0$ such that for sufficiently $\eps>0$ the estimate
 \begin{equation*}
  \eps \int_{x}^{\infty}z^{\gamma-\kappa_{2}}f_{1}(z)\ee^{-\min\{\Phi_{j}(x)-\Phi_{j}(z)\;|\; j=1,2\}}\dz\leq C\eps^{1/p'}x^{\alpha/p}\quad \text{for all }x>0
 \end{equation*}
 holds true for any pair $f_1$ and $f_2$ of solutions to~\eqref{eq:self:sim}.
\end{lemma}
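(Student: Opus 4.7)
The plan is to prove two separate upper bounds on
\[
I(x) \;:=\; \eps \int_x^\infty z^{\gamma-\kappa_2} f_1(z) \exp\bigl(-\min_{j=1,2}(\Phi_j(x) - \Phi_j(z))\bigr)\, dz,
\]
namely a trivial bound $I(x) \leq C\eps$ and a sharper bound $I(x) \leq Cx^\alpha$, both valid for all $x > 0$ whenever $\eps > 0$ is sufficiently small. Combining these via the elementary interpolation $I = I^{1/p'}\, I^{1/p} \leq (C\eps)^{1/p'} (Cx^\alpha)^{1/p}$ then yields the claim.

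\textbf{The trivial bound.} Since $W \geq 0$ and $f_j \geq 0$, both $\beta_W(\cdot, f_j)$ and $\Phi_j$ defined in \eqref{eq:def:Phi} are non-negative, with $\Phi_j$ non-increasing. Hence $\Phi_j(x) - \Phi_j(z) \geq 0$ for $z \geq x$ and the exponential factor is $\leq 1$. Since Remark~\ref{Rem:kappa:smallness} ensures $\gamma - \kappa_2 \in (-\rho,\rho)$ for $\eps$ small, Lemma~\ref{Lem:moment:est:2} then gives $I(x) \leq \eps\, \mom_{\gamma-\kappa_2}(f_1) \leq C\eps$.

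\textbf{The sharp bound.} For $x \geq 1$ this follows from $I(x) \leq C\eps \leq Cx^\alpha$. For $x < 1$ I would split the integration at $z = 1$. On $[1,\infty)$ I will pull $\exp(-\min_j(\Phi_j(x)-\Phi_j(1)))$ out of the integral and bound the remaining moment uniformly; using assumption~\eqref{eq:Ass:asymptotic} together with Proposition~\ref{Prop:uniform:convergence:Laplace:transform} (which provides $\mom_\alpha(f_j) \geq c > 0$ uniformly for $\eps$ small) one derives $\Phi_j(x) \geq c\eps x^{-\alpha}$ for $x$ small, uniformly in $j$. This yields a contribution $\lesssim \eps\, \exp(-c\eps x^{-\alpha})$, which is super-polynomially small in $x$ when $x \lesssim \eps^{1/\alpha}$ and is dominated by $x^\alpha$ in the complementary regime $x \gtrsim \eps^{1/\alpha}$ (where $\eps \leq x^\alpha$ directly). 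On $[x, 1]$ I will invoke Lemma~\ref{Lem:bd:layer:est:f:close:zero} to replace $f_1(z)$ by $C z^{\kappa_1+\rho-1}$ and use the same asymptotic $\Phi_j(x)-\Phi_j(z) \sim (a_j\eps/\alpha)(x^{-\alpha}-z^{-\alpha})$. Substituting $z = xs$ reduces the integral to
\[
\eps\, x^{\gamma+\rho} \int_1^{1/x} s^{\gamma+\rho-1} \exp\bigl(-b(1-s^{-\alpha})\bigr)\, ds, \qquad b \sim \eps x^{-\alpha},
\]
which in the boundary-layer regime $x \lesssim \eps^{1/\alpha}$ (where $b \gtrsim 1$) concentrates in a layer of width $O(1/b)$ near $s=1$ and evaluates to $O(1/b)$; this gives $I(x) \lesssim x^{\gamma+\rho+\alpha} \leq x^\alpha$ using $\gamma+\rho > 0$ and $x \leq 1$. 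In the complementary regime the trivial bound already suffices.

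\textbf{Main obstacle.} The main technical difficulty lies in making the boundary-layer computation on $[x,1]$ rigorous: both the asymptotic $\beta_W(y, f_j) \sim C_W\,\mom_\alpha(f_j)\, y^{-\alpha}$ coming from \eqref{eq:Ass:asymptotic} and the lower bound on $\mom_\alpha(f_j)$ must be made uniform in the profile and in $j$, and the $\min_{j=1,2}$ inside the exponential has to be handled consistently (for which it suffices to bound the larger of the two resulting exponentials). This is precisely where the pointwise asymptotic assumption~\eqref{eq:Ass:asymptotic} is used in an essential way.
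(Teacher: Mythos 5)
Your proposal takes a genuinely different route from the paper's. The paper does \emph{not} prove a separate pointwise bound $I(x)\leq Cx^{\alpha}$ and interpolate at the end. Instead, after splitting $\mathcal{I}=\int_x^2+\int_2^\infty$, the paper bounds the far part by $C\eps\,\ee^{-d\eps/x^{\alpha}}$ via~\eqref{eq:bd:layer:exp:decay} and Lemma~\ref{Lem:help:exponential:regularising}, and for the near part it shifts $z\mapsto z+x$, invokes Lemma~\ref{Lem:bd:layer:est:f:close:zero} and the two-case estimate~\eqref{eq:bd:layer:reg:difference}, and then applies H\"older's inequality with exponents $p,p'$ \emph{inside} the remaining integral to split the power $(x+z)^{\gamma+\rho+\Delta\kappa-1}$ and extract the $\eps^{1/p'}x^{\alpha/p}$ scaling in one computation. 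Your version establishes the two envelopes $I\leq C\eps$ and $I\leq Cx^{\alpha}$ separately and multiplies $I=I^{1/p'}I^{1/p}$. Both are correct; what yours buys is conceptual transparency (the role of the exponents $p,p'$ is just interpolation between two natural limiting bounds, and the proof of each envelope does not involve any free parameter), while the paper's H\"older-inside-the-integral manipulation avoids ever formalising the sharper envelope $I\leq Cx^{\alpha}$ as a stand-alone claim.

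A few places where you should be more careful, since you flagged the boundary-layer step as the main obstacle. First, the uniform lower bound $\mom_{\alpha}(f_j)\geq c>0$ is Lemma~\ref{Lem:mom:lower:bd}, which in turn rests on Lemma~\ref{Lem:moments:convergence} and hence on Proposition~\ref{Prop:closeness:two:norm}; citing Proposition~\ref{Prop:uniform:convergence:Laplace:transform} is not quite enough, since the latter controls only the sup norm of $\T(f-\bar f)$ and not its derivative. Second, the uniform version of the asymptotic $\Phi_j(x)-\Phi_j(z)\gtrsim \eps(x^{-\alpha}-z^{-\alpha})$ is exactly what~\eqref{eq:bd:layer:reg:difference} provides, with the expected linear-slope behaviour for $z\lesssim x$ and the flat floor $\eps x^{-\alpha}$ for $z\gtrsim x$; using that directly is cleaner than re-deriving the difference asymptotic. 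Third, your phrase ``concentrates in a layer of width $O(1/b)$ and evaluates to $O(1/b)$'' conceals a tail: in $\int_1^{1/x}s^{\gamma+\rho-1}\exp(-b(1-s^{-\alpha}))\,ds$ the region $s\geq 2$ contributes $\sim\ee^{-cb}\,x^{-(\gamma+\rho)}$, which after multiplication by $\eps x^{\gamma+\rho}$ becomes $\eps\,\ee^{-cb}\leq Cx^{\alpha}$ only after a second application of Lemma~\ref{Lem:help:exponential:regularising}; the $O(1/b)$ contribution near $s=1$ handles only the layer. Once these are written out, your two-envelope interpolation argument closes correctly and gives the same constants as the paper.
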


\begin{remark}\label{Rem:bd:layer:reg:effect}
 From \cref{Lem:asymptotics:Phi,Lem:asymptotics:beta,Lem:analyticity:Phi,Lem:reg:exponetial:decay}, one can derive for each $\alpha\in(0,1)$ that it holds
 \begin{equation}\label{eq:bd:layer:exp:decay}
  \ee^{-\min\{\Phi_{j}(x)-\Phi_{j}(z)\;|\; j=1,2\}}\leq C\ee^{-\frac{d\eps}{x^{\alpha}}}\quad \text{if }x\leq 1\text{ and }z\geq 1
 \end{equation}
where $C,d>0$ are constants. Moreover, one has for $j=1,2$ that
\begin{equation}\label{eq:bd:layer:reg:difference}
 \Phi_{j}(x+z)-\Phi_{j}(x)\leq\begin{cases}
                               -\frac{B\eps}{x^{\alpha}}+C &\text{if }x\leq z\\
                               -\frac{b\eps}{x^{1+\alpha}}z +C &\text{if }x\geq z,
                              \end{cases}
\end{equation}
holds for all $0<x,z<2$. 
\end{remark}

\begin{proof}[Proof of Lemma~\ref{Lem:bd:layer:int:est:1}]
 We define $\mathcal{I}\vcc=\int_{x}^{\infty}z^{\gamma-\kappa_{2}}f_{1}(z)\ee^{-\min\{\Phi_{j}(x)-\Phi_{j}(z)\;|\; j=1,2\}}\dz$ to shorten the notation and we note that the monotonicity of $\Phi_{j}$ and Lemma~\ref{Lem:moment:est:2} directly imply that $\mathcal{I}\leq C$. Thus, the claim already holds for $x\geq 1$. 
 
 We thus consider $x\leq 1$ and split $\mathcal{I}$ as
 \begin{equation}\label{eq:bd:layer:int:est:1:1}
  \mathcal{I}=\int_{x}^{2}(\cdots)\dz+\int_{2}^{\infty}(\cdots)\dz=\vcc\mathcal{I}_1+\mathcal{I}_2.
 \end{equation}
For the expression $\mathcal{I}_2$ we deduce with \cref{eq:bd:layer:exp:decay,Lem:moment:est:2,Lem:help:exponential:regularising} that
\begin{equation}\label{eq:bd:layer:int:est:1:2}
 \eps \mathcal{I}_{2}\leq C\eps \ee^{-\frac{d\eps}{x^{\alpha}}}\int_{2}^{\infty}z^{\gamma-\kappa_{2}}f_{1}(z)\dz\leq C\eps \ee^{-\frac{d\eps}{x^{\alpha}}}\leq C\eps^{1/p'}x^{\alpha/p}.
\end{equation}
The expression $\mathcal{I}_1$ is slightly more complicated, i.e.\@ we introduce $\Delta\kappa\vcc=\kappa_1-\kappa_2$, change variables $z\mapsto z+x$, split the integral further and we invoke \cref{Lem:bd:layer:est:f:close:zero,eq:bd:layer:reg:difference} to get
\begin{equation*}
 \begin{split}
  \eps \mathcal{I}_{1}&\leq C\eps \int_{0}^{2}(x+z)^{\gamma+\rho+\Delta\kappa-1}\ee^{-\min\{\Phi_{j}(x)-\Phi_{j}(x+z)\;|\; j=1,2\}}\dz\\
  &\leq C\eps\int_{0}^{x}(x+z)^{\gamma+\rho+\Delta\kappa-1}\ee^{-\frac{b\eps}{x^{1+\alpha}}z}\dz+C\eps\ee^{-\frac{B\eps}{x^{\alpha}}}\int_{x}^{2}(x+z)^{\gamma+\rho+\Delta\kappa-1}\dz.
 \end{split}
\end{equation*}
Computing the second integral on the right-hand side explicitly and using $\gamma+\rho+\Delta\kappa-1=(\gamma+\rho+\Delta\kappa-1)/p+(\gamma+\rho+\Delta\kappa-1)/p'$ for the first one, yields
\begin{multline*}
 \eps\mathcal{I}_{1}\leq  C\eps\biggl(\int_{0}^{x}(x+z)^{\frac{\gamma+\rho+\Delta\kappa-1}{p}}\ee^{-\frac{b\eps}{x^{1+\alpha}}z}(x+z)^{\frac{\gamma+\rho+\Delta\kappa-1}{p'}}\dz\biggr)\\*
 +\frac{C\eps}{\gamma+\rho+\Delta\kappa}\ee^{-\frac{B\eps}{x^{\alpha}}}\bigl((2+x)^{\gamma+\rho+\Delta\kappa}-x^{\gamma+\rho+\Delta\kappa}\bigr).
\end{multline*}
Note that according to Remark~\ref{Rem:kappa:smallness} the expression $\Delta\kappa$ is small if we choose $\eps$ small enough. In the second term on the right-hand side we use that $x\leq 1$ to bound the term in parenthesis by a constant, while in the first term, we apply Hölder's inequality with $p$ and $p'$ which yields
\begin{equation*}
 \eps\mathcal{I}_{1}\leq C\eps\biggl(\int_{0}^{x}(x+z)^{\gamma+\rho+\Delta\kappa-1}\ee^{-\frac{-\rho b\eps}{x^{1+\alpha}}z}\dz\biggr)^{1/p}\biggl(\int_{0}^{x}(x+z)^{\gamma+\rho+\Delta\kappa-1}\dz\biggr)^{1/p'}+\frac{C\eps}{\gamma+\rho+\Delta\kappa}\ee^{-\frac{B\eps}{x^{\alpha}}}.
\end{equation*}
We now change variables $z\mapsto \frac{x^{1+\alpha}}{p b\eps}z$ in the first integral, estimate the second one by $Cx^{\frac{\rho+\gamma\Delta\kappa}{p'}}$ and use Lemma~\ref{Lem:help:exponential:regularising} in the last expression on the right-hand side which yields
\begin{equation*}
 \eps\mathcal{I}_{1}\leq C\eps\biggl(\frac{x^{1+\alpha}}{pb\eps}\int_{0}^{\frac{bpb\eps}{x^{\alpha}}}\biggl(x+\frac{x^{1+\alpha}}{pb\eps}z\biggr)^{\gamma+\rho+\Delta\kappa-1}\ee^{-z}\dz\biggr)^{1/p} x^{\frac{\rho+\gamma\Delta\kappa}{p'}}+ C\eps^{1/p'}x^{\alpha/p}.
\end{equation*}
Since $\left(x+\frac{x}{r}z\right)^{\gamma+\rho+\Delta\kappa-1}\leq C x^{\gamma+\rho+\Delta\kappa-1}$ for all $z\in(0,r)$ as well as $1/p+1/p'=1$ and $\rho+\gamma+\Delta\kappa>0$ for $\eps$ sufficiently small, it follows
\begin{multline*}
  \eps\mathcal{I}_{1}\leq C\eps^{1-\frac{1}{p}}x^{\frac{\rho+\alpha+\gamma+\Delta\kappa}{p}}x^{\frac{\rho+\gamma+\Delta\kappa}{p'}}\int_{0}^{\infty}\ee^{-z}\dz+C\eps^{1/p'}x^{\alpha/p}\\*
  =C\eps^{1/p'}x^{\rho+\gamma+\Delta\kappa+\frac{\alpha}{p}}+C\eps^{1/p'}x^{\alpha/p}\leq C\eps^{1/p'}x^{\alpha/p}.
 \end{multline*}
Together with~\eqref{eq:bd:layer:int:est:1:2} the claim follows.
\end{proof}

\begin{lemma}\label{Lem:bd:layer:int:est:2}
 For $\alpha\in(0,1)$ and each $p\in[1,\infty)$ with dual exponent $p'=p/(p-1)$ there exists a constant $C>0$ such that it holds for $\eps>0$ sufficiently small and all $x>0$ that 
 \begin{equation*}
  \eps \int_{0}^{\infty}\int_{\max\{0,x-y\}}^{\infty}\frac{f_1(y)f_1(z)}{(y+z)^{\rho+\kappa_2}}K(y,z)\ee^{-\min\{\Phi_j(x)-\Phi_j(y+z)\;|\; j=1,2\}}\dz\dy\leq C\eps^{1/p'}x^{\alpha/p}
 \end{equation*}
  for all pairs of solutions $f_1$ and $f_2$ of~\eqref{eq:self:sim}.
\end{lemma}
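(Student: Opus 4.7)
The plan is to follow the scheme used in the proof of Lemma~\ref{Lem:bd:layer:int:est:1}, reducing to the case $x\leq 1$ and then splitting the domain of integration according to whether $y+z$ is large or close to $x$. For $x\geq 1$ the monotonicity of $\Phi_{j}$ makes the exponential factor bounded by $1$, while $(y+z)^{-\rho-\kappa_{2}}\leq 1$ because $y+z\geq x\geq 1$ and $\rho+\kappa_{2}>0$ for $\eps$ small. The bound $K(y,z)\leq 2+(y/z)^{\alpha}+(z/y)^{\alpha}$ from~\eqref{eq:Ass:basic} together with Lemma~\ref{Lem:moment:est:2} for the moments $\mom_{0}$ and $\mom_{\pm\alpha}$ of $f_{1}$ then bounds the remaining double integral by a constant, so the whole expression is $\leq C\eps\leq C\eps^{1/p'}x^{\alpha/p}$ since $\eps\leq 1$ and $x^{\alpha/p}\geq 1$ in this regime.

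For $x\leq 1$ I would split the domain into a far region $y+z\geq 2$ and a near region $x\leq y+z<2$. In the far region the estimate~\eqref{eq:bd:layer:exp:decay} of Remark~\ref{Rem:bd:layer:reg:effect} yields $\ee^{-\min\{\Phi_{j}(x)-\Phi_{j}(y+z)\;|\;j=1,2\}}\leq C\ee^{-d\eps/x^{\alpha}}$, and the remaining double integral is again uniformly bounded by the same moment argument; an application of Lemma~\ref{Lem:help:exponential:regularising} converts $C\eps\ee^{-d\eps/x^{\alpha}}$ into $C\eps^{1/p'}x^{\alpha/p}$, exactly as in the analogous step of the preceding lemma.

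The delicate part is the near region $x\leq y+z<2$. Here I would first use Lemma~\ref{Lem:bd:layer:est:f:close:zero} to replace $f_{1}(y)f_{1}(z)$ by $Cy^{\kappa_{1}+\rho-1}z^{\kappa_{1}+\rho-1}$, change variables $s=y+z$ with $y\in[0,s]$, and bound $K(y,s-y)\leq C((y/(s-y))^{\alpha}+((s-y)/y)^{\alpha})$. By symmetry, the inner $y$-integral becomes $2\int_{0}^{s}y^{\kappa_{1}+\rho+\alpha-1}(s-y)^{\kappa_{1}+\rho-\alpha-1}\dy$, which for $\eps$ small (so that $\kappa_{1}+\rho\pm\alpha>0$) equals $Cs^{2\kappa_{1}+2\rho-1}$ via the Beta function. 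This reduces the task to estimating $\eps\int_{x}^{2}s^{\rho+2\kappa_{1}-\kappa_{2}-1}\ee^{-\min\{\Phi_{j}(x)-\Phi_{j}(s)\;|\;j=1,2\}}\ds$, which is structurally identical to the one-dimensional integral $\mathcal{I}_{1}$ appearing in the proof of Lemma~\ref{Lem:bd:layer:int:est:1} (with $f_{1}(s)$ already replaced by its pointwise upper bound and the exponent $\gamma$ essentially playing the role of $\kappa_{1}$). The Hölder-based argument used there transfers verbatim: splitting according to $s\geq 2x$ versus $s<2x$, applying the two branches of~\eqref{eq:bd:layer:reg:difference} respectively, and balancing the singularity of $s^{\rho+2\kappa_{1}-\kappa_{2}-1}$ at $s=x$ against the linear exponential decay $\ee^{-b\eps(s-x)/x^{1+\alpha}}$ through a Hölder inequality with exponents $p,p'$ delivers the desired bound. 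The principal obstacle is precisely this last balancing: in the subregion $x\leq s<2x$ the exponential acts only on the scale $x^{1+\alpha}/\eps$ in $s-x$, and it is the Hölder split that converts this scale into the exact power $\eps^{1/p'}x^{\alpha/p}$ required.
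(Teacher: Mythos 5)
Your proposal is correct and follows the scheme that the paper itself invokes: the paper's proof of this lemma is a one-line reference to the analogous one-dimensional estimate, and you carry out exactly the reduction that reference implies (bound the case $x\geq 1$ by the uniform bound $\int\!\!\int K f_1 f_1\leq C$, split $x\leq 1$ into the far region $y+z\geq 2$ handled by \eqref{eq:bd:layer:exp:decay} and Lemma~\ref{Lem:help:exponential:regularising}, and in the near region collapse the double integral to the one-dimensional integral $\eps\int_x^2 s^{\rho+2\kappa_1-\kappa_2-1}\ee^{-\min_j(\Phi_j(x)-\Phi_j(s))}\ds$ via Lemma~\ref{Lem:bd:layer:est:f:close:zero} and the Beta-function identity, which is precisely $\mathcal I_1$ of Lemma~\ref{Lem:bd:layer:int:est:1} with $\gamma=\kappa_1$). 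The Hölder-split conclusion then transfers as you say.
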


\begin{proof}
 The proof is similar to that one of Lemma~\ref{Lem:bd:layer:est:f:close:zero}.
\end{proof}

\begin{lemma}\label{Lem:kappa:difference}
 For every $\delta>0$ there exists a constant $C_{\delta}>0$ such that it holds for sufficiently small $\eps>0$ that
 \begin{equation*}
  \abs*{\kappa_1-\kappa_2}\leq \delta \lfnorm*{0}{-\rho}{\theta}{\T(f_1-f_2)}+C_{\delta}\lfnorm*{0}{-\rho}{\theta}{\T\bigl((1-\zeta)(f_1-f_2)\bigr)}
 \end{equation*}
 for all pairs $f_1$ and $f_2$ of solutions to~\eqref{eq:self:sim}.
\end{lemma}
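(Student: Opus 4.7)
The plan is to reduce the bound to an estimate on $(\T m)(0)$ with $m\vcc=f_1-f_2$ and then to exploit a telescoping decomposition adapted to the shift operator $\zeta$. To start, I would observe that $\beta_2(f_j)=2\int_0^\infty f_j(z)\,\dz=2(\T f_j)(0)$, and since $(\T\bar{f})(0)=\bar{Q}(\infty)=\rho$ one obtains, exactly as already recorded in Remark~\ref{Rem:kappa:smallness},
$$\kappa_j=2(\T f_j)(0)-2\rho=2\T(f_j-\bar{f})(0),$$
so that $\kappa_1-\kappa_2=2(\T m)(0)$. Thus the task reduces to bounding this single number in terms of the two norms appearing on the right-hand side of the lemma.

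For this, for any integer $N\geq 1$ I telescope
$$(\T m)(0)=\sum_{k=0}^{N-1}\bigl((\T m)(k)-(\T m)(k+1)\bigr)+(\T m)(N),$$
and by the elementary shift identity~\eqref{eq:Laplace:easy:shift} each bracketed term equals $\T((1-\zeta)m)(k)$. From the definition of the norm one has the pointwise bound $|G(q)|\leq(1+q)^{-\theta}\lfnorm{0}{-\rho}{\theta}{G}$ valid for every $q\geq 0$ (at $q=0$ by continuity of the Laplace transform of a finite measure on $[0,\infty)$, which is guaranteed for $\eps$ small thanks to Remark~\ref{Rem:existence:Laplace:transform}). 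Applied termwise this produces
$$|(\T m)(0)|\leq \Bigl(\sum_{k=0}^{N-1}(1+k)^{-\theta}\Bigr)\lfnorm{0}{-\rho}{\theta}{\T((1-\zeta)m)}+(1+N)^{-\theta}\lfnorm{0}{-\rho}{\theta}{\T m}.$$

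To finish, given $\delta>0$ I would pick $N=N(\delta)$ so large that $2(1+N)^{-\theta}\leq \delta$, which is possible since $\theta>0$, and set $C_\delta\vcc=2\sum_{k=0}^{N-1}(1+k)^{-\theta}<\infty$. Multiplying by $2$ and invoking $\kappa_1-\kappa_2=2(\T m)(0)$ yields the claimed inequality. The smallness of $\eps$ enters only to guarantee that $\T f_j$ exists on $[0,\infty)$ and that the two norms on the right-hand side are finite (via Remark~\ref{Rem:existence:Laplace:transform} and Proposition~\ref{Prop:norm:boundedness}). I do not expect any substantial obstacle here: the argument simply transfers the $(1+q)^\theta$-decay built into the norm on $\T m$ from infinity down to $q=0$, paying the price of a finite sum of shifted differences $\T((1-\zeta)m)(k)$ which is absorbed into $C_\delta$.
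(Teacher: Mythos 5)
Your proof is correct and takes essentially the same route as the paper: both reduce $\kappa_1-\kappa_2$ to $2(\T m)(0)$ and then split off a remainder $(\T m)(n)$ controlled by $(1+n)^{-\theta}\lfnorm*{0}{-\rho}{\theta}{\T m}$ while absorbing a finite sum $\sum_{k=0}^{n-1}\T((1-\zeta)m)(k)$ into $C_\delta$. Your explicit telescoping identity $(\T m)(0)-(\T m)(N)=\sum_{k=0}^{N-1}\T((1-\zeta)m)(k)$ is precisely the Laplace-side restatement of the paper's use of $1=\ee^{-nz}+(1-\ee^{-nz})$ together with $\frac{1-\ee^{-nz}}{1-\ee^{-z}}=\sum_{k=0}^{n-1}\ee^{-kz}$, so the two arguments coincide term by term.
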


\begin{proof}
 Note that $\kappa_1-\kappa_2=2\T (f_1-f_2)(0)$, which means that it suffices to prove the stated estimate with the left-hand side replaced by $\abs*{\T (f_1-f_2)(0)}$. Together with $1=\ee^{-nz}+(1-\ee^{-nz})$ we thus first rewrite and estimate
 \begin{multline*}
  \abs*{\T (f_1-f_2)(0)}=\abs*{\int_{0}^{\infty}f_1(z)-f_2(z)\dz}\\*
  \leq \abs*{\int_{0}^{\infty}\bigl(f_1(z)-f_2(z)\bigr)\ee^{-nz}\dz}+\abs*{\int_{0}^{\infty}\bigl(f_1(z)-f_2(z)\bigr)(1-\ee^{-nz})\dz}\\*
  \leq \frac{\lfnorm*{0}{-\rho}{\theta}{\T(f_1-f_2)}}{(1+n)^{\theta}}+\abs*{\int_{0}^{\infty}\bigl(f_1(z)-f_2(z)\bigr)(1-\zeta(z))\frac{1-\ee^{-nz}}{1-\ee^{-z}}\dz}.
 \end{multline*}
Since $(1-\ee^{-nz})/(1-\ee^{-z})=\sum_{k=0}^{n-1}\ee^{-kz}$ it further follows
\begin{multline*}
 \abs*{\T (f_1-f_2)(0)}\leq \frac{\lfnorm*{0}{-\rho}{\theta}{\T(f_1-f_2)}}{(1+n)^{\theta}}+\sum_{k=0}^{n-1}\abs*{\int_{0}^{\infty}\bigl(f_1(z)-f_2(z)\bigr)(1-\zeta(z))\ee^{-kz}\dz}\\*
 \leq \frac{\lfnorm*{0}{-\rho}{\theta}{\T(f_1-f_2)}}{(1+n)^{\theta}}+\lfnorm*{0}{-\rho}{\theta}{\T\bigl((1-\zeta)(f_1-f_2)\bigr)}\sum_{k=0}^{n-1}\frac{1}{(k+1)^{\theta}}.
\end{multline*}
We then fix $n_{\delta}\in\N$ sufficiently large such that it holds $(1+n_{\delta})^{-\theta}\leq \delta$ which implies the claim with $C_{\delta}=\sum_{k=0}^{n_{\delta}-1}(k+1)^{-\theta}$.
\end{proof}

\begin{lemma}\label{Lem:bd:layer:Phi:difference}
 There exists a constant $C>0$ such that it holds
 \begin{equation*}
  \abs*{\Phi(x,f_1)-\Phi(x,f_2)}\leq C\eps \lfnorm*{1}{0}{\theta}{\T(f_1-f_2)}x^{-\alpha}\ee^{-x/2}
 \end{equation*}
 for any two solutions $f_1$ and $f_2$ of~\eqref{eq:self:sim}.
\end{lemma}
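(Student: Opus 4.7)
By the definition \eqref{eq:def:Phi} and the linearity of $\beta_{W}(y,\cdot)$ in its second argument,
\begin{equation*}
 \Phi(x,f_{1})-\Phi(x,f_{2})=\eps\int_{x}^{\infty}\frac{\beta_{W}(y,f_{1})-\beta_{W}(y,f_{2})}{y}\,\ee^{-y}\dy,
\end{equation*}
so it suffices to bound the integrand pointwise in $y$ in terms of $\lfnorm*{1}{0}{\theta}{\T(f_{1}-f_{2})}$ and then to carry out the outer $y$-integration.

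Put $\Delta F\vcc=\T(f_{1}-f_{2})$. Invoking Proposition~\ref{Prop:W:representation} to write $W(y,z)=(y+z)\int_{0}^{\infty}\!\int_{0}^{\infty}\Ker(\xi,\eta)\ee^{-\xi y-\eta z}\dxi\deta$, multiplying by $(f_{1}-f_{2})(z)$, applying Fubini, and using the elementary identities $\int_{0}^{\infty}\ee^{-\eta z}(f_{1}-f_{2})(z)\dz=\Delta F(\eta)$ and $\int_{0}^{\infty}z\,\ee^{-\eta z}(f_{1}-f_{2})(z)\dz=-\Delta F'(\eta)$, I obtain
\begin{equation*}
 \beta_{W}(y,f_{1})-\beta_{W}(y,f_{2})=\int_{0}^{\infty}\!\int_{0}^{\infty}\Ker(\xi,\eta)\,\ee^{-\xi y}\bigl(y\,\Delta F(\eta)-\Delta F'(\eta)\bigr)\dxi\deta.
\end{equation*}
The definition of $\lfnorm*{1}{0}{\theta}{\cdot}$ yields
\begin{equation*}
 \abs{\Delta F(\eta)}\leq \frac{\lfnorm*{1}{0}{\theta}{\Delta F}}{(1+\eta)^{\theta}}\quad\text{and}\quad\abs{\Delta F'(\eta)}\leq \frac{\lfnorm*{1}{0}{\theta}{\Delta F}}{\eta^{1-\rho}(1+\eta)^{\theta+\rho}},
\end{equation*}
which, combined with the splitting $\Ker=\widetilde{\Ker}+W_{\pm}(-1)\delta(\xi-\eta)$ and the pointwise bound on $\widetilde{\Ker}$ from Proposition~\ref{Prop:W:representation}, reduces matters to the estimation of a small number of $(\xi,\eta)$-double integrals. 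A case analysis in the spirit of \cref{Lem:est:N:small,Lem:N:est:large} (splitting according to whether $\xi$ or $\eta$ dominates and using the factor $\ee^{-\xi y}$ as an effective cutoff at $\xi\sim 1/y$) then yields the pointwise bound
\begin{equation*}
 \abs*{\beta_{W}(y,f_{1})-\beta_{W}(y,f_{2})}\leq C\lfnorm*{1}{0}{\theta}{\Delta F}\bigl(y^{-\alpha}+y^{\alpha}\bigr)\quad\text{for all }y>0.
\end{equation*}

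Inserting this back into the representation of $\Phi(x,f_{1})-\Phi(x,f_{2})$, I am left with the one-variable integral $\int_{x}^{\infty}(y^{-\alpha-1}+y^{\alpha-1})\ee^{-y}\dy$. For $x\leq 1$ its leading contribution $\int_{x}^{1}y^{-\alpha-1}\dy\sim x^{-\alpha}/\alpha$ produces the claimed $x^{-\alpha}$-factor, while the $y^{\alpha-1}$-piece stays uniformly bounded. For $x\geq 1$ the asymptotics of the incomplete gamma function give both pieces as $O(x^{-\alpha}\ee^{-x/2})$. Splitting $\ee^{-y}=\ee^{-y/2}\ee^{-y/2}$ and bounding one copy by $\ee^{-x/2}$ on $[x,\infty)$, a consolidation of these two regimes yields the stated estimate.

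The sole delicate point is the pointwise bound on $\beta_{W}(y,f_{1})-\beta_{W}(y,f_{2})$: while the scaling $y^{-\alpha}+y^{\alpha}$ is exactly what one would anticipate from the homogeneity-zero bound on $W$, obtaining it solely in terms of the Laplace-side norm $\lfnorm*{1}{0}{\theta}{\Delta F}$ forces one to balance the decay of $\widetilde{\Ker}$, the cutoff provided by $\ee^{-\xi y}$, and the singular weight $\eta^{\rho-1}$ in the derivative bound for $\Delta F'$. Once this balancing is carried out, the rest is routine one-variable calculus.
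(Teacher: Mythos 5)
Your proof is correct in substance, but it takes a genuinely different organizational route from the paper. You first derive (or essentially re-derive Lemma~\ref{Lem:bd:layer:beta:difference}) a pointwise bound on $\beta_{W}(y,f_1)-\beta_{W}(y,f_2)$, and then carry out a single one-variable $y$-integration. The paper instead keeps the $1/y$ factor inside the integral, uses the identity $y^{-1}\ee^{-(s+1)y}=\int_{s}^{\infty}\ee^{-(\xi+1)y}\dxi$ to convert it into an extra $\xi$-integration, performs the $y$-integral first via Fubini, and then estimates the resulting $(\xi,\eta)$-double integrals with \cref{Lem:Ker:est:most:general,Lem:kernel:est:primitive}; it never invokes Lemma~\ref{Lem:bd:layer:beta:difference}. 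Both routes use the same underlying ingredients (Proposition~\ref{Prop:W:representation}, the norm bounds on $\T(\Delta f)$ and $\T(\Delta f)'$, and the final elementary inequality $\ee^{-x}\leq Cx^{-\alpha}\ee^{-x/2}$); yours has the virtue of reusing an existing lemma and reducing the final step to elementary one-dimensional incomplete-gamma asymptotics, while the paper's route avoids passing to a pointwise $\beta_W$-estimate, working entirely inside the Laplace kernel representation.

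One small imprecision: the claimed pointwise bound $\abs{\beta_{W}(y,f_1)-\beta_{W}(y,f_2)}\leq C\lfnorm*{1}{0}{\theta}{\T(\Delta f)}(y^{-\alpha}+y^{\alpha})$ is slightly too strong. The correct bound (Lemma~\ref{Lem:bd:layer:beta:difference}) has an extra term $y^{\theta-\alpha}$, and since $\theta$ is not assumed to satisfy $\theta\leq 2\alpha$, the term $y^{\theta-\alpha}$ is not in general dominated by $y^{-\alpha}+y^{\alpha}$ for $y\geq 1$. This does not affect your conclusion, however, since $\int_{x}^{\infty}y^{\theta-\alpha-1}\ee^{-y}\dy$ is bounded for $x\leq 1$ (as $\theta>\alpha$) and decays like $x^{\theta-\alpha-1}\ee^{-x}\leq x^{-\alpha}\ee^{-x/2}$ for $x\geq 1$ (as $\theta<1$). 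You should state the correct bound so the reader does not have to check that the extra term is harmless.
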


\begin{proof}
 From the definitions of $\Phi_j$ and $\beta_{W}$ in~\cref{eq:def:Phi,eq:def:beta:and:R} one computes together with Proposition~\ref{Prop:W:representation} that it holds
 \begin{equation*}
  \Phi_{j}(x)=\eps\int_{x}^{\infty}\int_{0}^{\infty}\int_{0}^{\infty}\int_{0}^{\infty}\Ker(s,\eta)\ee^{-s y-\eta z}\ds\deta f_{j}(z)\ee^{-y}\Bigl(1+\frac{z}{y}\Bigr)\dz\dy.
 \end{equation*}
Using the relation $y^{-1}\ee^{(s+1)y}=\int_{s}^{\infty}\ee^{-(\xi+1)y}$, applying Fubini's Theorem and evaluating the integral in $y$, this can be further rearranged to obtain
\begin{multline*}
 \Phi_{j}(x)=\eps \int_{0}^{\infty}\int_{0}^{\infty}\frac{\ee^{-(\xi+1)x}}{\xi+1}\Ker(\xi,\eta)(\T f_j)(\eta)\deta\dxi\\*
 -\eps\int_{0}^{\infty}\int_{0}^{\infty}\frac{\ee^{-(\xi+1)x}}{\xi+1}(\T f_j)'(\eta)\int_{0}^{\xi}\Ker(s,\eta)\ds\dxi\deta.
\end{multline*}
For the difference $\Phi_1-\Phi_2$ it thus follows
\begin{multline*}
 \Phi_1(x)-\Phi_2(x)=\eps\int_{0}^{\infty}\int_{0}^{\infty}\frac{\ee^{-(\xi+1)x}}{\xi+1}\Ker(\xi,\eta)\bigl(\T(f_1-f_2)\bigr)(\eta)\deta\dxi\\*
 -\eps\int_{0}^{\infty}\int_{0}^{\infty}\frac{\ee^{-(\xi+1)x}}{\xi+1}\bigl(\T(f_1-f_2)\bigr)'(\eta)\int_{0}^{\xi}\Ker(s,\eta)\ds\deta\dxi.
\end{multline*}
This then yields the estimate
\begin{multline}\label{eq:bd:layer:difference:Phi:1}
 \abs*{ \Phi_1(x)-\Phi_2(x)}\leq \eps\lfnorm*{0}{-\rho}{\theta}{\T(f_1-f_2)}\int_{0}^{\infty}\frac{\ee^{-(\xi+1)x}}{\xi+1}\int_{0}^{\infty}\frac{\abs*{\Ker(\xi,\eta)}}{(\eta+1)^{\theta}}\deta\dxi\\*
 +\eps\lsnorm*{1}{0}{\theta}{\T(f_1-f_2)}\int_{0}^{\infty}\frac{\ee^{-(\xi+1)x}}{\xi+1}\int_{0}^{\infty}\int_{0}^{\xi}\abs*{\Ker(s,\eta)}\ds\frac{1}{(\eta+1)^{\theta+\rho}\eta^{1-\rho}}\deta\dxi.
\end{multline}
For the first integral on the right-hand side we obtain from Lemma~\ref{Lem:Ker:est:most:general} and $\ee^{-(\xi+1)x}\leq \ee^{-x}$ that
\begin{equation*}
 \int_{0}^{\infty}\frac{\ee^{-(\xi+1)x}}{\xi+1}\int_{0}^{\infty}\frac{\abs*{\Ker(\xi,\eta)}}{(\eta+1)^{\theta}}\deta\dxi\leq C\ee^{-x}.
\end{equation*}
For the second integral on the right-hand side of~\eqref{eq:bd:layer:difference:Phi:1} we use Lemma~\ref{Lem:kernel:est:primitive} and $(\xi+1)^{-1}(\xi^{\alpha}\eta^{-\alpha}+1)\leq \xi^{\alpha-1}(\eta^{-\alpha}+1)$ to get
\begin{multline*}
 \int_{0}^{\infty}\frac{\ee^{-(\xi+1)x}}{\xi+1}\int_{0}^{\infty}\int_{0}^{\xi}\abs*{\Ker(s,\eta)}\ds\frac{1}{(\eta+1)^{\theta+\rho}\eta^{1-\rho}}\deta\dxi\\*
 \leq C\ee^{-x}\int_{0}^{\infty}\frac{\ee^{-x\xi}(\xi^{\alpha}\eta^{-\alpha}+1)}{(\xi+1)(\eta+1)^{\theta+\rho}\eta^{1-\rho}}\deta\dxi\\*
 \leq C\ee^{-x}\int_{0}^{\infty}\xi^{\alpha-1}\ee^{-x\xi}\dxi\int_{0}^{\infty}\frac{\eta^{-\alpha}+1}{(\eta+1)^{\theta+\rho}\eta^{1-\rho}}\deta.
\end{multline*}
Estimating the integral in $\eta$ by a constant and changing variables $\xi\mapsto x^{-1}\xi$ in the integral in $\xi$ one deduces that
\begin{equation*}
 \int_{0}^{\infty}\frac{\ee^{-(\xi+1)x}}{\xi+1}\int_{0}^{\infty}\int_{0}^{\xi}\abs*{\Ker(s,\eta)}\ds\frac{1}{(\eta+1)^{\theta+\rho}\eta^{1-\rho}}\deta\dxi\leq Cx^{-\alpha}\ee^{-x}.
\end{equation*}
Since $\ee^{-x}\leq Cx^{-\alpha}\ee^{-x/2}$ for all $x>0$ the claim follows together with~\eqref{eq:bd:layer:difference:Phi:1}.
\end{proof}

\begin{lemma}\label{Lem:bd:layer:beta:difference}
 There exists a constant $C>0$ such that it holds
 \begin{equation*}
  \abs*{\beta_{W}(y,f_1)-\beta_{W}(y,f_2)}\leq C\lfnorm*{1}{0}{\theta}{\T(\Delta f)}\bigl(y^{-\alpha}+y^{\alpha}+y^{\theta-\alpha}\bigr)
 \end{equation*}
 for each pair $f_1$ and $f_2$ of solutions to~\eqref{eq:self:sim}.
\end{lemma}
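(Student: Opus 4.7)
The natural route is to turn $\beta_{W}(\cdot,f)$ into a Laplace integral through the representation of Proposition~\ref{Prop:W:representation}. Starting from $W(y,z)=(y+z)\int_{0}^{\infty}\int_{0}^{\infty}\Ker(\xi,\eta)\ee^{-\xi y-\eta z}\dxi\deta$, substituting into $\beta_{W}(y,f)=\int_{0}^{\infty}W(y,z)f(z)\dz$, applying Fubini's theorem and recognising $\int_{0}^{\infty}f(z)\ee^{-\eta z}\dz=(\T f)(\eta)$ and $\int_{0}^{\infty}zf(z)\ee^{-\eta z}\dz=-(\T f)'(\eta)$, the plan is to derive the identity
\begin{equation*}
 \beta_{W}(y,f)=\int_{0}^{\infty}\int_{0}^{\infty}\Ker(\xi,\eta)\ee^{-\xi y}\bigl[y(\T f)(\eta)-(\T f)'(\eta)\bigr]\dxi\deta.
\end{equation*}
Taking the difference for $j=1,2$ immediately yields the analogous formula with $f$ replaced by $\Delta f=f_{1}-f_{2}$.

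Next, I would insert the pointwise bounds that follow from the definition of the seminorms together with Lemma~\ref{Lem:comparison:norms},
\begin{equation*}
 \abs*{\T(\Delta f)(\eta)}\leq C\lfnorm*{1}{0}{\theta}{\T(\Delta f)}(1+\eta)^{-\theta},\qquad \abs*{(\T(\Delta f))'(\eta)}\leq \lfnorm*{1}{0}{\theta}{\T(\Delta f)}\eta^{\rho-1}(1+\eta)^{-\theta-\rho}.
\end{equation*}
This reduces the task to estimating the two scalar integrals
\begin{equation*}
 I_{1}(y)\vcc=y\int_{0}^{\infty}\int_{0}^{\infty}\abs*{\Ker(\xi,\eta)}\ee^{-\xi y}(1+\eta)^{-\theta}\dxi\deta
\end{equation*}
and
\begin{equation*}
 I_{2}(y)\vcc=\int_{0}^{\infty}\int_{0}^{\infty}\abs*{\Ker(\xi,\eta)}\ee^{-\xi y}\eta^{\rho-1}(1+\eta)^{-\theta-\rho}\dxi\deta
\end{equation*}
by a constant multiple of $y^{-\alpha}+y^{\alpha}+y^{\theta-\alpha}$.

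Both integrals are of the same type as those already handled in Section~\ref{Sec:continuity:estimates} (cf.\ the proofs of Propositions~\ref{Prop:est:BW} and~\ref{Prop:improved:regularity}). I would split into the regimes $y\leq 1$ and $y\geq 1$ and handle the continuous part $\widetilde{\Ker}$ and the Dirac part $W_{\pm}(-1)\delta(\xi-\eta)$ of $\Ker$ separately, using the bound $\abs*{\widetilde{\Ker}(\xi,\eta)}\leq C(\xi+\eta)^{\alpha-1}(\xi^{-\alpha}+\eta^{-\alpha})$ from Proposition~\ref{Prop:W:representation}. For small $y$, the rescaling $\xi\mapsto\xi/y$ combined with the diagonal singularity of $\widetilde{\Ker}$ and the factor $\ee^{-\xi y}$ is responsible for the $y^{-\alpha}$ contribution (exactly the mechanism that produces the loss of decay $q^{-\alpha}$ in Proposition~\ref{Prop:est:BW}). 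For $y\geq 1$, the factor $\ee^{-\xi y}$ in $I_{1}(y)$ effectively restricts $\xi\lesssim 1/y$ so that, paired with the prefactor $y$, one obtains $y^{\theta-\alpha}$ from the $(1+\eta)^{-\theta}$ tail of $\T(\Delta f)$, whereas $I_{2}(y)$ is controlled via kernel bounds of the type already proved as Lemmas~\ref{Lem:Ker:est:large:1} and~\ref{Lem:Ker:est:large:2} and yields the $y^{\alpha}$ term.

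The main obstacle is purely the careful bookkeeping: deciding in which subregion of $(\xi,\eta)$ each summand of the kernel estimate dominates and tracking the resulting powers of $y$ so that the three contributions $y^{-\alpha}$, $y^{\alpha}$ and $y^{\theta-\alpha}$ come out correctly (together with the contribution of the Dirac component $W_{\pm}(-1)\delta(\xi-\eta)$). No new conceptual ingredient beyond the Laplace representation of $W$ and the kernel estimates of Appendix~\ref{Sec:Proof:Gamma} is needed; the computation is essentially a variant of those already carried out in the proofs of Propositions~\ref{Prop:est:BW} and~\ref{Prop:improved:regularity}.
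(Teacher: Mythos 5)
Your plan is the same as the paper's: derive the identity $\beta_{W}(y,f)=\int_{0}^{\infty}\ee^{-\xi y}\int_{0}^{\infty}\Ker(\xi,\eta)\bigl(y(\T f)(\eta)-(\T f)'(\eta)\bigr)\deta\dxi$ from Proposition~\ref{Prop:W:representation}, insert the seminorm bounds, and estimate the resulting two scalar integrals. The paper does the inner $\eta$-integral once and for all via Lemmas~\ref{Lem:kernel:est:partial:0} and~\ref{Lem:kernel:est:partial:1} and then substitutes $\xi\mapsto\xi/y$ in the outer $\xi$-integral, so no splitting into $y\leq 1$ versus $y\geq 1$ is needed; also note your attribution of exponents is slightly off — $I_{1}$ alone yields $y^{\alpha}+y^{\theta-\alpha}$ (for all $y$), while $I_{2}$ yields $C(1+y^{-\alpha})$, and the three powers in the claim are then recovered from $1\leq y^{-\alpha}+y^{\alpha}$.
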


\begin{proof}
 From the definition of $\beta_{W}$ in~\eqref{eq:def:beta:and:R} and Proposition~\ref{Prop:W:representation} we obtain for $j=1,2$ after some rearrangement that it holds
 \begin{equation}\label{eq:beta:rewritten}
  \beta_{W}(y,f_j)=\int_{0}^{\infty}\ee^{-\xi y}\int_{0}^{\infty}\Ker(\xi,\eta)\bigl(y(\T f_j)(\eta)-(\T f_j)'(\eta)\bigr)\deta\dxi.
 \end{equation}
 For the difference $\beta_{W}(y,f_1)-\beta_{W}(y,f_2)$ we may thus estimate
 \begin{equation}\label{eq:bd:layer:beta:difference:1}
  \begin{split}
   \abs*{\beta_{W}(y,f_1)-\beta_{W}(y,f_2)}&\leq \lfnorm*{0}{-\rho}{\theta}{\T(\Delta f)}\int_{0}^{\infty}\ee^{-\xi y}\int_{0}^{\infty}\abs*{\Ker(\xi,\eta)}\frac{y}{(\eta+1)^{\theta}}\deta\dxi\\
   &\phantom{{}\leq{}}+\lsnorm*{1}{0}{\theta}{\T(\Delta f)}\int_{0}^{\infty}\ee^{-\xi y}\int_{0}^{\infty}\abs*{\Ker(\xi,\eta)}\frac{1}{(\eta+1)^{\theta+\rho}\eta^{1-\rho}}\deta\dxi.
  \end{split}
 \end{equation}
We continue by estimating the two integrals on the right-hand side separately, while we obtain for the first one together with Lemma~\ref{Lem:kernel:est:partial:0} and a change of variables $\xi\mapsto \xi/y$ that 
\begin{multline}\label{eq:bd:layer:beta:difference:2}
  \int_{0}^{\infty}\ee^{-\xi y}\int_{0}^{\infty}\abs*{\Ker(\xi,\eta)}\frac{y}{(\eta+1)^{\theta}}\deta\dxi\leq C\int_{0}^{\infty}y\ee^{-\xi y}(\xi^{-\alpha}+\xi^{\alpha-\theta})\dxi\\*
  \leq C(y^{\alpha}+y^{\theta-\alpha})\int_{0}^{\infty}\ee^{-\xi}(\xi^{-\alpha}+\xi^{\alpha-\theta})\dxi\leq C(y^{\alpha}+y^{\theta-\alpha}).
 \end{multline}
On the other hand, it follows from Lemma~\ref{Lem:kernel:est:partial:1} that 
\begin{multline*}
 \int_{0}^{\infty}\ee^{-\xi y}\int_{0}^{\infty}\frac{\abs*{\Ker(\xi,\eta)}}{(\eta+1)^{\theta+\rho}\eta^{1-\rho}}\deta\dxi\\*
 \leq C\int_{0}^{1}\ee^{-y\xi}(\xi^{\rho-1}+\xi^{-\alpha})\abs*{\log(\xi)}\dxi+C\int_{1}^{\infty}\ee^{-y\xi}\xi^{\alpha-1}\dxi.
\end{multline*}
Since $\ee^{-y\xi}\leq 1$, we can bound the first integral on the right-hand side just by a constant, while in the second one, we change variables $\xi\mapsto \xi/y$ which yields
\begin{equation*}
 \int_{0}^{\infty}\ee^{-\xi y}\int_{0}^{\infty}\frac{\abs*{\Ker(\xi,\eta)}}{(\eta+1)^{\theta+\rho}\eta^{1-\rho}}\deta\dxi\leq C+Cy^{-\alpha}\int_{0}^{\infty}\ee^{-\xi}\xi^{\alpha-1}\dxi\leq C(1+y^{-\alpha}).
\end{equation*}
If we use this estimate together with~\eqref{eq:bd:layer:beta:difference:2} in~\eqref{eq:bd:layer:beta:difference:1} the claim follows immediately since $1\leq y^{-\alpha}+y^{\alpha}$ for all $y>0$.
\end{proof}

\subsection{Proof of Proposition~\ref{Prop:boundary:layer}}\label{Sec:Proof:bd:layer}

The general strategy will be to consider the expressions $K_1$ \& $J_{1}$, $K_{2}$ \& $J_{2}$, $K_3$, as well as $K_{4,1}$, $K_{4,0}$, $J_{3,1}$ and $J_{3,0}$ separately which will be done in own subsections. Additionally, we mention that we will make frequent use of the fact that the function $\Phi_{j}(\cdot)$ is monotonously decreasing and thus it holds for $j=1,2$ that
\begin{equation*}
 \Phi_{j}(z)-\Phi_j(x)\leq 0\quad \text{and}\quad \exp\bigl(\Phi_{j}(z)-\Phi_{j}(x)\bigr)\leq 1\quad \text{if }x\leq z.
\end{equation*}
We will also often use that, according to Remark~\ref{Rem:kappa:smallness}, for a given $\nu>0$ it holds $\abs*{\kappa_j}\leq \nu$ if $\eps>0$ is small enough.

\subsubsection{Estimates for $K_1$ and $J_1$}

In this subsection we will show that it holds
\begin{equation}\label{eq:bd:layer:est:K1:J1}
 \eps\abs*{K_1}+\abs*{J_1}\leq \Bigl(\delta\lfnorm*{0}{-\rho}{\theta}{\T(\Delta f)}+C_{\delta}\lfnorm*{0}{-\rho}{\theta}{\T\bigl((1-\zeta)\Delta f\bigr)}(q+1)^{-2-\theta}.
\end{equation}
To prove this estimate, we use that $(1-\ee^{-z})z^{-\rho}\leq 1$ for all $z>0$ as well as the elementary bound $\abs*{\ee^{-a}-\ee^{-b}}\leq \abs*{a-b}$ for $a,b>0$ which yields for any small $\nu>0$ that 
\begin{equation*}
 \abs*{\Bigl(\frac{x}{z}\Bigr)^{\kappa_{1}}-\Bigl(\frac{x}{z}\Bigr)^{\kappa_{2}}}\leq \Bigl(\frac{z}{x}\Bigr)^{\nu}\log\Bigl(\frac{z}{x}\Bigr)\abs*{\kappa_{1}-\kappa_{2}}\leq C_{\nu}\Bigl(\frac{z}{x}\Bigr)^{2\nu}\abs*{\kappa_{1}-\kappa_{2}}.
\end{equation*}
Together with Remark~\ref{Rem:properties:beta:and:Phi} we thus deduce that the expression $K_1$ can be estimated by
\begin{equation*}
 \abs*{K_{1}}\leq C\abs*{\kappa_1-\kappa_2}\int_{0}^{\infty}z^{2\nu}(z^{-\alpha}+z^{\alpha})f_1(z)\int_{0}^{z}x^{1+\rho-2\nu}\ee^{-(q+1)x}\dx\dz.
\end{equation*}
For $\nu$ sufficiently small it holds $\int_{0}^{z}x^{1+\rho-2\nu}\ee^{-(q+1)x}\dx\leq \Gamma(2+\rho-2\delta)(q+1)^{2+\rho-2\delta}$. Therefore, if we take $\nu\leq (\rho-\theta)/2$, we conclude with Lemma~\ref{Lem:moment:est:2} that
\begin{equation}\label{eq:bd:layer:K1:1}
 \abs*{K_1}\leq C\abs*{\kappa_1-\kappa_2}(q+1)^{2+\theta}.
\end{equation}
The expression $J_1$ can be estimated similarly, i.e.\@ with the estimates $K(y,z)\leq C((y/z)^{\alpha}+(z/y)^{\alpha})$ and $y/(y+z)\leq 1$ one obtains
\begin{equation*}
 \abs*{J_1}\leq \frac{C}{(q+1)^{2+\rho-2\nu}}\abs*{\kappa_1-\kappa_2}\int_{0}^{\infty}\int_{0}^{\infty}\frac{y^{\alpha}z^{-\alpha}+y^{-\alpha}z^{\alpha}}{(y+z)^{\rho-2\nu}}f_1(y)f_1(z)\dz\dy.
\end{equation*}
If we choose again $\nu\leq(\rho-\theta)/2$, it follows from the symmetry of the integrand together with Lemma~\ref{Lem:moment:est:2} that 
\begin{equation}\label{eq:bd:layer:J1:1}
 \abs*{J_1}\leq \frac{C}{(q+1)^{2+\theta}}\abs*{\kappa_1-\kappa_2}\int_{0}^{\infty}y^{\alpha+2\nu-\rho}f_1(y)\dy+\int_{0}^{\infty}z^{-\alpha}f_1(z)\dz\leq \frac{C}{(q+1)^{2+\theta}}\abs*{\kappa_1-\kappa_2}.
\end{equation}
The proof of~\eqref{eq:bd:layer:est:K1:J1} now follows immediately by combining \cref{eq:bd:layer:K1:1,eq:bd:layer:J1:1,Lem:kappa:difference}.

\subsubsection{Estimates for $K_2$ and $J_2$}

In this subsection, we will show that there exists $p'\in(1,\infty)$ such that it holds
\begin{equation}\label{eq:bd:layer:est:K2:J2}
 \eps\abs*{K_2}+\abs*{J_2}\leq C\eps^{1/p'}\lfnorm*{2}{0}{\theta}{\T(\Delta f)}(q+1)^{-2-\theta}.
\end{equation}
To derive this estimate, we note that it holds $\abs*{\ee^{-a}-\ee^{-b}}\leq \ee^{-\min\{a,b\}}\abs*{a-b}$ for all $a,b\geq 0$. This then allows to deduce for $x\leq z$ that
\begin{multline}\label{eq:bd:layer:K2:J2:help}
 \abs*{\ee^{-(\Phi_1(x)-\Phi_1(z))}-\ee^{-(\Phi_2(x)-\Phi_2(z))}}\\*
 \leq \ee^{-\min\{\Phi_{j}(x)-\Phi_{j}(z)\;|\; j=1,2\}}\bigl(\abs*{\Phi_1(z)-\Phi_2(z)}+\abs*{\Phi_1(x)-\Phi_2(x)}\bigr).
\end{multline}
Since $z^{-\rho}(1-\ee^{-z})\leq 1$ for all $z\geq 0$, and taking into account \cref{Rem:properties:beta:and:Phi,Lem:bd:layer:Phi:difference} we obtain
\begin{multline*}
 \abs*{K_2}\leq C\eps\lfnorm*{1}{0}{\theta}{\T(\Delta f)}\int_{0}^{\infty}\int_{0}^{\xi}x^{1+\rho+\kappa_{2}}\ee^{-(q+1)x}\ee^{-\min\{\Phi_{j}(x)-\Phi_{j}(z)\;|\; j=1,2\}}\cdot\\*
 \cdot \biggl(\frac{\ee^{-x/2}}{x^{\alpha}}+\frac{\ee^{-z/2}}{z^{\alpha}}\biggr)z^{-\kappa_{2}}(z^{-\alpha}+z^{\alpha})f_{1}(z)\dz\dx.
\end{multline*}
If we apply now Fubini's Theorem, and note that $z^{-\alpha}\ee^{-z/2}\leq x^{-\alpha}\ee^{-x/2}$ for $x\leq z$ we obtain together with Lemma~\ref{Lem:bd:layer:int:est:1} for dual exponents $p,p'\in(1,\infty)$ with $1/p+1/p'=1$ that 
\begin{equation*}
 \begin{split}
  \abs*{K_2}&\leq C\eps\lfnorm*{1}{0}{\theta}{\T(\Delta f)}\int_{0}^{\infty}x^{1+\rho-\alpha+\kappa_{2}}\ee^{-\left(q+\frac{3}{2}\right)x}\int_{x}^{\infty}\frac{z^{-\kappa_2}(z^{-\alpha}+z^{\alpha})f_{1}(z)}{\ee^{\min\{\Phi_{j}(x)-\Phi_{j}(z)\;|\; j=1,2\}}}\dz\dx\\
  &\leq C\eps^{1/p'}\lfnorm*{1}{0}{\theta}{\T(\Delta F)}\int_{0}^{\infty}x^{1+\rho-\alpha+\frac{\alpha}{p}+\kappa_{2}}\ee^{-\left(q+\frac{3}{2}\right)}\dx\leq C\eps^{1/p'}\frac{\lfnorm*{1}{0}{\theta}{\T(\Delta f)}}{(q+3/2)^{2+\rho-\alpha/p'+\kappa_{2}}}.
 \end{split}
\end{equation*}
We now choose $p'$ sufficiently large and $\abs*{\kappa_{2}}$ sufficiently small such that we finally get
\begin{equation}\label{eq:bd:layer:K2:1}
 \abs*{K_2}\leq C\eps^{1/p'}\lfnorm*{1}{0}{\theta}{\T(\Delta f)}(q+1)^{-2-\theta}.
\end{equation}
 To estimate $J_2$ we proceed in the same way, i.e.\@ we first note that \cref{eq:bd:layer:K2:J2:help,Lem:bd:layer:Phi:difference} as well as $(y+z)^{-\alpha}\ee^{-(y+z)/2}\leq x^{-\alpha}\ee^{-x/2}$ for $x\leq y+z$ and $y/(y+z)\leq 1$ together with Fubini's Theorem imply
 \begin{multline*}
  \abs*{J_2}\leq C\eps \lfnorm*{1}{0}{\theta}{\T(\Delta f)}\int_{0}^{\infty}x^{1+\rho-\alpha+\kappa_{2}}\ee^{-\left(q+\frac{3}{2}\right)x}\int_{0}^{\infty}\int_{\max\{0,x-y\}}^{\infty}K(y,z)\frac{f_{1}(y)f_{1}(z)}{(y+z)^{\rho+\kappa_{2}}}\cdot\\*
  \cdot \ee^{-\min\{\Phi_{j}(x)-\Phi_{j}(y+z)\;|\; j=1,2\}}\dz\dy\dx.
 \end{multline*}
 Together with Lemma~\ref{Lem:bd:layer:int:est:2} it follows in the same way as above for $p'$ sufficiently large and $\abs*{\kappa_{2}}$ sufficiently small that 
 \begin{equation*}
  \abs*{J_{2}}\leq C\eps^{1/p'}\lfnorm*{1}{0}{\theta}{\T(\Delta f)}(q+1)^{-2-\theta}.
 \end{equation*}
 Together with~\eqref{eq:bd:layer:K2:1} this shows the claimed estimate~\eqref{eq:bd:layer:est:K2:J2} while we also note that the additional factor $\eps$ in front of $K_2$ is not needed here.
 
 \subsubsection{Estimate for $K_3$}

 In this subsection we will show that it holds
 \begin{equation}\label{eq:bd:layer:est:K3}
  \eps\abs*{K_3}\leq C\eps\lfnorm*{1}{0}{\theta}{\T(\Delta f)}(q+1)^{-2-\theta}.
 \end{equation}

 To see this, we use $z^{-\rho}(1-\ee^{-z})\leq 1$, the monotonicity of $\Phi_{2}$ as well as \cref{Lem:bd:layer:beta:difference,Lem:moment:est:2} to deduce for $\abs*{\kappa_{2}}$ sufficiently small that
 \begin{equation*}
  \begin{split}
   \abs*{K_3}&\leq C\lfnorm*{1}{0}{\theta}{\T(\Delta f)}\int_{0}^{\infty}x^{1+\rho+\kappa_{2}}\ee^{-(q+1)x}\dx\int_{0}^{\infty}z^{-\kappa_2}\bigl(z^{-\alpha}+z^{\alpha}+z^{\theta-\alpha}\bigr)f_{1}(z)\dz\\
   &\leq C\lfnorm*{1}{0}{\theta}{\T(\Delta f)}(q+1)^{-2-\rho-\kappa_{2}}\leq C\lfnorm*{1}{0}{\theta}{\T(\Delta f)}(q+1)^{-2-\theta}.
  \end{split}
 \end{equation*}
 This already proves~\eqref{eq:bd:layer:est:K3}.
 
 \subsubsection{Estimate for $J_{3,1}$}\label{Sec:est:J31}
 In this subsection, we will show that it holds
 \begin{equation}\label{eq:bd:layer:est:J:31}
  \abs*{J_{3,1}}\leq \Bigl(\delta \lfnorm*{2}{0}{\theta}{\T(\Delta f)}+C_{\delta}\lfnorm*{1}{0}{\theta}{\T\bigl((1-\zeta)(\Delta f)\bigr)}(q+1)^{-2-\theta}.
 \end{equation}
 As a first step, we recall that $K=2+\eps W$ such that $J_{3,1}$ can be rewritten by means of Proposition~\ref{Prop:W:representation} and the symmetry of the integrand. Precisely, we get
 \begin{equation}\label{eq:bd:layer:J31:splitting}
 \begin{split}
  J_{3,1}&=\frac{1}{q+1}\frac{\dd}{\dd{q}}\biggl(\int_{0}^{\infty}\int_{0}^{\infty}\ee^{-(q+1)(y+z)}\bigl(f_{1}(y)-f_{2}(y)\bigr)\bigl(f_{1}(z)+f_{2}(z)\bigr)\dz\dy\biggr)\\
  &-\frac{\eps}{2(q+1)}\int_{0}^{\infty}\int_{0}^{\infty}\Ker(\xi,\eta)(\del_{\xi}+\del_{\eta})^{2}\Bigl((\T(\Delta f))(\xi+q+1)\bigl(\T(f_1+f_{2})\bigr)(\eta+q+1)\Bigr)\deta\dxi\\
  &=\vcc J_{3,1,1}+J_{3,1,2}.
 \end{split}
\end{equation} 
 To simplify the presentation, we will treat the terms $J_{3,1,1}$ and $J_{3,1,2}$ individually, while we consider first $J_{3,1,1}$. Since this expression does not contain a prefactor $\eps$, it is necessary to invoke some interpolation argument, i.e.\@ for $n\in\N$ we squeeze in the factor
 \begin{equation}\label{eq:bd:layer:exp:splitting}
  1=\ee^{-yn}\ee^{-zn}+(1-\ee^{-yn})\ee^{-zn}+\ee^{-yn}(1-\ee^{-zn})+(1-\ee^{-yn})(1-\ee^{-zn}).
 \end{equation}
This then yields
\begin{multline*}
 J_{3,1,1}=\frac{1}{q+1}\frac{\dd}{\dd{q}}\biggl(\bigl(\T(\Delta f)\bigr)(q+n+1)\bigl(\T(f_1+f_2)\bigr)(q+n+1)\\*
 +\Bigl(\T\bigl((1-\ee^{-n\cdot})(\Delta f)\bigr)\Bigr)(q+1)\bigl(\T(f_1+f_2)\bigr)(q+n+1)\\*
 +\bigl(\T(\Delta f)\bigr)(q+n+1)\Bigl(\T\bigl((1-\ee^{-n\cdot})(f_1+f_2)\bigr)\Bigr)(q+1)\\*
 +\Bigl(\T\bigl((1-\ee^{-n\cdot})(\Delta f)\bigr)\Bigr)(q+1)\Bigl(\T\bigl((1-\ee^{-n\cdot})(f_1+f_2)\bigr)\Bigr)(q+1)\biggr).
\end{multline*}
Due to this relation, we can estimate
\begin{multline}\label{eq:bd:layer:est:J31:0}
 \abs*{J_{3,1,1}}\leq \frac{1}{q+1}\Biggl(\frac{\lsnorm*{1}{0}{\theta}{\T(\Delta f)}(\lfnorm*{0}{-\rho}{\theta}{\T f_1}+\lfnorm*{0}{-\rho}{\theta}{\T f_2})}{(q+n+2)^{2\theta+\rho}(q+n+1)^{1-\rho}}\\*
 \shoveright{+\frac{\lfnorm*{0}{-\rho}{\theta}{\T(\Delta f)}(\lsnorm*{1}{0}{\theta}{\T f_1}+\lsnorm*{1}{0}{\theta}{\T f_2})}{(q+n+2)^{2\theta+\rho}(q+n+1)^{1-\rho}}\Biggr)}\\*
 \shoveleft{\phantom{\abs*{J_{3,1,1}}\leq}+\frac{1}{q+1}\Biggl(\frac{\lsnorm*{1}{0}{\theta}{\T((1-\ee^{-n\cdot})(\Delta f))}(\lfnorm*{0}{-\rho}{\theta}{\T f_1}+\lfnorm*{0}{-\rho}{\theta}{\T f_2})}{(q+2)^{\theta+\rho}(q+1)^{1-\rho}(q+n+2)^{\theta}}}\\*
 +\frac{\lfnorm*{0}{-\rho}{\theta}{\T((1-\ee^{-n\cdot})(\Delta f))}(\lsnorm*{1}{0}{\theta}{\T f_1}+\lsnorm*{1}{0}{\theta}{\T f_2})}{(q+2)^{\theta}(q+n+2)^{\theta+\rho}(q+n+1)^{1-\rho}}\\*
 +\frac{\lsnorm*{1}{0}{\theta}{\T(\Delta f)}(\lfnorm*{0}{-\rho}{\theta}{\T((1-\ee^{-n\cdot}) f_1)}+\lfnorm*{0}{-\rho}{\theta}{\T((1-\ee^{-n\cdot}) f_2)})}{(q+n+2)^{\theta+\rho}(q+n+1)^{1-\rho}(q+2)^{\theta}}\\*
 \shoveright{+\frac{\lfnorm*{0}{-\rho}{\theta}{\T(\Delta f)}(\lsnorm*{1}{0}{\theta}{\T((1-\ee^{-n\cdot}) f_1)}+\lsnorm*{1}{0}{\theta}{\T((1-\ee^{-n\cdot}) f_2)}}{(q+n+2)^{\theta}(q+2)^{\theta+\rho}(q+1)^{1-\rho}}\Biggr)}\\*
 +\frac{1}{q+1}\Biggl(\frac{\lsnorm*{1}{0}{\theta}{\T((1-\ee^{-n\cdot})(\Delta f))}(\lfnorm*{0}{-\rho}{\theta}{\T((1-\ee^{-n\cdot}) f_1)}+\lfnorm*{0}{-\rho}{\theta}{\T((1-\ee^{-n\cdot}) f_2)})}{(q+2)^{2\theta+\rho}(q+1)^{1-\rho}}\\*
 +\frac{\lfnorm*{0}{-\rho}{\theta}{\T((1-\ee^{-n\cdot})(\Delta f))}(\lsnorm*{1}{0}{\theta}{\T((1-\ee^{-n\cdot}) f_1)}+\lsnorm*{1}{0}{\theta}{\T((1-\ee^{-n\cdot}) f_2)})}{(q+2)^{2\theta+\rho}(q+1)^{1-\rho}}\Biggr).
\end{multline}
Together with \cref{Lem:norm:shift,Lem:norm:shift:by:shift,Prop:norm:boundedness} this simplifies as
\begin{equation*}
 \begin{split}
  \abs*{J_{3,1,1}}&\leq C\Bigl(\lfnorm*{1}{0}{\theta}{\T f_1}+\lfnorm*{1}{0}{\theta}{\T f_2}+\lfnorm*{1}{0}{\theta}{\T((1-\ee^{-n\cdot}) f_1)}+\lfnorm*{1}{0}{\theta}{\T((1-\ee^{-n\cdot}) f_2)}\Bigr)\cdot\\
  &\qquad\qquad\cdot \biggl(\frac{\lfnorm*{1}{0}{\theta}{\T(\Delta f)}}{(q+1)^{2+\theta}n^{\theta}}+\frac{\lfnorm*{1}{0}{\theta}{\T((1-\ee^{-n\cdot})(\Delta f))}}{(q+1)^{2+2\theta}}\biggr)\\
  &\leq \frac{C}{n^{\theta}}\lfnorm*{1}{0}{\theta}{\T(\Delta f)}(q+1)^{-2-\theta}+C(n)\lfnorm*{1}{0}{\theta}{\T((1-\zeta)(\Delta f))}(q+1)^{-2-\theta}.
 \end{split}
\end{equation*}
For a given $\delta>0$ we then fix $n_{\delta}$ large enough such that it holds
\begin{equation}\label{eq:bd:layer:est:J:31:1}
 \abs*{J_{3,1,1}}\leq \biggl(\frac{\delta}{2}\lfnorm*{1}{0}{\theta}{\T(\Delta f)}+C_{n_{\delta}}\lfnorm*{1}{0}{\theta}{\T((1-\zeta)(\Delta f))}\biggr)\frac{1}{(q+1)^{2+\theta}}.
\end{equation}
Since the expression $J_{3,1,2}$ appears already with a factor $\eps$ this is slightly simpler to estimate and we find, by expanding the derivative $(\del_{\xi}+\del_{\eta})^{2}$, that it holds
\begin{multline*}
 \abs*{J_{3,1,2}}=\frac{\eps}{2(q+1)}\int_{0}^{\infty}\int_{0}^{\infty}\abs*{\Ker(\xi,\eta)}\Biggl(\frac{\lsnorm*{2}{0}{\theta}{\T(\Delta f)}(\lfnorm*{0}{-\rho}{\theta}{\T f_1}+\lfnorm*{0}{-\rho}{\theta}{\T f_2})}{(\xi+q+2)^{\theta+\rho}(\xi+q+1)^{2-\rho}(\eta+q+2)^{\theta}}\\*
 +2\frac{\lsnorm*{1}{0}{\theta}{\T(\Delta f)}(\lsnorm*{1}{0}{\theta}{\T f_1}+\lsnorm*{1}{0}{\theta}{\T f_2})}{(\xi+q+2)^{\theta+\rho}(\xi+q+1)^{1-\rho}(\eta+q+2)^{\theta+\rho}(\eta+q+1)^{1-\rho}}\\*
 +\frac{\lfnorm*{0}{-\rho}{\theta}{\T(\Delta f)}(\lsnorm*{2}{0}{\theta}{\T f_1}+\lsnorm*{2}{0}{\theta}{\T f_2})}{(\xi+q+2)^{\theta}(\eta+q+2)^{\theta+\rho}(\eta+q+1)^{2-\rho}}\Biggr).
 \end{multline*}
Together with~\eqref{eq:est:weight} this can be further simplified such that we get
\begin{equation*}
 \begin{split}
  \abs*{J_{3,1,2}}&\leq C\eps\frac{\lfnorm*{2}{0}{\theta}{\T(\Delta f)}(\lfnorm*{2}{0}{\theta}{\T f_1}+\lfnorm*{2}{0}{\theta}{\T f_2})}{q+1}\cdot\\
  &\cdot \int_{0}^{\infty}\int_{0}^{\infty}\abs*{\Ker(\xi,\eta)}\Bigl((\xi+q+1)^{-2-\theta}(\eta+q+1)^{-\theta}\\
  &+(\xi+q+1)^{-1-\theta}(\eta+q+1)^{-1-\theta}+(\xi+q+1)^{-\theta}(\eta+q+1)^{-2-\theta}\Bigr)\deta\dxi.
 \end{split}
\end{equation*}
The change of variables $\xi\mapsto \xi(q+1)$ and $\eta\mapsto \eta(q+1)$ together with the homogeneity of $\Ker$ as well as \cref{Prop:norm:boundedness,Lem:Ker:est:most:general} then yields
\begin{multline*}
  \abs*{J_{3,1,2}}\leq C\eps\frac{\lfnorm*{2}{0}{\theta}{\T(\Delta f)}}{(q+1)^{2+2\theta}}\int_{0}^{\infty}\int_{0}^{\infty}\abs*{\Ker(\xi,\eta)}\Bigl((\xi+1)^{-2-\theta}(\eta+1)^{-\theta}\\*
  +(\xi+1)^{-1-\theta}(\eta+1)^{-1-\theta}+(\xi+1)^{-\theta}(\eta+1)^{-2-\theta}\Bigr)\deta\dxi\\
  \leq C\eps \lfnorm*{2}{0}{\theta}{\T(\Delta f)}(q+1)^{-2-\theta}.
 \end{multline*}
We then recall~\eqref{eq:bd:layer:est:J:31:1} such that the claimed estimate~\eqref{eq:bd:layer:est:J:31} follows if we choose $\eps>0$ sufficiently small.

\subsubsection{Estimate for $K_{4,1}$}
 
In this subsection, we will show that it holds
\begin{equation}\label{eq:bd:layer:est:K41}
 \eps\abs*{K_{4,1}}\leq C\eps \lfnorm*{2}{0}{\theta}{\T(\Delta f)}(q+1)^{-2-\theta}.
\end{equation}
We first recall the representation~\eqref{eq:beta:rewritten} for $\beta_{W}$ to find after some elementary manipulations that $K_{4,1}$ can be rewritten as
\begin{equation}\label{eq:bd:layer:rep:K41}
 \begin{split}
  K_{4,1}&=-\frac{1}{q+1}\int_{0}^{\infty}\int_{0}^{\infty}\Ker(\xi,\eta)(\T f_2)(\eta)\bigl(\T((1-\zeta)\Delta f)\bigr)''(q+1+\xi)\dxi\deta\\
  &\phantom{{}={}}-\frac{1}{q+1}\int_{0}^{\infty}\int_{0}^{\infty}\Ker(\xi,\eta)(\T f_2)'(\eta)\bigl(\T((1-\zeta)\Delta f)\bigr)'(q+1+\xi)\dxi\deta.
 \end{split}
\end{equation}
To estimate the right-hand side, we note that~\cref{Rem:est:difference:Laplace,eq:est:weight} imply
\begin{equation*}
 \abs[\big]{\bigl(\T((1-\zeta)\Delta f)\bigr)'(q+1+\xi)}\leq C\lsnorm*{2}{0}{\theta}{\T(\Delta f)}(\xi+q+1)^{-2-\theta}.
\end{equation*}
Thus, together with~\eqref{eq:difference:simple} we conclude from~\eqref{eq:bd:layer:rep:K41} that
\begin{multline*}
 \abs*{K_{4,1}}\leq \frac{C}{q+1}\lfnorm*{0}{-\rho}{\theta}{\T f_2}\lsnorm*{2}{0}{\theta}{\T(\Delta f)}\int_{0}^{\infty}\int_{0}^{\infty}\frac{\abs*{\Ker(\xi,\eta)}}{(\xi+q+2)^{\theta+\rho}(\xi+q+1)^{2-\rho}(\eta+1)^{\theta}}\deta\dxi\\*
 +\frac{C}{q+1}\lsnorm*{1}{0}{\theta}{\T f_{2}}\lsnorm*{2}{0}{\theta}{\T(\Delta f)}\int_{0}^{\infty}\int_{0}^{\infty}\frac{\abs*{\Ker(\xi,\eta)}}{(\xi+q+1)^{2+\theta}(\eta+1)^{\theta+\rho}\eta^{1-\rho}}\deta\dxi.
\end{multline*}
We proceed now similarly as in Section~\ref{Sec:est:J31}, i.e.\@ we estimate $(\xi+q+2)^{-\theta-\rho}\leq (\xi+q+1)^{-\theta-\rho}$ and change variables $\xi\mapsto \xi(q+1)$ and $\eta\mapsto \eta(q+1)$ such that the homogeneity of $\Ker$ and Proposition~\ref{Prop:norm:boundedness} imply
\begin{multline*}
 \abs*{K_{4,1}}\leq \frac{C}{(q+1)^{2+\theta}}\lsnorm*{2}{0}{\theta}{\T(\Delta f)}\int_{0}^{\infty}\int_{0}^{\infty}\frac{\abs*{\Ker(\xi,\eta)}}{(\xi+1)^{2+\theta}(\eta(q+1)+1)^{\theta}}\deta\dxi\\*
 +\frac{C}{(q+1)^{3+\theta-\rho}}\lsnorm*{2}{0}{\theta}{\T(\Delta f)}\int_{0}^{\infty}\int_{0}^{\infty}\frac{\abs*{\Ker(\xi,\eta)}}{(\xi+1)^{2+\theta}(\eta(q+1)+1)^{\theta+\rho}\eta^{1-\rho}}\deta\dxi.
\end{multline*}
To estimate the first integral on the right-hand side, we note that $(\eta(q+1)+1)^{-\theta}\leq (\eta+1)^{-\theta}$ and we use Lemma~\ref{Lem:Ker:est:most:general}, while for the second one, we similarly use $(\eta(q+1)+1)^{-\theta-\rho}\leq (\eta+1)^{-\theta-\rho}$ such that Lemma~\ref{Lem:Ker:est:large:1} applies in the special case $r=1$. Together it then follows
\begin{equation*}
 \abs*{K_{4,1}}\leq C\lfnorm*{2}{0}{\theta}{\T(\Delta f)}(q+1)^{-2-\theta}
\end{equation*}
because we have $\rho<1$. From this, we can immediately deduce the claimed estimate~\eqref{eq:bd:layer:est:K41}.

\subsubsection{A representation formula for $H_{0}$}\label{Sec:rep:H0}

In order to estimate the expressions $J_{3,0}$ and $K_{4,0}$, we have to write the function $H_{0}(\cdot,q)$ as Laplace transform of a certain function $\Qo_{0}(\cdot,q)$. The next proposition states that such a representation is in fact possible for all $q\geq 0$.

\begin{proposition}\label{Prop:rep:H0}
 For each $q\geq 0$ there exists a function $\Qo_{0}(\cdot,q)$ such that $H_{0}(\cdot,q)$, which is defined in~\eqref{eq:H:splitting:2}, is the Laplace transform of $\Qo_{0}(\cdot,q)$, i.e.\@ it holds
 \begin{equation*}
  H_{0}(y,q)=\int_{0}^{\infty}\Qo_{0}(\xi,q)\ee^{-\xi y}\dxi.
 \end{equation*}
 The function $\Qo_{0}(\cdot,q)$ is given by the representation formula
 \begin{equation*}
  \Qo_{0}(\xi,q)=\frac{1}{2\pi \im}\lim_{R\to \infty}\int_{-\im R}^{\im R}H_{0}(y,q)\ee^{y\xi}\dy
 \end{equation*}
 where this expression has to be understood as a limit in $L^2$, i.e.\@ $\Qo_{0}(\cdot,q)\in L^{2}(\R)$ for all $q\geq 0$.
\end{proposition}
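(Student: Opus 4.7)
The natural framework is the Paley--Wiener theorem for the Hardy space $H^{2}(\mathbb{C}_{+})$ of the right half-plane $\mathbb{C}_{+}\vcc=\{y\in\C\;|\;\Re(y)>0\}$: if an analytic function $H$ on $\mathbb{C}_{+}$ satisfies $\sup_{c>0}\int_{\R}\abs{H(c+\im t)}^{2}\dt<\infty$, then there exists $\Qo\in L^{2}(0,\infty)$ (extended by zero to the negative axis) such that $H(y)=\int_{0}^{\infty}\Qo(\xi)\ee^{-\xi y}\dxi$ and $\Qo$ is recovered by the Bromwich integral $\frac{1}{2\pi\im}\int_{c-\im R}^{c+\im R}H(y)\ee^{\xi y}\dy$ in the $L^{2}$-sense, with $c\geq 0$. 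The entire plan is to verify the hypotheses of this theorem for $H_{0}(\cdot,q)$ with fixed $q\geq 0$, and then to justify taking $c=0$ in the inversion formula.

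The first step is analytic continuation of $H_{0,1}(\cdot,q)$ and $H_{0,2}(\cdot,q)$ from $(0,\infty)$ into $\mathbb{C}_{+}$. For each of the two integrals in~\eqref{eq:H:splitting:2}, the path $[0,y]$ is replaced by the segment $\{ty\;|\;t\in[0,1]\}$, and I would check that the integrand is analytic in $y$ by first extending $\Phi_{2}$ and the factor $y^{-\rho-\kappa_{2}}\ee^{\Phi_{2}(y)}(1+y)^{-1}$ analytically to $\Ccut$; this is where the analyticity results for $\Phi$ referenced after~\eqref{eq:def:Phi} (see Section~\ref{Sec:asymptotics}) enter. The factor $(x/y)^{\kappa_{2}}=\exp(\kappa_{2}(\log x-\log y))$ is analytic on the slit plane as well. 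Cauchy's theorem then lets one deform the ray $\{ty\}$ to any convenient contour inside $\mathbb{C}_{+}$, which is the freedom we need to obtain boundedness on vertical lines.

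The core technical step is the $H^{2}$-bound. For $y=c+\im t$ with $c\geq 0$ and $|t|\to\infty$, the prefactor contributes a decay of order $|y|^{-1-\rho-\kappa_{2}}$ from $y^{-\rho-\kappa_{2}}(1+y)^{-1}$, while the remaining integral must be shown to be at most $O(1)$ uniformly in $c\geq 0$. Writing $y=|y|\ee^{\im\varphi}$ with $\varphi\in(-\pi/2,\pi/2)$ and parametrising $x=sy$, the oscillatory factor $\ee^{-(q+1)sy}$ provides the decay once one has suitable uniform bounds on $|\ee^{\Phi_{2}(y)-\Phi_{2}(sy)}|$ along the contour. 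Because $\Phi_{2}$ is analytic and of the form $\eps C_{W}\alpha^{-1}y^{-\alpha}+O(1)$ near zero (see the discussion in Section~\ref{Sec:bd:layer:formal} and~\eqref{eq:Ass:asymptotic}), the real part of $\Phi_{2}(sy)$ stays non-negative on a suitable contour, and the full integral is dominated by $\int_{0}^{1}(s|y|)^{\rho}\ee^{-(q+1)s\Re(y)}|y|\ds$, which combined with the prefactor yields an $L^{2}(\im\R)$-bound in $t$ independently of $c\geq 0$.

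Granted the $H^{2}$-property, Paley--Wiener supplies a function $\Qo_{0}(\cdot,q)\in L^{2}(0,\infty)$ (extended by zero for negative argument) with $H_{0}(y,q)=\int_{0}^{\infty}\Qo_{0}(\xi,q)\ee^{-\xi y}\dxi$ for $y\in\mathbb{C}_{+}$, and the inversion formula holds for every $c\geq 0$ as an $L^{2}$-limit. Since $H_{0}(\cdot,q)$ extends continuously to $\im\R\setminus\{0\}$ and is in $L^{2}(\im\R)$ by the boundary trace in $H^{2}$, one may take $c=0$, giving the representation formula stated in the proposition. The hardest part will be obtaining uniform control of $\Phi_{2}$ on the deformed contour close to the origin, where the singularity $\eps y^{-\alpha}/\alpha$ is most dangerous; this is exactly the content of the preparatory estimates on $\Phi$ in Section~\ref{Sec:asymptotics} that Proposition~\ref{Prop:rep:H0} is designed to lean on.
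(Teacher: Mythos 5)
Your high-level plan — invoking the Paley--Wiener theorem for the Hardy space of the right half-plane, extending $H_{0}(\cdot,q)$ analytically via path deformation, and then verifying the uniform $L^2$-bound on vertical lines — matches the structure of the paper's proof. The gap is in the $L^2$-estimate itself. You assert that after parametrising $x=sy$, the modulus of the inner integral is dominated by $\int_{0}^{1}(s|y|)^{\rho}\ee^{-(q+1)s\Re(y)}|y|\ds$, and that this "combined with the prefactor yields an $L^2(\im\R)$-bound independently of $c\geq 0$". But as $c=\Re(y)\to 0$ the exponential factor degenerates to $1$, and this integral grows like $|y|^{1+\rho}/(1+\rho)$; multiplying by the prefactor $\sim (q+1)^{-1}|y|^{-\rho}|1+y|^{-1}$ produces only $O(1)$ on the imaginary axis, which is nowhere near $L^2(\R)$. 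Indeed, even on a vertical line $\Re(y)=c\geq 1$ your bound is constant in $\Im(y)$, so the argument already fails before one approaches the boundary. The triangle-inequality estimate of the inner integral is simply too crude: the factor $\ee^{-(q+1)x}$ provides no modulus decay for $\Re(x)\approx 0$, and the decay of $H_0$ for $|\Im(y)|\to\infty$ comes from oscillation, not damping.

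What the paper does — and what is missing from your proposal — is to integrate by parts in $x$ inside the defining integrals for $H_{0,1}$ and $H_{0,2}$, splitting the ray $[0,y]$ at the unit-modulus point $y/|y|$ into a bounded piece (where crude estimates suffice) and an outer piece (where each integration by parts converts the oscillatory factor into a $(q+1)^{-1}$ gain together with lower-order terms that decay in $|y|$). For $H_{0,2}$ one integration by parts is not even enough and the procedure has to be iterated, which is exactly where the restriction $\alpha<1/2$ enters (the resulting boundary bound is of order $|y|^{\alpha-1}$, square-integrable only if $\alpha<1/2$). Finally, you identify the control of $\Phi_2$ near the origin as "the hardest part", but that is actually the benign part: near zero the exponential factor $\ee^{\Phi_{2}(y)-\Phi_{2}(x)}$ is bounded (and in fact regularising), and the paper dispatches $|y|\leq 1$ with a one-line estimate. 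The technical difficulty is entirely the $|y|\to\infty$ decay on nearly vertical lines, which your proposal does not address.
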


Moreover, we will need a suitable estimate for the representation kernel $\Qo_{0}(\cdot,q)$ which will be provided by the following proposition.
 
 \begin{proposition}\label{Prop:Q0:int:est}
  For $\Qo_{0}(\cdot ,q)$ as given by Proposition~\ref{Prop:rep:H0} and $\mathfrak{a}\in[0,2\theta]$, $\mathfrak{b}\in[2\theta,\infty)$ there exists a parameter $\nu_{*}>0$ and a constant $C=C_{\nu_{*},\blp}>0$ such that it holds for sufficiently small $\eps>0$ that
  \begin{equation*}
   \int_{0}^{\infty}\weight{-\mathfrak{a}}{\mathfrak{b}-\nu}(\xi)\abs*{\Qo_{0}(\xi,q)}\dxi\leq C (q+1)^{-2-\theta}
  \end{equation*}
 for every $\nu\in[0,\nu_{*}]$ and all $q\geq 0$.
 \end{proposition}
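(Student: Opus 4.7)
The plan is to exploit the inversion formula for $\Qo_{0}(\cdot,q)$ provided by Proposition~\ref{Prop:rep:H0} together with the analyticity of $\Phi_{2}$ on $\Ccut$ ensured by assumption~\eqref{eq:Ass:analytic}. Since $H_{0}=H_{0,1}+H_{0,2}$ and the two pieces in~\eqref{eq:H:splitting:2} share the same structure, I would treat them analogously and focus on $H_{0,1}$.

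The first step is to isolate the dependence on $q$. Rescaling $x\mapsto x/(q+1)$ inside the inner integral of $H_{0,1}(y,q)$ turns $\ee^{-(q+1)x}$ into $\ee^{-x}$ and makes it transparent that the natural length scale of the integral is $1/(q+1)$. Combined with the explicit prefactor $(q+1)^{-1}y^{-\rho}(1+y)^{-1}$ and the $x^{\rho}$ weight inside the integrand, this rescaling extracts a factor of order $(q+1)^{-2-\rho}\leq (q+1)^{-2-\theta}$ from $H_{0}(y,q)$ itself, uniformly in $y$ after the rescaling. The claim then reduces to a weighted $L^{1}$-bound in $\xi$ for the inverse Laplace transform of the bounded rescaled function.

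The core of the proof is a contour deformation. Using that $\Phi_{2}$ is analytic on $\Ccut$ and that all remaining algebraic factors in~\eqref{eq:H:splitting:2} are trivially analytic away from the negative real axis, one shows that $y\mapsto H_{0}(y,q)$ extends analytically to $\Ccut$ with algebraic bounds along the cut. The Bromwich contour in Proposition~\ref{Prop:rep:H0} can then be deformed to a Hankel-type contour wrapping $(-\infty,0]$, yielding
\begin{equation*}
 \Qo_{0}(\xi,q)=\frac{1}{\pi}\int_{0}^{\infty}\Im\bigl[H_{0}(-s+\im 0,q)\bigr]\ee^{-s\xi}\ds.
\end{equation*}
Fubini combined with the elementary bounds $\int_{0}^{1}\xi^{-\blp}\ee^{-s\xi}\dxi\leq Cs^{\blp-1}$ for $\blp\in(0,1)$ and an analogous estimate for $\int_{1}^{\infty}\xi^{-(\mathfrak{b}-\nu)}\ee^{-s\xi}\dxi$ then reduces the weighted integral to a moment-type bound for $\abs*{\Im H_{0}(-s+\im 0,q)}$ against powers of $s$. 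These moment bounds are obtained from the explicit formulas in~\eqref{eq:H:splitting:2} combined with the estimates on $\Phi_{j}$ collected in \cref{Lem:asymptotics:Phi,Lem:analyticity:Phi,Lem:reg:exponetial:decay}.

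The main obstacle is to control $\exp(\Phi_{2}(y)-\Phi_{2}(x))$ on the deformed contour, since the singular behaviour $\Phi_{2}(x)\sim -c\eps x^{-\alpha}$ near zero makes the integrand in~\eqref{eq:H:splitting:2} a priori as large as $\exp(c\eps x^{-\alpha})$. Two features have to be combined to tame this growth: the damping provided by $\ee^{-(q+1)x}$ together with the smallness of $\eps$, and the sign $\Re W(\xi,1)\geq 0$ on $\Re(\xi)\geq 0$ inherited from~\eqref{eq:Ass:analytic}, which translates into a favourable sign of $\Re\Phi_{2}$ on the relevant portion of the deformed contour. Matching the boundary layer scale $\eps^{1/\alpha}$ with the Laplace scale $1/(q+1)$ uniformly in $q$ is the delicate technical core of the argument; once this is in place, the weighted $L^{1}$-bound follows by routine integration of the Hankel representation.
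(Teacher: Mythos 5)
Your approach is genuinely different from the paper's and, in its present form, has two gaps that would be hard to repair.

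First, the rescaling step does not deliver what you claim. Writing $H_{0,1}$ explicitly, the boundary term produced by the first integration by parts is $\U_{1,0}(y,q)=-(1+\rho+\kappa_{2})(q+1)^{-2}(1+y)^{-1}\ee^{-(q+1)y}$; this carries only a factor $(q+1)^{-2}$, not $(q+1)^{-2-\theta}$, and no rescaling of the inner $x$-integral touches it. Its inverse Laplace transform is concentrated near $\xi=q+1$, so the extra decay $(q+1)^{-\theta}$ in the proposition does \emph{not} come from $H_{0}$ alone; it only appears after the weight $\weight{-\mathfrak{a}}{\mathfrak{b}-\nu}$ is integrated against $\abs{\Qo_{0}(\cdot,q)}$, because the dominant contribution sits around $\xi\approx q+1$ where the weight decays like $(q+1)^{\nu-2\theta}$. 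The assertion that the rescaling ``extracts a factor of order $(q+1)^{-2-\rho}\leq(q+1)^{-2-\theta}$ uniformly in $y$'' is therefore false, and the claimed reduction to a $q$-independent weighted $L^{1}$-bound is not available. Even for the integral terms the rescaling leaves $\Phi_{2}(x/(q+1))$ in the exponent with a $q$-dependent argument and a $q$-dependent upper limit $(q+1)y$, so no uniform bound on the rescaled function is obtained either.

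Second, the deformation of the Bromwich contour to a Hankel contour around $(-\infty,0]$ is not justified and is likely to fail. The estimates on $\Phi_{2}$ provided by \cref{Lem:analyticity:Phi,Lem:asymptotics:Phi,Lem:reg:exponetial:decay} are stated only for $\Re(x)\geq 0$; in the left half-plane the defining integral $\Phi(x)=\eps\int_{x}^{\infty}\beta_{W}(y)y^{-1}\ee^{-y}\dy$ involves $\ee^{-y}$ which grows along horizontal paths, so there is no controlled continuation of $\exp(\Phi_{2}(y)/\ee^{\Phi_{2}(x)})$ to the region you propose to deform into. Moreover, the paper's estimate~\eqref{eq:proof:rep:bd:4} shows that $H_{0}(y,q)$ only decays like $\abs{y}^{\alpha-1}$ at infinity, which is too slow to close the contour by arcs at infinity and to make a Sokhotski--Plemelj type boundary-value formula with an absolutely convergent Hankel integral work out of the box. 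The paper's proof in fact stays on the imaginary axis (or deforms slightly into the \emph{right} half-plane near the origin, cf.\ Figure~\ref{fig:W20}); for $\xi>q+2$ it explicitly cannot deform at all ``due to the sign in the exponential''. The actual mechanism is repeated integration by parts, alternately in $x$ (producing factors $(q+1)^{-1}$) and in $y$ (producing factors $\xi^{-1}$ or $\abs{\xi-(q+1)}^{-1}$), combined with a case distinction $\xi\lessgtr q+2$, the cut-off functions $\cut$ and $\coup$ to isolate the singular region near zero, and the lemmas controlling $\exp(-\Phi_{2})$ and its derivatives. The conditions $\rho>1/2$ and $\alpha<1/2$ enter precisely there: the former to absorb the worsening singularity at zero after each integration by parts, the latter to make the iterated integration by parts in the region away from zero converge. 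None of these points appear in your outline, and they are not reducible to the routine moment bounds you sketch.
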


 With these two results, we will now continue to estimate the expressions $J_{3,0}$ and $K_{4,0}$ while the rather technical proofs of \cref{Prop:rep:H0,Prop:Q0:int:est} are postponed to \cref{Sec:H0:representation,Sec:Q0:est} respectively.
 
 \subsubsection{Estimate for $J_{3,0}$}
 In this subsection, we will show that it holds
 \begin{equation}\label{eq:bd:layer:est:J30}
  \abs*{J_{3,0}}\leq \Bigl(\delta \lfnorm*{2}{0}{\theta}{\T(\Delta f)}+C_{\delta}\lfnorm*{1}{0}{\theta}{\T\bigl((1-\zeta)(\Delta f)\bigr)}\Bigr)(q+1)^{-2-\theta}.
 \end{equation}
 We first note that the splitting $K=2+\eps W$ together with the symmetry of the integrand and Proposition~\ref{Prop:rep:H0} allow to rewrite $J_{3,0}$ as 
 \begin{equation*}
  J_{3,0}=J_{3,0,1}+J_{3,0,2}
 \end{equation*}
 with 
 \begin{equation*}
  \begin{split}
   J_{3,0,1}&\vcc=\int_{0}^{\infty}\Qo_{0}(\xi,q)\int_{0}^{\infty}\int_{0}^{\infty}\ee^{-\xi(y+z)}(1+y+z)\Delta f(y)\bigl(f_{1}(z)+f_{2}(z)\bigr)\dz\dy\dxi,\\
   J_{3,0,2}&\vcc=\frac{\eps}{2}\int_{0}^{\infty}\Qo_{0}(\xi,q)\int_{0}^{\infty}\int_{0}^{\infty}\ee^{-\xi(y+z)}(1+y+z)W(y,z)\Delta f(y)\bigl(f_{1}(z)+f_{2}(z)\bigr)\dz\dy\dxi.
  \end{split}
 \end{equation*}
 We estimate the two expressions again individually while we start with $J_{3,0,1}$ and the approach will be the same as in Section~\ref{Sec:est:J31}. More precisely, this means that one introduces the factor~\eqref{eq:bd:layer:exp:splitting}, estimates the resulting expression by the norms as in~\eqref{eq:bd:layer:est:J31:0} such that together with \cref{Lem:norm:shift,Lem:norm:shift:by:shift,Prop:norm:boundedness} one obtains
 \begin{multline*}
  \abs*{J_{3,0,1}}\leq C\lfnorm*{1}{0}{\theta}{\T(\Delta f)}\int_{0}^{\infty}\abs*{\Qo_{0}(\xi,q)}\cdot\\*
  \cdot\biggl(\frac{1}{(\xi+n)^{2\theta}}+\frac{1}{(\xi+n+1)^{\theta}(\xi+1)^{\theta}}+\frac{1}{(\xi+n+1)^{\theta}(\xi+1)^{\theta+\rho}\xi^{1-\rho}}\biggr)\dxi\\*
  \shoveleft{\quad+C(n)\lfnorm*{1}{0}{\theta}{\T\bigl((1-\zeta)(\Delta f)\bigr)}\int_{0}^{\infty}\abs*{\Qo_{0}(\xi,q)}\cdot}\\*
  \cdot\biggl(\frac{1}{(\xi+1)^{\theta}(\xi+n+1)^{\theta}}+\frac{1}{(\xi+n+1)^{\theta}(\xi+1)^{\theta+\rho}\xi^{1-\rho}}\\*
  +\frac{1}{(\xi+n)^{2\theta}}+\frac{1}{(\xi+1)^{2\theta+\rho}\xi^{1-\rho}}\biggr)\dxi.
 \end{multline*}
For a small parameter $\nu>0$ in the sense of Proposition~\ref{Prop:Q0:int:est}, we may thus further estimate
\begin{multline*}
 \abs*{J_{3,0,1}}\leq \frac{C}{n^{\nu}}\lfnorm*{1}{0}{\theta}{\T(\Delta f)}\biggl(\int_{0}^{\infty}\frac{\abs*{\Qo_{0}(\xi,q)}}{(\xi+1)^{2\theta-\nu}}+\frac{\abs*{\Qo_{0}(\xi,q)}}{(\xi+1)^{2\theta+\rho-\nu}\xi^{1-\rho}}\dxi\biggr)\\*
 +C(n)\lfnorm*{1}{0}{\theta}{\T\bigl((1-\zeta)(\Delta f)\bigr)}\biggl(\int_{0}^{\infty}\frac{\abs*{\Qo_{0}(\xi,q)}}{(\xi+1)^{2\theta}}+\frac{\abs*{\Qo_{0}(\xi,q)}}{(\xi+1)^{2\theta+\rho}\xi^{1-\rho}}\dxi\biggr).
\end{multline*}
Thus, the integrals on the right-hand side can be estimated due to Proposition~\ref{Prop:Q0:int:est} and it follows for any given $\delta>0$ by choosing $n=n_{\delta}\in\N$ sufficiently large that it holds
\begin{equation}\label{eq:bd:layer:est:J30:1}
 \abs*{J_{3,0,1}}\leq \biggl(\frac{\delta}{2}\lfnorm*{1}{0}{\theta}{\T(\Delta f)}+C(n_{\delta})\lfnorm*{1}{0}{\theta}{\T\bigl((1-\zeta)(\Delta f)\bigr)}\biggr)\frac{1}{(q+1)^{2+\theta}}.
\end{equation}
To estimate $J_{3,0,2}$ we note that by means of Proposition~\ref{Prop:W:representation} we can rewrite
\begin{equation*}
 J_{3,0,2}=-\frac{\eps}{2}\int_{0}^{\infty}\int_{0}^{\infty}\Ker(\sigma,\eta)\int_{0}^{\infty}\Qo_{0}(\xi,q)(1-\del_{\xi})\del_{\xi}\Bigl(\T(\Delta f)(\xi+\sigma)\bigl(\T(f_1+f_2)\bigr)(\xi+\eta)\Bigr)\dxi\deta\dsig.
\end{equation*}
This then gives the estimate
\begin{multline*}
 \abs*{J_{3,0,2}}\leq \frac{\eps}{2}\int_{0}^{\infty}\int_{0}^{\infty}\int_{0}^{\infty}\abs*{\Ker(\sigma,\eta)}\abs*{\Qo_{0}(\xi,q)}\Biggl(\frac{\lsnorm*{1}{0}{\theta}{\T(\Delta f)}(\lfnorm*{0}{-\rho}{\theta}{\T f_1}+\lfnorm*{0}{-\rho}{\theta}{\T f_2})}{(\xi+\sigma+1)^{\theta+\rho}(\xi+\sigma)^{1-\rho}(\xi+\eta+1)^{\theta}}\\*
 +\frac{\lfnorm*{0}{-\rho}{\theta}{\T(\Delta f)}(\lsnorm*{1}{0}{\theta}{\T f_1}+\lsnorm*{1}{0}{\theta}{\T f_2})}{(\xi+\sigma+1)^{\theta}(\xi+\eta+1)^{\theta+\rho}(\xi+\eta)^{1-\rho}}+\frac{\lsnorm*{2}{0}{\theta}{\T(\Delta f)}(\lfnorm*{0}{-\rho}{\theta}{\T f_1}+\lfnorm*{0}{-\rho}{\theta}{\T f_2})}{(\xi+\sigma+1)^{\theta+\rho}(\xi+\sigma)^{2-\rho}(\xi+\eta+1)^{\theta}}\\*
 +\frac{\lsnorm*{1}{0}{\theta}{\T(\Delta f)}(\lsnorm*{1}{0}{\theta}{\T f_1}+\lsnorm*{1}{0}{\theta}{\T f_2})}{(\xi+\sigma+1)^{\theta+\rho}(\xi+\sigma)^{1-\rho}(\xi+\eta+1)^{\theta+\rho}(\xi+\eta)^{1-\rho}}\\*
 +\frac{\lfnorm*{0}{-\rho}{\theta}{\T(\Delta f)}(\lsnorm*{2}{0}{\theta}{\T f_1}+\lsnorm*{2}{0}{\theta}{\T f_2})}{(\xi+\sigma+1)^{\theta}(\xi+\eta+1)^{\theta+\rho}(\xi+\eta)^{2-\rho}}\Biggr)\dxi\deta\dsig.
\end{multline*}
With Proposition~\ref{Prop:norm:boundedness} and the elementary estimates $(\xi+\tau+1)^{-\theta-\rho}(\xi+\tau)^{\rho-1}\leq (\xi+\tau+1)^{-\frac{\theta+\rho}{2}}(\xi+\tau)^{-\frac{\theta+\rho}{2}+\rho-1}$ for $\tau=\sigma$ and $\tau=\eta$ and furthermore $(\xi+\sigma+1)^{-\frac{\theta+\rho}{2}}(\xi+\eta+1)^{-\frac{\theta+\rho}{2}}\leq (\xi+1)^{-\theta-\rho}$ and $(x+\tau)^{-a}\leq x^{-a}$ for $x,\tau,a>0$ we can further simplify the previous estimate to get
\begin{multline*}
 \abs*{J_{3,0,2}}\leq C\eps \lfnorm*{2}{0}{\theta}{\T(\Delta f)}\int_{0}^{\infty}\int_{0}^{\infty}\int_{0}^{\infty}\abs*{\Ker(\sigma,\eta)}\abs*{\Qo_{0}(\xi,q)}\Biggl(\frac{1}{(\xi+1+\sigma)^{\theta+\rho}\sigma^{1-\rho}(\xi+1+\eta)^{\theta}}\\*
 +\frac{1}{(\xi+1+\sigma)^{\theta}(\xi+1+\eta)^{1-\rho}\eta^{1-\rho}}+\frac{1}{(\xi+1)^{\theta+\rho}(\xi+\sigma)^{2-\rho}(\xi+\eta)^{\theta}}\\*
 +\frac{1}{(\xi+1)^{\theta+\rho}(\xi+\sigma)^{\frac{\theta+\rho}{2}+1-\rho}(\xi+\eta)^{\frac{\theta+\rho}{2}+1-\rho}}+\frac{1}{(\xi+1)^{\theta+\rho}(\xi+\sigma)^{\theta}(\xi+\eta)^{2-\rho}}\Biggr)\dxi\deta\dsig.
\end{multline*}
In the first two integrals on the right-hand side we change now variables $\sigma\mapsto (\xi+1)\sigma$ and $\eta\mapsto (\xi+1)\eta$ while in the remaining ones, we use $\sigma \mapsto \xi\sigma$ and $\eta\mapsto \xi\eta$ which yields together with \cref{Lem:kernel:est:small:1,Lem:Ker:est:most:general} that 
 \begin{equation*}
  \abs*{J_{3,0,2}}\leq C\eps\lfnorm*{2}{0}{\theta}{\T(\Delta f)}\biggl(\int_{0}^{\infty}\frac{\abs*{\Qo_{0}(\xi,q)}}{(\xi+1)^{2\theta}}+\int_{0}^{\infty}\frac{\abs*{\Qo_{0}(\xi,q)}}{(\xi+1)^{1+\theta-\rho}(\xi+1)^{\theta+\rho}}\biggr).
 \end{equation*}
From Proposition~\ref{Prop:Q0:int:est} we thus conclude that
 \begin{equation*}
  \abs*{J_{3,0,2}}\leq C\eps\lfnorm*{2}{0}{\theta}{\T(\Delta f)}(q+1)^{-2-\theta}.
 \end{equation*}
 Combining this with~\eqref{eq:bd:layer:est:J30:1}, we note that for $\eps>0$ small enough the claimed estimate~\eqref{eq:bd:layer:est:J30} holds.
 
 \subsubsection{Estimate for $K_{4,0}$}
 
 In this subsection we will show that it holds
 \begin{equation}\label{eq:bd:layer:est:K40}
  \eps\abs*{K_{4,0}}\leq C\eps\lfnorm*{2}{0}{\theta}{\T(\Delta f)}(q+1)^{-2-\theta}.
 \end{equation}
 For this, we first rewrite $K_{4,0}$ by means of \cref{Prop:rep:H0,eq:beta:rewritten} to obtain after some rearrangement that it holds
 \begin{equation*}
  \begin{split}
   K_{4,0}=&-\int_{0}^{\infty}\int_{0}^{\infty}\Ker(\sigma,\eta)(\T f_{2})(\eta)\int_{0}^{\infty}\Qo_{0}(\xi,q)(1-\del_{\xi})\del_{\xi}\bigl(\T((1-\zeta)\Delta f)(\xi+q)\bigr)\dxi\dsig\deta\\
   & -\int_{0}^{\infty}\int_{0}^{\infty}\Ker(\sigma,\eta)(\T f_{2})'(\eta)\int_{0}^{\infty}\Qo_{0}(\xi,q)(1-\del_{\xi})\bigl(\T((1-\zeta)\Delta f)(\xi+q)\bigr)\dxi\dsig\deta.
  \end{split}
 \end{equation*}
 Together with \cref{Rem:est:difference:Laplace,Lem:norm:shift}, we can bound the previous expression as
 \begin{equation*}
  \begin{split}
   \abs*{K_{4,0}}&\leq C\int_{0}^{\infty}\int_{0}^{\infty}\int_{0}^{\infty}\abs*{\Ker(\sigma,\eta)}\abs*{\Qo_{0}(\xi,q)}\frac{\lfnorm*{0}{-\rho}{\theta}{\T f_{2}}}{(\eta+1)^{\theta}}\frac{\lsnorm*{2}{0}{\theta}{\T(\Delta f)}}{(\xi+\sigma+1)^{\theta+\rho}(\xi+\sigma)^{2-\rho}}\dxi\deta\dsig\\
   &+C\int_{0}^{\infty}\int_{0}^{\infty}\int_{0}^{\infty}\abs*{\Ker(\sigma,\eta)}\abs*{\Qo_{0}(\xi,q)}\frac{\lsnorm*{1}{0}{\theta}{\T f_{2}}}{(\eta+1)^{\theta+\rho}\eta^{1-\rho}}\frac{\lsnorm*{1}{0}{\theta}{\T(\Delta f)}}{(\xi+\sigma+1)^{\theta+\rho}(\xi+\sigma)^{1-\rho}}\dxi\deta\dsig.
  \end{split}
 \end{equation*}
 From the choice of $\theta$ in~\eqref{eq:choice:theta:bl} one immediately verifies that it holds $(\xi+\sigma+1)^{-\theta-\rho}(\xi+\sigma)^{\rho-2}\leq (\sigma+1)^{-\theta-\rho}\sigma^{2\theta+\rho-2}\xi^{-2\theta}$ and $(\xi+\sigma+1)^{-\theta-\rho}(\xi+\sigma)^{\rho-1}\leq (\sigma+1)^{\theta-1}(\xi+1)^{1-\rho-2\theta}\xi^{\rho-1}$. Thus, together with Proposition~\ref{Prop:norm:boundedness} we deduce that
 \begin{multline*}
   \abs*{K_{4,0}}\leq C\lfnorm*{2}{0}{\theta}{\T(\Delta f)}\biggl(\int_{0}^{\infty}\frac{\abs*{\Qo_{0}(\xi,q)}}{\xi^{2\theta}}\int_{0}^{\infty}\int_{0}^{\infty}\frac{\abs*{\Ker(\sigma,\eta)}}{(\eta+1)^{\theta}(\sigma+1)^{\theta+\rho}\sigma^{2-2\theta-\rho}}\dsig\deta\dxi\\*
   +\int_{0}^{\infty}\frac{\abs*{\Qo_{0}(\xi,q)}}{\xi^{1-\rho}(\xi+1)^{2\theta+\rho-1}}\int_{0}^{\infty}\int_{0}^{\infty}\frac{\abs*{\Ker(\sigma,\eta)}}{(\eta+1)^{\theta+\rho}\eta^{1-\rho}(\sigma+1)^{1-\theta}}\dsig\deta\dxi\biggr).
  \end{multline*}
 Due to the choice of $\theta$ in~\eqref{eq:choice:theta:bl} the integrals in $\sigma$ and $\eta$ can be estimated uniformly by some constant according to Lemma~\ref{Lem:Ker:est:most:general}. On the other hand, Proposition~\ref{Prop:Q0:int:est} yields an estimate for the integral in $\xi$ such that we conclude $\abs*{K_{4,0}}\leq C\lfnorm*{2}{0}{\theta}{\T(\Delta f)}(q+1)^{-2-\theta}$. From this, \eqref{eq:bd:layer:est:K40} directly follows.
 
 \subsubsection{Conclusion of the proof}
 
  From~\cref{eq:bd:layer:splitting:1,eq:bd:layer:splitting:2} we recall that
  \begin{equation*}
   \Bigl(\T\bigl(\zeta (f_{1}-f_{2})\bigr)\Bigr)''(q)=-\eps (K_1+K_2+K_3+K_{4,1}+K_{4,0})+(J_1+J_2+J_{3,1}+J_{3,0}).
  \end{equation*}
  If we collect now the estimates~\cref{eq:bd:layer:est:K1:J1,eq:bd:layer:est:K2:J2,eq:bd:layer:est:K3,eq:bd:layer:est:J:31,eq:bd:layer:est:K41,eq:bd:layer:est:J30,eq:bd:layer:est:K40} the claim of Proposition~\ref{Prop:boundary:layer} follows.

\section{The representation formula for $H_{0}(\cdot,q)$}\label{Sec:H0:representation}

This section is devoted to the proof of Proposition~\ref{Prop:rep:H0} which will be essentially a consequence of the Paley-Wiener Theorem. To formulate this statement, we need some notation while we follow~\cite{Yos78} and denote by $H^2(0)$ the Hardy-Lebesgue class which consists of all functions $\varphi$ satisfying
\begin{enumerate}
 \item $\varphi$ is holomorphic in the right half-plane,
 \item for each fixed value $x>0$ the function $z\mapsto \varphi(x+\im z)$ is contained in $L^{2}(\R)$ and satisfies
 \begin{equation}\label{eq:cond:Hardy:Lebesgue}
  \sup_{x>0}\biggl(\int_{-\infty}^{\infty}\abs*{\varphi(x+\im z)}^2\dz\biggr)<\infty.
 \end{equation}
 Under these conditions we have the Paley-Wiener Theorem which is shown in~\cite[Ch.\@ VI, 4, Theorem~2]{Yos78} and can be rephrased as follows.
 \begin{theorem}\label{Thm:Paley:Wiener}
  For each $\varphi\in H^2(0)$ the function $z\mapsto\varphi(\im z)$ exists in $L^{2}(\R)$ in the sense that 
  \begin{equation*}
   \lim_{x\downarrow 0}\int_{-\infty}^{\infty}\abs*{\varphi(\im z)-\varphi(x+\im z)}^2\dz=0.
  \end{equation*}
  Moreover, the inverse Fourier transform
 \begin{equation*}
  \psi(t)=\frac{1}{2\pi}\lim_{N\to \infty}\int_{-N}^{N}\varphi(\im z)\ee^{\im t z}\dz
 \end{equation*}
 vanishes for $t<0$ and $\varphi$ can be represented as the Laplace transform of $\psi$.
 \end{theorem}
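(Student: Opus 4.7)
The plan is to combine Plancherel's theorem on each vertical line with a contour-shifting argument, which is the standard route to Paley--Wiener.

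First, for each fixed $x>0$ the function $\varphi(x+\im\cdot)$ lies in $L^{2}(\R)$ by hypothesis~(2), so Plancherel's theorem guarantees that its inverse Fourier transform
\[
 \psi_{x}(t)\vcc=\frac{1}{2\pi}\lim_{N\to\infty}\int_{-N}^{N}\varphi(x+\im z)\ee^{\im tz}\dz
\]
exists in $L^{2}(\R)$, with $\|\psi_{x}\|_{L^{2}}^{2}=(2\pi)^{-1}\|\varphi(x+\im\cdot)\|_{L^{2}}^{2}$, and Fourier inversion yields $\varphi(x+\im y)=\int_{\R}\psi_{x}(t)\ee^{-\im yt}\dt$ in $L^{2}$.

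The heart of the proof is the compatibility identity $\psi_{x_{1}}(t)\ee^{tx_{1}}=\psi_{x_{2}}(t)\ee^{tx_{2}}$ for all $0<x_{1}<x_{2}$, from which we define $\psi(t)\vcc=\ee^{xt}\psi_{x}(t)$ independently of $x>0$. I would obtain this identity by applying Cauchy's theorem to the holomorphic function $\zeta\mapsto\varphi(\zeta)\ee^{t\zeta}$ on the rectangular contour with vertices $x_{j}\pm\im R$, $j=1,2$. The horizontal segments at height $\pm R$ are the delicate part, since we only have $L^{2}$ control on vertical strips and no pointwise bound on $\varphi$ at $\pm\im\infty$. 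The trick is that by Fubini and~(2), the nonnegative function $R\mapsto\int_{x_{1}}^{x_{2}}\abs*{\varphi(x+\im R)}^{2}\dx$ is integrable on $\R$, so there exists $R_{n}\to\infty$ along which it tends to zero. Cauchy--Schwarz then kills the horizontal contributions in the limit, leaving $\ee^{tx_{1}}\int_{-R_{n}}^{R_{n}}\varphi(x_{1}+\im y)\ee^{\im ty}\dy=\ee^{tx_{2}}\int_{-R_{n}}^{R_{n}}\varphi(x_{2}+\im y)\ee^{\im ty}\dy$; passing to the $L^{2}$-limit (e.g.\@ by testing against $C_{c}^{\infty}(\R)$) gives the claimed identity a.e.

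The vanishing on $(-\infty,0)$ now follows from the uniform bound~(2): since Plancherel gives
\[
 2\pi\int_{\R}\ee^{-2xt}\abs*{\psi(t)}^{2}\dt=\|\varphi(x+\im\cdot)\|_{L^{2}}^{2}\leq M\quad\text{for all }x>0,
\]
sending $x\to+\infty$ makes $\ee^{-2xt}\to+\infty$ pointwise on $\{t<0\}$, so Fatou forces $\psi\equiv 0$ on $(-\infty,0)$. Finally, since $\psi$ is supported in $[0,\infty)$, dominated convergence gives $\psi_{x}=\ee^{-x\cdot}\psi\to\psi$ in $L^{2}(\R)$ as $x\downarrow 0$; by Plancherel this is the $L^{2}$-existence of $z\mapsto\varphi(\im z)$ claimed in the theorem, and Fourier inversion on the shifted line becomes
\[
 \varphi(x+\im y)=\int_{0}^{\infty}\psi(t)\ee^{-(x+\im y)t}\dt,
\]
which is the Laplace-transform representation of $\varphi$. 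The principal obstacle is the contour-deformation step: the $L^{2}$-only hypothesis precludes any classical Jordan-lemma estimate, and must be circumvented by the Fubini-based selection of good heights $R_{n}$ together with a weak-convergence argument to extract a pointwise (a.e.) identity out of $L^{2}$-limits.
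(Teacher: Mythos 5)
The paper does not prove this statement at all: it is quoted verbatim (in rephrased form) from Yosida's \emph{Functional Analysis}, Ch.~VI, \S4, Theorem~2, so there is no in-paper proof to compare against. Your argument is a correct, self-contained rendition of the classical Paley--Wiener proof for the Hardy class $H^{2}(0)$: Plancherel on each vertical line, the Cauchy-theorem compatibility identity $\psi_{x_{1}}(t)\ee^{tx_{1}}=\psi_{x_{2}}(t)\ee^{tx_{2}}$ with the horizontal sides handled by the Fubini selection of good heights $R_{n}$ (you should pick heights so that both $\pm R_{n}$ are good, which the same integrability argument gives), the Fatou argument as $x\to\infty$ for the vanishing on $(-\infty,0)$, and the limit $x\downarrow 0$ for the boundary values. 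Two small points deserve a line each to be airtight: before invoking dominated convergence for $\psi_{x}\to\psi$ you need $\psi\in L^{2}$, which follows from monotone convergence in $\int_{0}^{\infty}\ee^{-2xt}\abs*{\psi(t)}^{2}\dt\leq M/(2\pi)$ as $x\downarrow 0$; and the final identity $\varphi(x+\im y)=\int_{0}^{\infty}\psi(t)\ee^{-(x+\im y)t}\dt$ is a priori only an a.e.\ identity in $y$, upgraded to all $y$ because both sides are continuous (the right-hand side is an absolutely convergent integral since $\ee^{-xt}\psi(t)\in L^{1}$ by Cauchy--Schwarz). With these routine patches your proof is complete and matches the standard argument underlying the cited reference.
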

\end{enumerate}

\subsection{Analyticity properties}

The general strategy will be to apply Theorem~\ref{Thm:Paley:Wiener}, in order to construct the representation kernel $\Qo_{0}(\cdot,q)$. However, the conditions in this statement require certain analyticity properties for the function $H_{0}(\cdot,q)$ to hold. As one can see from the definition of $H_{0}$ in~\cref{eq:H:splitting:1,eq:H:splitting:2}, this boils down to verify analyticity of the functions $\beta_{W}(\cdot,f_j)$ and $\Phi_{j}(\cdot)$. Since we will consider in the following exclusively the case of a fixed self-similar profile $f_{j}$, we drop all indices indicating a specific profile and just write $\kappa$, $\beta_{W}(\cdot)$ and $\Phi(\cdot)$ instead of $\kappa_{j}$, $\beta_{W}(\cdot,f_j)$ and $\Phi_{j}(\cdot)$. We also emphasise that the results in this subsection are universal in the sense that they are more or less independent of the self-similar profiles. More precisely this means, that we rely heavily on the analyticity properties of the kernel $W$, while for the self-similar profiles we essentially only need the uniform boundedness of certain moments which is guaranteed by Lemma~\ref{Lem:moment:est:2}. Therefore, the following proofs are quite similar to the corresponding ones in~\cite{NTV15}.

 \begin{figure}
  \centering
  \begin{minipage}{0.48\textwidth}
    \centering
     \begin{tikzpicture}[contour/.style={postaction={decorate, decoration={markings,
mark=at position 1.7cm with {\arrow[line width=1pt]{>}}
}}},
contourtwo/.style={postaction={decorate, decoration={markings,
mark=at position 1.5cm with {\arrow[line width=1pt]{>}}
}}},
contourthree/.style={postaction={decorate, decoration={markings,
mark=at position 2cm with {\arrow[line width=1pt]{>}}
}}},
    interface/.style={postaction={draw,decorate,decoration={border,angle=45,
                    amplitude=0.3cm,segment length=2mm}}},
]
\draw[->] (0,0) -- (3,0) node[below] {$\Re$};
\draw[->] (0,-2.5) -- (0,4) node[left] {$\Im$};

\draw[line width=.8pt,interface](0,0)--(-3,0);

\path[draw,line width=0.8pt,contour] (-2,1)  -- (-2,3.5);
\path[draw,dashed,line width=0.8pt,contourthree] (1.5,0)  -- (1.5,3.5);
\path[draw,dashed,line width=0.8pt,contourtwo] (1.5,0)  -- (1.5,-2);

\draw[black,fill=black] (-2,1) circle (.4ex);
\draw[black, fill=black] (1.5,0) circle (.4ex);
\node[below right] at (-2,1) {$x$};
\node[below right] at (1.5,0) {$z$};
\node[right] at (-2,2) {$\gamma_{x}$};
\node[right] at (1.5,1.25) {$\gamma_{z}$};
\node[right] at (1.5,-1.5) {$\gamma_{z}$};
\end{tikzpicture}
\subcaption{Path for $\int_{x}^{\sgn(\Im x)\im\infty}(\cdots)\dt$}
  \end{minipage}
  \hfill
  \begin{minipage}{0.51\textwidth}
    \centering
     \begin{tikzpicture}[contour/.style={postaction={decorate, decoration={markings,
mark=at position 4.5cm with {\arrow[line width=1pt]{>}},
mark=at position 2.5cm with {\arrow[line width=1pt]{>}}
}}},
    interface/.style={postaction={draw,decorate,decoration={border,angle=45,
                    amplitude=0.3cm,segment length=2mm}}},
]
\draw[->] (0,0) -- (3,0) node[below] {$\Re$};
\draw[->] (0,-2.5) -- (0,4) node[left] {$\Im$};

\draw[line width=.8pt,interface](0,0)--(-3.8,0);

\path[draw,line width=0.8pt,contour] (-3,1.5)  -- (2.5,1.5);

\draw[black,fill=black] (-3,1.5) circle (.4ex);
\node[below right] at (-3,1.5) {$x$};
\node[above] at (-1.3,1.5) {$\gamma_{x}$};
\end{tikzpicture}
      \subcaption{Path for $\int_{x}^{\infty}(\cdots)\dt$}
  \end{minipage}
  \caption{Integration path $\gamma_{x}$}
  \label{fig:gamma}
\end{figure}
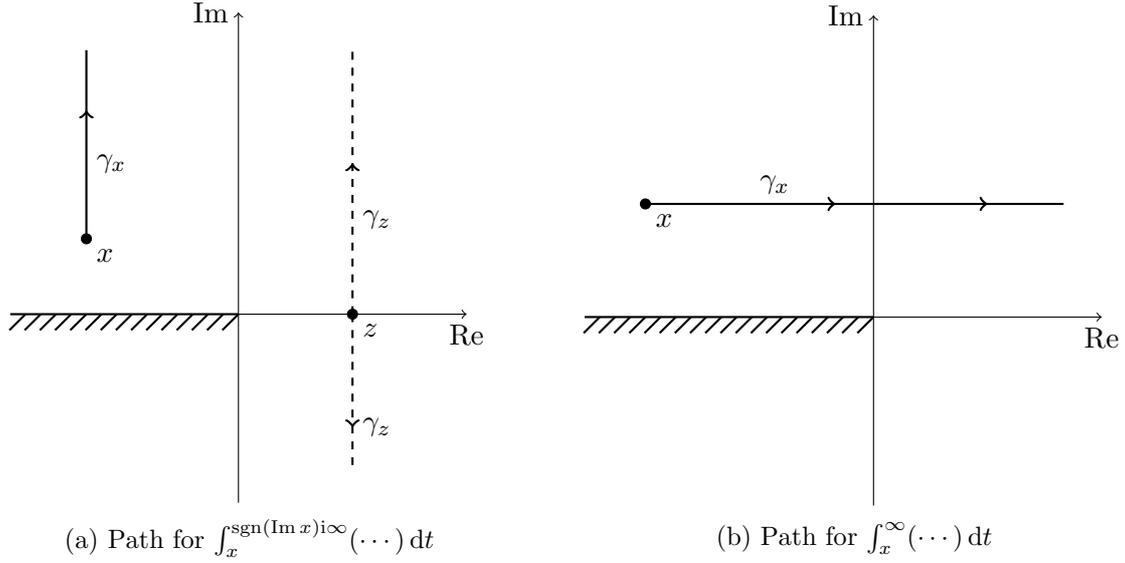

\begin{lemma}\label{Lem:analyticity:beta}
 For each $\alpha\in(0,1)$ the function $\beta_{W}$ as given by~\eqref{eq:def:beta:and:R} has an analytic extension to $\Ccut$ and there it holds
 \begin{align}
  \abs*{\beta_{W}(x)}&\leq C\bigl(\abs*{x}^{-\alpha}+\abs*{x}^{\alpha}\bigr)\label{eq:analyticity:beta:est}\\
  \abs*{\del_{x}^{k}\beta_{W}(x)}&\leq C_{k}\bigl(\abs*{x}^{-k-\alpha}+\abs*{x}^{\alpha-k}\bigr) && \text{for }k\in\N \text{ if }\Re(x)\geq 0\label{eq:analyticity:beta:est:der}\\
  \Re(\beta_{W}(x))&\geq 0 && \text{if }\Re(x)\geq 0.\label{eq:analyticity:beta:real:part}
 \end{align}
\end{lemma}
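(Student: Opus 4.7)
The plan is to define the analytic extension of $\beta_{W}$ directly by exploiting the homogeneity of $W$. Since $W$ is homogeneous of degree zero and symmetric, $W(y,z)=W(y/z,1)$ for $y,z>0$. Assumption~\eqref{eq:Ass:analytic} provides an analytic extension of $W(\cdot,1)$ to $\Ccut$, so for $x\in\Ccut$ and $z>0$ we have $x/z\in\Ccut$, and I set
\begin{equation*}
  \beta_{W}(x)\vcc=\int_{0}^{\infty}W(x/z,1)f(z)\dz.
\end{equation*}
The growth bound in~\eqref{eq:Ass:analytic} gives $|W(x/z,1)|\leq C(|x|^{-\alpha}z^{\alpha}+|x|^{\alpha}z^{-\alpha})$, and since $\alpha<1/2<\rho$, the moment estimate of Lemma~\ref{Lem:moment:est:2} ensures $\int_{0}^{\infty}(z^{\alpha}+z^{-\alpha})f(z)\dz<\infty$ for $\eps$ small. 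This yields both absolute convergence and the bound~\eqref{eq:analyticity:beta:est}.

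For analyticity on $\Ccut$, I would invoke Morera's theorem: the integrand $x\mapsto W(x/z,1)f(z)$ is holomorphic on $\Ccut$ for each fixed $z>0$, and the integral is uniformly bounded on any compact set $K\subset\Ccut$ (by the same growth estimate with $|x|$ replaced by $\dist(K,(-\infty,0])$), so Fubini's theorem allows the interchange of contour and $z$-integration. On $\R_{+}$ this extension agrees with the original definition, so it is the unique analytic continuation.

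For the derivative bounds~\eqref{eq:analyticity:beta:est:der} on the closed right half-plane, the cleanest route is through the Laplace representation~\eqref{eq:beta:rewritten}, which is valid for $\Re(x)\geq 0$:
\begin{equation*}
  \beta_{W}(x)=\int_{0}^{\infty}\ee^{-\xi x}\int_{0}^{\infty}\Ker(\xi,\eta)\bigl(x(\T f)(\eta)-(\T f)'(\eta)\bigr)\deta\dxi.
\end{equation*}
Differentiation under the integral brings down powers of $\xi$ (except on the linear-in-$x$ prefactor). Then I can estimate each resulting term using the bounds on $(\T f)$ and $(\T f)'$ coming from Proposition~\ref{Prop:norm:boundedness}, together with the kernel integrals from \cref{Lem:kernel:est:partial:0,Lem:kernel:est:partial:1}, performing the change of variables $\xi\mapsto \xi/|x|$ exactly as in the proof of Lemma~\ref{Lem:bd:layer:beta:difference}. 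This gives the claimed scaling $|x|^{-k-\alpha}+|x|^{\alpha-k}$.

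Finally,~\eqref{eq:analyticity:beta:real:part} is immediate from the integral representation $\beta_{W}(x)=\int_{0}^{\infty}W(x/z,1)f(z)\dz$: for $\Re(x)\geq 0$ one has $\Re(x/z)\geq 0$, the hypothesis $\Re(W(\cdot,1))\geq 0$ on the right half-plane from~\eqref{eq:Ass:analytic} applies, and $f\geq 0$, so the real part of the integrand is non-negative pointwise. The main technical obstacle is the derivative estimate for larger $k$, which requires combining the right scaling of $|x|$ with the logarithmic and power-type bounds on the $\xi$-marginals of $\Ker$; this is why it is advantageous to work from~\eqref{eq:beta:rewritten} rather than differentiating the homogeneity-based formula, where controlling $\partial_{1}^{k}W(x/z,1)$ would require contour arguments that are more delicate near the cut.
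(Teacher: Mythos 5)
Your treatment of the analyticity, of the bound~\eqref{eq:analyticity:beta:est}, and of the real-part property~\eqref{eq:analyticity:beta:real:part} is essentially the paper's argument: write $\beta_{W}(x)=\int_{0}^{\infty}W(x/z,1)f(z)\dz$ using homogeneity, invoke~\eqref{eq:Ass:analytic} and Lemma~\ref{Lem:moment:est:2}, and read off the sign of the real part from $\Re(W(\cdot,1))\geq 0$ and $f\geq 0$. (One small point: the relevant reason the moments $\mom_{\pm\alpha}$ are controlled is $\alpha<\rho$, which holds by Definition~\ref{Def:profile}, not the inequality $\alpha<1/2<\rho$ that is only available under the hypotheses of the main theorem.)

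Where you diverge is in~\eqref{eq:analyticity:beta:est:der}, and there you overcomplicate matters. You dismiss differentiating the homogeneity-based formula because controlling $\del_{1}^{k}W(x/z,1)$ would be ``more delicate near the cut,'' and instead route through the Laplace representation~\eqref{eq:beta:rewritten}. But there is no need to differentiate inside the integrand at all: once you know $\beta_{W}$ is analytic on $\Ccut$ and satisfies~\eqref{eq:analyticity:beta:est} there, Cauchy's estimates give~\eqref{eq:analyticity:beta:est:der} immediately. For $\Re(x)\geq 0$, $x\neq 0$, the closed disk of radius $\abs{x}/2$ around $x$ lies in $\Ccut$, every $w$ on its boundary satisfies $\abs{x}/2\leq\abs{w}\leq 3\abs{x}/2$, so the Cauchy integral formula yields $\abs{\del_{x}^{k}\beta_{W}(x)}\leq C_{k}\abs{x}^{-k}\sup_{\abs{w-x}=\abs{x}/2}\abs{\beta_{W}(w)}\leq C_{k}(\abs{x}^{-k-\alpha}+\abs{x}^{\alpha-k})$, which is exactly the claim and also explains why the statement restricts to the closed right half-plane. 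This is the paper's route, and it is shorter and fully rigorous. Your alternative via~\eqref{eq:beta:rewritten} has two unaddressed problems: that representation is derived for real $y>0$ and need not converge absolutely on the imaginary axis (with $\Re(y)=0$ one has $\abs{\ee^{-\xi y}}=1$ and the $\xi$-integrand decays only like a power, so inclusion of the case $\Re(x)=0$ requires extra work); and the change of variables $\xi\mapsto\xi/\abs{x}$ that you borrow from Lemma~\ref{Lem:bd:layer:beta:difference} does not straightforwardly interact with $\ee^{-\xi x}$ once $x$ is genuinely complex --- you would need a contour rotation, which is precisely the delicacy you were trying to avoid. So the derivative bounds should be obtained via Cauchy estimates rather than the Laplace representation.
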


\begin{proof}
 The homogeneity of the kernel $W$ allows to rewrite $\beta_{W}(x)=\int_{0}^{\infty}W(x/z,1)f(z)\dz$. Since $W$ can be extended analytically to $\Ccut$ we then see at once that the same is also true for $\beta_{W}$. Moreover, \eqref{eq:Ass:analytic} together with Lemma~\ref{Lem:moment:est:2} directly implies that 
 \begin{equation*}
  \abs*{\beta_{W}(x)}\leq C\int_{0}^{\infty}\biggl(\frac{\abs*{x}^{\alpha}}{z^{\alpha}}+\frac{z^{\alpha}}{\abs*{x}^{\alpha}}\biggr)f(z)\dz\leq C\bigl(\abs*{x}^{\alpha}+\abs*{x}^{-\alpha}\bigr).
 \end{equation*}
 This proves~\eqref{eq:analyticity:beta:est}, while~\eqref{eq:analyticity:beta:est:der} follows then by Cauchy estimates. The property~\eqref{eq:analyticity:beta:real:part} is again a direct consequence of the definition of $\beta_{W}$ and $\Re(W(\xi,1))\geq 0$ for $\Re(\xi)\geq 0$.
\end{proof}

\begin{remark}
 In the following, we have to deal with contour integrals in the complex plane. However, since in most cases the contour is just given by a straight (half-)line we use the following notation. For $x\in\Ccut$ the integral $\int_{x}^{\infty}(\cdots)\dy$ should be interpreted as integral $\int_{\gamma_{x}}(\cdots)\ds$ with contour $\gamma_{x}=\{s+\im \Im(x)\;|\; s\in(\Re(x),\infty)\}$. 
 
 Similarly, $\int_{x}^{y}(\cdots)\dz$ for $x,y\in\C$ denotes integration along the segment connecting $x$ and $y$, while in our considerations below this will never cross the negative real axis $(-\infty,0)$.
 
 Finally, we write $\int_{x}^{\sgn(\Im x)\im\infty}(\cdots)\dy$ for the integral $\int_{\gamma_{x}}(\cdots)\ds$ along the path $\gamma_{x}=\{\Re(x)+\im \sgn(\Im x)s\; |\; s\in(\Im(x),\infty)\}$ provided $\Im (x)\neq 0$. If however $\Im(x)=0$, then we can choose between both signs equivalently, i.e.\@ $\gamma_{x}=\{\Re(x)\pm \im s\;| s\in (0,\infty)\}$. See also Figure~\ref{fig:gamma} for an illustration.
\end{remark}

\begin{lemma}\label{Lem:analyticity:Phi}
 For each $\alpha\in(0,1)$ the function $\Phi$, as defined in~\eqref{eq:def:Phi}, has an analytic extension to $\Ccut$ while on this domain, we have the equivalent representation
 \begin{equation}\label{eq:analyticity:Phi:further:rep}
  \Phi(x)=\eps\frac{\beta_{W}(x)}{x}\ee^{-x}+\eps\int_{x}^{\infty}\frac{\dd}{\dy}\biggl(\frac{\beta_{W}(y)}{y}\biggr)\ee^{-y}\dy.
 \end{equation}
 Moreover, it holds for $x\in\Ccut$ with $\Re(x)\geq 0$ that
 \begin{align}
  \abs*{\del_{x}^{k}\Phi(x)}&\leq C_{k}\eps \bigl(\abs*{x}^{-k-\alpha}+\abs*{x}^{\alpha-1}\bigr) && \text{for all }k\in\N, \label{eq:analyticity:Phi:est:der}\\
  \abs*{\Phi(x)}&\leq C\eps\min\bigl\{\abs*{x}^{-\alpha},\abs*{x}^{\alpha-1}\bigr\}\label{eq:analyticity:Phi:est}.
 \end{align}
 For $y\in\Ccut$ with $\Re(y)\geq 0$ and $x\in\{\lambda y\;|\; \lambda\in[0,1]\}$ it further holds that
 \begin{equation}\label{eq:analyticity:Phi:difference}
  \Re\bigl(\Phi(y)-\Phi(x)\bigr)\leq C\eps\bigl(\abs*{y}^{1+\alpha}+\abs*{y}^{1-\alpha}\bigr).
 \end{equation}
 In particular this means that $\Re(\Phi(y)-\Phi(x))$ is uniformly bounded for all $x\in\{\lambda y\;|\; \lambda\in[0,1]\}$.
\end{lemma}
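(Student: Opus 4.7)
The plan is to leverage the analyticity and bounds for $\beta_W$ from Lemma \ref{Lem:analyticity:beta} and handle the five claims in order. For $x\in\Ccut$ I interpret the defining integral along the horizontal contour $y=x+t$, $t\geq 0$; the integrand is analytic in $\Ccut$, and $|e^{-y}|=e^{-\Re(x)}e^{-t}$ ensures absolute convergence, so $\Phi$ extends analytically to $\Ccut$ and one may differentiate under the integral to obtain $\Phi'(x)=-\eps\beta_W(x)e^{-x}/x$. The representation \eqref{eq:analyticity:Phi:further:rep} then follows from integration by parts with $u=\beta_W(y)/y$, the boundary term at infinity vanishing thanks to the polynomial bound \eqref{eq:analyticity:beta:est} combined with exponential decay. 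The derivative bound \eqref{eq:analyticity:Phi:est:der} for $k=1$ is immediate from \eqref{eq:analyticity:beta:est} together with $|e^{-x}|\leq 1$ for $\Re(x)\geq 0$; for $k\geq 2$, Leibniz applied to $\partial_x^{k-1}(\beta_W(x)e^{-x}/x)$ with \eqref{eq:analyticity:beta:est:der} and $|\partial_x^m(1/x)|\leq m!/|x|^{m+1}$ produces exactly the claimed bound.

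For the size estimate \eqref{eq:analyticity:Phi:est} I split into $|x|\leq 1$ and $|x|\geq 1$. In the first regime, I use $|x+t|\geq\max\{|x|,t\}$ (valid for $\Re(x)\geq 0$, $t\geq 0$) in the defining integral, so that the piece over $t\in(0,1)$ splits as $\int_0^{|x|}|x|^{-1-\alpha}\,dt+\int_{|x|}^{1}t^{-1-\alpha}\,dt\leq C|x|^{-\alpha}$, and the tail over $t\geq 1$ contributes only a constant through $e^{-t}$. For $|x|\geq 1$ I use \eqref{eq:analyticity:Phi:further:rep}: the boundary term satisfies $|\beta_W(x)e^{-x}/x|\leq C|x|^{\alpha-1}$, while Cauchy estimates on $B(y,|y|/2)$ applied to $\beta_W/y$ give $|\partial_y(\beta_W/y)|\leq C(|y|^{-2-\alpha}+|y|^{\alpha-2})$, whose integral against $e^{-t}$ (using $|x+t|\geq|x|$) is again $\leq C|x|^{\alpha-1}$.

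The main obstacle is the real-part difference estimate \eqref{eq:analyticity:Phi:difference}: a naive triangle inequality is useless, because for $x=\lambda y$ with $\lambda\downarrow 0$ the size bound gives $|\Phi(x)|$ of order $|x|^{-\alpha}$, which can be arbitrarily large, while the claimed right-hand side grows only polynomially in $|y|$. The point is that only an \emph{upper} bound on the real part is required, so the large positive contribution to $\Re\Phi(\lambda y)$ can be exploited rather than fought. By analyticity I deform the contour from $\lambda y$ to $y$ to the straight segment $s=\mu y$, $\mu\in[\lambda,1]$, which stays inside $\{\Re(s)\geq 0\}$, and obtain, writing $y=a+\im b$ with $a\geq 0$,
\begin{equation*}
 \Re\bigl(\Phi(y)-\Phi(\lambda y)\bigr)=-\eps\int_{\lambda}^{1}\frac{e^{-\mu a}}{\mu}\Bigl(\Re\beta_W(\mu y)\cos(\mu b)+\Im\beta_W(\mu y)\sin(\mu b)\Bigr)d\mu.
\end{equation*}
I split the $\mu$-range at $1/|y|$. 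On $I_1=[\lambda,\min\{1,1/|y|\}]$ one has $|\mu b|\leq\mu|y|\leq 1$, hence $\cos(\mu b)\geq\cos(1)>0$; combined with $\Re\beta_W(\mu y)\geq 0$ from \eqref{eq:analyticity:beta:real:part} this makes the $\cos$-integrand non-negative, so its negative contribution to the right-hand side can simply be discarded in the upper bound. The $\sin$-contribution on $I_1$ is estimated via $|\sin(\mu b)|\leq\mu|y|$ and $|\Im\beta_W(\mu y)|\leq C(\mu|y|)^{-\alpha}$, giving an integrand $\leq C\mu^{-\alpha}|y|^{1-\alpha}$; the $\mu^{-1}$ singularity is cancelled by the $\mu$ from $\sin$, and the $\mu$-integral is $\leq C|y|^{1-\alpha}$ when $|y|\leq 1$ and $\leq C$ when $|y|\geq 1$. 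On $I_2=[1/|y|,1]$ (non-empty only if $|y|\geq 1$) I instead use $|\beta_W(\mu y)|\leq C(\mu|y|)^{\alpha}$ to get integrand $\leq C\mu^{\alpha-1}|y|^\alpha$, hence integral $\leq C|y|^\alpha\leq C|y|^{1+\alpha}$. Summing and absorbing constants into $|y|^{1+\alpha}$ for $|y|\geq 1$ yields \eqref{eq:analyticity:Phi:difference}, and the uniformity in $x$ is immediate since the right-hand side depends on $y$ only.
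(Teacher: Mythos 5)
Your proof is correct and arrives at the same estimates as the paper, but the route you take for the decisive one-sided bound \eqref{eq:analyticity:Phi:difference} is genuinely different. Both arguments hinge on extracting the sign $\Re(W(\xi,1))\geq 0$ (equivalently $\Re\beta_W\geq 0$), since a triangle-inequality estimate on $\abs{\Phi(\lambda y)}$ blows up like $\abs{\lambda y}^{-\alpha}$ as $\lambda\downarrow 0$ and is useless. Where you differ from the paper is in \emph{how} that sign is isolated. You keep the representation $\Phi'(s)=-\eps\beta_W(s)\ee^{-s}/s$, write out $\Re\bigl(\beta_W(\mu y)\ee^{-\mu y}\bigr)$ explicitly in terms of $\cos(\mu b)$ and $\sin(\mu b)$, split the $\mu$-range at $1/\abs{y}$, discard the non-negative cosine piece on the inner range using $\cos(\mu b)\geq\cos(1)>0$ together with $\Re\beta_W(\mu y)\geq 0$, and control the sine piece through the cancellation $\abs{\sin(\mu b)}\leq\mu\abs{y}$, which tames the $\mu^{-1}\cdot(\mu\abs{y})^{-\alpha}$ singularity. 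The paper instead unwraps $\beta_W$ back to the double integral against $f(t)W(z/t,1)$ and substitutes $\ee^{-z}=1+(\ee^{-z}-1)$: the "$1$" piece gives a pure $z^{-1}W(z/t,1)$ integrand whose real part is manifestly non-positive after the radial parametrisation $z=s\ee^{\im\vartheta}$ (no trigonometric case distinction, no range-splitting), and the $(\ee^{-z}-1)/z$ piece is simply bounded by $1$ and controlled by the moments $\mom_{\pm\alpha}$. The paper's split is a bit cleaner in that it removes the singular behaviour and the sign argument in one step; your version is entirely self-contained and works just as well. For \eqref{eq:analyticity:Phi:est} with $\abs{x}\leq 1$ you also sidestep the paper's contour deformation (Figure~\ref{fig:Phi}) by using $\abs{x+t}\geq\max\{\abs{x},t\}$ along the horizontal ray, which is slightly more elementary and gives the same $\abs{x}^{-\alpha}$ bound. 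The remaining parts — the analytic extension along the shifted contour, integration by parts for \eqref{eq:analyticity:Phi:further:rep}, and Leibniz plus Lemma~\ref{Lem:analyticity:beta} for \eqref{eq:analyticity:Phi:est:der} — match the paper's reasoning; you could in fact avoid the Cauchy estimates you mention for $\partial_y(\beta_W/y)$, since \eqref{eq:analyticity:beta:est:der} already provides the needed derivative bound directly.
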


\begin{proof}
 From Lemma~\ref{Lem:analyticity:beta} and the definition of $\Phi$ we directly deduce that $\Phi$ has an analytic extension to $\Ccut$ while~\eqref{eq:analyticity:Phi:further:rep} follows from integration by parts. The estimate~\eqref{eq:analyticity:Phi:est:der} for $k=1$ directly follow from the formula $\Phi'(x)=-\eps x^{-1}\beta_{W}(x)\ee^{-x}$ and Lemma~\ref{Lem:analyticity:beta}. For $k\geq 2$ one repeatedly differentiates and uses Lemma~\ref{Lem:analyticity:beta} to conclude.
 
 To obtain the bound~\eqref{eq:analyticity:Phi:est}, we have to estimate the integral in the definition of $\Phi$ directly. For this, one can consider the cases $\abs*{x}\leq 1$ and $\abs*{x}\geq 1$ separately. For $\abs*{x}\geq 1$, the desired bound can be derived directly from the integral representation of $\Phi$ together with Lemma~\ref{Lem:analyticity:beta}. If $\abs*{x}\leq 1$ we can use the analyticity of $\beta_{W}$ to deform the contour as shown in Figure~\ref{fig:Phi} which then also allows to derive the bound~\eqref{eq:analyticity:Phi:est}. More details are contained in~\cite{Thr16}.
 
\begin{figure}
 \begin{center}
  \begin{tikzpicture}[contour/.style={postaction={decorate, decoration={markings,
mark=at position 0.8cm with {\arrow[line width=1pt]{>}},
mark=at position 2.8cm with {\arrow[line width=1pt]{>}},
mark=at position 5.5cm with {\arrow[line width=1pt]{>}}
}}},
    interface/.style={postaction={draw,decorate,decoration={border,angle=45,
                    amplitude=0.3cm,segment length=2mm}}},
]

\draw[dashed] (0,0) circle (2cm);

\draw[->] (0,0) -- (7,0) node[below] {$\Re$};
\draw[->] (0,-2.5) -- (0,3) node[left] {$\Im$};

\draw[line width=.8pt,interface](0,0)--(-4.5,0);

\path[draw,line width=0.8pt,contour] (0.25,0.5)  -- (0.894427,1.788854) node[above]{$\frac{x}{\abs*{x}}$} -- (6,1.788854);

\draw[black,fill=black] (0.25,0.5) circle (.4ex);
\node[below right] at (0.25,0.5) {$x$};
\node[below right] at (2,0) {$1$};
\end{tikzpicture}
  \caption{Contour to estimate $\Phi$ for $x\leq 1$}
 \label{fig:Phi}
 \end{center}
\end{figure}
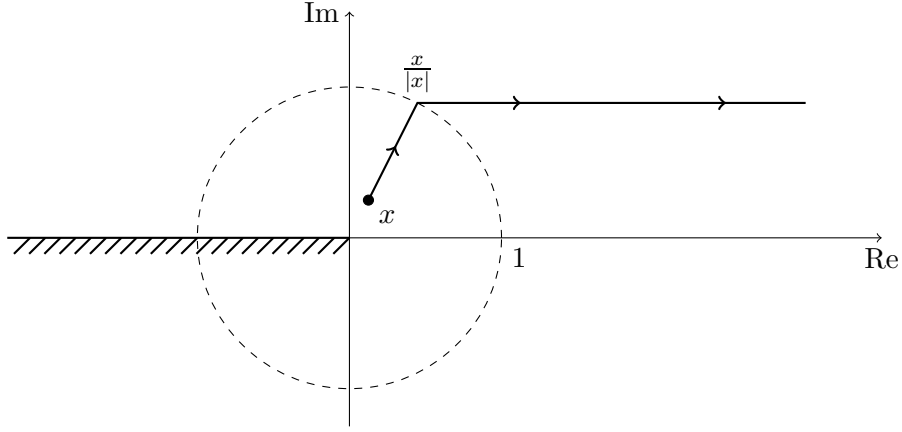
 
 To derive the estimate~\eqref{eq:analyticity:Phi:difference}, we first note that it holds by contour deformation and the homogeneity of $W$ that $\Phi(y)-\Phi(x)=-\eps \int_{0}^{\infty}f(t)\int_{x}^{y}W(z/t,1)z^{-1}\ee^{-z}\dz\dt$. If we substitute $\ee^{-z}=1+(\ee^{-z}-1)$, it follows
 \begin{multline*}
  \Phi(y)-\Phi(x)=-\eps\int_{0}^{\infty}f(t)\int_{x}^{y}\frac{1}{z}W\Bigl(\frac{z}{t},1\Bigr)\dz\dt\\*
  -\eps\int_{0}^{\infty}f(t)\int_{x}^{y}W\Bigl(\frac{z}{t},1\Bigr)\frac{\ee^{-z}-1}{z}\dz\dt=\vcc(I)+(II).
 \end{multline*}
Since $\abs*{z^{-1}(\ee^{-z}-1)}\leq 1$ for $\Re(z)\geq 0$ one obtains rather directly from~\eqref{eq:Ass:analytic} and Lemma~\ref{Lem:moment:est:2} that $\abs*{(II)}\leq C\eps (\abs*{y}^{1+\alpha}+\abs*{y}^{1-\alpha})$. On the other hand, we can rewrite $(I)$ with $x=\lambda y$ and $y=\abs*{y}\ee^{\im \vartheta}$ for some $\vartheta\in[-\pi/2,\pi/2]$ which yields
\begin{equation*}
 (I)=-\eps \int_{0}^{\infty}f(t)\int_{\lambda \abs*{y}}^{\abs*{y}}\frac{1}{s}W\Bigl(\frac{s}{t}\ee^{\im\vartheta},1\Bigr)\ds\dt.
\end{equation*}
 Since $\Re(W(\xi,1))\geq 0$ if $\Re(\xi)\geq 0$, due to~\eqref{eq:Ass:analytic}, the non-negativity of $f$ implies that $\Re(I)\leq 0$. In combination with the estimate on $(II)$ this finishes the proof while we refer again to~\cite{Thr16,NTV15} for slightly more details.
\end{proof}

\subsection{Proof of Proposition~\ref{Prop:rep:H0}}

With the preparations above we can now give the proof of the representation formula for $H_{0}(\cdot, q)$.

\begin{proof}[Proof of Proposition~\ref{Prop:rep:H0}]
 Due to Remark~\ref{Rem:kappa:smallness} we can always assume in the following that we have for any $\delta>0$ that $\abs*{\kappa}\leq \delta$ if we take $\eps>0$ small enough. According to Theorem~\ref{Thm:Paley:Wiener} it suffices to verify that $H_{0}(\cdot,q)\in H^2(0)$ for all $q\geq 0$. From the definition of $H_{0}(\cdot,q)$ in~\cref{eq:H:splitting:1,eq:H:splitting:2} one easily checks together with Lemma~\ref{Lem:analyticity:Phi} that $H_{0}(\cdot,q)$ can be extended analytically to $\Ccut$. Thus, it suffices to verify that $H_{0}(\cdot,q)$ also satisfies~\eqref{eq:cond:Hardy:Lebesgue}.
 
 From the definition of $H_{0}(\cdot,q)$ one derives together with~\cref{eq:analyticity:Phi:est:der,eq:analyticity:Phi:difference} that 
 \begin{equation}\label{eq:proof:rep:bd:1}
  \abs*{H_{0}(y,q)}\leq \frac{C}{q+1}\quad \text{if }\abs*{y}\leq 1 \text{ and }\Re(y)\geq 0.
 \end{equation}
On the other hand, if $\abs*{y}\geq 1$, we recall the splitting $H_{0}=H_{0,1}+H_{0,2}$ and first consider $H_{0,1}(\cdot,q)$ which we may rewrite as
\begin{multline*}
 H_{0,1}(y,q)=\frac{1+\rho+\kappa}{(q+1)y^{\rho}(1+y)}\int_{0}^{y/\abs*{y}}\ee^{-(q+1)x}x^{\rho}\Bigl(\frac{x}{y}\Bigr)\frac{\ee^{\Phi(y)}}{\ee^{\Phi(x)}}\dx\\*
 +\frac{1+\rho+\kappa}{(q+1)y^{\rho}(1+y)}\int_{y/\abs*{y}}^{y}\ee^{-(q+1)x}x^{\rho}\Bigl(\frac{x}{y}\Bigr)\frac{\ee^{\Phi(y)}}{\ee^{\Phi(x)}}\dx=\vcc H_{0,1,1}(y,q)+H_{0,1,2}(y,q).
\end{multline*}
 Due to Lemma~\ref{Lem:analyticity:Phi} it holds for $\abs*{y}\geq 1$ that $\Re(\Phi(y)-\Phi(x))=\Re(\Phi(y/\abs*{y})-\Phi(x))+\Re(\Phi(y)-\Phi(y/\abs*{y}))\leq C$ such that the expression $H_{0,1,1}$ can be estimated as
 \begin{equation}\label{eq:proof:rep:bd:2}
  \abs*{H_{0,1,1}(y,q)}\leq \frac{C}{(q+1)\abs*{y}^{\rho-\delta}\abs*{1+y}}\quad \text{for }\abs*{y}\geq 1 \text{ and }\Re(y)\geq 0.
 \end{equation}
 In order to see that $H_{0,1,2}(y,q)$ also decays sufficiently fast in $y$, we rewrite the expression $\ee^{-(q+1)x}=-(q+1)^{-1}\del_{x}(\ee^{-(q+1)x})$ and integrate by parts. This then yields
 \begin{multline*}
  H_{0,1,2}(y,q)=-\frac{1+\rho+\kappa}{(q+1)^2y^{\rho}(1+y)}\biggl(\ee^{-(q+1)y}y^{\rho}-\frac{y^{\rho}}{\abs*{y}^{\rho}}\frac{\ee^{-(q+1)\frac{y}{\abs*{y}}}}{\abs*{y}^{\kappa}}\frac{\ee^{\Phi(y)}}{\ee^{\Phi(y/\abs*{y})}}\biggr)\\*
  +\frac{1+\rho+\kappa}{(q+1)^2y^{\rho}(1+y)}\int_{y/\abs*{y}}^{y}\ee^{-(q+1)x}\Bigl(\frac{\rho+\kappa}{x}-\Phi'(x)\Bigr)\frac{x^{\rho+\kappa}}{y^{\kappa}}\frac{\ee^{\Phi(y)}}{\ee^{\Phi(x)}}\dx.
 \end{multline*}
 If we parametrise the path of integration as $s\mapsto sy/\abs*{y}$ we obtain together with Lemma~\ref{Lem:analyticity:Phi} that the previous expression can be estimated as
 \begin{multline*}
  \abs*{H_{0,1,2}(y,q)}\leq \frac{C}{(q+1)^2\abs*{1+y}}+\frac{C}{(q+1)^2\abs*{y}^{\rho-\delta}\abs*{1+y}}\\*
  +\frac{C}{(q+1)^2\abs*{y}^{\rho+\kappa}\abs*{1+y}}\int_{1}^{\abs*{y}}(s^{-1}+s^{\alpha})s^{\rho+\kappa}\ds.
 \end{multline*}
 Since $\int_{1}^{\abs*{y}}(s^{-1}+s^{\alpha})s^{\rho+\kappa}\ds\leq C\abs*{y}^{\rho+\alpha+\kappa}$ for $\abs*{y}\geq 1$ we can estimate by the most dominant terms to get
 \begin{equation}\label{eq:proof:rep:bd:3}
  \abs*{H_{0,1,2}(y,q)}\leq\frac{C}{(q+1)^2}\frac{\abs*{y}^{\alpha}}{\abs*{1+y}}\quad \text{if }\abs*{y}\geq 1 \text{ and }\Re(y)\geq 0.
 \end{equation}
The expression $H_{0,2}(\cdot,q)$ can be treated similarly, while here we have to use that $\Phi'(x)=-\eps \beta_{W}(x)x^{-1}\ee^{-x}$ to absorb the oscillating term $\ee^{-x}$ into $\ee^{-(q+1)x}$ before we integrate by parts. Then, following in principle the same procedure as before for $H_{0,1}$, we can split the integral, integrate by parts and estimate while here, in one term we have to integrate by parts twice to get enough decay for large values of $\abs*{y}$. Some more of the details that we omit here can be found in~\cite{Thr16,NTV15}. If one proceeds in the prescribed manner, one can finally show that for $\abs*{y}\geq 1$ with $\Re(y)\geq 0$ it holds that
\begin{equation*}
 \abs*{H_{0,2}(y,q)}\leq \frac{C\eps\abs*{y}^{\alpha+\delta}}{(q+1)^2\abs*{1+y}}+\frac{C\eps^2}{(q+1)^3}\biggl(\frac{\abs*{y}^{3\alpha+\delta}}{\abs*{y}\abs*{1+y}}+\frac{1}{\abs*{y}^{\rho-\delta}\abs*{1+y}}\biggr).
\end{equation*}
Combining this with~\cref{eq:proof:rep:bd:2,eq:proof:rep:bd:3} we further deduce
\begin{equation}\label{eq:proof:rep:bd:4}
 \abs*{H_{0}(y,q)}\leq C\Bigl(\abs*{y}^{\delta-\rho-1}+\abs*{y}^{\alpha-1}+\abs*{y}^{3\alpha+\delta-2}\Bigr)\quad \text{for }\abs*{y}\geq 1\text{ with }\Re(y)\geq 0.
\end{equation}
Note that the constant in the previous estimate is independent of $q$ and $\eps$ if $\eps$ is sufficiently small. Moreover, the latter estimate together with~\eqref{eq:proof:rep:bd:1} yields 
\begin{equation}\label{eq:proof:rep:bd:5}
 \abs*{H_{0}(y,q)}\leq C \quad \text{if } \abs*{y}\leq 2 \text{ and }\Re(y)\geq 0.
\end{equation}
We can now show that $H_{0}(\cdot,q)$ satisfies~\eqref{eq:cond:Hardy:Lebesgue}. For this, we write $y=x+\im z$ and consider the cases $x\leq 1$ and $x\geq 1$ separately. For $x\leq 1$ we find together with~\cref{eq:proof:rep:bd:4,eq:proof:rep:bd:5} that
\begin{equation*}
 \begin{split}
  \int_{-\infty}^{\infty}\abs*{H_{0}(x+\im z)}^2\dz&=\int_{-1}^{1}\abs*{H_{0}(x+\im z)}^2\dz+\int_{\R\setminus (-1,1)}\abs*{H_{0}(x+\im z)}^2\dz\\
  &\leq C+C\int_{1}^{\infty}\Bigl((x^2+z^2)^{\frac{\delta-\rho-1}{2}}+(x^2+z^2)^{\frac{\alpha-1}{2}}+(x^2+z^2)^{\frac{3\alpha+\delta-2}{2}}\Bigr)^2\dz\\
  &\leq C+C\int_{1}^{\infty}z^{2(\delta-1-\rho}+z^{\alpha-1}+z^{2(3\alpha+\delta-2)}\dz\leq C.
 \end{split}
\end{equation*}
Note that we used $x\leq z$ for $x\leq 1$ and $z\in(1,\infty)$. 
 For $x\geq 1$ we proceed similarly, i.e.\@ we split the integral $\int_{0}^{\infty}(\cdots)\dz=\int_{0}^{1}(\cdots)\dz+\int_{1}^{\infty}\dz$ and use $(x^2+z^2)^{-1}\leq x^{-2}$ and $(x^2+z^2)^{-1}\leq z^{-2}$ respectively to find
 \begin{equation*}
  \begin{split}
   &\phantom{{}\leq {}} \int_{-\infty}^{\infty}\abs*{H_{0}(x+\im z)}^2\dz\\
   &\leq C\Bigl(x^{2(\delta-1-\rho)}+x^{2(\alpha-1)}+x^{2(3\alpha+\delta-2)}\Bigr)+C\int_{1}^{\infty}z^{2(\delta-1-\rho)}+z^{2(\alpha-1)}+z^{2(3\alpha+\delta-2)}\dz\leq C.
  \end{split}
 \end{equation*}
Since the constants in the previous estimates can be chosen independently of $x$ and $q$ this gives $H_{0}(\cdot,q)\in H^2(0)$ and thus concludes the proof by an application of Theorem~\ref{Thm:Paley:Wiener}.
\end{proof}

\section{Integral estimate on $\Qo_{0}(\cdot,q)$}\label{Sec:Q0:est}

This section is devoted to the proof of Proposition~\ref{Prop:Q0:int:est} which is the most technical part of this article. Before we come to the proof itself we will consider tow auxiliary lemmas.

\subsection{Auxiliary results to estimate integrals of $\Qo_{0}(\cdot,q)$}

\begin{lemma}\label{Lem:contribution:small:xi}
 Let parameters $\mathfrak{a}\in[0,2\theta]$, $\mathfrak{b}\in[2\theta,\infty)$ and $\delta\in(0,\rho+\theta-1)$ be given. For each $a_{*}\in(\max\{1-\rho+\delta,\alpha\},\theta)$ and $\nu_{*}\in(0,\theta-a_{*})$ it holds 
 \begin{align}
  (q+1)^{a-3}\int_{0}^{q+2}\weight{-\mathfrak{a}}{\mathfrak{b}-\nu}(\xi)\dxi&\leq C(q+1)^{-2-\theta}  &&\text{for all }a\in[0,a_{*}]\label{eq:contribution:small:xi:1}
  \shortintertext{and}
  (q+1)^{-2}\int_{0}^{q+2}\weight{-\mathfrak{a}}{\mathfrak{b}-\nu}(\xi)\abs*{\xi-(q+1)}^{a-1}\dxi&\leq C_{a}(q+1)^{-2-\theta} &&\text{for }a\in(0,a_{*}]\label{eq:contribution:small:xi:2}
 \end{align}
 for all $q\geq 0$ and $\nu\in[0,\nu_{*})$.
\end{lemma}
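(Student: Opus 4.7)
The plan is to prove both inequalities by splitting the $\xi$-integral into a region near $0$, where the weight behaves like $\xi^{-\mathfrak{a}}$, and a region $\xi \geq 1$, where the weight decays as $\xi^{-(\mathfrak{b}-\nu)}$; for \eqref{eq:contribution:small:xi:2} one additionally has to isolate a neighbourhood of the singularity at $\xi = q+1$. The required decay $(q+1)^{-2-\theta}$ will then come out as a consequence of the tight interplay between the upper bound $a \leq a_{*} < \theta$ (which controls the polynomial prefactor) and the lower bound $\mathfrak{b} \geq 2\theta$ (which supplies the additional decay at infinity).

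For \eqref{eq:contribution:small:xi:1}, I would first write
\begin{equation*}
 \int_{0}^{q+2}\weight{-\mathfrak{a}}{\mathfrak{b}-\nu}(\xi)\dxi = \int_{0}^{1\wedge(q+2)}\xi^{-\mathfrak{a}}\dxi + \int_{1\wedge(q+2)}^{q+2}\xi^{\nu-\mathfrak{b}}\dxi.
\end{equation*}
The first integral is bounded by a constant because $\mathfrak{a}\leq 2\theta < 1$, and the second is bounded by $C\bigl(1+(q+1)^{\max\{0,1-\mathfrak{b}+\nu\}}\bigr)$ (with a harmless $\log$-factor at the threshold, which however does not occur for $\nu \leq \nu_{*}$ provided $\nu_{*}$ is fixed small enough). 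Multiplying by $(q+1)^{a-3}$ produces two terms, $(q+1)^{a-3}$ and $(q+1)^{a-2-\mathfrak{b}+\nu}$. The first is controlled since $a \leq a_{*} < \theta < 1-\theta$, while the second requires $a + \nu + \theta \leq \mathfrak{b}$; this follows from $\mathfrak{b} \geq 2\theta$ together with $a + \nu \leq a_{*} + \nu_{*} < \theta$.

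For \eqref{eq:contribution:small:xi:2} the presence of $|\xi-(q+1)|^{a-1}$ forces a further splitting at $\xi=(q+1)/2$. On $[0,(q+1)/2]$ the singular factor is bounded by $C(q+1)^{a-1}$, so that piece reduces, up to the factor $(q+1)^{a-1}$, to the integral already handled in \eqref{eq:contribution:small:xi:1}; combining with the $(q+1)^{-2}$ prefactor yields exactly the same two contributions $(q+1)^{a-3}$ and $(q+1)^{a-2-\mathfrak{b}+\nu}$ as above. On $[(q+1)/2,q+2]$, assuming $q \geq 1$ so that $\xi \geq 1$, the weight is at most $C(q+1)^{\nu-\mathfrak{b}}$, and the substitution $u = \xi-(q+1)$ together with $a > 0$ gives
\begin{equation*}
 \int_{(q+1)/2}^{q+2}\abs*{\xi-(q+1)}^{a-1}\dxi \leq \frac{C}{a}\bigl((q+1)^{a}+1\bigr),
\end{equation*}
so this piece contributes $C_{a}\bigl((q+1)^{a-2-\mathfrak{b}+\nu}+(q+1)^{-2-\mathfrak{b}+\nu}\bigr)$, both again bounded by $C_{a}(q+1)^{-2-\theta}$ under the same parameter constraints. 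The case $q \leq 1$ is trivial since everything in sight is then of order one.

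The main obstacle is not any single estimate but the bookkeeping that identifies the correct combination of parameter restrictions used to close the argument. The essential inequality that must hold is $a + \nu + \theta \leq \mathfrak{b}$, which is precisely guaranteed by $a_{*} < \theta$, $\nu_{*} < \theta - a_{*}$ and $\mathfrak{b} \geq 2\theta$; the condition $\theta < 1/2$ (implicit in \eqref{eq:choice:theta:bl}) is what ensures $a < 1-\theta$ for the other contribution. The remaining hypotheses of the lemma, namely $\delta \in (0,\rho+\theta-1)$ and the lower bound $a_{*} > \max\{1-\rho+\delta,\alpha\}$, are not actually needed for the estimate itself; they encode compatibility with later applications and simply have to be preserved in the statement.
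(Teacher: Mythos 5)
Your argument is correct and does establish both inequalities, but it follows a somewhat different path than the paper's. The paper first observes that the hypotheses $\mathfrak{a}\leq 2\theta$ and $\mathfrak{b}\geq 2\theta$ give the single, clean bound $\weight{-\mathfrak{a}}{\mathfrak{b}-\nu}(\xi)\leq \xi^{-2\theta}(\xi+1)^{\nu}$ on all of $(0,\infty)$; it then estimates $\int_{0}^{q+2}\xi^{-2\theta}(\xi+1)^{\nu}\dxi\leq C(q+3)^{\nu}(q+2)^{1-2\theta}$ directly for \eqref{eq:contribution:small:xi:1}, and for \eqref{eq:contribution:small:xi:2} rescales $\xi\mapsto(q+1)\xi$ so that the whole integral reduces to the $q$-independent quantity $\int_{0}^{2}\xi^{-2\theta}\abs*{\xi-1}^{a-1}\dxi$ times $(q+1)^{a-2-2\theta}(q+3)^{\nu}$. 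You instead split the $\xi$-integral at $\xi=1$ (and, for \eqref{eq:contribution:small:xi:2}, additionally at $(q+1)/2$) and bound the pieces separately; this is more bookkeeping but leads to the same exponent condition $a+\nu+\theta\leq\mathfrak{b}$, which as you correctly note follows from $a+\nu<a_{*}+\nu_{*}<\theta$ and $\mathfrak{b}\geq 2\theta$. The paper's substitution-based treatment of \eqref{eq:contribution:small:xi:2} is tidier because it handles the singularity at $\xi=q+1$ and the one at $\xi=0$ in one stroke, whereas your split has to track each piece; the trade-off is that your version makes the separate roles of the two singularities more transparent.

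One small inaccuracy worth flagging: the parenthetical about the logarithmic threshold is off. The threshold occurs when $\mathfrak{b}-\nu=1$, which is a condition on $\mathfrak{b}$, not something that disappears for small $\nu_{*}$ (e.g. $\mathfrak{b}=1$, $\nu=0$). In fact, the cleanest way to avoid the issue within your framework is not to dismiss the logarithm but to use $\xi^{\nu-\mathfrak{b}}\leq\xi^{\nu-2\theta}$ on $[1,q+2]$ (the same inequality from the hypotheses that the paper uses globally); then the relevant exponent $1+\nu-2\theta$ is bounded away from $0$ uniformly in $\nu\in[0,\nu_{*})$ since $\theta<1/2$, so no logarithm ever appears and the constant is uniform. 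Your final remark — that the conditions $\delta\in(0,\rho+\theta-1)$ and $a_{*}>\max\{1-\rho+\delta,\alpha\}$ are not used in the proof and are there only for consistency with later applications — matches the paper.
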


\begin{proof}
 We note that due to the assumptions on $\mathfrak{a}$ and $\mathfrak{b}$, one deduces from~\eqref{eq:est:weight} that it holds $\weight{-\mathfrak{a}}{\mathfrak{b}-\nu}(\xi)\leq \xi^{-2\theta}(\xi+1)^{\nu}$. Thus, it follows for any $a\in[0,a_{*}]$ and $\nu\in[0,\nu_{*}]$ that
 \begin{equation*}
  \begin{split}
   (q+1)^{a-3}\int_{0}^{q+2}\weight{-\mathfrak{a}}{\mathfrak{b}-\nu}(\xi)\dxi&\leq (q+1)^{a-3}\int_{0}^{q+2}\xi^{-2\theta}(\xi+1)^{\nu}\dxi\\
   &\leq C(q+1)^{a-3}(q+3)^{\nu}(q+2)^{1-2\theta}\leq C(q+1)^{-2-\theta}.
  \end{split}
 \end{equation*}
 This proves~\eqref{eq:contribution:small:xi:1}. To show~\eqref{eq:contribution:small:xi:2} we first obtain similarly as before that 
 \begin{multline*}
  (q+1)^{-2}\int_{0}^{q+2}\weight{-\mathfrak{a}}{\mathfrak{b}-\nu}(\xi)\abs*{\xi-(q+1)}^{a-1}\dxi\\*
  \leq (q+1)^{-2}(q+3)^{\nu}\int_{0}^{q+2}\xi^{-2\theta}\abs*{\xi-(q+1)}^{a-1}\dxi.
 \end{multline*}
 The change of variables $\xi\mapsto (q+1)\xi$ together with $(q+2)/(q+1)\leq 2$ then yields 
 \begin{multline*}
   (q+1)^{-2}\int_{0}^{q+2}\weight{-\mathfrak{a}}{\mathfrak{b}-\nu}(\xi)\abs*{\xi-(q+1)}^{a-1}\dxi\\*
   \leq (q+1)^{a-2-2\theta}(q+3)^{\nu}\int_{0}^{2}\xi^{-2\theta}\abs*{\xi-1}^{a-1}\dxi\leq C_{a}(q+1)^{-2-\theta}
 \end{multline*}
 for each $a\in(0,a_{*}]$.
\end{proof}

\begin{lemma}\label{Lem:contribution:large:xi}
Let parameters $\mathfrak{a}\in[0,2\theta]$, $\mathfrak{b}\in[2\theta,\infty)$ and $\delta\in(0,\rho+\theta-1)$ be given. For each $a_{*}\in(\max\{1-\rho+\delta,\alpha\},\theta)$ and $\nu_{*}\in(0,\theta-a_{*})$ it holds 
 \begin{align}
  (q+1)^{-2}\int_{q+2}^{\infty}\xi^{-1}\weight{-\mathfrak{a}}{\mathfrak{b}-\nu}(\xi)\dxi&\leq C(q+1)^{-2-\theta}  \label{eq:contribution:large:xi:1}
  \shortintertext{and}
  (q+1)^{-2}\int_{q+2}^{\infty}\weight{-\mathfrak{a}}{\mathfrak{b}-\nu}(\xi)\abs*{\xi-(q+1)}^{a-1}\dxi&\leq C(q+1)^{-2-\theta} \label{eq:contribution:large:xi:2}
 \end{align}
 for all $q\geq 0$ and all $a\in[0,a_{*}]$, $\nu\in[0,\nu_{*})$.
 
 Moreover, we have the estimates
  \begin{equation}\label{eq:contribution:large:xi:3}
  \frac{1}{(q+1)(q+2)}\int_{q+2}^{\infty}\weight{-\mathfrak{a}}{\mathfrak{b}-\nu}(\xi)\abs*{\xi-(q+2)}^{a-1}\dxi\leq C_{a}(q+1)^{-2-\theta}
 \end{equation}
 for all $q\geq 0$ and $a\in(0,a_{*}]$, $\nu\in[0,\nu_{*})$ as well as
 \begin{equation}\label{eq:contribution:large:xi:4}
  (q+1)^{-2}\int_{q+1}^{\infty}\weight{-\mathfrak{a}}{\mathfrak{b}-\nu}(\xi)\ee^{-(\xi-(q+1))}\dxi\leq C(q+1)^{-2-\theta}
 \end{equation}
 for all $q\geq 0$ and all $\nu\in[0,\nu_{*})$.
\end{lemma}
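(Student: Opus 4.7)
The plan is to exploit the fact that on every integration domain in the statement we have $\xi \geq q+1 \geq 1$, so that the weight collapses to $\weight{-\mathfrak{a}}{\mathfrak{b}-\nu}(\xi) = \xi^{\nu-\mathfrak{b}} \leq \xi^{\nu-2\theta}$ by virtue of $\mathfrak{b}\geq 2\theta$. This uniform simplification eliminates all dependence on $\mathfrak{a}$ and reduces each of the four estimates to a single polynomial integral. Throughout I will use that the calibration $a_*\in(\cdots,\theta)$ together with $\nu_*\in(0,\theta-a_*)$ is precisely tuned so that $a+\nu<\theta$ uniformly in $a\in[0,a_*]$ and $\nu\in[0,\nu_*]$.

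For~\eqref{eq:contribution:large:xi:1} I would just compute $\int_{q+2}^{\infty}\xi^{\nu-2\theta-1}\dxi \leq C(q+2)^{\nu-2\theta}$ directly (the exponent is negative since $\nu<\theta<2\theta$); multiplication by $(q+1)^{-2}$ then gives $C(q+1)^{\nu-2\theta-2}$, and the condition $\nu<\theta$ yields the bound $(q+1)^{-2-\theta}$. The estimate~\eqref{eq:contribution:large:xi:4} is handled analogously: after the substitution $\eta=\xi-(q+1)\geq 0$ one uses $(\eta+q+1)^{\nu-2\theta}\leq(q+1)^{\nu-2\theta}$ (valid because $\nu-2\theta<0$) and integrates the exponential to obtain the same polynomial prefactor $(q+1)^{\nu-2\theta}$, after which division by $(q+1)^2$ concludes.

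For~\eqref{eq:contribution:large:xi:2} I would substitute $\eta=\xi-(q+1)\geq 1$. Since $a\leq a_*$ and $\eta\geq 1$ we have $\eta^{a-1}\leq \eta^{a_*-1}$, which renders the bound \emph{uniform} in $a$. I would then rescale $\eta=(q+1)s$ to factor the remaining integral as $(q+1)^{\nu+a_*-2\theta}\int_{1/(q+1)}^{\infty}(1+s)^{\nu-2\theta}s^{a_*-1}\ds$; this $s$-integral converges at zero because $a_*>0$ and at infinity because $\nu+a_*<2\theta$. Dividing by $(q+1)^2$ and invoking $\nu+a_*<\theta$ produces the required bound.

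For~\eqref{eq:contribution:large:xi:3} the corresponding substitution is $\eta=\xi-(q+2)\geq 0$, followed by the rescaling $\eta=(q+2)s$. This factors the integral as $(q+2)^{\nu+a-2\theta}\int_{0}^{\infty}(1+s)^{\nu-2\theta}s^{a-1}\ds$, and the $s$-integral is finite because $a>0$ forces integrability at $s=0$ while $\nu+a<2\theta$ ensures integrability at infinity; here the constant is allowed to depend on $a$ because of the singularity as $a\to 0$. Dividing by $(q+1)(q+2)\sim(q+1)^2$ and using $\nu+a<\theta$ completes the proof. The whole argument is exponent bookkeeping, and I do not foresee any genuine technical obstacle; the only point requiring care is that in~\eqref{eq:contribution:large:xi:2} the uniform constant forces one to pass from $\eta^{a-1}$ to $\eta^{a_*-1}$ before rescaling, whereas in~\eqref{eq:contribution:large:xi:3} the $a$-dependent constant $C_a$ accommodates the endpoint singularity of the beta-type integral directly.
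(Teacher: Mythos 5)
Your proof is correct and follows essentially the same route as the paper: collapse the weight to $\xi^{\nu-2\theta}$ on the integration domain (using $\mathfrak{b}\geq 2\theta$), then rescale by $q+1$ (resp.\ $q+2$) and bookkeep the exponents, exactly as the paper does by appealing to the computations in Lemma~\ref{Lem:contribution:small:xi}. The only cosmetic difference is that you first uniformize $\eta^{a-1}\leq\eta^{a_*-1}$ in~\eqref{eq:contribution:large:xi:2} before rescaling, while the paper leaves the $a$-dependence in place; both yield the same bound with the same parameter conditions.
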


\begin{proof}
 The proof of \cref{eq:contribution:large:xi:1,eq:contribution:large:xi:2,eq:contribution:large:xi:3} can be done similarly as in Lemma~\ref{Lem:contribution:small:xi} if we use that it holds $\weight{-\mathfrak{a}}{\mathfrak{b}-\nu}(\xi)\leq \xi^{\nu-2\theta}$ for $\xi\geq q+2$. Note also that the singularity of $\abs*{\xi-(q+1)}^{a-1}$ at $\xi=q+1$ does not cause any problem since we only integrate over $[q+2,\infty)$.
 
 To prove~\eqref{eq:contribution:large:xi:4} we again use $\weight{-\mathfrak{a}}{\mathfrak{b}-\nu}(\xi)\leq \xi^{\nu-2\theta}$ for $\xi\geq q+2$ together with the shift $\xi\mapsto \xi+(q+1)$ and $(\xi+q+1)^{\nu-2\theta}\leq (q+1)^{\nu-2\theta}$ which yields
 \begin{equation*}
  \begin{split}
   (q+1)^{-2}\int_{q+1}^{\infty}\weight{-\mathfrak{a}}{\mathfrak{b}-\nu}(\xi)\ee^{-(\xi-(q+1))}\dxi&\leq  C(q+1)^{-2}\int_{0}^{\infty}(\xi+q+1)\ee^{-\xi}\dxi\\
   &\leq C(q+1)^{\nu-2-2\theta}\int_{0}^{\infty}\ee^{-\xi}\dxi\leq C(q+1)^{-2-\theta}.
  \end{split}
 \end{equation*}
This then concludes the proof.
\end{proof}

\subsection{Proof of Proposition~\ref{Prop:Q0:int:est}}

\begin{proof}[Proof of Proposition~\ref{Prop:Q0:int:est}]
 The general task for this proof is to obtain enough decay in $q$ for $\Qo_{0}(\cdot ,q)$ as $q\to \infty$ in a suitable integral sense. The main strategy to show this, will be to use the relation $\ee^{-(q+1)x}=-(q+1)^{-1}\del_{x}\ee^{-(q+1)x}$ and integrate by parts. In general, this approach is rather straightforward, however, there are two main technical problems arising. First of all, in order to obtain the desired decay in $q$, we have to integrate by parts at least two times which produces a plenty of several terms that have to be estimated. Moreover, as we have to estimate $\Qo_{0}(\cdot ,q)$ in an integral sense, we also have to show that this quantity is integrable both at zero and at infinity. We already note here that there will in general arise problems in both cases, namely, due to the repeated integration by parts, we produce a singularity at zero which worsens during each step of integrating by parts. However, for $\rho>1/2$ it turns out that this can be overcome by a careful analysis and the precise prescription of the asymptotic behaviour of the kernel $W$ (see \cref{Lem:gain:of:alpha:coup,Lem:gain:of:alpha:cut}). On the other hand, in order to obtain enough decay with respect to $\xi$ such that $\Qo_{0}(\xi,q)$ is integrable at infinity, we have to integrate by parts repeatedly in several terms while it turns out that for this approach that we use here the assumption $\alpha<1/2$ is essential to estimate the most singular expressions.
 
 Before we explain the general approach more closely, we will introduce some notation and collect some general properties that we will need in the following. First of all, we denote by 
 \begin{equation*}
  V(q)\vcc=\int_{0}^{\infty}\weight{-\mathfrak{a}}{\mathfrak{b}-\nu}(\xi)\abs*{\Qo_{0}(\xi,q)}\dxi
 \end{equation*}
 the integral expression that we have to estimate. Additionally, we will need two cut-off functions $\cut,\coup\in C^{\infty}(\R)$ which satisfy
 \begin{equation}\label{eq:cutoffs}
  \begin{aligned}
   0&\leq \coup\leq 1 & \coup(-s)&=\coup(s), & \coup(s)&=0\text{ if } \abs{s}\in[0,1/2], & \coup(s)&=1\text{ if }\abs{s}\in [1,\infty),\\
   0&\leq \cut\leq 1 & \cut(-t)&=\coup(t), & \cut(t)&=1\text{ if } \abs{t}\in[0,1], & \cut(t)&=0\text{ if }\abs{t}\in [2,\infty).
  \end{aligned}
 \end{equation}
 For $R>0$, and the function $H_{0,k}(\cdot,q)$ as given by~\eqref{eq:H:splitting:2} we denote
 \begin{equation*}
  \Qo_{k,R}(\xi,q)=\frac{1}{2\pi\im}\int_{-\im R}^{\im R}\ee^{y\xi}H_{0,k}(y,q)\dy\quad \text{for }k=1,2.
 \end{equation*}
The general approach to prove the statement will be to use a certain splitting
\begin{equation}\label{eq:Q0:est:gen:approach}
 \Qo_{k,R}(\xi,q)=M_{k}(\xi,q)+J_{k,R}(\xi,q)
\end{equation}
with a function $M_{k}(\xi,q)$ which behaves nice in the sense that we obtain the desired decay properties in $\xi$ and $q$. On the other hand, for the remainder $J_{k,R}(\xi,q)$, we will show $J_{k,R}(\xi,q)\to 0$ for almost every $\xi$ as $R\to \infty$ uniformly with respect to $q$. 

Due to Proposition~\ref{Prop:rep:H0}, for each $q\geq 0$, we may extract a sequence $R_{n}(q)\to \infty$ for $n\to\infty$ such that it holds
\begin{equation*}
 \Qo_{0}(\xi,q)=\lim_{n\to\infty}\bigl(\Qo_{1,R_{n}(q)}(\xi,q)+\Qo_{2,R_{n}(q)}(\xi,q)\bigr) \quad \text{for almost every }\xi\in\R_{+}.
\end{equation*}
The representation~\eqref{eq:Q0:est:gen:approach} together with the convergence of $J_{k,R}$ then yields
\begin{multline*}
 \abs*{\Qo_{0}(\xi,q)}\leq \bigl(\abs*{M_{1}(\xi,q)}+\abs*{M_{2}(\xi,q)}\bigr)+\lim_{n\to\infty}\bigl(J_{1,R_{n}(q)}(\xi,q)+J_{2,R_{n}(q)}(\xi,q)\bigr)\\*
 =\abs*{M_{1}(\xi,q)}+\abs*{M_{2}(\xi,q)}.
\end{multline*}
Therefore, we see that in order to estimate the expression $V(q)$, it suffices to bound the corresponding integral where we replace $\Qo_{0}(\xi,q)$ by $M_{k}(\xi,q)$.

As already announced above, the general approach to derive the necessary estimates will consist in integrating by parts. More precisely, this means that we first integrate by parts with respect to $x$ in the representation formula for $H_{0,k}(y,q)$. Then, we have to distinguish whether it holds $\xi<q+2$ or $\xi>q+2$. Namely, in the first case, we integrate by parts once more with respect to $x$ while in the second one we instead integrate by parts with respect to $y$ in the integral representation of $\Qo_{k,R}(\xi,q)$. In principle, each integration by parts with respect to $x$ or $y$ yields either a factor $(q+1)^{-1}$ or instead a factor $\xi^{-1}$ or $(\xi-(q+1))^{-1}$ respectively. However, there is a further technical problem arising. Namely, in order to get uniform estimates on the resulting integrals, we have to weaken several singularities using the estimate $\abs*{1-\ee^{-x}}\leq C \abs{x}^{a}$ for some $a\in[0,1]$ as explained in more detail below. Thus, we will generally obtain estimates of the form 
\begin{equation*}
 \abs*{M_{k}(\xi,q)}\leq C(q+1)^{a-3} \quad \text{and}\quad \abs*{M_{k}(\xi,q)}\leq C(q+1)^{a-2}\abs*{\xi-(q+1)}^{a-1}\quad \text{for }\xi<q+2 
\end{equation*}
as well as
\begin{equation*}
 \abs*{M_{k}(\xi,q)}\leq C(q+1)^{-2}\xi^{-1} \quad \text{and}\quad \abs*{M_{k}(\xi,q)}\leq C(q+1)^{-2}\abs*{\xi-(q+1)}^{a-1}\quad \text{for }\xi>q+2.
\end{equation*}
The corresponding contribution to the integral $V(q)$ that we have to estimate is then controlled by \cref{Lem:contribution:small:xi,Lem:contribution:large:xi}.

In order to keep track over all the different terms, we will consider each quantity arising after the first integration by parts in a separate subsection.

As indicated above, we have to exploit frequently the regularising effect of $1-\ee^{-x}$ for small $\abs{x}$ which requires a careful adjustment of certain parameters. More precisely, the choice of $\theta$ in~\eqref{eq:choice:theta:bl} yields $1-\rho<\theta$ such that it is always possible to fix a small parameter $\delta>0$ which satisfies 
\begin{equation*}
 1-\rho+\delta<\theta.
\end{equation*}
Then, we can furthermore take $\eps>0$ sufficiently small such that it holds $\abs*{\kappa}< \delta$ as guaranteed by Remark~\ref{Rem:kappa:smallness}. Throughout the proof we will then make frequent use of the estimate
\begin{equation*}
 \abs*{\ee^{-x}-1}\leq C\abs*{x}^{a}\quad \text{for } a\in[0,1]\text{ and all }x\text{ with }\Re(x)\geq 0.
\end{equation*}
Typically, we apply this estimate with $a_{*}\in(\max\{1-\rho+\delta,\alpha\},\theta)$ while this choice is possible if we take $\delta<\rho-\alpha$ and recall that $\alpha<\rho$.

We also note here once that we will use the relation
\begin{equation*}
 \int_{-R}^{R}\int_{0}^{t}(\cdots)\ds\dt=\int_{-R}^{R}\int_{s}^{\sgn(s)R}(\cdots)\dt\ds
\end{equation*}
which is a consequence of Fubini's Theorem.

Finally, in order to avoid any additional technical complication, we note that we can always redefine the value of $\theta$ or make $\abs*{\kappa}$ slightly smaller such that none of the integrals that we will consider in the following yields a logarithmic expression and we only have to deal with power laws.

\subsection{Rewriting $H_{0}(y,q)$}

We recall from~\eqref{eq:H:splitting:2} that $H_{0}(y,q)=H_{0,1}(y,q)+H_{0,2}(y,q)$ and we use the relations $\Phi'(x)=-\eps\beta_{W}(x)x^{-1}\ee^{-x}$ and $\ee^{-(q+n)x}=-(q+n)^{-1}\del_{x}\ee^{-(q+n)x}$ for $n=1,2$ to rewrite 
\begin{equation*}
 \begin{split}
  H_{0,1}(y,q)&=-\frac{1+\rho+\kappa}{(q+1)^{2}y^{\rho}(1+y)}\int_{0}^{y}\del_{x}\Bigl(\ee^{-(q+1)x}\Bigr)\frac{x^{\rho+\kappa}}{y^{\kappa}}\frac{\ee^{\Phi(y)}}{\ee^{\Phi(x)}}\dx,\\
  H_{0,2}(y,q)&=-\frac{1}{(q+1)(q+2)y^{\rho+\kappa}(1+y)}\int_{0}^{y}\del_{x}\Bigl(\ee^{-(q+2)x}\Bigr)\beta_{W}(x)x^{\rho+\kappa}\frac{\ee^{\Phi(y)}}{\ee^{\Phi(x)}}\dx.
 \end{split}
 \end{equation*} 
Thus, integration by parts yields that
\begin{equation*}
 \begin{split}
  H_{0,1}(y,q)=\U_{1,0}(y,q)+\U_{1,1}(y,q)+\U_{1,2}(y,q),\\
  H_{0,2}(y,q)=\U_{2,0}(y,q)+\U_{2,1}(y,q)+\U_{2,2}(y,q).
 \end{split}
\end{equation*}
with 
\begin{equation*}
 \begin{split}
  \U_{1,0}(y,q)&=-\frac{1+\rho+\kappa}{(q+1)^{2}(1+y)}\ee^{-(q+1)y},\\
  \U_{1,1}(y,q)&=\frac{(1+\rho+\kappa)(\rho+\kappa)}{(q+1)^{2}y^{\rho}(1+y)}\int_{0}^{y}\ee^{-(q+1)x}\frac{x^{\rho+\kappa-1}}{y^{\kappa}}\frac{\ee^{\Phi(y)}}{\ee^{\Phi(x)}}\dx,\\
  \U_{1,2}(y,q)&=-\frac{1+\rho+\kappa}{(q+1)^{2}y^{\rho}(1+y)}\int_{0}^{y}\ee^{-(q+1)x}\Phi'(x)\frac{x^{\rho+\kappa}}{y^{\kappa}}\frac{\ee^{\Phi(y)}}{\ee^{\Phi(x)}}\dx
 \end{split}
\end{equation*}
as well as
\begin{equation*}
 \begin{split}
  \U_{2,0}(y,q)&=-\frac{\eps\ee^{-(q+2)y}\beta_{W}(y)}{(q+1)(q+2)(1+y)},\\
  \U_{2,1}(y,q)&=\frac{\eps}{(q+1)(q+2)y^{\rho+\kappa}(1+y)}\int_{0}^{y}\ee^{-(q+2)x}\del_{x}\Bigl(x^{\rho+\kappa}\beta_{W}(x)\Bigr)\frac{\ee^{\Phi(y)}}{\ee^{\Phi(x)}}\dx,\\
  \U_{2,2}(y,q)&=\frac{1}{(q+1)(q+2)y^{\rho+\kappa}(1+y)}\int_{0}^{y}\ee^{-(q+1)x}x^{1+\rho+\kappa}\bigl(\Phi'(x)\bigr)^2\frac{\ee^{\Phi(y)}}{\ee^{\Phi(x)}}\dx.
 \end{split}
\end{equation*}
As outlined above, the contribution of each of these six terms has to be estimated separately and moreover, for each expression we have to consider the regions $\xi<q+2$ and $\xi>q+2$. However, since the approach for $\U_{1,1}$, $\U_{1,2}$, $\U_{2,1}$ and $\U_{2,2}$ is essentially the same, we will only consider for illustrative purpose the expression $\U_{2,2}(y,q)$ which is the most delicate one since it contains the most singular terms. The terms $\U_{1,1}$, $\U_{1,2}$ and $\U_{2,1}$ can then be treated similarly, while some of the estimates in these cases are slightly easier. More details on how to estimate these terms might also be found in~\cite{Thr16}. On the other hand, the 'boundary' expressions $\U_{1,0}(y,q)$ and $\U_{2,0}(y,q)$ behave slightly different and thus have to be treated separately and we will therefore also give the precise estimates here.

\subsection{Contribution of $\U_{1,0}(y,q)$}

We estimate first the expression $\U_{1,0}(y,q)$ which is exceptional since the contribution to $\Qo_{1,R}$ can be computed rather explicitly using contour deformation. Precisely, we have to consider $(2\pi\im)^{-1}\int_{-\im R}^{\im R} \ee^{y\xi}\U_{1,0}(y,q)\dy$. With the explicit expression for $\U_{1,0}(y,q)$ we can rewrite this and integrate by parts which yields
\begin{multline}\label{eq:W10:1}
 -\frac{1+\rho+\kappa}{2\pi\im(q+1)^{2}}\int_{-\im R}^{\im R}\frac{\ee^{y(\xi-(q+1))}}{1+y}\dy=-\frac{1+\rho+\kappa}{2\pi\im(q+1)^{2}}\frac{1}{\xi-(q+1)}\int_{\im R}^{\im R}\frac{\del_{y}(\ee^{y(\xi-(q+1))}-1)}{1+y}\dy\\*
 =-\frac{1+\rho+\kappa}{2\pi\im(q+1)^2}\frac{1}{\xi-(q+1)}\biggl(\frac{\ee^{y(\xi-(q+1))}}{1+y}\biggr)\bigg|_{y=-\im R}^{y=\im R}\\*
 -\frac{1+\rho+\kappa}{2\pi\im(q+1)^2}\frac{1}{\xi-(q+1)}\int_{-\im R}^{\im R}\frac{\ee^{y(\xi-(q+1))}-1}{(1+y)^2}\dy.
\end{multline}
For the boundary term, i.e.\@ the first expression on the right-hand side we immediately find
\begin{equation*}
 \abs*{\frac{1+\rho+\kappa}{2\pi\im(q+1)^2}\frac{1}{\xi-(q+1)}\biggl(\frac{\ee^{y(\xi-(q+1))}}{1+y}\biggr)\bigg|_{y=-\im R}^{y=\im R}}\longrightarrow 0\quad \text{for }R\longrightarrow \infty.
\end{equation*}
For the second term on the right-hand side of~\eqref{eq:W10:1} we consider the two cases $\xi<q+1$ and $\xi>q+1$ separately and we will compute the integral explicitly by deforming the contour as shown in Figure~\ref{fig:W10}. Similarly as for the boundary term above, one can readily show that the integral over the half-circle in the right (for $\xi<q+1$) or left half-plane (for $\xi>q+1$) vanishes in the limit $R\to\infty$ with a rate as $1/R$. Since the integrand $(1+y)^{-2}(\ee^{y(\xi-(q+1))}-1)$ is analytic in the right half-plane, this already shows that for $\xi<q+1$ also the integral $\int_{-\im R}^{\im R}(1+y)^{-2}(\ee^{y(\xi-(q+1))}-1)\dy$ vanishes in the limit $R\to\infty$. 

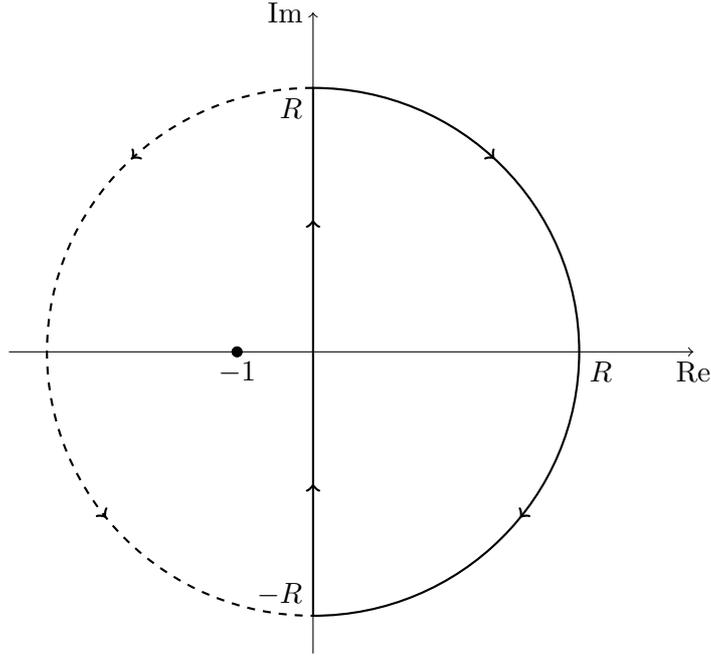
\begin{figure}
\begin{center}
  \begin{tikzpicture}[contour/.style={postaction={decorate, decoration={markings,
mark=at position 1.75cm with {\arrow[line width=1pt]{>}},
mark=at position 5.25cm with {\arrow[line width=1pt]{>}},
mark=at position 9.625cm with {\arrow[line width=1pt]{>}},
mark=at position 14.875cm with {\arrow[line width=1pt]{>}}}}},
contourtwo/.style={postaction={decorate, decoration={markings, mark=at position 2.625cm with {\arrow[line width=1pt]{>}},mark=at position 7.875cm with {\arrow[line width=1pt]{>}}}}}]

\draw[->] (-4,0) -- (5,0) node[below] {$\Re$};
\draw[->] (0,-4) -- (0,4.5) node[left]  {$\Im$};

\path[draw,line width=0.8pt, contour] (0,-3.5) node[above left]{$-R$}  -- (0,3.5) node[below left]{$R$}  arc (90:-90:3.5);

\path[draw, dashed, line width=0.8pt,contourtwo] (0,3.5) arc (90:270:3.5);

\node[below right] at (3.5,0) {$R$};
\draw[black,fill=black] (-1,0) circle (.4ex);
\node[below] at (-1,0){$-1$};
\end{tikzpicture}
\caption{Contour for $\U_{1,0}$}
\label{fig:W10}
 \end{center}
\end{figure}

On the other hand, for $\xi>q+1$, we close the contour in the left half-plane and the Residue Theorem thus yields
\begin{equation*}
 -\frac{1+\rho+\kappa}{2\pi\im(q+1)^2}\frac{1}{\xi-(q+1)}\int_{-\im R}^{\im R}\frac{\ee^{y(\xi-(q+1))}-1}{(1+y)^2}\dy\longrightarrow -\frac{1+\rho+\kappa}{(q+1)^{2}}\ee^{-(\xi-(q+1))}
\end{equation*}
for $R\to \infty$. Therefore, the only contribution of $\U_{1,0}$ to $V(q)$ stems from the region $\xi>q+1$ and we obtain the desired estimate due to Lemma~\ref{Lem:contribution:large:xi}.

\subsection{Contribution of $\U_{2,0}(y,q)$}

In order to estimate the contribution coming from $\U_{2,0}(y,q)$ we have to consider the expression
\begin{equation*}
 \Qo_{2,0,R}\vcc=-\frac{1}{2\pi\im}\frac{\eps}{(q+1)(q+2)}\int_{-\im R}^{\im R}\frac{\ee^{y(\xi-(q+2))}}{1+y}\beta_{W}(y)\dy.
\end{equation*}
To get a bound on this which is uniform in $R$, we use the relation $\ee^{y(\xi-(q+2))}=(\xi-(q+2))^{-1}\del_{y}(\ee^{y(\xi-(q+2))}-1)$ and integrate by parts in $y$ which yields the splitting $\Qo_{2,0,R}=(I)+(II)$ with 
\begin{equation*}
 \begin{split}
  (I)&=-\frac{1}{2\pi\im}\frac{\eps}{(q+1)(q+2)(\xi-(q+2))}\biggl(\Bigl(\ee^{y(\xi-(q+2))}-1\Bigr)\frac{\beta_{W}(y)}{1+y}\biggr)\bigg|_{y=-\im R}^{y=\im R}\\
  (II)&=\frac{1}{2\pi\im}\frac{\eps}{(q+1)(q+2)(\xi-(q+2))}\int_{-\im R}^{\im R}\Bigl(\ee^{y(\xi-(q+2))}-1\Bigr)\biggl(-\frac{\beta_{W}(y)}{(1+y)^2}+\frac{\beta_{W}'(y)}{1+y}\biggr)\dy.
 \end{split}
\end{equation*}
From Lemma~\ref{Lem:analyticity:beta} one immediately deduces that for each $a\in(0,1-\alpha)$ it holds $\abs*{(I)}\leq C\abs*{\xi-(q+2)}^{a-1}R^{\alpha+a-1}\to 0$ for $R\to\infty$. Thus, $\U_{2,0}(y,q)$ contributes only via the expression $(II)$ which will be estimated next and which requires to consider the regions $\xi<q+2$ and $\xi>q+2$ separately. 

\subsubsection{Contribution of $\U_{2,0}(y,q)$ for $\xi<q+2$}

For $\xi<q+2$ one can easily show that due to the analyticity of $\beta_{W}$ in the right half-plane and the estimates provided by Lemma~\ref{Lem:analyticity:beta} that the contour of integration of the integral in $(II)$ can be deformed to a half-circle of radius $r_{0}\in(0,1/2)$ around the origin such that we prevent the singularity at zero of the integrand (see Figure~\ref{fig:W20}). 

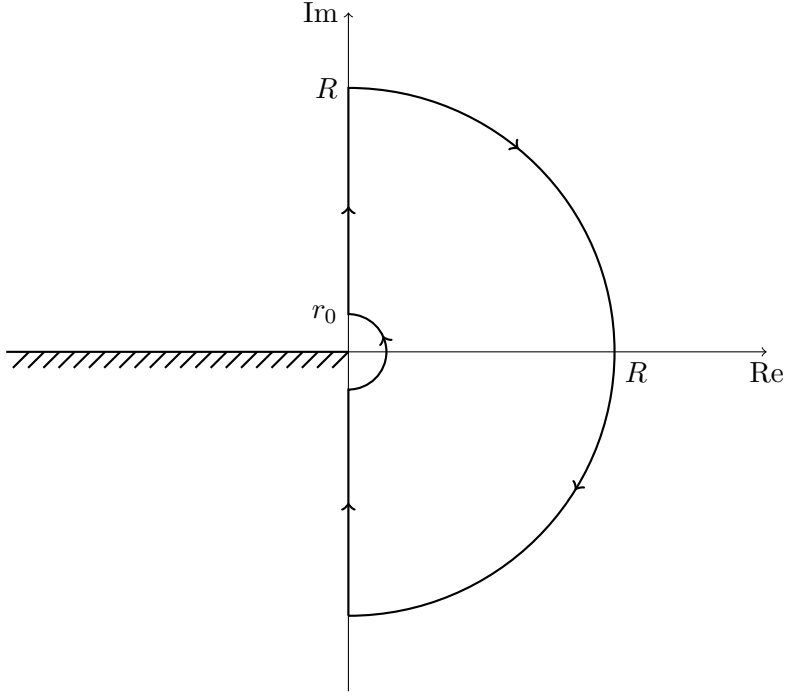
\begin{figure}
\begin{center}
  \begin{tikzpicture}[contour/.style={postaction={decorate, decoration={markings,
mark=at position 1.5cm with {\arrow[line width=1pt]{>}},
mark=at position 4cm with {\arrow[line width=1pt]{>}},
mark=at position 6cm with {\arrow[line width=1pt]{>}},
mark=at position 10cm with {\arrow[line width=1pt]{>}},
mark=at position 15cm with {\arrow[line width=1pt]{>}}}}},
    interface/.style={postaction={draw,decorate,decoration={border,angle=45,
                    amplitude=0.3cm,segment length=2mm}}}]
                    
\draw[->] (0,0) -- (5.5,0) node[below] {$\Re$};
\draw[->] (0,-4.5) -- (0,4.5) node[left]{$\Im$};

\draw[line width=.8pt,interface](0,0)--(-4.5,0);

\path[draw,line width=0.8pt,contour] (0,-3.5)  -- (0,-0.5)  arc (-90:90:0.5) node[left] {$r_{0}$} -- (0, 3.5)node[left] {$R$} arc (90:-90:3.5);

\node[below right] at (3.5,0) {$R$};
\end{tikzpicture}
\caption{Contour for $\U_{2,0}$ if $\xi<q+2$}
\label{fig:W20}
 \end{center}
\end{figure}

In the same way, we can moreover show that if we take instead the half-circle of radius $R$ in the right half-plane as contour for the integral in $(II)$, this converges to zero for $R\to 0$. Thus, since we have deformed the original contour in $(II)$ to the region where the integrand is analytic, we may conclude from Cauchy's Theorem that $\lim_{R\to\infty}\abs*{(II)}=0$ if $\xi<q+2$, i.e.\@ there is no contribution of $\U_{2,0}(y,q)$ stemming from the region $\xi<q+2$.

\subsubsection{Contribution of $\U_{2,0}(y,q)$ for $\xi>q+2$}

To estimate the contribution in the region $\xi>q+2$ we cannot use contour deformation due to the sign in the exponential factor. Therefore, we have to directly estimate the integral where we now take the original contour along the imaginary axis. Together with Lemma~\ref{Lem:analyticity:beta} and estimating by the most dominant terms, both at zero and infinity, we find
\begin{equation*}
 \abs*{(II)}\leq \frac{C}{(q+1)(q+2)\abs*{\xi-(q+2)}}\int_{-R}^{R}\abs*{\ee^{-y(\xi-(q+2))}-1}\frac{\abs{y}^{-\alpha-1}+\abs{y}^{\alpha-1}}{1+\abs*{y}}\dy.
\end{equation*}
In order to bound the integral on the right-hand side uniformly, we use $\abs*{\ee^{-y(\xi-(q+2))}-1}\leq \abs*{\xi-(q+2)}^{a_{*}}\abs*{y}^{a_{*}}$ in order to weaken on the one hand the singularity $\abs*{y}^{-\alpha-1}$ at zero and on the other hand the singularity at $q-2$ of $\abs*{\xi-(q+2)}^{-1}$. This then yields
\begin{equation*}
 \begin{split}
  \abs*{(II)}&\leq \frac{C}{(q+1)(q+2)\abs*{\xi-(q+2)}^{1-a_{*}}}\int_{-\infty}^{\infty}\frac{\abs{y}^{a_{*}-\alpha-1}}{(1+\abs{y})}+\frac{\abs{y}^{a_{*}+\alpha-1}}{(1+\abs{y})}\dy\\
  &\leq \frac{C}{(q+1)(q+2)}\abs*{\xi-(q+2)}^{a_{*}-1}.
 \end{split}
\end{equation*}
The contribution to the integral $V(q)$ can be estimated in the desired way due to Lemma~\ref{Lem:contribution:large:xi}.

\subsection{Contribution of $\U_{2,2}(y,q)$ for $\xi<q+2$}

Together with the relation $\Phi'(x)=-\eps\beta_{W}(x)x^{-1}\ee^{-x}$ and the cut-off function $\coup$ we can rewrite and split $\U_{2,2}(y,q)=\U_{2,2,1}(y,q)+\U_{2,2,2}(y,q)$ with
\begin{equation*}
 \begin{split}
  \U_{2,2,1}(y,q)&=\frac{1}{(q+1)(q+2)y^{\rho+\kappa}(1+y)}\int_{0}^{y}\ee^{-(q+3)x}\bigl(\eps \beta_{W}(x)\bigr)^{2}x^{\rho-1+\kappa}\frac{\ee^{\Phi(y)}}{\ee^{\Phi(x)}}\coup(\im x)\dx,\\
  \U_{2,2,2}(y,q)&=\frac{1}{(q+1)(q+2)y^{\rho+\kappa}(1+y)}\int_{0}^{y}\ee^{-(q+1)x}\bigl(\Phi'(x)\bigr)^{2}x^{\rho-1+\kappa}\frac{\ee^{\Phi(y)}}{\ee^{\Phi(x)}}(1-\coup(\im x)\dx.
 \end{split}
\end{equation*}

\subsubsection{Contribution of $\U_{2,2,1}(y,q)$}

Due to the cut-off $\coup$ we avoid the singularity of the integrand of $\U_{2,2,1}(\cdot,q)$ at the origin such that we can integrate by parts again. Precisely, we use that $\ee^{-(q+3)x}=-(q+3)^{-1}\del_{x}(\ee^{-(q+3)x})$ and the relation $\Phi'(x)=-\eps\beta_{W}(x)x^{-1}\ee^{-x}$ such that integration by parts readily implies that $\U_{2,2,1}(y,q)=\U_{d}(y,q)+\U_{it}(y,q)$ with 
\begin{equation*}
 \begin{split}
 \U_{d}(y,q)&=-D(q)\frac{\ee^{-(q+3)y}(\eps\beta_{W}(y))^2}{y(1+y)}\coup(\im y)\\
 &\qquad +\frac{D(q)}{y^{\rho+\kappa}(1+y)}\int_{0}^{y}\ee^{-(q+3)x}\del_{x}\Bigl(\bigl(\eps\beta_{W}(x)\bigr)^2 x^{\rho-1+\kappa}\coup(\im x)\Bigr)\frac{\ee^{\Phi(y)}}{\ee^{\Phi(y)}}\dx,\\
 \U_{it}(y,q)&=\frac{D(q)}{y^{\rho+\kappa}(1+y)}\int_{0}^{y}\ee^{-(q+4)x}\bigl(\eps\beta_{W}(x)\bigr)^{3} x^{\rho-2+\kappa}\coup(\im x)\frac{\ee^{\Phi(y)}}{\ee^{\Phi(y)}}\dx,
 \end{split}
\end{equation*}
where we also use the abbreviation $D(q)=((q+1)(q+2)(q+3))^{-1}$.
The goal is then to bound the expressions on the right-hand side by a function in $y$ which has a finite integral over $\R$. This works well for $\U_{d}(y,q)$ and due to the cut-off $\coup$ the region close to zero does not cause any problems here but unfortunately, for $\alpha$ too close to $1/2$ the decay of $\U_{it}(y,q)$ is not strong enough for large values of $\abs{y}$. Therefore, we have to integrate by parts once more in the expression $\U_{it}(y,q)$ which in the most dominant term yields an additional factor $\abs{y}^{\alpha-1}$ which then suffices. More precisely, we obtain together with \cref{Lem:analyticity:beta,Lem:analyticity:Phi} and the properties of $\coup$ that 
\begin{multline*}
 \abs*{y^{-\rho-\kappa}(1+y)^{-1}\int_{0}^{y}\ee^{-(q+3)x}\del_{x}\Bigl(\bigl(\eps\beta_{W}(x)\bigr)^2 x^{\rho-1+\kappa}\coup(\im x)\Bigr)\frac{\ee^{\Phi(y)}}{\ee^{\Phi(y)}}\dx}\\*
 \leq C\max\bigl\{\abs*{y}^{2\alpha-2},\abs*{y}^{-(\rho+1+\kappa)}\bigr\}\chi_{\{\abs{y}\geq 1/2\}}.
\end{multline*}
Since the right-hand side is uniformly integrable in $y$, we conclude that the contribution to $\Qo_{0}(\xi,q)$ stemming from $\U_{d}(y,q)$, i.e.\@ $\Qo_{d}=(2\pi\im)^{-1}\int_{-\im\infty}^{\im\infty}\ee^{y\xi}\U_{d}(y,q)\dy$ can be estimated as
\begin{equation}\label{eq:Q0:est:Qd}
 \begin{split}
  \abs*{\Qo_{d}}\leq C(q+1)^{-3} \int_{-\infty}^{\infty}\max\bigl\{\abs*{y}^{2\alpha-2},\abs*{y}^{-(\rho+1+\kappa}\bigr\}\chi_{\{\abs{y}\geq 1/2\}}\dy\leq C(q+1)^{-3}.
 \end{split}
\end{equation}
We next indicate how we conclude for the expression $\U_{it}(y,q)$. Together with \cref{Lem:analyticity:beta,Lem:analyticity:Phi} we can verify similarly as for $\U_{d}(y,q)$ that it holds
\begin{equation*}
 \abs*{\U_{it}(y,q)}\leq \frac{C}{(q+1)^3}\max\{\abs{y}^{3\alpha-2},\abs{y}^{-(\rho+1+\kappa)}\}\chi_{\{\abs{y}\geq 1/2\}}.
\end{equation*}
Thus, if $\alpha<1/3$ we already obtain an integrable function on the right-hand side and we can conclude. On the other hand, if $\alpha\geq 1/3$, we have to iterate the procedure of integration by parts once more to get
\begin{equation}\label{eq:bound:Witn}
 \abs*{\U_{it}(y,q)}\leq \frac{C}{(q+1)^4}\max\{\abs{y}^{3\alpha-3},\abs{y}^{-(\rho+1+\kappa)}\}\chi_{\{\abs{y}\geq 1/2\}}.
\end{equation}
From the bound~\eqref{eq:bound:Witn} we can now estimate the contribution of $\U_{it}(y,q)$ to the function $\Qo_{0}(\xi,q)$ which is given by
\begin{equation*}
 \begin{split}
 \abs[\bigg]{\frac{1}{2\pi\im}\int_{-\im\infty}^{\im\infty}\ee^{y\xi}\U_{it}(y,q)\dy}\leq \frac{C}{(q+1)^4}\int_{-\infty}^{\infty}\abs{y}^{\max\{3\alpha-3,-(\rho+1+\kappa)\}}\chi_{\{\abs{y}\geq 1/2\}}\dy\leq \frac{C}{(q+1)^3}.
 \end{split}
\end{equation*}
Together with~\eqref{eq:Q0:est:Qd} we thus have $\abs*{\Qo_{2,2,1,R}}\leq C(q+1)^{-3}$ uniformly with respect to $R$. The contribution to the integral $V(q)$ can then be controlled by Lemma~\ref{Lem:contribution:small:xi}.

\subsubsection{Contribution of $\U_{2,2,2}(y,q)$}\label{Sec:W222:small}

We have to estimate the expression $\Qo_{2,2,2}^{<}\vcc=\lim_{R\to\infty}(2\pi\im)^{-1}\int_{-\im R}^{\im R}\ee^{y\xi}\U_{2,2,2}(y,q)\dy$ while the limit exists due to the cut-off $1-\coup$ which is supported only in the region close to zero. If we introduce another cut-off function $\cut$ and change also to real variables we can split $\Qo_{2,2,2}^{<}=\Qo_{2,2,2,1}^{<}+\Qo_{2,2,2,2}^{<}$ with
\begin{multline*}
  \Qo_{2,2,2,1}^{<}\vcc=\frac{\im}{2\pi(q+1)(q+2)}\int_{-\infty}^{\infty}\ee^{-\im s(q+1)}(\im s)^{1+\rho+\kappa}\bigl(\Phi'(\im s)\bigr)^{2}\bigl(1-\coup(s)\bigr)\cdot\\*
  \shoveright{\cdot\int_{s}^{\sgn(s)\infty}\frac{\ee^{\im t\xi}\cut(t)}{(\im t)^{\rho+\kappa}(1+\im t)}\frac{\ee^{\Phi(\im t)}}{\ee^{\Phi(\im s)}}\dt\ds}\\*
  \shoveleft{\Qo_{2,2,2,2}^{<}\vcc=\frac{\im}{2\pi(q+1)(q+2)}\int_{-\infty}^{\infty}\ee^{-\im s(q+1)}(\im s)^{1+\rho+\kappa}\bigl(\Phi'(\im s)\bigr)^2\bigl(1-\coup(s)\bigr)\ee^{-\Phi(\im s)}\cdot}\\*
  \cdot\int_{s}^{\sgn(s)\infty}\frac{\ee^{\im t\xi}(1-\cut(t))}{(\im t)^{\rho+\kappa}(1+\im t)}\ee^{\Phi(\im t)}\dt\ds.
 \end{multline*}
To estimate $\Qo_{2,2,2,2}^{<}$, we introduce the abbreviation $D(q)=(q+1)^{-2}(q+2)^{-1}$ and we use $\ee^{-\im s(q+1)}=\im(q+1)^{-1}\del_{s}(\ee^{-\im s(q+1)}-1)$ to integrate by parts once more which gives
\begin{multline*}
  \Qo_{2,2,2,2}^{<}=\frac{D(q)}{2\pi}\int_{-\infty}^{\infty}\biggl(\Bigl(\ee^{-\im s(q+1)}-1\Bigr)\biggl[\del_{s}\Bigl((\im s)^{1+\rho+\kappa}\bigl(\Phi'(\im s)\bigr)^2(1-\coup(s))\Bigr)\\*
  \shoveright{ -\im(\im s)^{1+\rho+\kappa}\bigl(\Phi'(\im s)\bigr)^3(1-\coup(s))\biggr]\ee^{-\Phi(\im s)}\int_{s}^{\sgn(s)\infty}(\cdots)\dt\biggr)\ds}\\*
   -\frac{D(q)}{2\pi}\int_{-\infty}^{\infty}\Bigl(\ee^{-\im s(q+1)}-1\Bigr)\im s\bigl(\Phi'(\im s)\bigr)^{2}(1-\coup(s))\frac{\ee^{\im s\xi}(1-\cut(s))}{(1+\im s)}\ds.
 \end{multline*}
Together with Lemma~\ref{Lem:analyticity:Phi} one can deduce that 
\begin{equation*}
 \abs[\bigg]{\int_{s}^{\sgn(s)\infty}\frac{\ee^{\im t\xi}(1-\cut(t))}{(\im t)^{\rho+\kappa}(1+\im t)}\ee^{\Phi(\im t)}\dt}\leq C\quad \text{for all }s\in\R.
\end{equation*}
Moreover, taking also \cref{Lem:analyticity:Phi,Lem:reg:exponetial:decay,Lem:help:exponential:regularising} into account, it further follows
\begin{equation*}
 \begin{split}
  \abs*{\del_{s}\Bigl((\im s)^{1+\rho+\kappa}\bigl(\Phi'(\im s)\bigr)^2(1-\coup(s))\Bigr)\ee^{-\Phi(\im s)}}\leq C\eps^2\abs*{s}^{\rho-2\alpha-2+\kappa}\ee^{-\frac{d\eps}{\abs{s}^{\alpha}}}\chi_{\{\abs{s}\leq 1\}}\leq C\abs{s}^{\rho-2+\kappa}\chi_{\{\abs{s}\leq 1\}},\\
  \abs*{(\im s)^{1+\rho+\kappa}\bigl(\Phi'(\im s)\bigr)^3(1-\coup(s))\ee^{-\Phi(\im s)}}\leq C\eps^3\abs*{s}^{\rho-3\alpha-2+\kappa}\ee^{-\frac{d\eps}{\abs{s}^{\alpha}}}\chi_{\{\abs{s}\leq 1\}}\leq C\abs{s}^{\rho-2+\kappa}\chi_{\{\abs{s}\leq 1\}}.
 \end{split}
\end{equation*}
Thus, for a parameter $a\in(1-\rho+\delta,\theta)$ and $D(q)\leq (q+1)^{-3}$ it follows
\begin{equation*}
 \abs*{\Qo_{2,2,2,2}^{<}}\leq (q+1)^{a-3}\int_{-1}^{1}\abs*{s}^{a+\rho-2+\kappa}\ds\leq C(q+1)^{a-3}.
\end{equation*}
The contribution of this expression to $V(q)$ is then controlled in the desired manner due to Lemma~\ref{Lem:contribution:small:xi}.

To bound the term $\Qo_{2,2,2,1}^{<}$ we change variables $t\mapsto s+t$ and introduce the abbreviation $\widehat{D}(q,\xi)=((q+1)(q+2)(\xi-(q+1)))^{-1}$ which allows to rewrite
\begin{multline*}
 \Qo_{2,2,2,1}^{<}=\frac{\widehat{D}(q,\xi)}{2\pi}\int_{-\infty}^{\infty}\del_{s}\Bigl(\ee^{\im s(\xi-(q+1))}-1\Bigr)(\im s)^{1+\rho+\kappa}\bigl(\Phi'(\im s)\bigr)^{2}(1-\coup(s))\cdot\\*
 \cdot \int_{0}^{\sgn(s)\infty}\frac{\ee^{\im t\xi}\cut(s+t)}{(\im(s+t))^{\rho+\kappa}(1+\im(s+t))}\frac{\ee^{\Phi(\im(s+t))}}{\ee^{\Phi(\im s)}}\dt\ds.
\end{multline*}
An integration by parts thus yields
\begin{multline*}
  \Qo_{2,2,2,1}^{<}=-\frac{\widehat{D}(q,\xi)}{2\pi}\int_{-\infty}^{\infty}\Bigl(\ee^{\im s(\xi-(q+1))}-1\Bigr)\del_{s}\Bigl((\im s)^{1+\rho+\kappa}\bigl(\Phi'(\im s)\bigr)^{2}(1-\coup(s))\Bigr)\cdot\\*
  \shoveright{\cdot \int_{0}^{\sgn(s)\infty}\frac{\ee^{\im t\xi}\cut(s+t)}{(\im(s+t))^{\rho+\kappa}(1+\im(s+t))}\frac{\ee^{\Phi(\im(s+t))}}{\ee^{\Phi(\im s)}}\dt\ds}\\*
  \shoveleft{\phantom{\Qo_{2,2,2,1}^{<}{}={}}-\frac{\widehat{D}(q,\xi)}{2\pi}\int_{-\infty}^{\infty}\Bigl(\ee^{\im s(\xi-(q+1))}-1\Bigr)(\im s)^{1+\rho+\kappa}\bigl(\Phi'(\im s)\bigr)^{2}(1-\coup(s))\cdot}\\*
  \shoveright{\cdot \int_{0}^{\sgn(s)\infty}\del_{s}\biggl(\frac{\ee^{\im t\xi}\cut(s+t)}{(\im(s+t))^{\rho+\kappa}(1+\im(s+t))}\biggr)\frac{\ee^{\Phi(\im(s+t))}}{\ee^{\Phi(\im s)}}\dt\ds}\\*
  \shoveleft{\phantom{\Qo_{2,2,2,1}^{<}{}={}}-\frac{\im \widehat{D}(q,\xi)}{2\pi}\int_{-\infty}^{\infty}\Bigl(\ee^{\im s(\xi-(q+1))}-1\Bigr)(\im s)^{1+\rho+\kappa}\bigl(\Phi'(\im s)\bigr)^{2}(1-\coup(s))\cdot}\\*
  \cdot \int_{0}^{\sgn(s)\infty}\frac{\ee^{\im t\xi}\cut(s+t)}{(\im(s+t))^{\rho+\kappa}(1+\im(s+t))}\Bigl(\Phi'(\im(s+t))-\Phi'(\im s)\Bigr)\frac{\ee^{\Phi(\im(s+t))}}{\ee^{\Phi(\im s)}}\dt\ds.
 \end{multline*}
To estimate the terms on the right-hand side we take \cref{Lem:analyticity:Phi,Lem:reg:exponetial:decay,Lem:help:exponential:regularising} into account to obtain
\begin{equation*}
 \begin{split}
  &\phantom{{}\leq{}}\abs*{\del_{s}\Bigl((\im s)^{1+\rho+\kappa}\bigl(\Phi'(\im s)\bigr)^{2}(1-\coup(s))\Bigr)\int_{0}^{\sgn(s)\infty}\frac{\ee^{\im t\xi}\cut(s+t)}{(\im(s+t))^{\rho+\kappa}(1+\im(s+t))}\frac{\ee^{\Phi(\im(s+t))}}{\ee^{\Phi(\im s)}}\dt}\\
  &\leq C\abs{s}^{\rho-2\alpha-2+\kappa}\Bigl(\eps\abs{s}^{1+\alpha-\rho-\kappa}+\eps^2\ee^{-\frac{B\eps}{\abs{s}^{\alpha}}}\Bigr)\chi_{\{\abs{s}\leq 1\}}\leq C\Bigl(\eps\abs{s}^{-\alpha-1}+\abs{s}^{\rho-2+\kappa}\Bigr)\chi_{\{\abs{s}\leq 1\}}
 \end{split}
\end{equation*}
and
\begin{equation*}
 \begin{split}
  &\phantom{{}\leq{}} \abs*{(\im s)^{1+\rho+\kappa}\bigl(\Phi'(\im s)\bigr)^{2}(1-\coup(s))\int_{0}^{\sgn(s)\infty}\del_{s}\biggl(\frac{\ee^{\im t\xi}\cut(s+t)}{(\im(s+t))^{\rho+\kappa}(1+\im(s+t))}\biggr)\frac{\ee^{\Phi(\im(s+t))}}{\ee^{\Phi(\im s)}}\dt}\\
  &\leq C\abs{s}^{\rho-2\alpha-1+\kappa}\Bigl(\eps\abs{s}^{\alpha-\rho-\kappa}+\eps^2\abs{s}^{-\rho-\kappa}\ee^{-\frac{B\eps}{\abs{s}^{\alpha}}}\Bigr)\chi_{\{\abs{s}\leq 1\}}\leq C\Bigl(\eps\abs{s}^{-\alpha-1}+\abs{s}^{\rho-2+\kappa}\Bigr)\chi_{\{\abs{s}\leq 1\}}
 \end{split}
\end{equation*}
and
\begin{equation*}
 \begin{split}
  &\phantom{{}\leq{}} \abs*{(\im s)^{1+\rho+\kappa}\bigl(\Phi'(\im s)\bigr)^{2}(1-\coup(s))\int_{0}^{\sgn(s)\infty}\frac{\ee^{\im t\xi}\cut(s+t)(\Phi'(\im(s+t))-\Phi'(\im s))}{(\im(s+t))^{\rho+\kappa}(1+\im(s+t))}\frac{\ee^{\Phi(\im(s+t))}}{\ee^{\Phi(\im s)}}\dt}\\
  &\leq C\abs{s}^{\rho-2\alpha-1+\kappa}\Bigl(\eps\abs{s}^{\alpha-\rho-\kappa}+\eps^3\abs{s}^{-1-\alpha}\ee^{-\frac{B\eps}{\abs{s}^{\alpha}}}\Bigr)\chi_{\{\abs{s}\leq 1\}}\leq C\Bigl(\eps\abs{s}^{-\alpha-1}+\abs{s}^{\rho-2+\kappa}\Bigr)\chi_{\{\abs{s}\leq 1\}}.
 \end{split}
\end{equation*}
We then take the parameter $a_{*}\in(\max\{1-\rho+\delta,\alpha\},\theta)$ to obtain together with $\widehat{D}(q,\xi)=((q+1)(q+2)(\xi-(q+1)))^{-1}$ that
\begin{equation*}
 \abs*{\Qo_{2,2,2,1}^{<}}\leq C\frac{\abs*{\xi-(q+1)}^{a_{*}-1}}{(q+1)^2}\int_{-1}^{1}\eps\abs{s}^{a_{*}-1-\alpha}+\abs*{s}^{a_{*}+\rho-2+\kappa}\ds\leq C\frac{\abs*{\xi-(q+1)}^{a_{*}-1}}{(q+1)^2}.
\end{equation*}
Lemma~\ref{Lem:contribution:small:xi} then shows that the contribution to $V(q)$ can be estimated as desired.

\subsection{Contribution of $\U_{2,2}(y,q)$ for $\xi>q+2$}

We consider the expression $\Qo^{>}_{2,2,R}\vcc=(2\pi\im)^{-1}\int_{-\im R}^{\im R}\ee^{y\xi}\U_{2,2}(y,q)\dy$ and use the partition of unity, $1=\coup+(1-\coup)$ to split this as $\Qo^{>}_{2,2,R}=\Qo^{>}_{2,2,1,R}+\Qo^{>}_{2,2,2,R}$ where we also change to real variables such that the terms on the right-hand side read as
\begin{equation*}
 \begin{split}
  \Qo^{>}_{2,2,1,R}&\vcc=\frac{\im}{2\pi(q+1)(q+2)}\int_{-R}^{R}\frac{\ee^{\im t\xi}\coup(t)\ee^{\Phi(\im t)}}{(\im t)^{\rho+\kappa}(1+\im t)}\int_{0}^{t}\ee^{-(q+1)\im s}(\im s)^{1+\rho+\kappa}\bigl(\Phi'(\im s)\bigr)^2\ee^{-\Phi(\im s)}\ds\dt,\\
  \Qo^{>}_{2,2,2,R}&\vcc=\frac{\im}{2\pi(q+1)(q+2)}\int_{-R}^{R}\frac{\ee^{\im t\xi}(1-\coup(t))}{(\im t)^{\rho+\kappa}(1+\im t)}\int_{0}^{t}\ee^{-(q+1)\im s}(\im s)^{1+\rho+\kappa}\bigl(\Phi'(\im s)\bigr)^2\frac{\ee^{\Phi(\im t)}}{\ee^{\Phi(\im s)}}\ds\dt.
 \end{split}
\end{equation*}
In the expression $\Qo^{>}_{2,2,1,R}$ we can use the relation $\ee^{\im t\xi}=(\im\xi)^{-1}\del_{t}(\ee^{\im t\xi})$ and due to the cut-off $\coup$ we may thus integrate by parts to obtain
\begin{equation*}
 \begin{split}
  \Qo^{>}_{2,2,1,R}&=\frac{1}{2\pi(q+1)(q+2)\xi}\biggl(\ee^{\im t\xi}\frac{\coup(t)}{(\im t)^{\rho+\kappa}(1+\im t)}\ee^{\Phi(\im t)}\int_{0}^{t}(\cdots)\ds\biggr)\bigg|_{t=-R}^{t=R}\\
  &\qquad -\frac{1}{2\pi(q+1)(q+2)\xi}\int_{-R}^{R}\ee^{\im t\xi}\del_{t}\biggl(\frac{\coup(t)}{(\im t)^{\rho+\kappa}(1+\im t)}\ee^{\Phi(\im t)}\int_{0}^{t}(\cdots)\ds\dt\\
  &\qquad -\frac{\im}{2\pi(q+1)(q+2)\xi}\int_{-R}^{R}\ee^{\im t(\xi-(q+1))}\frac{\coup(t)}{(1+\im t)}t\bigl(\Phi'(\im t)\bigr)^2\dt\\
  &\qquad -\frac{\im}{2\pi(q+1)(q+2)\xi}\int_{-R}^{R}\ee^{\im t\xi}\frac{\coup(t)\Phi'(\im t)}{(\im t)^{\rho+\kappa}(1+\im t)}\ee^{\Phi(\im t)}\int_{0}^{t}(\cdots)\ds\dt\\
  &=(I)+(II)+(III)+(IV).
 \end{split}
\end{equation*}
To estimate the right-hand side, we first note that Lemma~\ref{Lem:analyticity:Phi} yields
\begin{equation*}
 \abs*{\ee^{-(q+1)\im s}(\im s)^{1+\rho+\kappa}\bigl(\Phi'(\im s)\bigr)^2\ee^{-\Phi(\im s)}}\leq C\eps^2\abs{s}^{\rho+2\alpha-1+\kappa}\quad \text{if }\abs{s}\geq 1.
\end{equation*}
Moreover, in addition with \cref{Lem:reg:exponetial:decay,Lem:help:exponential:regularising} we get
\begin{equation*}
 \abs*{\ee^{-(q+1)\im s}(\im s)^{1+\rho+\kappa}\bigl(\Phi'(\im s)\bigr)^2\ee^{-\Phi(\im s)}}\leq C\eps^2\abs{s}^{\rho-2\alpha-1+\kappa}\ee^{-\frac{d\eps}{\abs{s}^{\alpha}}}\leq C\abs{s}^{\rho-1+\kappa}\quad \text{if }\abs{s}\leq 1.
\end{equation*}
Thus, we immediately conclude that
\begin{equation*}
 \abs*{\int_{0}^{t}\ee^{-(q+1)\im s}(\im s)^{1+\rho+\kappa}\bigl(\Phi'(\im s)\bigr)^2\ee^{-\Phi(\im s)}\ds}\leq C\abs*{t}^{\rho+2\alpha+\kappa}\quad \text{if }t\geq 1/2.
\end{equation*}
With this estimate we then obtain for the expressions $(I)$--$(III)$ that
\begin{equation*}
 \abs*{(I)}\leq C(q+1)^{-2}\xi^{-1}R^{2\alpha-1}\longrightarrow 0\quad \text{if }R\longrightarrow \infty,
\end{equation*}
while we also use $\alpha<1/2$ here. Similarly, estimating by the most dominant terms, we obtain
\begin{equation*}
 \begin{split}
  \abs*{(II)}&\leq C(q+1)^{-2}\xi^{-1}\int_{-\infty}^{\infty}\abs{s}^{2\alpha-2}\chi_{\{\abs{t}\geq 1/2\}}\dt\leq C(q+1)^{-2}\xi^{-1},\\
  \abs*{(III)}&\leq C(q+1)^{-2}\xi^{-1}\int_{-\infty}^{\infty}\abs*{t}^{2\alpha-2}\chi_{\{\abs{t}\geq 1/2\}}\dt\leq C(q+1)^{-2}\xi^{-1}.
 \end{split}
\end{equation*}
The integrand in $(IV)$ can be estimated in the same fashion and we obtain
\begin{equation*}
 \abs*{\ee^{\im t\xi}\frac{\coup(t)\Phi'(\im t)}{(\im t)^{\rho+\kappa}(1+\im t)}\ee^{\Phi(\im t)}\int_{0}^{t}(\cdots)\ds}\leq C(q+1)^{-2}\xi^{-1}\abs{t}^{3\alpha-2}\chi_{\{\abs{t}\geq 1/2\}}.
\end{equation*}
For $\alpha<1/3$ we can proceed as before and obtain the same bound as for $(II)$ and $(III)$. However, if $\alpha\geq 1/3$ the right-hand side is not integrable over $\R$ such that we not yet obtain a uniform estimate with respect to $R$. Instead, we have to iterate the previous procedure on the expression $(IV)$. More precisely, we use the relation $\Phi'(\im t)=-\eps (\im t)^{-1}\beta_{W}(\im t)\ee^{-\im t}$ which allows to rewrite
\begin{equation*}
 (IV)=\frac{\eps}{2\pi(q+1)(q+2)\xi(\xi-1)}\int_{-R}^{R}\del_{t}\Bigl(\ee^{\im t(\xi-1)}-1\Bigr)\frac{\coup(t)\beta_{W}(\im t)}{(\im t)^{1+\rho+\kappa}(1+\im t)}\ee^{\Phi(\im t)}\int_{0}^{t}(\cdots)\ds\dt.
\end{equation*}
Then, proceeding as before, i.e.\@ integrating by parts and estimating the different terms one can conclude that we can take the limit $R\to\infty$ to obtain that
\begin{equation*}
 \abs*{(IV)}\leq C(q+1)^{-2}\xi^{-1}(\xi-1)^{-1}\leq C(q+1)^{-2}\xi^{-1}\quad \text{for }\xi>q+2.
\end{equation*}
The contribution to $V(q)$ is then controlled by Lemma~\ref{Lem:contribution:large:xi}.

We conclude the proof by estimating the contribution stemming from $\Qo^{>}_{2,2,2,R}$ and we note that due to the fact that $1-\coup$ is supported only in the region close to zero, we may directly pass to the limit $R\to\infty$ and just consider $\Qo^{>}_{2,2,2}\vcc=\lim_{R\to\infty}\Qo^{>}_{2,2,2,R}$. We then rewrite $\Qo^{>}_{2,2,2}$ by applying Fubini's Theorem and changing variables $t\mapsto s+t$ which yields
\begin{multline*}
 \Qo^{>}_{2,2,2}=\frac{1}{2\pi(q+1)(q+2)(\xi-(q+1))}\int_{\infty}^{\infty}\del_{s}\Bigl(\ee^{\im (\xi-(q+1))}-1\Bigr)(\im s)^{1+\rho+\kappa}\bigl(\Phi'(\im s)\bigr)^{2}\cdot\\*
 \cdot \int_{0}^{\sgn(s)\infty}\frac{\ee^{\im t\xi}(1-\coup(s+t))}{(\im(s+t))^{\rho+\kappa}(1+\im(s+t))}\frac{\ee^{\Phi(\im (s+t))}}{\ee^{\Phi(\im s)}}\dt\ds.
\end{multline*}
We use the abbreviation $\widehat{D}(q,\xi)=((q+1)(q+2)(\xi-(q+1)))^{-1}$ and integrate by parts to find
\begin{multline*}
  \Qo^{>}_{2,2,2}=-\frac{\widehat{D}(q,\xi)}{2\pi}\int_{\infty}^{\infty}\Bigl(\ee^{\im (\xi-(q+1))}-1\Bigr)\del_{s}\Bigl((\im s)^{1+\rho+\kappa}\bigl(\Phi'(\im s)\bigr)^{2}\Bigr)\cdot\\
\shoveright{\cdot \int_{0}^{\sgn(s)\infty}\frac{\ee^{\im t\xi}(1-\coup(s+t))}{(\im(s+t))^{\rho+\kappa}(1+\im(s+t))}\frac{\ee^{\Phi(\im (s+t))}}{\ee^{\Phi(\im s)}}\dt\ds}\\*
\shoveleft{\phantom{\Qo^{>}_{2,2,2}{}={}} -\frac{\widehat{D}(q,\xi)}{2\pi}\int_{\infty}^{\infty}\Bigl(\ee^{\im (\xi-(q+1))}-1\Bigr)(\im s)^{1+\rho+\kappa}\bigl(\Phi'(\im s)\bigr)^{2}\cdot}\\*
\shoveright{ \cdot \int_{0}^{\sgn(s)\infty}\del_{s}\biggl(\frac{\ee^{\im t\xi}(1-\coup(s+t))}{(\im(s+t))^{\rho+\kappa}(1+\im(s+t))}\biggr)\frac{\ee^{\Phi(\im (s+t))}}{\ee^{\Phi(\im s)}}\dt\ds}\\
\shoveleft{\phantom{\Qo^{>}_{2,2,2}{}={}} -\frac{\im \widehat{D}(q,\xi)}{2\pi}\int_{\infty}^{\infty}\Bigl(\ee^{\im (\xi-(q+1))}-1\Bigr)(\im s)^{1+\rho+\kappa}\bigl(\Phi'(\im s)\bigr)^{2}\cdot}\\*
 \cdot \int_{0}^{\sgn(s)\infty}\frac{\ee^{\im t\xi}(1-\coup(s+t))}{(\im(s+t))^{\rho+\kappa}(1+\im(s+t))}\Bigl(\Phi'(\im (s+t))-\Phi'(\im s)\Bigr)\frac{\ee^{\Phi(\im (s+t))}}{\ee^{\Phi(\im s)}}\dt\ds.
 \end{multline*}
To bound the right-hand side we note that $1-\coup$ and $\cut$ have the same qualitative behaviour at zero. Therefore, we can proceed in exactly the same way as in Section~\ref{Sec:W222:small} to deduce with \cref{Lem:analyticity:Phi,Lem:gain:of:alpha:coup,Lem:help:exponential:regularising} that it holds
\begin{equation*}
 \abs*{\Qo^{>}_{2,2,2}}\leq \frac{C}{(q+1)^2(\xi-(q+1))^{1-a}}\int_{-1}^{1}\eps\abs*{s}^{a_{*}-\alpha-1}+\abs{s}^{a_{*}+\rho-2+\kappa}\ds\leq \frac{C}{(q+1)^2(\xi-(q+1))^{1-a_{*}}}
\end{equation*}
for $a_{*}\in(\max\{\alpha,1-\rho+\delta\},\theta\}$. The contribution to the integral $V(q)$ can then be estimated in the required fashion due to Lemma~\ref{Lem:contribution:large:xi}.
\end{proof}

\section{Asymptotic behaviour of several auxiliary functions}\label{Sec:asymptotics}

In this section, we will collect certain asymptotic properties of the functions $\beta_{W}(\cdot,f)$ and $\Phi(\cdot ,f)$ which have been defined in~\cref{eq:def:beta:and:R,eq:def:Phi} and where $f$ is a self-similar profile, i.e.\@ a solution to~\eqref{eq:self:sim}. It will turn out, that most of the properties that we will derive depend mainly on the existence the moment of order $\alpha$ of $f$ and on the asymptotic behaviour of the kernel $W$ as given by~\eqref{eq:Ass:asymptotic}.

\subsection{Bounds on moments}
 
 In this first subsection we will collect several properties and uniform bounds for certain moments $\mom_{\gamma}$, as given by~\eqref{eq:def:moment}, of self-similar profiles, which will be needed in the following.
 
 \begin{lemma}\label{Lem:moment:representation}
  Let $\gamma\in[0,\rho)$. Then the relation 
  \begin{equation*}
   \int_{0}^{\infty}x^{\gamma} f(x)\dx=-\frac{1}{\Gamma(1-\gamma)}\int_{0}^{\infty}\xi^{-\gamma}(\T f)'(\xi)\dxi
  \end{equation*}
 holds for each solution $f$ of~\eqref{eq:self:sim}.
 \end{lemma}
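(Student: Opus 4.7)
The plan is to derive this identity from the classical beta-type integral representation
\begin{equation*}
x^{\gamma-1}=\frac{1}{\Gamma(1-\gamma)}\int_{0}^{\infty}\xi^{-\gamma}\ee^{-\xi x}\dxi,\qquad x>0,\ \gamma<1,
\end{equation*}
which follows at once from the definition $\Gamma(1-\gamma)=\int_{0}^{\infty}u^{-\gamma}\ee^{-u}\du$ by the substitution $u=\xi x$. Multiplying this identity by $xf(x)$ and integrating in $x$, one formally obtains
\begin{equation*}
\int_{0}^{\infty}x^{\gamma}f(x)\dx=\frac{1}{\Gamma(1-\gamma)}\int_{0}^{\infty}\int_{0}^{\infty}xf(x)\xi^{-\gamma}\ee^{-\xi x}\dxi\dx,
\end{equation*}
and, after interchanging the order of integration and recognising $-(\T f)'(\xi)=\int_{0}^{\infty}xf(x)\ee^{-\xi x}\dx$ (see \eqref{eq:des:Lap:trans:der}), this gives exactly the claimed representation.

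The only substantive step is to justify the interchange of the two integrations, and since $f\geq 0$ the integrand is non-negative, so Tonelli's theorem applies. It then suffices to check that one of the iterated integrals is finite. For $\gamma\in[0,\rho)$ and sufficiently small $\eps>0$, Lemma~\ref{Lem:moment:est:2} gives $\int_{0}^{\infty}x^{\gamma}f(x)\dx<\infty$, so the iterated integral in the order $\dxi\dx$ equals $\Gamma(1-\gamma)\int_{0}^{\infty}x^{\gamma}f(x)\dx<\infty$, and Tonelli's theorem delivers the identity.

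I do not expect any genuine obstacle: the asymptotics $(\T f)'(\xi)\sim-\rho^{2}\xi^{\rho-1}$ as $\xi\to 0$ from Remark~\ref{Rem:Q:asymptotics} guarantee integrability of $\xi^{-\gamma}(\T f)'(\xi)$ near zero precisely because $\gamma<\rho$, and the bound $|(\T f)'(\xi)|\leq\int_{0}^{\infty}xf(x)\ee^{-\xi x}\dx$ combined with the moment bound is exactly what makes the Fubini argument close. The boundary case $\gamma=0$ can either be verified directly (both sides equal $(\T f)(0)=\int_{0}^{\infty}f(x)\dx$, using $(\T f)(\xi)\to 0$ as $\xi\to\infty$ by Lemma~\ref{Lem:improvement:zero:Laplace}) or absorbed into the general argument by noting that the Tonelli computation above remains valid for $\gamma=0$.
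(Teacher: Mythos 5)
Your proof is correct and uses essentially the same idea as the paper: the Gamma-function integral representation of $x^{\gamma-1}$ followed by an interchange of integration. The only minor difference is technical — the paper first inserts a regularizing parameter $q>0$, applies Fubini there, and then passes to the limit $q\to 0$ by dominated convergence, whereas you apply Tonelli directly at $q=0$ using non-negativity of $f$ and the finite moment from Lemma~\ref{Lem:moment:est:2}; both justifications are sound.
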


 \begin{proof}
  We first note that it holds $x^{\gamma-1}=(\Gamma(1-\gamma))^{-1}\int_{0}^{\infty}\xi^{-\gamma}\ee^{-\xi x}\dxi$. Together with Fubini's Theorem this yields 
  \begin{equation*}
   \int_{0}^{\infty}x^{\gamma} f(x)\ee^{-qx}\dx=\int_{0}^{\infty} x^{\gamma-1}xf(x)\ee^{-qx}\dx=-\int_{0}^{\infty}\xi^{-\gamma}(\T f)'(q+\xi)\dxi.
  \end{equation*}
 Due to Lebesgue's Theorem, we can take the limit $q\to 0$ which gives the claim.
 \end{proof}
 
 As an immediate consequence of the preceding lemma, we obtain uniform convergence of moments.
 
 \begin{lemma}\label{Lem:moments:convergence}
  For each $\gamma\in[0,\rho)$ and any $\delta>0$ it holds for sufficiently small $\eps>0$ that
  \begin{equation*}
   \abs*{\mom_{\gamma}-\bar{\mom}_{\gamma}}\leq \delta
  \end{equation*}
 for every solution $f$ of~\eqref{eq:self:sim}.
 \end{lemma}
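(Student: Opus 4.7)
The plan is to apply the integral representation of moments from Lemma~\ref{Lem:moment:representation} to both $f$ and $\bar{f}$, form the difference, and then control the resulting integral using the norm convergence of $\T f$ towards $\T\bar{f}$ already established in Lemma~\ref{Lem:convergence:one:norm} (or equivalently in Proposition~\ref{Prop:closeness:two:norm}).

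More precisely, subtracting the two representations we obtain
\begin{equation*}
 \mom_{\gamma}-\bar{\mom}_{\gamma}=-\frac{1}{\Gamma(1-\gamma)}\int_{0}^{\infty}\xi^{-\gamma}\bigl(\T(f-\bar{f})\bigr)'(\xi)\dxi.
\end{equation*}
By the definition of the seminorm $\lsnorm{1}{\mu}{\theta}{\cdot}$, for any fixed $\mu\in(0,\mu_{*})$ we have the pointwise estimate
\begin{equation*}
 \abs[\big]{\bigl(\T(f-\bar{f})\bigr)'(\xi)}\leq \lsnorm{1}{\mu}{\theta}{\T(f-\bar{f})}\,\xi^{\rho+\mu-1}(1+\xi)^{-\theta-\rho-\mu},
\end{equation*}
so that
\begin{equation*}
 \abs*{\mom_{\gamma}-\bar{\mom}_{\gamma}}\leq \frac{\lsnorm{1}{\mu}{\theta}{\T(f-\bar{f})}}{\Gamma(1-\gamma)}\int_{0}^{\infty}\xi^{\rho+\mu-\gamma-1}(1+\xi)^{-\theta-\rho-\mu}\dxi.
\end{equation*}

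The next step is to verify that the remaining integral is finite and independent of $\eps$. Near zero the integrand behaves like $\xi^{\rho+\mu-\gamma-1}$ with exponent $>-1$, because $\gamma<\rho$ and $\mu\geq 0$; near infinity it behaves like $\xi^{-\theta-\gamma-1}$ with exponent $<-1$, because $\theta>0$ and $\gamma\geq 0$. Hence the integral is a finite constant $C(\gamma,\mu,\theta,\rho)$ that does not depend on the particular self-similar profile $f$ nor on $\eps$.

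Finally, Lemma~\ref{Lem:convergence:one:norm} (applied for the fixed choice of $\mu\in(0,\mu_{*})$ above) ensures that, given $\delta>0$, one can pick $\eps>0$ small enough so that $\lsnorm{1}{\mu}{\theta}{\T(f-\bar{f})}\leq \delta\,\Gamma(1-\gamma)/C(\gamma,\mu,\theta,\rho)$ uniformly in $f$, which gives the desired bound. There is no essential obstacle; the only thing to watch is the choice of $\mu>0$ so that the power $\xi^{\rho+\mu-\gamma-1}$ is integrable at the origin even when $\gamma$ is close to $\rho$, which is automatic since we work with strict inequality $\gamma<\rho$.
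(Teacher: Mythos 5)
Your proposal is correct and follows essentially the same route as the paper's proof: apply Lemma~\ref{Lem:moment:representation}, bound $\abs{(\T(f-\bar{f}))'(\xi)}$ by the weight defining the seminorm, check the resulting weight integral is finite, and conclude via the uniform norm convergence of $\T(f-\bar f)$. The only cosmetic difference is that you work with $\mu\in(0,\mu_*)$ and invoke Lemma~\ref{Lem:convergence:one:norm}, whereas the paper simply takes $\mu=0$ and cites Proposition~\ref{Prop:closeness:two:norm}; both choices make the integral converge and both give the needed smallness.
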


 \begin{proof}
  Lemma~\ref{Lem:moment:representation} allows to rewrite and estimate $\abs*{\mom_{\gamma}-\bar{\mom}_{\gamma}}\leq C\int_{0}^{\infty}\xi^{-\gamma}\abs[\big]{\bigl(\T(f-\bar{f})\bigr)'(\xi)}\dxi$. From this, we deduce
  \begin{equation*}
   \abs*{\mom_{\gamma}-\bar{\mom}_{\gamma}}\leq C\lsnorm*{1}{0}{\theta}{\T(f-\bar{f})}\int_{0}^{\infty}\xi^{\rho-1-\gamma}(\xi+1)^{-\theta-\rho}\dxi\leq C\lsnorm*{1}{0}{\theta}{\T(f-\bar{f})}.
  \end{equation*}
 The claim then follows from Proposition~\ref{Prop:closeness:two:norm}.
 \end{proof}

 Moreover, we can bound $\mom_{\alpha}$ uniformly from below.
 
 \begin{lemma}\label{Lem:mom:lower:bd}
  For any $\alpha\in(0,\rho)$ there exists a constant $D>0$ such that it holds for sufficiently small $\eps>0$ that $\mom_{\alpha}\geq D$ for each solution $f$ of~\eqref{eq:self:sim}.
 \end{lemma}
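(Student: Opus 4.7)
The plan is short and relies essentially on the convergence of moments established in Lemma~\ref{Lem:moments:convergence}. Since $\alpha\in(0,\rho)$, this lemma applies with $\gamma=\alpha$ and tells us that $\mom_{\alpha}$ converges to $\bar{\mom}_{\alpha}$ uniformly over the class of self-similar profiles as $\eps\to 0$.

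First, I would show that $\bar{\mom}_{\alpha}>0$. This is immediate from the explicit description of $\bar{f}$: from Section~\ref{Sec:constant:kernel} we know that $\bar{f}$ is strictly positive on $(0,\infty)$ (for instance from its asymptotics $\bar{f}(x)\sim \frac{\rho^{2}}{\Gamma(2-\rho)}x^{\rho-1}$ as $x\to 0$ and $\bar{f}(x)\sim \frac{\rho^{2}}{\Gamma(1-\rho)}x^{-1-\rho}$ as $x\to\infty$, together with continuity and non-negativity). Since $x^{\alpha}\bar{f}(x)$ is thus strictly positive and locally integrable, and its integral converges at both endpoints thanks to $\alpha\in(0,\rho)$, we obtain $\bar{\mom}_{\alpha}\in(0,\infty)$.

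Then set $D\vcentcolon= \bar{\mom}_{\alpha}/2>0$ and apply Lemma~\ref{Lem:moments:convergence} with $\delta\vcentcolon= D$. This yields an $\eps_{D}>0$ such that $\abs*{\mom_{\alpha}-\bar{\mom}_{\alpha}}\leq D$ for every solution $f$ of~\eqref{eq:self:sim}, provided $\eps\leq \eps_{D}$. Hence $\mom_{\alpha}\geq \bar{\mom}_{\alpha}-D=D$, which is the claim.

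There is no real obstacle here; the statement is essentially a corollary of Lemma~\ref{Lem:moments:convergence} together with the strict positivity of the explicit unperturbed profile $\bar{f}$. The only mild point to make rigorous is that $\bar{\mom}_{\alpha}$ is indeed a strictly positive finite number, which follows directly from the explicit asymptotics recalled in Section~\ref{Sec:constant:kernel}.
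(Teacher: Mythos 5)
Your proof is correct and follows the same route as the paper's: apply Lemma~\ref{Lem:moments:convergence} with $\gamma=\alpha$, note that $\bar{\mom}_{\alpha}>0$, and take $D=\bar{\mom}_{\alpha}/2$. The extra justification you give that $\bar{\mom}_{\alpha}$ is a strictly positive finite number (from the explicit asymptotics of $\bar f$) is a reasonable elaboration of a point the paper leaves implicit.
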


 \begin{proof}
  The claim follows immediately from Lemma~\ref{Lem:moments:convergence} since $\bar{\mom}_{\alpha}>0$, i.e.\@ one can choose for example $D=\bar{\mom}_{\alpha}/2$.
 \end{proof}
 
 \subsection{Asymptotic behaviour of $\beta_{W}$ and $\Phi$}
 
 \begin{lemma}\label{Lem:asymptotics:beta}
  For each solution $f$ of~\eqref{eq:self:sim} the function $\beta_{W}(\cdot,f)$ as defined in~\eqref{eq:def:beta:and:R} satisfies
  \begin{equation*}
   \beta_{W}(x,f)\sim C_{W}\mom_{\alpha}x^{-\alpha}\quad \text{on }\Ccut \text{ for  }x\longrightarrow 0.
  \end{equation*}
 Note that the precise meaning of $\sim$ is given in~\eqref{eq:def:asymptotics:complex:plane}.
 \end{lemma}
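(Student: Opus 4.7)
The plan is to use the homogeneity of $W$ to write, for $x\in\Ccut$,
\begin{equation*}
 \beta_{W}(x,f)=\int_{0}^{\infty}W(x/y,1)\,f(y)\dy,
\end{equation*}
and then to isolate the conjectured leading term by forming
\begin{equation*}
 x^{\alpha}\beta_{W}(x,f)-C_{W}\mom_{\alpha}=\int_{0}^{\infty}\Bigl[(x/y)^{\alpha}W(x/y,1)-C_{W}\Bigr]\,y^{\alpha}f(y)\dy.
\end{equation*}
The goal is then to exhibit a function $\tau_{\beta}$ satisfying the uniform circle condition~\eqref{eq:def:asymptotics:complex:plane} that dominates the right-hand side.

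First I would split the integration at $y=\abs{x}/\delta$ for a small auxiliary parameter $\delta>0$. On the ``outer'' region $\{y>\abs{x}/\delta\}$ the ratio $x/y$ has modulus strictly less than $\delta$, so the asymptotic assumption~\eqref{eq:Ass:asymptotic} provides (after replacing the given $\tau$ by a non-decreasing envelope $\tilde\tau(\delta)\vcc=\sup_{0<s\le\delta}\sup_{\abs{\xi}=s,\,\xi\in\Ccut}\tau(\xi)$, which is finite thanks to the growth bound~\eqref{eq:Ass:growth} and satisfies $\tilde\tau(\delta)\to 0$ as $\delta\to 0$) the pointwise estimate
\begin{equation*}
 \bigl|(x/y)^{\alpha}W(x/y,1)-C_{W}\bigr|\leq\tilde\tau(\delta).
\end{equation*}
The contribution of this region is thus at most $\tilde\tau(\delta)\,\mom_{\alpha}$, and by Lemma~\ref{Lem:moment:est:2} the moment $\mom_{\alpha}$ is uniformly bounded.

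On the ``inner'' region $\{y\le\abs{x}/\delta\}$ I would use the growth bound~\eqref{eq:Ass:growth} on $W$, which yields $\bigl|(x/y)^{\alpha}W(x/y,1)-C_{W}\bigr|\leq C\bigl(1+\abs{x/y}^{2\alpha}\bigr)$, so the contribution is controlled by
\begin{equation*}
 C\int_{0}^{\abs{x}/\delta}y^{\alpha}f(y)\dy+C\abs{x}^{2\alpha}\int_{0}^{\abs{x}/\delta}y^{-\alpha}f(y)\dy.
\end{equation*}
The first integral tends to zero as $\abs{x}\to 0$ by absolute continuity of the integral since $\mom_{\alpha}<\infty$. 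The second term is bounded by $\abs{x}^{2\alpha}\mom_{-\alpha}$; by Lemma~\ref{Lem:moment:est:2} (applied with $\gamma=-\alpha$, admissible because $\alpha<\rho$) this moment is uniformly bounded, so the prefactor $\abs{x}^{2\alpha}$ forces this piece to zero as well. The resulting bounds depend only on $\abs{x}$ and on $\delta$, so $\delta\to 0$ first and then $\abs{x}\to 0$ produces the required function $\tau_{\beta}$.

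The main technical obstacle is less computational and more about uniformity: the asymptotic hypothesis~\eqref{eq:Ass:asymptotic} only controls $\tau$ on circles, whereas the argument needs a bound valid on the whole small disk $\{\abs{z}\le\delta\}\cap\Ccut$. Passing from a circle-supremum to a disk-supremum is the step that requires the monotone envelope $\tilde\tau$ and uses the local boundedness of $z^{\alpha}W(z,1)$ provided by~\eqref{eq:Ass:growth}. Once this is in place, everything else reduces to applying uniform moment bounds from Lemma~\ref{Lem:moment:est:2}, and the two pieces recombine into the claimed asymptotic on all of $\Ccut$.
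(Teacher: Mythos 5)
Your proof is correct and follows essentially the same strategy as the paper: rewrite $\beta_W$ via homogeneity of $W$, split the integral at $y=\abs{x}/\delta$, apply~\eqref{eq:Ass:asymptotic} on the outer region and the global growth bound on the inner region, and invoke the uniform moment estimates from Lemma~\ref{Lem:moment:est:2}. The only minor remark is that the passage from the circle-supremum in~\eqref{eq:def:asymptotics:complex:plane} to a disk-supremum is automatic --- for $\xi\in\Ccut$ with $0<\abs{\xi}\leq\delta$ one has $\tau(\xi)\leq\sup_{\abs{\zeta}=\abs{\xi},\,\zeta\in\Ccut}\tau(\zeta)$, which the limit condition already controls --- so the growth bound~\eqref{eq:Ass:growth} is not actually needed to make the envelope $\tilde\tau$ finite.
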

 
 \begin{proof}
  Due to~\eqref{eq:Ass:asymptotic}, for each $\nu>0$ there exists $\delta_{\nu}>0$ such that $\abs*{W(\xi,1)-C_{W}\xi^{-\alpha}}\leq \nu \abs*{\xi}^{-\alpha}$ for $\xi\in\Ccut$ with $\abs*{\xi}\leq \delta_{\nu}$. Moreover, by the homogeneity of $W$ the integral which defines $\beta_{W}$ can be split as
  \begin{multline*}
   x^{\alpha}\beta_{W}(x)=\int_{0}^{\abs*{x}/\delta_{\nu}}x^{\alpha}W\Bigl(\frac{x}{z},1\Bigr)f(z)\dz\\*
   +\int_{\abs*{x}/\delta_{\nu}}^{\infty}\biggl(x^{\alpha}W\Bigl(\frac{x}{z},1\Bigr)-C_{W}\Bigl(\frac{1}{z}\Bigr)^{-\alpha}\biggr)f(z)\dz+\int_{\abs*{x}/\delta_{\nu}}^{\infty}C_{W}\Bigl(\frac{1}{z}\Bigr)^{-\alpha}f(z)\dz\\*
   =\vcc(I)+(II)+(III).
  \end{multline*}
 Due to the bound on $W$ provided by~\eqref{eq:Ass:analytic} and Lemma~\ref{Lem:moment:est:2} one deduces that $\abs*{(I)}\leq \mom_{-\alpha}\abs*{x}^{2\alpha}+\frac{\mom_{0}}{\delta_{\nu}^{\alpha}}\abs*{x}^{\alpha}$. Moreover, \eqref{eq:Ass:asymptotic} together with the choice of $\delta_{\nu}$ as explained above yields $\abs*{(II)}\leq \nu\mom_{\alpha}$. Finally, an explicit computation gives $(III)=C_{W}\mom_{\alpha}-C_{W}\int_{0}^{\abs*{x}/\delta_{\nu}}z^{\alpha}f(z)\dz$ and $C_{W}\int_{0}^{\abs*{x}/\delta_{\nu}}z^{\alpha}f(z)\dz\leq C_{W}\frac{\mom_{0}}{\delta_{\nu}^{\alpha}}\abs*{x}^{\alpha}$. In summary, we thus have
 \begin{equation*}
  \abs*{x^{\alpha}\beta_{W}(x)-C_{W}\mom_{\alpha}}\leq \mom_{-\alpha}\abs*{x}^{2\alpha}+(1+C_{W})\frac{\mom_{0}}{\delta_{\nu}^{\alpha}}\abs*{x}^{\alpha}+\nu\mom_{\alpha}.
 \end{equation*}
The claim then follows, if we take first $\nu$ and then $\abs*{x}$ small.
 \end{proof}
 
 \begin{lemma}\label{Lem:asymptotics:Re:Phi}
  For any solution $f$ of~\eqref{eq:self:sim} and $\Phi=\Phi(\cdot,f)$ as defined in~\eqref{eq:def:Phi} it holds
  \begin{equation*}
   \Re\bigl(\im \Phi'(\im s)\bigr)\sim -\eps C_{W}\mom_{\alpha}\cos\Bigl(\frac{\alpha \pi}{2}\Bigr)s^{-1}\abs*{s}^{-\alpha}\quad \text{ for } s\in\R \text{ and }s\longrightarrow 0.
  \end{equation*}
 \end{lemma}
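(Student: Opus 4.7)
The starting point is the explicit formula $\Phi'(x)=-\eps\,\beta_W(x,f)\,x^{-1}\ee^{-x}$, which was already used several times in the preceding sections (it follows by differentiating the defining integral in~\eqref{eq:def:Phi}). Substituting $x=\im s$ gives $\im\Phi'(\im s)=-\eps\,\beta_W(\im s,f)\,s^{-1}\ee^{-\im s}$, so that
\begin{equation*}
 \Re\bigl(\im\Phi'(\im s)\bigr)=-\frac{\eps}{s}\Re\bigl(\beta_W(\im s,f)\,\ee^{-\im s}\bigr).
\end{equation*}
Thus the task reduces to determining the asymptotics of $\Re(\beta_W(\im s,f)\ee^{-\im s})$ as $s\to 0$ in $\R$.

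Next I would invoke Lemma~\ref{Lem:asymptotics:beta} and~\eqref{eq:def:asymptotics:complex:plane} to write $\beta_W(\im s,f)=C_W\mom_\alpha(\im s)^{-\alpha}+(\im s)^{-\alpha}r(\im s)$ with a function $r$ satisfying $|r(\im s)|\leq \tau(\im s)\to 0$ as $|s|\to 0$. Using the principal branch on $\Ccut$, for $s>0$ one has $(\im s)^{-\alpha}=s^{-\alpha}\ee^{-\im\alpha\pi/2}$, while for $s<0$ one has $(\im s)^{-\alpha}=|s|^{-\alpha}\ee^{\im\alpha\pi/2}$; in either case
\begin{equation*}
 \Re\bigl((\im s)^{-\alpha}\bigr)=|s|^{-\alpha}\cos\Bigl(\frac{\alpha\pi}{2}\Bigr),\qquad \bigl|(\im s)^{-\alpha}\bigr|=|s|^{-\alpha}.
\end{equation*}

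To conclude, I would treat the $\ee^{-\im s}$ factor via $\ee^{-\im s}=1+O(|s|)$ as $s\to 0$. Splitting
\begin{equation*}
 \beta_W(\im s)\ee^{-\im s}=C_W\mom_\alpha(\im s)^{-\alpha}+C_W\mom_\alpha(\im s)^{-\alpha}\bigl(\ee^{-\im s}-1\bigr)+(\im s)^{-\alpha}r(\im s)\ee^{-\im s},
\end{equation*}
the first term contributes $C_W\mom_\alpha|s|^{-\alpha}\cos(\alpha\pi/2)$ to the real part, while the remaining two terms are bounded by $C|s|^{1-\alpha}+\tau(\im s)|s|^{-\alpha}=o(|s|^{-\alpha})$ uniformly in the sign of $s$. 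Dividing by $-\eps/s$ (which equals $-\eps\,s^{-1}|s|^{-\alpha}\cdot|s|^{\alpha}$) one obtains
\begin{equation*}
 \Re\bigl(\im\Phi'(\im s)\bigr)=-\eps C_W\mom_\alpha\cos\Bigl(\frac{\alpha\pi}{2}\Bigr)s^{-1}|s|^{-\alpha}+o\bigl(s^{-1}|s|^{-\alpha}\bigr),
\end{equation*}
which is exactly the claimed asymptotics. Since every step is either a direct algebraic manipulation or an application of Lemma~\ref{Lem:asymptotics:beta}, no real obstacle is expected; the only point requiring a little care is the correct branch choice for $(\im s)^{-\alpha}$ on the two sides $s>0$ and $s<0$, which, however, yields the same real part and thus a uniform formula for $s\to 0$.
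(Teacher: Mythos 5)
Your proposal is correct and takes essentially the same route as the paper's (very terse) proof: both use $\Phi'(\im s)=-\eps (\im s)^{-1}\beta_{W}(\im s)\ee^{-\im s}$ together with Lemma~\ref{Lem:asymptotics:beta} and the branch formula for $(\im s)^{-\alpha}$. Your write-up simply spells out the error decomposition and the two-sided branch computation that the paper leaves implicit; as a minor remark, your formulas $(\im s)^{-\alpha}=s^{-\alpha}\ee^{-\im\alpha\pi/2}$ for $s>0$ and $(\im s)^{-\alpha}=|s|^{-\alpha}\ee^{\im\alpha\pi/2}$ for $s<0$ are the correct principal-branch values (the paper's stated phase $\exp(\im\sgn(s)\alpha\pi/2)$ has the wrong sign, but since only $\Re((\im s)^{-\alpha})=|s|^{-\alpha}\cos(\alpha\pi/2)$ enters, this is harmless).
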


 \begin{proof}
 The proof of this statement follows in a straightforward manner from Lemma~\ref{Lem:asymptotics:beta} if one uses that $\Phi'(\im s)=-\eps (\im s)^{-1}\beta_{W}(\im s)\ee^{-\im s}$ and $(\im s)^{-\alpha}=\abs*{s}^{-\alpha}\exp\left(\im \sgn(s)\frac{\alpha \pi}{2}\right)$.
 \end{proof}

\begin{lemma}\label{Lem:asymptotics:Phi}
 For any solution $f$ of~\eqref{eq:self:sim} and $\Phi=\Phi(\cdot,f)$ as defined in~\eqref{eq:def:Phi}, it holds
 \begin{equation*}
  \Phi(x)\sim \frac{C_{W}\mom_{\alpha}\eps}{\alpha}x^{-\alpha}\quad \text{for }x\in\Ccut\text{ and }x\longrightarrow 0
 \end{equation*}
with $\sim$ as explained in~\eqref{eq:def:asymptotics:complex:plane}.
\end{lemma}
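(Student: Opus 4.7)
The plan is to isolate the singular contribution near the origin by deforming the integration contour and then inserting the expansion from Lemma~\ref{Lem:asymptotics:beta}. First I would use that $\beta_{W}(y)y^{-1}\ee^{-y}$ is analytic on $\Ccut$ together with the decay provided by Lemma~\ref{Lem:analyticity:beta} to replace the horizontal contour from $x$ to $+\infty$ by the concatenation of the radial segment $\gamma_{1}=\{t\ee^{\im\theta}\;|\; t\in[\abs{x},1]\}$, where $\theta=\arg(x)$, followed by a fixed tail $\gamma_{2}$ running from $x/\abs{x}$ along the unit arc in $\Ccut$ to $1$ and then along the positive real axis to $+\infty$.

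On $\gamma_{2}$, the integrand is bounded by an integrable function whose $L^{1}$-norm is independent of $\arg(x)$ by Lemma~\ref{Lem:analyticity:beta}, so $\int_{\gamma_{2}}(\cdots)\dy=O(1)$, and after multiplication by $x^{\alpha}$ this contribution is $O(\abs{x}^{\alpha})$, which vanishes uniformly as $\abs{x}\to 0$. On $\gamma_{1}$ the substitution $y=t\ee^{\im\theta}$ yields $\dy/y=\dt/t$, and substituting $\beta_{W}(t\ee^{\im\theta})=C_{W}\mom_{\alpha}t^{-\alpha}\ee^{-\im\alpha\theta}+r(t\ee^{\im\theta})$ from Lemma~\ref{Lem:asymptotics:beta} (where $\abs{y^{\alpha}r(y)}\to 0$ uniformly on circles $\abs{y}=\sigma$ as $\sigma\to 0$), the leading part of the radial integral becomes
\[
 \eps C_{W}\mom_{\alpha}\ee^{-\im\alpha\theta}\int_{\abs{x}}^{1}t^{-1-\alpha}\ee^{-t\ee^{\im\theta}}\dt
 = \eps C_{W}\mom_{\alpha}\ee^{-\im\alpha\theta}\biggl(\frac{\abs{x}^{-\alpha}}{\alpha}+O(1)\biggr),
\]
where the $O(1)$ absorbs $\int_{\abs{x}}^{1}t^{-1-\alpha}\bigl(\ee^{-t\ee^{\im\theta}}-1\bigr)\dt$, controlled via $\abs{\ee^{-t\ee^{\im\theta}}-1}\leq Ct$ uniformly in $\theta\in(-\pi,\pi)$ for $t\in[0,1]$. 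Multiplication by $x^{\alpha}=\abs{x}^{\alpha}\ee^{\im\alpha\theta}$ then produces exactly the target constant $\eps C_{W}\mom_{\alpha}/\alpha$ plus an $O(\abs{x}^{\alpha})$ remainder.

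The remainder $\int_{\abs{x}}^{1}r(t\ee^{\im\theta})t^{-1}\ee^{-t\ee^{\im\theta}}\dt$ is handled by fixing, for each $\nu>0$, a threshold $\delta>0$ with $\abs{r(y)}\leq \nu\abs{y}^{-\alpha}$ on $\{\abs{y}\leq\delta\}\cap\Ccut$ (available from Lemma~\ref{Lem:asymptotics:beta}); splitting the integral at $\delta$ and multiplying by $x^{\alpha}$ gives a bound of the form $C\nu/\alpha + C_{\delta}\abs{x}^{\alpha}$, which is less than $2C\nu/\alpha$ once $\abs{x}$ is small enough. Assembling the three contributions yields $\abs{x^{\alpha}\Phi(x)-\eps C_{W}\mom_{\alpha}/\alpha}\to 0$ uniformly on $\{\abs{x}=\sigma\}\cap\Ccut$ as $\sigma\to 0^{+}$, which is precisely the statement in the sense of~\eqref{eq:def:asymptotics:complex:plane}.

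The main obstacle is maintaining the uniformity in $\arg(x)$ across the full range $(-\pi,\pi)$, especially as $\arg(x)$ approaches the cut. This is why I route $\gamma_{2}$ through the unit arc before continuing along the positive real axis: both the length of $\gamma_{2}$ and the pointwise bound on $\beta_{W}(y)y^{-1}\ee^{-y}$ supplied by Lemma~\ref{Lem:analyticity:beta} are then controlled independently of $\theta$, so the tail contribution behaves uniformly. A secondary technical point is verifying that the deformation from the original horizontal contour to $\gamma_{1}\cup\gamma_{2}$ is admissible; this follows from analyticity of $\beta_{W}$ in $\Ccut$ and the decay $\abs{\beta_{W}(y)\ee^{-y}/y}\leq C\ee^{-\Re(y)/2}(\abs{y}^{\alpha-1}+\abs{y}^{-\alpha-1})$, which makes the contribution from any large arc closing the two contours vanish.
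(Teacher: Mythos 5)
Your argument is correct and follows the same route the paper takes: deform the integration contour in $\Ccut$ to a radial segment plus a tail, insert the asymptotic expansion of $\beta_{W}$ from Lemma~\ref{Lem:asymptotics:beta}, peel off the $e^{-z}-1$ part using $|e^{-z}-1|\leq C|z|$, and let the remainder's $\nu$-threshold control the error. The only cosmetic differences are that you pivot at the unit circle and handle the $\nu$-dependent threshold inside the radial integral, while the paper pivots directly at $\delta_{\nu}$ and performs the algebraic split $e^{-z}=1+(e^{-z}-1)$ up front; both lead to the same uniform estimate.
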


\begin{proof}
 The proof proceeds similarly as the one for Lemma~\ref{Lem:asymptotics:beta}, therefore we will only sketch it. Due to Lemma~\ref{Lem:asymptotics:beta} for any $\nu>0$, we may fix $\delta_{\nu}>0$ such that it holds $\abs*{\beta_{W}(z)-C_{W}\mom_{\alpha}z^{-\alpha}}\leq \nu \abs*{z}^{-\alpha}$ for $\abs*{z}\leq \delta_{\nu}$, where we may assume for simplicity also that $\delta_{\nu}<1$. For $x\in\Ccut$ with $\abs*{x}\leq \delta_{\nu}$ we then deform the contour of integration as illustrated in Figure~\ref{fig:asymp}.
 
  \begin{figure}
\begin{center}
  \begin{tikzpicture}[contour/.style={postaction={decorate, decoration={markings,
mark=at position 3cm with {\arrow[line width=1pt]{>}},mark=at position 6.2cm with {\arrow[line width=1pt]{>}}
}}},
    interface/.style={postaction={draw,decorate,decoration={border,angle=45,
                    amplitude=0.3cm,segment length=2mm}}}]

\draw[->] (0,0) -- (7,0) node[below] {$\Re$};
\draw[->] (0,-2.5) -- (0,3) node[left] {$\Im$};
\draw[line width=.8pt,interface](0,0)--(-4.5,0);

\path[draw,line width=0.8pt,contour] (-1.732,1) arc(150:0:2) node[below left]{$\abs*{x}$} -- (3.5,0)node[below]{$\delta_{\nu}$};
\path[draw, dashed, line width=0.8pt, contour] (-1.732,-1) arc(-150:0:2);

\draw[black,fill=black] (-1.732,1) circle (.4ex);
\draw[black,fill=black] (-1.732,-1) circle (.4ex);
\draw[black,fill=black] (3.5,0) circle (.4ex);
\node[below] at (-2,1) {$x$};
\end{tikzpicture}
\caption{Contour connecting $x$ and $\delta_{\nu}$}
\label{fig:asymp}
 \end{center}
\end{figure}
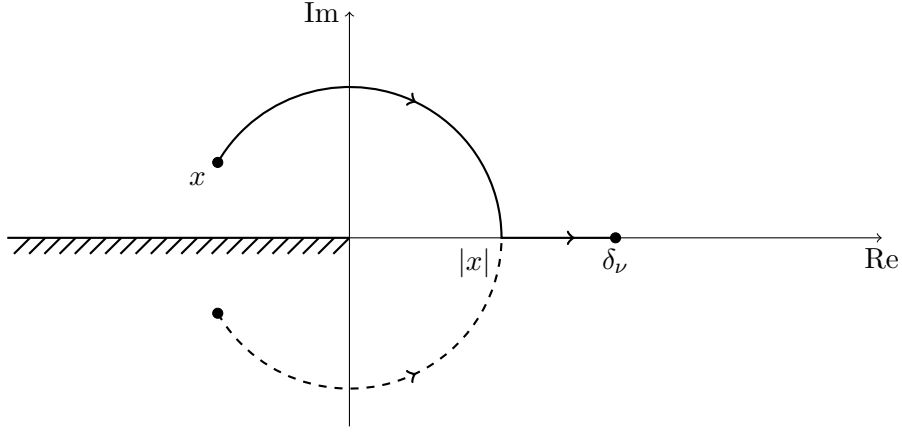
 
 This then yields the splitting of the integral which defines $\Phi$ as
 \begin{multline*}
   \Phi(x)=\eps\int_{x}^{\delta_{\nu}}C_{W}\mom_{\alpha}z^{-\alpha-1}\dz+\eps\int_{x}^{\delta_{\nu}}\frac{\beta_{W}(z)-C_{W}\mom_{\alpha}z^{-\alpha}}{z}\dz\\*
   +\eps\int_{x}^{\delta_{\nu}}\frac{\beta_{W}(z)}{z}(\ee^{-z}-1)\dz+\eps\int_{\delta_{\nu}}^{\infty}\frac{\beta_{W}(z)}{z}\ee^{-z}\dz=(I)+(II)+(III)+(IV).
 \end{multline*}
 An explicit computation shows that $(I)=\frac{C_{W}\mom_{\alpha}\eps}{\alpha}(x^{-\alpha}-\delta_{\nu}^{-\alpha})$. Next, we obtain from the choice of $\delta_{\nu}$ similarly as in the proof of Lemma~\ref{Lem:asymptotics:beta} that $\abs*{(II)}\leq C\nu \eps \abs*{x}^{-\alpha}$. To estimate $(III)$, one can use that $\abs*{1-\ee^{-z}}\leq C z$ and $\delta_{\nu}<1$. Together with Lemma~\ref{Lem:analyticity:beta} a straightforward computation then yields $\abs*{(III)}\leq C\eps$ and moreover also $\abs*{(IV)}\leq C\eps \delta_{\nu}^{-1-\alpha}$. In summary, we have
 \begin{equation*}
  \abs*{x^{\alpha}\Phi(x)-\frac{C_{W}\mom_{\alpha}\eps}{\alpha}}\leq C\eps \biggl(\delta_{\nu}^{-1-\alpha}+1+\frac{C_{W}\mom_{\alpha}}{\alpha}\delta_{\nu}^{-\alpha}\biggr)\abs*{x}^{\alpha}+C\eps \nu.
 \end{equation*}
 To conclude the proof, we first choose $\nu$ and then $\abs*{x}$ sufficiently small.
\end{proof}

\subsection{Regularity properties close to zero}

\begin{lemma}\label{Lem:reg:exponetial:decay}
 There exist constants $C,d>0$ such that it holds
 \begin{equation*}
  \exp\bigl(-\Phi(\im s)\bigr)\leq C\exp\bigl(-d\eps\abs*{s}^{-\alpha}\bigr)\quad \text{if }\abs*{s}\leq 1
 \end{equation*}
 for each solution $f$ of~\eqref{eq:self:sim} with $\Phi=\Phi(\cdot,f)$ as given by~\eqref{eq:def:Phi}. In particular we have $\exp\bigl(-\Phi(\im s)\bigr)\leq C$ for $\abs*{s}\leq 1$.
\end{lemma}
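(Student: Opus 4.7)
The plan is to extract the strict positivity of $\Re(\Phi(\im s))$ near the origin from the asymptotic formula in Lemma~\ref{Lem:asymptotics:Phi}, and then patch this small-$s$ estimate with a trivial bound on the bounded region $s_0 \leq |s| \leq 1$.

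First I would apply Lemma~\ref{Lem:asymptotics:Phi} on the ray $\{\im s : s \in \R \setminus \{0\}\} \subset \Ccut$. Writing $(\im s)^{-\alpha} = |s|^{-\alpha} \exp\bigl(-\im \sgn(s) \alpha\pi/2\bigr)$, the real part of the leading term is
\begin{equation*}
\Re\Bigl(\frac{C_W \mom_\alpha \eps}{\alpha} (\im s)^{-\alpha}\Bigr) = \frac{C_W \mom_\alpha \eps}{\alpha} \cos\Bigl(\frac{\alpha\pi}{2}\Bigr) |s|^{-\alpha}.
\end{equation*}
Since $\alpha \in (0,1)$, the cosine factor is strictly positive, $C_W > 0$ by the non-negativity of $W$ and~\eqref{eq:Ass:asymptotic}, and by Lemma~\ref{Lem:mom:lower:bd} we have $\mom_\alpha \geq D > 0$ uniformly in $f$ (for $\eps$ small enough). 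Hence the coefficient in front of $|s|^{-\alpha}$ is bounded below by $2d\eps$ for some absolute $d > 0$.

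Next, unwinding the definition of $\sim$ given in~\eqref{eq:def:asymptotics:complex:plane}, there exists $s_0 \in (0,1]$ (independent of $\eps$ once $\eps$ is small enough, because the coefficient scales linearly in $\eps$ and so does the remainder $\tau$ coming from Lemma~\ref{Lem:asymptotics:Phi}) such that for all $0 < |s| \leq s_0$,
\begin{equation*}
\Re\bigl(\Phi(\im s)\bigr) \geq d\eps |s|^{-\alpha},
\end{equation*}
and therefore $|\exp(-\Phi(\im s))| = \exp(-\Re(\Phi(\im s))) \leq \exp(-d\eps |s|^{-\alpha})$, which is exactly the desired bound on this region with constant $C = 1$.

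Finally, for the complementary region $s_0 \leq |s| \leq 1$, I would invoke the uniform bound~\eqref{eq:analyticity:Phi:est} from Lemma~\ref{Lem:analyticity:Phi}, which gives $|\Phi(\im s)| \leq C_0 \eps |s|^{-\alpha} \leq C_0 \eps s_0^{-\alpha}$. Hence $\exp(-\Phi(\im s))$ is bounded by a constant $M$ on this annulus, while simultaneously $\exp(-d\eps|s|^{-\alpha}) \geq \exp(-d\eps s_0^{-\alpha})$ is bounded away from zero. Enlarging $C$ to absorb the quotient $M / \exp(-d\eps s_0^{-\alpha})$ completes the estimate on the full range $|s| \leq 1$. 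The final assertion $\exp(-\Phi(\im s)) \leq C$ then follows because $|s|^{-\alpha} \geq 1$ on this range makes $\exp(-d\eps |s|^{-\alpha}) \leq 1$. There is no real obstacle here; the only subtlety is ensuring the threshold $s_0$ can be chosen independently of $\eps$, which works because both the leading coefficient in Lemma~\ref{Lem:asymptotics:Phi} and the remainder $\tau$ scale proportionally to $\eps$, so the ratio that controls when the asymptotic takes over is $\eps$-independent.
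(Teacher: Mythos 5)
Your proof is correct and uses essentially the same ingredients as the paper's: the asymptotics from Lemma~\ref{Lem:asymptotics:Phi} combined with the lower bound on $\mom_\alpha$ from Lemma~\ref{Lem:mom:lower:bd} to control the real part of $\Phi(\im s)$ near zero, and the uniform bound from Lemma~\ref{Lem:analyticity:Phi} to handle the remaining compact region. The paper packages this as a single estimate $\abs{\Phi(\im s)-D_\alpha\eps(\im s)^{-\alpha}}\leq \frac{D_\alpha}{2}\eps\abs{s}^{-\alpha}+C_{\delta_*}\eps$ valid for all $s\neq 0$ and then factors the exponential multiplicatively, whereas you argue piecewise on $\abs{s}\leq s_0$ and $s_0\leq\abs{s}\leq 1$; the substance and the observation that the threshold can be chosen uniformly in $\eps$ (since both the leading coefficient and the remainder in Lemma~\ref{Lem:asymptotics:Phi} scale linearly in $\eps$) are the same.
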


\begin{proof}
 It holds $\Re((\im s)^{-\alpha})=\abs*{s}^{-\alpha}\cos(\alpha\pi/2)$. We define $D_{\alpha}\vcc=\frac{C_{W}\mom_{\alpha}}{\alpha}\cos(\alpha\pi/2)$ and note that according to Lemma~\ref{Lem:asymptotics:Phi} there exists $\delta_{*}>0$ such that $\abs*{\Phi(\im s)-D_{\alpha}(\im s)^{-\alpha}}\leq \frac{D_{\alpha}}{2}\eps\abs*{s}^{-\alpha}$ for $\abs*{s}\leq \delta_{*}$. On the other hand, Lemma~\ref{Lem:analyticity:Phi} ensures that there exists a constant $C_{\delta_{*}}$ such that $\abs*{\Phi(\im s)-D_{\alpha}(\im s)^{-\alpha}}\leq C_{\delta_{*}}\eps$ if $\abs*{s}\geq \delta_{*}$. Together, we find that $\abs*{\Phi(\im s)-D_{\alpha}(\im s)^{-\alpha}}\leq \frac{D_{\alpha}}{2}\eps\abs*{s}^{-\alpha}+C_{\delta_{*}}\eps$ for all $s\neq 0$. We then use that
 \begin{equation*}
  \begin{split}
     \abs*{\exp\bigl(-\Phi(\im s)\bigr)}&=\abs*{\exp(-\Phi(\im s)+D_{\alpha}\eps (\im s)^{\alpha}\bigr)\exp\bigl(-D_{\alpha}(\im s)^{-\alpha}\bigr)}\\
     &\leq \exp\bigl(\abs*{\Phi(\im s)-D_{\alpha}\eps (\im s)^{\alpha}}\bigr)\exp\bigl(-D_{\alpha}\eps \Re((\im s)^{-\alpha})\bigr).
  \end{split}
 \end{equation*}
 This yields the estimate $\abs*{\exp(-\Phi(\im s))}\leq \exp(C_{\delta_{*}})\exp(-(D_{\alpha}/2)\abs*{s}^{-\alpha})$ and thus the claim follows with $C=\exp(C_{\delta_{*}})$ and $d=D_{\alpha}/2$ while we also note that $d>0$ due to Lemma~\ref{Lem:mom:lower:bd}.
\end{proof}

\begin{lemma}\label{Lem:help:exponential:regularising}
 For constants $B>0$ and $a,b\in\R$ such that $a+b>0$ there exists $C>0$ which only depends on $\alpha$, $B$ and $a+b$ such that it holds
 \begin{equation*}
  x^{-a}\ee^{-\frac{B\eps}{x^{\alpha}}}\leq C\eps^{-\frac{a+b}{\alpha}}x^{b}
 \end{equation*}
 for all $x>0$.
\end{lemma}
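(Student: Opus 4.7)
The plan is to reduce the inequality to the elementary fact that the map $y\mapsto y^{c}\ee^{-y}$ is bounded on $[0,\infty)$ whenever $c>0$. Multiplying both sides of the claimed inequality by $x^{a}$, it is equivalent to show
\begin{equation*}
 x^{-(a+b)}\ee^{-\frac{B\eps}{x^{\alpha}}}\leq C\eps^{-\frac{a+b}{\alpha}}\quad\text{for all }x>0,
\end{equation*}
so only the combination $c\vcc=(a+b)/\alpha$ (which by hypothesis is strictly positive) actually matters.

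The natural substitution is $y\vcc=B\eps/x^{\alpha}$, equivalently $x^{-\alpha}=y/(B\eps)$, which turns the left-hand side into
\begin{equation*}
 x^{-(a+b)}\ee^{-\frac{B\eps}{x^{\alpha}}}=\Bigl(\frac{y}{B\eps}\Bigr)^{c}\ee^{-y}=\frac{1}{B^{c}}\eps^{-c}\,y^{c}\ee^{-y}.
\end{equation*}
Since $c>0$, elementary calculus gives $\sup_{y\geq 0}y^{c}\ee^{-y}=c^{c}\ee^{-c}<\infty$, and this supremum depends only on $c=(a+b)/\alpha$. Hence the desired inequality holds with
\begin{equation*}
 C\vcc=\frac{c^{c}\ee^{-c}}{B^{c}},
\end{equation*}
which depends only on $\alpha$, $B$ and $a+b$, as required.

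There is no real obstacle here; the only thing to check is that the two negative powers balance correctly under the substitution, which the calculation above makes explicit. The statement is best thought of as a quantitative version of the trivial observation that an exponential of $-\eps/x^{\alpha}$ beats any algebraic singularity in $x$, while tracking exactly how the resulting constant scales in $\eps$.
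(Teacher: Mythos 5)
Your proof is correct and takes essentially the same route as the paper: the paper simply locates the maximizer $x_{\max}=\bigl((\alpha B\eps)/(a+b)\bigr)^{1/\alpha}$ of $x\mapsto x^{-(a+b)}\exp(-B\eps x^{-\alpha})$ by elementary calculus, which is exactly your substitution $y=B\eps/x^{\alpha}$ in disguise (the maximizer corresponds to $y=c$). Both yield the same explicit constant $C=c^{c}\ee^{-c}/B^{c}$ with $c=(a+b)/\alpha$.
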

 
 \begin{proof}
  By an elementary computation one sees that the function $x\mapsto x^{-a-b}\exp(-B\eps x^{-\alpha})$ attains its maximum at $x_{\text{max}}=\left((\alpha B\eps)/(a+b)\right)^{1/\alpha}$ which directly implies the claim.
 \end{proof}

We conclude this section with two auxiliary statements which are essential in the proof of Proposition~\ref{Prop:Q0:int:est}.

\begin{lemma}\label{Lem:gain:of:alpha:cut}
 For the function $\cut$ as given in~\eqref{eq:cutoffs} there exist constants $B,C>0$ such that it holds
 \begin{align*}
  \abs*{\int_{0}^{\sgn(s)\infty}\frac{\cut(s+t)\ee^{\im t \xi}}{(\im(s+t))^{\rho+\kappa}(1+\im(s+t))}\frac{\ee^{\Phi(\im (s+t))}}{\ee^{\Phi(\im s)}}\dt}&\leq C\biggl(\frac{\abs{s}^{1+\alpha-\rho-\kappa}}{\eps}+\ee^{-\frac{B\eps}{\abs*{s}^{\alpha}}}\biggr)\chi_{\{\abs*{s}\leq 1\}},\\
  \abs*{\int_{0}^{\sgn(s)\infty}\del_{s}\Biggl(\frac{\cut(s+t)\ee^{\im t \xi}}{(\im(s+t))^{\rho+\kappa}(1+\im(s+t))}\Biggr)\frac{\ee^{\Phi(\im (s+t))}}{\ee^{\Phi(\im s)}}\dt}&\leq C\biggl(\frac{\abs{s}^{\alpha-\rho-\kappa}}{\eps}+\abs*{s}^{\rho-\kappa}\ee^{-\frac{B\eps}{\abs*{s}^{\alpha}}}\biggr)\chi_{\{\abs*{s}\leq 1\}}
 \end{align*}
 and
  \begin{multline*}
   \abs*{\int_{0}^{\sgn(s)\infty}\frac{\cut(s+t)\ee^{\im t \xi}(\Phi'(\im(s+t))-\Phi'(\im s))}{(\im(s+t))^{\rho+\kappa}(1+\im(s+t))}\frac{\ee^{\Phi(\im (s+t))}}{\ee^{\Phi(\im s)}}\dt}\\*
   \leq C\biggl(\frac{\abs{s}^{\alpha-\rho-\kappa}}{\eps}+\eps\abs*{s}^{-1-\alpha}\ee^{-\frac{B\eps}{\abs*{s}^{\alpha}}}\biggr)\chi_{\{\abs*{s}\leq 1\}}.
  \end{multline*}

\end{lemma}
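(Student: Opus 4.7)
The plan is to change variables $\tau = s+t$ in all three integrals. Because $\cut(s+t)$ vanishes for $|s+t|\geq 2$, the resulting $\tau$-integration is effectively supported in $|s|\leq |\tau|\leq 2$, and the indicator $\chi_{\{|s|\leq 1\}}$ lets me restrict to $|s|\leq 1$ from the outset. After factoring out $\ee^{-\Phi(\im s)}$, Lemma~\ref{Lem:reg:exponetial:decay} provides a prefactor $C\ee^{-d\eps/|s|^\alpha}$, which is the ultimate source of the exponentially decaying term in each of the three bounds.

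The central observation is that for $|\tau|\geq |s|$, Lemma~\ref{Lem:asymptotics:Phi} combined with the identity $\Re((\im\sigma)^{-\alpha})=|\sigma|^{-\alpha}\cos(\alpha\pi/2)$ yields
\begin{equation*}
\Re\bigl(\Phi(\im\tau)-\Phi(\im s)\bigr)\leq -c\eps\bigl(|s|^{-\alpha}-|\tau|^{-\alpha}\bigr)+C\eps,
\end{equation*}
so $|\ee^{\Phi(\im\tau)-\Phi(\im s)}|$ is uniformly bounded on the relevant range and, once $|\tau|\geq 2|s|$, delivers extra decay of order $\ee^{-c'\eps/|s|^{\alpha}}$. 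I would therefore split the $\tau$-integration at $|\tau| = 2|s|$. On the inner region $|s|\leq|\tau|\leq 2|s|$ the integrand for the first estimate is comparable to $|s|^{-\rho-\kappa}$ over an interval of length $|s|$, yielding a contribution of order $|s|^{1-\rho-\kappa}\ee^{-d\eps/|s|^\alpha}$, which by Lemma~\ref{Lem:help:exponential:regularising} applied with $a=0$ and $b=\alpha$ becomes $\eps^{-1}|s|^{1+\alpha-\rho-\kappa}$; this matches the first term in the first target bound. On the outer region $2|s|\leq|\tau|\leq 2$ the combined exponential contribution is $\ee^{-c'\eps/|s|^\alpha}$ and the remaining integrand is uniformly integrable, so this region contributes the second term.

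The second and third estimates follow the same template. For the second, the $s$-derivative produces at worst an extra factor of $|\tau|^{-1}$ from differentiating $(\im(s+t))^{-\rho-\kappa}$, together with bounded terms from $\cut'$ and from differentiating the $1+\im(s+t)$ factor; redoing the two-region analysis with this extra factor shifts the power of $|s|$ by one and accounts for the stated bound. For the third estimate I would bound $|\Phi'(\im(s+t))-\Phi'(\im s)|\leq C\eps(|s+t|^{-1-\alpha}+|s|^{-1-\alpha})$ via Lemma~\ref{Lem:analyticity:Phi}; the contribution of the second summand is responsible for the factor $\eps|s|^{-1-\alpha}$ attached to the exponentially decaying term, while the first summand is absorbed into the same two-region analysis as before.

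The main technical obstacle is the joint exponential bound displayed above: the naive combination of Lemma~\ref{Lem:reg:exponetial:decay} for $\ee^{-\Phi(\im s)}$ with Lemma~\ref{Lem:analyticity:Phi} for $|\ee^{\Phi(\im\tau)}|$ uses non-matching constants and fails to cancel the leading $\eps/|s|^{\alpha}$ terms when $|\tau|$ is close to $|s|$. One must instead work with the sharp asymptotic from Lemma~\ref{Lem:asymptotics:Phi} directly, deriving a uniform remainder on $\Ccut$ so that the monotonicity $\Re\Phi(\im\tau)\leq\Re\Phi(\im s)+O(\eps)$ on $|\tau|\geq|s|$ holds with explicit constants. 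Once this is in place, the two-region decomposition produces matching constants in the three target bounds and the remainder of the argument is essentially bookkeeping.
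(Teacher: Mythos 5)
Your route (change of variables $\tau=s+t$, split at $|\tau|=2|s|$, factor out $\ee^{-\Phi(\im s)}$) matches the paper's decomposition (which splits at $t=s$; these are the same cut). The outer region and the differentiation bookkeeping for the second and third estimates are also along the right lines. The gap is in the inner region. You write that the contribution on $|s|\leq|\tau|\leq 2|s|$ is of order $|s|^{1-\rho-\kappa}\ee^{-d\eps/|s|^\alpha}$ and then convert the exponential to $\eps^{-1}|s|^{\alpha}$ via Lemma~\ref{Lem:help:exponential:regularising}. But the exponential factor from Lemma~\ref{Lem:reg:exponetial:decay} does not survive there: for $|\tau|$ close to $|s|$, the other factor $\abs{\ee^{\Phi(\im\tau)}}=\ee^{\Re\Phi(\im\tau)}$ grows like $\ee^{c\eps/|\tau|^{\alpha}}\approx\ee^{c\eps/|s|^{\alpha}}$ (Lemma~\ref{Lem:asymptotics:Phi}), which cancels the decay from $\ee^{-\Phi(\im s)}$ up to an $O(1)$ factor. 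You acknowledge this and propose to replace the two-lemma combination by the uniform estimate $\Re\Phi(\im\tau)\leq\Re\Phi(\im s)+O(\eps)$ for $|\tau|\geq|s|$. That upgrade gives $\abs{\ee^{\Phi(\im\tau)-\Phi(\im s)}}\leq C$ and hence the inner-region bound $C|s|^{1-\rho-\kappa}$, with no exponential and no $\eps^{-1}|s|^{\alpha}$ improvement. This is strictly weaker than the target: when $|s|^{\alpha}\ll\eps$ one has $\frac{|s|^{1+\alpha-\rho-\kappa}}{\eps}=\frac{|s|^{\alpha}}{\eps}\,|s|^{1-\rho-\kappa}\ll|s|^{1-\rho-\kappa}$ and the exponential term is negligible, so $|s|^{1-\rho-\kappa}$ is not dominated by the claimed right-hand side.

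What is actually needed on the inner region is a $t$-dependent exponential estimate rather than a uniform remainder: for $0\leq t\leq s$ one shows
\begin{equation*}
\Re\bigl(\Phi(\im(s+t))-\Phi(\im s)\bigr)\leq -d\eps\,s^{-1-\alpha}t+c_{2},
\end{equation*}
which follows from the asymptotic $\Phi(\im\sigma)\sim\tfrac{C_{W}\mom_{\alpha}\eps}{\alpha}(\im\sigma)^{-\alpha}$ together with $(s+t)^{-\alpha}-s^{-\alpha}\leq -c\,s^{-1-\alpha}t$ on $0\leq t\leq s$. This makes the decay length scale $s^{1+\alpha}/(d\eps)$ explicit; the change of variables $t\mapsto\frac{s^{1+\alpha}}{d\eps}t$ in $\int_{0}^{s}\exp(-d\eps s^{-1-\alpha}t)(s+t)^{-\rho-\kappa}\dt$ then produces $\frac{C}{\eps}s^{1+\alpha-\rho-\kappa}$ directly, so the $\eps^{-1}$ factor comes from this rescaling and not from Lemma~\ref{Lem:help:exponential:regularising}. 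The same linear-in-$t$ bound, together with $\abs{\Phi'(\im(s+t))-\Phi'(\im s)}\leq C\eps s^{-1-\alpha}t$ on the inner region, is also what is needed for the third estimate; a uniform $O(\eps)$ bound there is not sufficient for the same reason.
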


\begin{proof}
 Since $\cut$ is supported in the interval $[-2,2]$ it suffices to consider $\abs*{s},\abs*{t}\leq 2$. In the following, we will moreover only restrict to the case $s,t>0$ while $s,t<0$ can be treated analogously. From the asymptotic properties of $\Phi$ and $\beta_{W}$ provided by \cref{Lem:asymptotics:Phi,Lem:asymptotics:beta} as well as \cref{Lem:analyticity:Phi,Lem:analyticity:beta} it is straightforward to derive the bound 
 \begin{equation*}
  \Re\bigl(\Phi(\im (s+t))-\Phi(\im s)\bigr)\leq \begin{cases}
                                                  -B\eps s^{-\alpha} +c_{1} &\text{if }s\leq t\\
                                                  -d\eps s^{-1-\alpha} t+c_{2} &\text{if }s\geq t,
                                                 \end{cases}
 \end{equation*}
with constants $B,d,c_1,c_2>0$.

To prove the first estimate of the lemma, we split the integral and use the previous bound to get
\begin{equation}\label{eq:gain:alpha:1}
 \begin{split}
  &\phantom{{}\leq{}} \abs*{\int_{0}^{\sgn(s)\infty}\frac{\cut(s+t)\ee^{\im t \xi}}{(\im(s+t))^{\rho+\kappa}(1+\im(s+t))}\frac{\ee^{\Phi(\im (s+t))}}{\ee^{\Phi(\im s)}}\dt}\leq C\int_{0}^{2}\frac{\exp(\Re(\Phi(\im (s+t))-\Phi(\im s)))}{(s+t)^{\rho+\kappa}}\dt\\
  &\leq C\int_{0}^{s}\frac{\exp(-d\eps s^{-1-\alpha}t)}{(s+t)^{\rho+\kappa}}\dt+C\int_{s}^{2}\frac{\exp(-B\eps s^{-\alpha})}{(s+t)^{\rho+\kappa}}\dt.
 \end{split}
\end{equation}
To estimate the first integral on the right-hand side, we change variables $t\mapsto (d\eps)^{-1}s^{1+\alpha}t$ which yields
\begin{equation*}
 \begin{split}
  C\int_{0}^{s}\frac{\exp(-d\eps s^{-1-\alpha}t)}{(s+t)^{\rho+\kappa}}\dt&=C\frac{s^{1+\alpha}}{d\eps}\int_{0}^{\frac{d\eps}{s^{\alpha}}}\ee^{-t}\biggl(s+\frac{ts^{1+\alpha}}{d\eps}\biggr)^{-\rho-\kappa}\dt\\
  &\leq C\frac{s^{\alpha+1-\rho-\kappa}}{d\eps}\int_{0}^{\infty}\ee^{-t}\dt\leq \frac{C}{\eps}s^{1+\alpha-\rho-\kappa}.
 \end{split}
\end{equation*}
Moreover, the second integral on the right-hand side of~\eqref{eq:gain:alpha:1} can be estimated as
\begin{equation*}
 C\int_{s}^{2}\frac{\exp(-B\eps s^{-\alpha})}{(s+t)^{\rho+\kappa}}\dt\leq C\exp(-B\eps s^{-\alpha})\int_{0}^{2}t^{-\rho-\kappa}\dt\leq C\exp(-B\eps s^{-\alpha}).
\end{equation*}
Note that the last step holds since $\rho<1$ and $\abs{\kappa}$ can be made as small as needed. If we summarise the two previous estimates, the first claim of the statement follows. The two remaining claims, can be shown in a similar fashion if we use for the second one that
\begin{equation*}
 \abs*{\del_{s}\Biggl(\frac{\cut(s+t)\ee^{\im t \xi}}{(\im(s+t))^{\rho+\kappa}(1+\im(s+t))}\Biggr)}\leq \frac{C}{(s+t)^{1+\rho+\kappa}}.
\end{equation*}
On the other hand, to prove the third estimate of the statement, it is also straightforward to establish
\begin{equation*}
 \abs*{\Phi'(\im(s+t))-\Phi'(\im s)}\leq C\eps s^{-1-\alpha} t
\end{equation*}
while Lemma~\ref{Lem:analyticity:Phi} directly yields $\abs*{\Phi'(\im(s+t))-\Phi'(\im s)}\leq C\eps s^{-1-\alpha}$. Then, proceeding similarly as above, the statement readily follows. More details can be found in~\cite{Thr16}.
\end{proof}

\begin{lemma}\label{Lem:gain:of:alpha:coup}
 For the function $\coup$ as given in~\eqref{eq:cutoffs} there exist constants $B,C>0$ such that it holds
 \begin{align*}
  \abs*{\int_{0}^{\sgn(s)\infty}\frac{(1-\coup(s+t))\ee^{\im t \xi}}{(\im(s+t))^{\rho+\kappa}(1+\im(s+t))}\frac{\ee^{\Phi(\im (s+t))}}{\ee^{\Phi(\im s)}}\dt}&\leq C\biggl(\frac{\abs{s}^{1+\alpha-\rho-\kappa}}{\eps}+\ee^{-\frac{B\eps}{\abs*{s}^{\alpha}}}\biggr)\chi_{\{\abs*{s}\leq 1\}},\\
  \abs*{\int_{0}^{\sgn(s)\infty}\del_{s}\Biggl(\frac{(1-\coup(s+t))\ee^{\im t \xi}}{(\im(s+t))^{\rho+\kappa}(1+\im(s+t))}\Biggr)\frac{\ee^{\Phi(\im (s+t))}}{\ee^{\Phi(\im s)}}\dt}&\leq C\biggl(\frac{\abs{s}^{\alpha-\rho-\kappa}}{\eps}+\abs*{s}^{\rho-\kappa}\ee^{-\frac{B\eps}{\abs*{s}^{\alpha}}}\biggr)\chi_{\{\abs*{s}\leq 1\}}
 \end{align*}
 and
 \begin{multline*}
  \abs*{\int_{0}^{\sgn(s)\infty}\frac{(1-\coup(s+t))\ee^{\im t \xi}(\Phi'(\im(s+t))-\Phi'(\im s))}{(\im(s+t))^{\rho+\kappa}(1+\im(s+t))}\frac{\ee^{\Phi(\im (s+t))}}{\ee^{\Phi(\im s)}}\dt}\\*
  \leq C\biggl(\frac{\abs{s}^{\alpha-\rho-\kappa}}{\eps}+\eps\abs*{s}^{-1-\alpha}\ee^{-\frac{B\eps}{\abs*{s}^{\alpha}}}\biggr)\chi_{\{\abs*{s}\leq 1\}}.
 \end{multline*}
\end{lemma}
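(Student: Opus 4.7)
The plan is to reduce everything to the same estimates that were carried out for Lemma~\ref{Lem:gain:of:alpha:cut}. The only structural difference between the two statements is the cut-off: $\cut$ is supported in $[-2,2]$ and equals one on $[-1,1]$, while $1-\coup$ is supported in $[-1,1]$ and equals one on $[-1/2,1/2]$. Thus both factors have exactly the same qualitative behaviour near zero, namely they are smooth, bounded by one, and supported in a bounded neighbourhood of the origin with a constant value of one on a neighbourhood. Consequently the pointwise bounds on the integrand and, crucially, the size of the region of integration for $t$ change only by a harmless multiplicative constant.

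First I would note that the restriction $\abs*{s}\leq 1$ (produced by the characteristic functions on the right) is justified by the support property of $1-\coup$: if $\abs*{s}\geq 2$ then $1-\coup(s+t)\neq 0$ forces $\abs*{t}\geq 1$, and a short computation using Lemmas~\ref{Lem:analyticity:Phi} and \ref{Lem:analyticity:beta} shows that in that regime all three integrals are uniformly bounded (so they may be absorbed into the cases $\abs*{s}\leq 1$ by enlarging $C$). Hence I reduce to $\abs*{s}\leq 1$ and $\abs*{s+t}\leq 1$, i.e.\ $\abs*{t}\leq 2$.

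Second, I would invoke the same bound
\begin{equation*}
 \Re\bigl(\Phi(\im(s+t))-\Phi(\im s)\bigr)\leq \begin{cases} -B\eps s^{-\alpha}+c_1 & \text{if } s\leq t,\\ -d\eps s^{-1-\alpha}t+c_2 & \text{if } s\geq t,\end{cases}
\end{equation*}
which was derived in the proof of Lemma~\ref{Lem:gain:of:alpha:cut} from \cref{Lem:asymptotics:Phi,Lem:asymptotics:beta,Lem:analyticity:Phi,Lem:analyticity:beta}; this bound depends only on the geometry of $\Phi$, not on the cut-off. Then, exactly as before, I split the $t$-integral as $\int_0^s+\int_s^{\text{supp}}$ and bound each piece: on $[0,s]$ the change of variables $t\mapsto (d\eps)^{-1}s^{1+\alpha}t$ produces the factor $\eps^{-1}s^{1+\alpha-\rho-\kappa}$, while on the remaining interval the exponential factor $\ee^{-B\eps/\abs*{s}^{\alpha}}$ can be pulled out and the residual $t$-integral of $(s+t)^{-\rho-\kappa}$ is bounded because $\rho+\kappa<1$ for $\eps$ small. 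This yields the first inequality.

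The second inequality follows by the same splitting, using the additional estimate $\abs*{\del_s((\im(s+t))^{-\rho-\kappa}(1+\im(s+t))^{-1}(1-\coup(s+t))\ee^{\im t\xi})}\leq C(s+t)^{-1-\rho-\kappa}$, where the derivative of $1-\coup$ contributes only on $\abs*{s+t}\in[1/2,1]$ and is therefore harmless. For the third inequality I additionally use the two bounds
\begin{equation*}
 \abs*{\Phi'(\im(s+t))-\Phi'(\im s)}\leq C\eps s^{-1-\alpha}t\quad\text{and}\quad \abs*{\Phi'(\im(s+t))-\Phi'(\im s)}\leq C\eps s^{-1-\alpha},
\end{equation*}
applying the first one on $[0,s]$ and the second on $[s,1]$, in complete analogy with Lemma~\ref{Lem:gain:of:alpha:cut}. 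Since there is no genuinely new obstacle here—the proof of Lemma~\ref{Lem:gain:of:alpha:cut} never used any property of $\cut$ beyond boundedness, smoothness and compact support around zero, all of which are shared by $1-\coup$—the main work is simply checking that the contribution of the cut-off derivative, which now comes from $\coup'$ supported in $\{1/2\leq\abs*{s+t}\leq 1\}$ rather than from $\cut'$ supported in $\{1\leq \abs*{s+t}\leq 2\}$, is again absorbed by the right-hand side.
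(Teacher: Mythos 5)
Your proposal is correct and takes essentially the same approach as the paper, whose proof of this lemma consists of a single sentence pointing out that $1-\coup$ and $\cut$ share the same qualitative behaviour so that the argument of Lemma~\ref{Lem:gain:of:alpha:cut} carries over with adjusted constants; you have simply spelled out the details. One small remark: for $\abs*{s}>1$ the left-hand side is not merely \emph{bounded} but identically zero, since $t$ has the same sign as $s$ along the contour and hence $\abs*{s+t}\geq\abs*{s}>1$ forces $1-\coup(s+t)=0$; this is cleaner than ``absorbing into $C$'', which in fact cannot work because the right-hand side vanishes for $\abs*{s}>1$ due to the factor $\chi_{\{\abs*{s}\leq 1\}}$.
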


\begin{proof}
 Note that $1-\coup$ and $\cut$ have the same qualitative behaviour such that up to different constants the same proof as for Lemma~\ref{Lem:gain:of:alpha:cut} applies.
\end{proof}

\appendix

\section{Useful elementary results}\label{Sec:elementary:results}

We collect here some properties of the weight that we used to define the norms on Laplace transforms. These things are rather elementary but some of them will be used frequently throughout the proofs.

For $a,b,a_1,a_2,b_1,b_2,r\in\R$ it holds that
\begin{equation}\label{eq:weight:mult}
 \weight{a_1}{b_1}(q)\weight{a_2}{b_2}(q)=\weight{a_1+a_2}{b_1+b_2}(q),\quad \weight{a}{b}(q)q^{r}=\weight{a+r}{b-r}(q)\; \text{ and }\; \weight{-a}{-b}(q)=\bigl(\weight{a}{b}(q)\bigr)^{-1}.
\end{equation}
Moreover, we have
\begin{equation}\label{eq:weight:parmono}
 \weight{a_2}{b_2}(q)\leq \weight{a_1}{b_1}(q) \quad \text{if } a_1\leq a_2 \text{ and }b_1\leq b_2.
\end{equation}
If $a>-1$ one also finds the integral estimates
\begin{equation}\label{eq:weight:int:low}
 \int_{0}^{q}\weight{a}{b}(r)\dr\leq C\weight{a+1}{b-1}(q)\quad \text{if }b<1\quad \text{and}\quad \int_{0}^{q}\weight{a}{b}(r)\dr\leq C\weight{a+1}{0}(q)\quad \text{if }b>1.
\end{equation}
In the same way we have for $b>1$ that
\begin{equation}\label{eq:weight:int:high}
 \int_{q}^{\infty}\weight{a}{b}(r)\dr\leq C\weight{a+1}{b-1}(q)\quad \text{if }a<-1\quad \text{and}\quad \int_{q}^{\infty}\weight{a}{b}(r)\dr\leq C\weight{0}{b-1}(q)\quad \text{if }a>-1.
\end{equation}
Combining~\eqref{eq:weight:mult} and~\eqref{eq:weight:int:high} it further follows
\begin{equation}\label{eq:weight:int:reduced}
 \int_{q}^{\infty}\frac{\weight{a}{b}(r)}{r}\dr\leq C\weight{a}{b}(q)\quad \text{if } a<0\text{ and }b>0.
\end{equation}
Concerning the weights occurring in the definition of the (semi-)norms in~\eqref{eq:def:seminorm} we have
\begin{equation}\label{eq:est:weight}
 2^{-b-a}\weight{a}{b}(q)q^{-k}\leq \frac{1}{(1+q)^{b+1}q^{k-a}}\leq \weight{a}{b}(q)q^{-k}\quad \text{if }a+b\geq 0.
\end{equation}
From this one immediately deduces that for each $G\in\X{k}{\mu}{\chi}$ it holds
\begin{equation}\label{eq:est:by:norm}
 \abs*{G(q)}\leq \lfnorm*{0}{-\rho}{\chi}{G}\weight{0}{\chi}(q)\quad \text{and}\quad \abs*{\del_{q}^{k}G(q)}\leq \lfnorm*{k}{\mu}{\chi}{G}\weight{\mu+\rho}{\chi}(q)q^{-k}.
\end{equation}
By monotonicity we also have
\begin{equation}\label{eq:weight:monotonicity}
 \frac{1}{(1+q+\tau)^{b}(q+\tau)^{a}}\leq \frac{1}{(1+q)^{b}q^{a}}\quad \text{for }\tau>0\text{ if }a,b\geq 0.
\end{equation}
Moreover, we have for $G\in\X{2}{\mu}{\chi}$ that
\begin{equation}\label{eq:difference:simple}
 \abs*{G(q)-G(q+\tau)}\leq \frac{2\lfnorm*{0}{-\rho}{\chi}{G}}{(q+1)^{\chi}}\quad \text{and}\quad \abs*{\del_{q}^{k}G(q)-\del_{q}^{k}G(q+\tau)}\leq \frac{2\lsnorm*{k}{\mu}{\chi}{G}}{(q+1)^{\chi+\rho+\mu}q^{k-\rho-\mu}}.
\end{equation}
On the other hand, we can also estimate such differences by the norm of the derivative. Precisely, if we use $\del_{q}^{0}G\vcc=G$ we have for $k=0,1$ and $\mu\in[0,1-\rho]$ that
\begin{equation}\label{eq:est:norm:difference}
 \begin{split}
  \abs*{\del_{q}^{k}G(q)-\del_{q}^{k}G(q+\tau)}&=\abs[\bigg]{\int_{q}^{q+\tau}\del_{q}^{k+1}G(s)\ds}\leq \lsnorm*{k+1}{\mu}{\chi}{G}\int_{q}^{q+\tau}\frac{1}{(s+1)^{\chi+\rho+\mu}s^{k+1-\rho-\mu}}\ds\\
  &\leq \lsnorm*{k+1}{\mu}{\chi}{G}\frac{\tau}{(q+1)^{\chi+\rho+\mu}q^{k+1-\rho-\mu}}.
 \end{split}
\end{equation}

\begin{remark}\label{Rem:est:difference:Laplace}
 In terms of the Laplace transform the estimate~\eqref{eq:est:norm:difference} read as
 \begin{equation*}
  \abs[\big]{\del_{q}^{k}\T\bigl((1-\ee^{-\cdot \tau}) g\bigr)(q)}\leq\lsnorm*{k+1}{\mu}{\chi}{\T g}\frac{\tau}{(q+1)^{\chi+\rho+\mu}q^{k+1-\rho-\mu}},
 \end{equation*}
 for $g\in\M^{\text{fin}}(0,\infty)$ with $(\T g)\in \X{k}{\mu}{\chi}$.
\end{remark}

The next lemma states that the norm $\lfnorm{k}{\mu}{\chi}{\cdot}$ is monotonous both with respect to the second and third parameter.

 \begin{lemma}\label{Lem:comparison:norms}
  For $\mu_1,\mu_2\in [0,\mu_{*})$ such that $\mu_1\leq \mu_2$ and $\chi_1,\chi_2>0$ such that $\chi_1\leq \chi_2$ we have
  \begin{equation*}
   \lfnorm*{0}{-\rho}{\chi_1}{G}\leq \lfnorm*{0}{-\rho}{\chi_2}{G}\quad \text{and}\quad \lsnorm*{k}{\mu_1}{\chi_1}{G}\leq \lsnorm*{k}{\mu_2}{\chi_2}{G}
  \end{equation*}
 for all $G\in\X{k}{\mu_2}{\chi_2}$ with $k=1,2$.
 \end{lemma}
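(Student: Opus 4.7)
The plan is to verify the monotonicity pointwise inside the supremum that defines each (semi-)norm, since once we establish that the weight with parameters $(\mu_1,\chi_1)$ is dominated by the weight with parameters $(\mu_2,\chi_2)$ for every $q>0$, the inequality between the suprema follows immediately.

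Concretely, recalling the definition
\begin{equation*}
 \lsnorm*{k}{\mu}{\chi}{G}=\sup_{q>0}\Bigl((1+q)^{\chi+\mu+\rho}q^{k-\rho-\mu}\abs[\big]{\del_{q}^{k}G(q)}\Bigr),
\end{equation*}
the ratio of the two weights appearing for $(\mu_1,\chi_1)$ and $(\mu_2,\chi_2)$ equals
\begin{equation*}
 (1+q)^{(\chi_1-\chi_2)+(\mu_1-\mu_2)}\,q^{\mu_2-\mu_1}.
\end{equation*}
Setting $a\vcc=\chi_2-\chi_1\geq 0$ and $b\vcc=\mu_2-\mu_1\geq 0$, this reduces to $q^{b}(1+q)^{-a-b}$, and the elementary estimate $q\leq 1+q$ together with $1+q\geq 1$ gives $q^{b}\leq (1+q)^{b}\leq (1+q)^{a+b}$. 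Hence the $(\mu_1,\chi_1)$-weight is pointwise bounded by the $(\mu_2,\chi_2)$-weight, so taking the supremum over $q>0$ yields $\lsnorm*{k}{\mu_1}{\chi_1}{G}\leq \lsnorm*{k}{\mu_2}{\chi_2}{G}$.

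For the first inequality, I would just observe that $\lfnorm*{0}{-\rho}{\chi}{G}=\lsnorm*{0}{-\rho}{\chi}{G}$ collapses, after cancelling the $\rho$-contributions in the exponents, to $\sup_{q>0}(1+q)^{\chi}\abs{G(q)}$, from which $\chi_1\leq \chi_2$ and $1+q\geq 1$ trivially give the claimed monotonicity. Since the full norm $\lfnorm*{k}{\mu}{\chi}{G}$ is defined as a finite sum of such quantities (the zero-norm piece plus the seminorms of orders $\ell=1,\dots,k$), term-by-term monotonicity extends to the full norm. There is no real obstacle here; the only thing to be careful about is matching the signs of exponents correctly, which is automatic once one writes the quotient of the two weights.
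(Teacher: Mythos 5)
Your proof is correct and follows essentially the same route as the paper: a pointwise comparison of the weights in the definition~\eqref{eq:def:seminorm}, followed by passing to the supremum. The paper factors the weight and compares $(1+q)^{\chi}$ and $\bigl((1+q)/q\bigr)^{\rho+\mu}$ separately, while you bound the quotient of the two full weights directly; these are the same computation.
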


 \begin{proof}
  Under the assumptions of $\mu_{\ell}$ and $\chi_{\ell}$ for $\ell=1,2$ one can check that it holds
  \begin{equation*}
   (1+q)^{\rho+\mu_1}q^{-\rho-\mu_1}\leq (1+q)^{\rho+\mu_2}q^{-\rho-\mu_2}\quad\text{and}\quad (1+q)^{\chi_1}\leq (1+q)^{\chi_2} \quad \text{for }q>0.
  \end{equation*}
  From these estimates the claim immediately follows from the definition of the norms.
 \end{proof}
 
 The next lemma states a certain regularising effect for differences in Laplace variables, i.e.\@ we may gain additional regularity of order $q^{\mu}$ at zero.

 \begin{lemma}\label{Lem:difference:regularised}
  For every $\mu\in[0,\mu_{*})$ there exists a constant $C>0$ such that it holds
  \begin{equation*}
   \abs*{G(q)-G(0)}\leq C\lsnorm*{1}{\mu}{\chi}{G}\weight{\mu+\rho}{0}(q)
  \end{equation*}
 for all $\chi>0$ and all $G\in \X{1}{\mu}{\chi}$.
 \end{lemma}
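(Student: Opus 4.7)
The plan is to prove the estimate directly by writing the difference $G(q)-G(0)$ as the integral of $G'$ and controlling the resulting expression by the pointwise bound on $G'$ implied by the definition of $\lsnorm*{1}{\mu}{\chi}{\cdot}$. Since $G\in\X{1}{\mu}{\chi}$ means in particular that $G\in C^{1}((0,\infty))\cap C([0,\infty))$, the value $G(0)$ exists and the fundamental theorem of calculus gives
\begin{equation*}
 G(q)-G(0)=\int_{0}^{q}G'(r)\dr,
\end{equation*}
provided the right-hand side is absolutely convergent near $r=0$. This is where $\mu\in[0,\mu_{*})$ is used: from the definition of the semi-norm together with~\eqref{eq:est:weight} one has
\begin{equation*}
 \abs*{G'(r)}\leq \lsnorm*{1}{\mu}{\chi}{G}\,\frac{1}{(1+r)^{\chi+\mu+\rho}\,r^{1-\rho-\mu}},
\end{equation*}
and since $\rho+\mu>0$ the factor $r^{\rho+\mu-1}$ is integrable at zero.

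From here the proof reduces to estimating this integral by $\weight{\mu+\rho}{0}(q)$, and I would simply split into the two regions defining the weight. For $q\leq 1$ the factor $(1+r)^{-\chi-\mu-\rho}$ is bounded so
\begin{equation*}
 \abs*{G(q)-G(0)}\leq \lsnorm*{1}{\mu}{\chi}{G}\int_{0}^{q}r^{\rho+\mu-1}\dr=\frac{1}{\rho+\mu}\lsnorm*{1}{\mu}{\chi}{G}\,q^{\rho+\mu},
\end{equation*}
which matches $\weight{\mu+\rho}{0}(q)=q^{\rho+\mu}$ in this range. For $q\geq 1$ I would decompose $\int_{0}^{q}=\int_{0}^{1}+\int_{1}^{q}$: the first piece is bounded by the previous computation with $q=1$, giving a pure constant times the norm; the second piece is controlled by $\int_{1}^{q}r^{-1-\chi}\dr\leq 1/\chi$ because $\chi>0$ and $(1+r)^{-\chi-\mu-\rho}r^{\rho+\mu-1}\leq r^{-1-\chi}$ for $r\geq 1$. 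Combining these two pieces gives $\abs*{G(q)-G(0)}\leq C\lsnorm*{1}{\mu}{\chi}{G}$, which matches $\weight{\mu+\rho}{0}(q)=1$ for $q\geq 1$.

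There is no real obstacle here; the only subtle point is the integrability at $r=0$, which is guaranteed by the assumption $\mu\geq 0$ combined with $\rho>0$ (so in particular $\mu_{*}=\min\{\rho,1-\rho\}$ being used only through $\mu\geq 0$). Both constants appearing in the two regimes depend only on $\rho$, $\mu$ and $\chi$ and can be absorbed into a single $C>0$, which yields the stated inequality.
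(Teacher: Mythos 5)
Your proof is correct and follows essentially the same route as the paper: write $G(q)-G(0)=\int_{0}^{q}G'(s)\,ds$, insert the pointwise bound $\abs{G'(r)}\leq\lsnorm{1}{\mu}{\chi}{G}\,\weight{\rho+\mu-1}{\chi+1}(r)$, and bound the resulting weight integral. The only cosmetic difference is that you split explicitly into $q\leq 1$ and $q\geq 1$ and compute the integral by hand, whereas the paper invokes its abstract weight-integral estimate~\eqref{eq:weight:int:low}; these are the same computation.
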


 \begin{proof}
  From~\cref{eq:weight:mult,eq:weight:int:low,eq:est:by:norm} we infer
  \begin{equation*}
   \abs*{G(q)-G(0)}=\abs*{\int_{0}^{q}G'(s)\ds}\leq \lsnorm*{1}{\mu}{\chi}{G}\int_{0}^{q}\weight{\rho+\mu-1}{\chi+1}(s)\ds\leq C\lsnorm*{1}{\mu}{\chi}{G}\weight{\rho+\mu}{0}(q)
  \end{equation*}
 which shows the claim.
 \end{proof}
 
 The next results states that $\lsnorm{k}{\mu}{\chi}{\cdot}$ for $k=1,2$ defines already a norm on the spaces $\X{k}{\mu}{\chi}$.

\begin{lemma}\label{Lem:seminorm:improved}
 For each $\mu\in[0,\mu_{*})$ and $\chi>0$ there exists a constant $C=C_{\mu,\chi}>0$ such that it holds $\lfnorm*{k}{\mu}{\chi}{G}\leq C\lsnorm*{k}{\mu}{\chi}{G}$ for all $G\in\X{k}{\mu}{\chi}$ and all $k=1,2$.
\end{lemma}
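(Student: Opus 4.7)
The plan is to recover $G$ and (when $k=2$) its first derivative from the top derivative by integrating from infinity, exploiting the fact that $G\in\X{k}{\mu}{\chi}$ already ensures $\lfnorm*{k}{\mu}{\chi}{G}<\infty$ and hence in particular $G(q)\to 0$ (and $G'(q)\to 0$ if $k=2$) as $q\to\infty$. In this way, $\lsnorm*{k}{\mu}{\chi}{G}$ controls everything that makes up $\lfnorm*{k}{\mu}{\chi}{G}$.

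For $k=1$ it suffices to prove $\lfnorm*{0}{-\rho}{\chi}{G}\leq C\lsnorm*{1}{\mu}{\chi}{G}$. Write $G(q)=-\int_q^\infty G'(s)\ds$ and estimate
\begin{equation*}
 \abs*{G(q)}\leq \lsnorm*{1}{\mu}{\chi}{G}\int_q^\infty\frac{1}{(1+s)^{\chi+\mu+\rho}s^{1-\rho-\mu}}\ds.
\end{equation*}
By~\eqref{eq:est:weight} the integrand is bounded up to a constant by $\weight{\rho+\mu-1}{\chi+1}(s)$. Since $\mu\in[0,\mu_*)$ gives $\rho+\mu\in(0,1)$ and $\chi+1>1$, the second estimate of~\eqref{eq:weight:int:high} applies and yields $C\weight{\rho+\mu}{\chi}(q)$ for the integral. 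Multiplying by $(1+q)^\chi$ and using~\eqref{eq:est:weight} once more shows $(1+q)^\chi\abs*{G(q)}\leq C\lsnorm*{1}{\mu}{\chi}{G}$, which is the claim.

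For $k=2$, it remains to bound $\lsnorm*{1}{\mu}{\chi}{G}$ in terms of $\lsnorm*{2}{\mu}{\chi}{G}$, since $\lfnorm*{0}{-\rho}{\chi}{G}$ is then controlled by the $k=1$ argument. The same scheme applies: write $G'(q)=-\int_q^\infty G''(s)\ds$, so
\begin{equation*}
 \abs*{G'(q)}\leq \lsnorm*{2}{\mu}{\chi}{G}\int_q^\infty\frac{1}{(1+s)^{\chi+\mu+\rho}s^{2-\rho-\mu}}\ds,
\end{equation*}
whose integrand is dominated by $\weight{\rho+\mu-2}{\chi+2}(s)$. Because $\mu+\rho<1$ one has $\rho+\mu-2<-1$, so~\eqref{eq:weight:int:high} (first estimate) gives the integral bound $C\weight{\rho+\mu-1}{\chi+1}(q)$. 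Translating back via~\eqref{eq:est:weight} produces exactly $(1+q)^{\chi+\mu+\rho}q^{1-\rho-\mu}\abs*{G'(q)}\leq C\lsnorm*{2}{\mu}{\chi}{G}$, as required.

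No genuine obstacle is expected; the only thing to keep track of is the parameter bookkeeping, i.e.\ that the assumptions $\mu\in[0,\mu_*)$ (giving $0<\mu+\rho<1$) and $\chi>0$ place all exponents in the regime where~\eqref{eq:weight:int:high} is applicable with the correct sign on the boundary terms. Everything else is routine weight arithmetic via~\cref{eq:weight:mult,eq:est:weight,eq:weight:int:high}.
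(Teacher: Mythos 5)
Your proof is correct and follows essentially the same approach as the paper: write $G(q)=-\int_q^\infty G'(s)\,ds$ (resp.\ $G'(q)=-\int_q^\infty G''(s)\,ds$), bound the integrand by the appropriate weight function via the seminorm, and then integrate using~\eqref{eq:weight:int:high}; the $k=2$ case chains the first-derivative bound through the $k=1$ argument exactly as the paper's terse ``follows analogously'' intends. Your parameter bookkeeping (second branch of~\eqref{eq:weight:int:high} for $k=1$ since $\rho+\mu-1>-1$, first branch for $k=2$ since $\rho+\mu-2<-1$, both with $b=\chi+k>1$) is also correct.
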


\begin{proof}
 We consider first the case $k=1$ and thus assume $G\in\X{1}{\mu}{\chi}$. The estimates~\cref{eq:weight:int:high,eq:est:weight,eq:est:by:norm} then imply
 \begin{equation*}
  \abs*{G(q)}=\abs*{\int_{q}^{\infty}G'(s)\ds}\leq \lsnorm*{1}{\mu}{\chi}{G}\int_{q}^{\infty}\weight{\rho+\mu-1}{\chi+1}(s)\ds\leq C\weight{0}{\chi}(q)\leq C\lsnorm*{1}{\mu}{\chi}{G}(1+q)^{-\chi}.
 \end{equation*}
This implies $\lfnorm*{0}{-\rho}{\chi}{G}\leq C\lsnorm*{1}{\mu}{\chi}{G}$ and thus the claim for $k=1$. The case $k=2$ follows analogously.
\end{proof}

The following lemma shows that the derivatives of the Laplace transform of a non-negative measure can be estimated by the Laplace transform itself.

\begin{lemma}\label{Lem:non:neg:measures}
 For each $ g \in\M_{+}^{\text{fin}}(0,\infty)$ it holds that
 \begin{equation*}
  \abs*{\del_{q}(\T  g )(q)}\leq \frac{\abs*{(\T g )(q/2)}}{q}\quad \text{and}\quad \abs*{\del_{q}^{2}(\T g )(q)}\leq \frac{\abs*{\del_{q}(\T  g )(q/2)}}{q}\quad \text{for all }q>0.
 \end{equation*}
This in particular shows that $\del_{q}^{2}(\T g )(q)$ can be estimated by $(\T g )(q/4)$.
\end{lemma}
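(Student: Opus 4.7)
The proof will be a direct calculation exploiting the non-negativity of $g$. The starting point is the classical formula $\del_q (\T g)(q) = -\int_0^\infty x\,\ee^{-qx} g(x)\,\dx$, so, since $g \geq 0$, we have $|\del_q (\T g)(q)| = \int_0^\infty x\,\ee^{-qx} g(x)\,\dx$. The natural idea is to split the weight $\ee^{-qx}$ as $\ee^{-qx/2}\cdot \ee^{-qx/2}$ and absorb the factor $x$ into one of the two exponentials.

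The elementary bound I would prove first is $x\,\ee^{-qx/2} \leq 1/q$ for all $x,q>0$. This follows by maximising $x \mapsto x\,\ee^{-qx/2}$: the maximum is attained at $x=2/q$ with value $2/(eq)$, which is smaller than $1/q$ since $e>2$. Once this pointwise bound is in hand, the first inequality falls out immediately:
\begin{equation*}
 \abs*{\del_q (\T g)(q)} = \int_0^\infty \bigl(x\,\ee^{-qx/2}\bigr)\ee^{-qx/2} g(x)\,\dx \leq \frac{1}{q}\int_0^\infty \ee^{-qx/2} g(x)\,\dx = \frac{(\T g)(q/2)}{q},
\end{equation*}
and the modulus on the right can be omitted because $g\geq 0$ forces $(\T g)(q/2)\geq 0$.

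For the second inequality I would proceed in essentially the same way. Write $\del_q^2(\T g)(q) = \int_0^\infty x^2 \ee^{-qx} g(x)\,\dx$ and split one factor of $x$ together with one factor of $\ee^{-qx/2}$, leaving $x \ee^{-qx/2} g(x)$ inside. Applying the pointwise bound $x\,\ee^{-qx/2}\leq 1/q$ to the isolated factor yields
\begin{equation*}
 \abs*{\del_q^2 (\T g)(q)} \leq \frac{1}{q}\int_0^\infty x\,\ee^{-qx/2} g(x)\,\dx = \frac{\abs*{\del_q (\T g)(q/2)}}{q},
\end{equation*}
where again non-negativity of $g$ lets us identify the integral on the right with the absolute value of the derivative evaluated at $q/2$.

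There is no real obstacle here: the whole argument rests on the single scalar inequality $x\,\ee^{-qx/2}\leq 1/q$ and on the fact that the Laplace transform and its derivatives of a non-negative measure have definite signs, so absolute values can be pulled inside the integrals freely. Iterating the first estimate once more then gives the final remark that $\del_q^2(\T g)(q)$ can be controlled by $(\T g)(q/4)$.
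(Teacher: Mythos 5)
Your proof is correct and takes essentially the same route as the paper: split $\ee^{-qx}=\ee^{-qx/2}\ee^{-qx/2}$, absorb one factor of $x$ into one exponential via the pointwise bound $x\,\ee^{-qx/2}\leq 1/q$ (the paper phrases this as $qx\,\ee^{-qx/2}\leq 1$, equivalently $x\,\ee^{-x/2}\leq 1$), and use non-negativity of $g$ to drop absolute values. The second inequality and the concluding remark about $(\T g)(q/4)$ are handled identically.
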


\begin{proof}
 The non-negativity of $ g $ implies that
 \begin{equation*}
  \abs*{\del_{q}(\T g )(q)}=\int_{0}^{\infty}x g (x)\ee^{-qx}\dx=\frac{1}{q}\int_{0}^{\infty} g (x)\ee^{-\frac{q}{2}x}qx\ee^{-\frac{q}{2}x}\dx.
 \end{equation*}
 Since $x\ee^{-x/2}\leq 1$ for all $x\geq 0$ it follows that 
 \begin{equation*}
  \abs*{\del_{q}(\T g )(q)}\leq \frac{1}{q}\int_{0}^{\infty} g (x)\ee^{-\frac{q}{2}x}\dx=\frac{\abs*{(\T g )(q/2)}}{q}.
 \end{equation*}
This shows the first part of the claim, while the second one follows in the same way.
\end{proof}

\begin{lemma}\label{Lem:norm:non:neg:meas}
 Let $\mu\in[0,\mu_{*})$, $\chi>0$ and $ g \in\M_{+}^{\text{fin}}(0,\infty)$ such that $\T g \in\X{1}{\mu}{\chi}$. Then there exists $C>0$ such that $\lsnorm*{2}{\mu}{\chi}{\T g }\leq C \lsnorm*{1}{\mu}{\chi}{\T g }$ which in particular implies $\T g \in \X{2}{\mu}{\chi}$.
\end{lemma}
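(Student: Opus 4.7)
The plan is to combine Lemma~\ref{Lem:non:neg:measures} with the definition of the seminorms in a direct manner. By definition we have
\begin{equation*}
 \lsnorm*{2}{\mu}{\chi}{\T g}=\sup_{q>0}\Bigl((1+q)^{\chi+\mu+\rho}q^{2-\rho-\mu}\abs*{\del_{q}^{2}(\T g)(q)}\Bigr),
\end{equation*}
so the task reduces to bounding $\abs*{\del_{q}^{2}(\T g)(q)}$ by something proportional to $(1+q)^{-\chi-\mu-\rho}q^{\rho+\mu-2}\lsnorm*{1}{\mu}{\chi}{\T g}$. Because $g$ is non-negative, Lemma~\ref{Lem:non:neg:measures} immediately gives
\begin{equation*}
 \abs*{\del_{q}^{2}(\T g)(q)}\leq \frac{\abs*{\del_{q}(\T g)(q/2)}}{q}.
\end{equation*}

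Next I would apply the definition of $\lsnorm*{1}{\mu}{\chi}{\T g}$ at the shifted point $q/2$, which yields
\begin{equation*}
 \abs*{\del_{q}(\T g)(q/2)}\leq \frac{\lsnorm*{1}{\mu}{\chi}{\T g}}{(1+q/2)^{\chi+\mu+\rho}(q/2)^{1-\rho-\mu}}.
\end{equation*}
Substituting this into the previous inequality gives
\begin{equation*}
 (1+q)^{\chi+\mu+\rho}q^{2-\rho-\mu}\abs*{\del_{q}^{2}(\T g)(q)}\leq \lsnorm*{1}{\mu}{\chi}{\T g}\cdot \Bigl(\frac{1+q}{1+q/2}\Bigr)^{\chi+\mu+\rho}\Bigl(\frac{q}{q/2}\Bigr)^{1-\rho-\mu}.
\end{equation*}

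The main (and only) point to check is that the right-hand factor is uniformly bounded in $q>0$. For the second factor this is trivial since $(q/(q/2))^{1-\rho-\mu}=2^{1-\rho-\mu}$, and this exponent is bounded because $\mu\in[0,\mu_{*})\subset[0,1-\rho)$. For the first factor, one uses the elementary inequality $1+q\leq 2(1+q/2)$ to conclude $((1+q)/(1+q/2))^{\chi+\mu+\rho}\leq 2^{\chi+\mu+\rho}$. Taking the supremum over $q>0$ therefore yields $\lsnorm*{2}{\mu}{\chi}{\T g}\leq C\lsnorm*{1}{\mu}{\chi}{\T g}$ with $C=2^{\chi+1}$.

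Finally, the assertion $\T g\in\X{2}{\mu}{\chi}$ follows because, by definition, $\lfnorm*{2}{\mu}{\chi}{\T g}=\lfnorm*{0}{-\rho}{\chi}{\T g}+\lsnorm*{1}{\mu}{\chi}{\T g}+\lsnorm*{2}{\mu}{\chi}{\T g}$; the first two summands are already finite by the hypothesis $\T g\in\X{1}{\mu}{\chi}$, and the third has just been shown to be controlled by the second. I do not expect any serious obstacle here, as the argument is a one-shot application of the pointwise estimate from Lemma~\ref{Lem:non:neg:measures} combined with the scale-invariance (up to constants) of the weight $(1+q)^{\chi+\mu+\rho}q^{1-\rho-\mu}$ under the substitution $q\mapsto q/2$.
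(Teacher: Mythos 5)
Your proof is correct and takes essentially the same approach as the paper: both rely on the pointwise bound $\abs*{(\T g)''(q)}\leq q^{-1}\abs*{(\T g)'(q/2)}$ from Lemma~\ref{Lem:non:neg:measures}, then insert the definition of $\lsnorm*{1}{\mu}{\chi}{\cdot}$ at the shifted point $q/2$ and absorb the resulting ratio of weights into the constant. The paper compresses the argument into a single chain of inequalities, whereas you write out the constant $C=2^{\chi+1}$ explicitly, but the content is identical.
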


\begin{proof}
 From Lemma~\ref{Lem:non:neg:measures} it follows that
 \begin{equation*}
  \abs*{(\T g )''(q)}\leq \frac{\abs*{(\T g )'(q/2)}}{q}\leq \lsnorm*{1}{\mu}{\chi}{\T g }\frac{1}{q}\frac{1}{\left(\frac{q}{2}\right)^{1-\rho-\mu}\left(1+\frac{q}{2}\right)^{\chi+\rho+\mu}}\leq C\frac{\lsnorm*{1}{\mu}{\chi}{\T g }}{q^{2-\rho-\mu}(1+q)^{\chi+\rho+\mu}}.
 \end{equation*}
 The claim then follows from the definition of the semi-norm.
\end{proof}

\begin{remark}\label{Rem:est:der:by:zero:norm}
 Under the same conditions as in Lemma~\ref{Lem:norm:non:neg:meas} one can show that 
 \begin{equation*}
  \abs*{(\T g )'(q)}\leq 2^{\theta}\lfnorm*{0}{-\rho}{\theta}{\T g }\frac{1}{q(1+q)^{\theta}}\quad \text{for all }q>0.
 \end{equation*}
However, it is not possible to estimate $\lsnorm*{1}{\mu}{\chi}{\T g }$ by $\lfnorm*{0}{-\rho}{\theta}{\T g }$ since the latter does not contain enough information about the regularity for $q$ close to zero.
\end{remark}

The next lemma states that the norms $\lfnorm{k}{\mu}{\chi}{\cdot}$ behave well under shifts in Laplace variables.

\begin{lemma}\label{Lem:norm:shift}
 For all parameters $\mu\in[0,\mu_{*})$, $\chi>0$ and $\tau>0$ it holds for all $g\in\M^{\text{fin}}(0,\infty)$ with $(\T g)\in\X{0}{\mu}{\chi}$ that 
 \begin{equation*}
  \lfnorm*{0}{-\rho}{\chi}{\T\bigl(\ee^{-\tau\cdot}g\bigr)}\leq \lfnorm*{0}{-\rho}{\chi}{\T g} \quad \text{and} \quad \lfnorm*{0}{-\rho}{\chi}{\T\bigl((1-\ee^{-\tau\cdot})g\bigr)}\leq 2\lfnorm*{0}{-\rho}{\chi}{\T g}.
  \end{equation*}
  Moreover, if $(\T g)\in\X{k}{\mu}{\chi}$ we also have
  \begin{equation*}
   \lsnorm*{k}{\mu}{\chi}{\T\bigl(\ee^{-\tau\cdot}g\bigr)}\leq \lfnorm*{k}{\mu}{\chi}{\T g} \quad \text{and} \quad \lfnorm*{k}{\mu}{\chi}{\T\bigl((1-\ee^{-\tau\cdot})g\bigr)}\leq 2\lfnorm*{k}{\mu}{\chi}{\T g}
  \end{equation*}
 for $k=1,2$.
\end{lemma}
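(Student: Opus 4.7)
The plan is to reduce everything to a pointwise statement about the function $G \vcc= \T g$ and its translates, using the elementary Laplace identities
\begin{equation*}
 \T\bigl(\ee^{-\tau\cdot}g\bigr)(q) = G(q+\tau), \qquad \T\bigl((1-\ee^{-\tau\cdot})g\bigr)(q) = G(q) - G(q+\tau),
\end{equation*}
which extend \eqref{eq:Laplace:easy:shift} verbatim to any $\tau>0$ (the case $\tau=1$ stated in that remark works the same way after simply replacing the exponent). Since differentiation in $q$ commutes with translation, we have $\del_q^k \T(\ee^{-\tau\cdot}g)(q) = G^{(k)}(q+\tau)$.

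Next I would verify monotonicity of the weights appearing in the semi-norm. Define $w_{k,\mu,\chi}(q)\vcc=(1+q)^{\chi+\mu+\rho}q^{k-\rho-\mu}$; for $k=0$ with $\mu=-\rho$ this reduces to $(1+q)^{\chi}$, which is clearly non-decreasing. For $k=1,2$ the logarithmic derivative is $(\chi+\mu+\rho)/(1+q)+(k-\rho-\mu)/q$, and the constraint $\mu\in[0,\mu_*)$ guarantees $k-\rho-\mu\geq 1-\rho-\mu_* \geq 0$, so $w_{k,\mu,\chi}$ is non-decreasing on $(0,\infty)$ in all relevant cases. Consequently, for any $\tau>0$,
\begin{equation*}
 \lsnorm*{k}{\mu}{\chi}{G(\cdot+\tau)} = \sup_{q>0} w_{k,\mu,\chi}(q)\,\abs*{G^{(k)}(q+\tau)} \leq \sup_{q'>\tau} w_{k,\mu,\chi}(q')\,\abs*{G^{(k)}(q')} \leq \lsnorm*{k}{\mu}{\chi}{G},
\end{equation*}
and the analogous inequality holds for $\lfnorm*{0}{-\rho}{\chi}{\cdot}$. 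This immediately yields the first pair of estimates in the lemma, since $\lfnorm*{k}{\mu}{\chi}{G(\cdot+\tau)} = \lfnorm*{0}{-\rho}{\chi}{G(\cdot+\tau)} + \sum_{\ell=1}^k \lsnorm*{\ell}{\mu}{\chi}{G(\cdot+\tau)} \leq \lfnorm*{k}{\mu}{\chi}{G}$.

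For the second pair, the triangle inequality applied at each level $\ell\in\{0,1,\dots,k\}$ gives
\begin{equation*}
 \lsnorm*{\ell}{\mu}{\chi}{G-G(\cdot+\tau)} \leq \lsnorm*{\ell}{\mu}{\chi}{G}+\lsnorm*{\ell}{\mu}{\chi}{G(\cdot+\tau)} \leq 2\lsnorm*{\ell}{\mu}{\chi}{G},
\end{equation*}
(with the obvious modification for $\ell=0$, $\mu=-\rho$), and summing over $\ell$ produces the factor-$2$ bound on $\lfnorm*{k}{\mu}{\chi}{\T((1-\ee^{-\tau\cdot})g)}$.

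There is essentially no obstacle: the only point requiring care is verifying that the weight $w_{k,\mu,\chi}$ is monotone for the admissible parameter range, which rests on the constraint $\mu<\mu_*\leq 1-\rho$ that forces $k-\rho-\mu\geq 0$ for $k\geq 1$. Everything else is bookkeeping with the triangle inequality and the change of variable $q'=q+\tau$.
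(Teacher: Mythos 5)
Your proof is correct and follows exactly the route the paper takes: it reduces the claim to the monotonicity of the weights $(1+q)^{\chi}$ and $q^{k-\rho-\mu}(1+q)^{\chi+\rho+\mu}$ in $q$ (which the paper invokes in a single sentence), combined with the identification of $\T(\ee^{-\tau\cdot}g)$ with the shift $G(\cdot+\tau)$ and the triangle inequality for the difference. The monotonicity check using $\mu<\mu_{*}\leq 1-\rho$ to ensure $k-\rho-\mu>0$ for $k\geq 1$ is precisely the verification that the paper's terse proof leaves to the reader.
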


\begin{proof}
 The estimates on $\T(\ee^{-\tau\cdot}g(\cdot))$ follow immediately from the definition of the norm and the monotonicity properties of $(1+q)^{\chi}$ and $q^{k-\rho-\mu}(1+q)^{\chi+\rho+\mu}$ as functions of $q$. The estimates on $\T((1-\ee^{-\tau\cdot})g(\cdot))$ are then an immediate consequence.
\end{proof}

The next lemma states that the expression $\T\bigl((1-\ee^{-n\cdot})g\bigr)$ can also be estimates by $\T\bigl((1-\zeta)g\bigr)$ up to constant which grows linearly with $n$.

\begin{lemma}\label{Lem:norm:shift:by:shift}
 For every $\mu\in[0,\mu_{*})$ and $\chi>0$ there exists a constant $C>0$ such that it holds
 \begin{equation*}
  \lfnorm*{k}{\mu}{\chi}{\T\bigl((1-\ee^{-n\cdot})g\bigr)}\leq Cn\lfnorm*{k}{\mu}{\chi}{\T\bigl((1-\zeta)g\bigr)}
 \end{equation*}
 for all $n\in\N$ and all $g\in\M^{\text{fin}}(0,\infty)$ such that $(\T g)\in\X{k}{\mu}{\chi}$ for $k=1,2$.
\end{lemma}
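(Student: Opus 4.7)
The proof rests on the elementary telescoping identity
\begin{equation*}
 1 - \ee^{-nz} = (1 - \ee^{-z})\sum_{k'=0}^{n-1}\ee^{-k'z},
\end{equation*}
which expresses $(1-\ee^{-n\cdot})g$ as a finite sum of multiplicative shifts of $(1-\zeta)g$. The plan is to apply $\T$ to this decomposition and control each summand separately by means of Lemma~\ref{Lem:norm:shift}, then collect the contributions through a single triangle-inequality step.

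First, I would multiply the identity by $g(z)$ and apply $\T$ termwise to obtain
\begin{equation*}
 \T\bigl((1-\ee^{-n\cdot})g\bigr) = \sum_{k'=0}^{n-1} \T\bigl(\ee^{-k'\cdot}(1-\zeta)g\bigr).
\end{equation*}
Here each summand is a Laplace transform of a genuine shift: by the elementary relation $\T(\ee^{-k'\cdot}h)(q) = (\T h)(q+k')$, the $k'$-th term equals $\bigl(\T((1-\zeta)g)\bigr)(\cdot + k')$, which is well-defined and belongs to $\X{k}{\mu}{\chi}$ under our hypotheses.

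Next, I would bound each term using Lemma~\ref{Lem:norm:shift} applied with $\tau = k'$ to the measure $(1-\zeta)g$. This gives
\begin{equation*}
 \lfnorm*{0}{-\rho}{\chi}{\T\bigl(\ee^{-k'\cdot}(1-\zeta)g\bigr)} \leq \lfnorm*{0}{-\rho}{\chi}{\T\bigl((1-\zeta)g\bigr)}
\end{equation*}
together with the analogous estimate $\lsnorm*{\ell}{\mu}{\chi}{\T(\ee^{-k'\cdot}(1-\zeta)g)} \leq \lfnorm*{\ell}{\mu}{\chi}{\T((1-\zeta)g)}$ for $\ell = 1,2$. Summing these (at most $k+1$) contributions and using that $\lfnorm*{\ell}{\mu}{\chi}{\cdot} \leq \lfnorm*{k}{\mu}{\chi}{\cdot}$ for $\ell \leq k$ yields, with a constant $C$ depending only on $k$,
\begin{equation*}
 \lfnorm*{k}{\mu}{\chi}{\T\bigl(\ee^{-k'\cdot}(1-\zeta)g\bigr)} \leq C\lfnorm*{k}{\mu}{\chi}{\T\bigl((1-\zeta)g\bigr)},
\end{equation*}
with $C$ independent of both $k' \in \{0,\dots,n-1\}$ and $n$.

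Finally, combining the decomposition with the triangle inequality for $\lfnorm*{k}{\mu}{\chi}{\cdot}$ and the uniform per-summand bound gives
\begin{equation*}
 \lfnorm*{k}{\mu}{\chi}{\T\bigl((1-\ee^{-n\cdot})g\bigr)} \leq \sum_{k'=0}^{n-1}\lfnorm*{k}{\mu}{\chi}{\T\bigl(\ee^{-k'\cdot}(1-\zeta)g\bigr)} \leq Cn\,\lfnorm*{k}{\mu}{\chi}{\T\bigl((1-\zeta)g\bigr)},
\end{equation*}
which is the claimed estimate. There is no serious obstacle: the only slightly delicate point is checking that the derivative parts of the norm genuinely behave well under the shifts $q \mapsto q+k'$, but this is precisely the content of Lemma~\ref{Lem:norm:shift}, so the result reduces to bookkeeping on the telescoping sum.
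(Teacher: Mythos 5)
Your proof is correct and uses essentially the same approach as the paper: the key identity $(1-\ee^{-nz})=(1-\zeta(z))\sum_{\ell=0}^{n-1}\ee^{-\ell z}$ followed by a uniform per-shift bound and a triangle inequality over the $n$ summands. The paper streamlines the bookkeeping slightly by first invoking Lemma~\ref{Lem:seminorm:improved} to reduce to the seminorms $\lsnorm*{k}{\mu}{\chi}{\cdot}$ and then applying the weight monotonicity~\eqref{eq:weight:monotonicity} directly, but this is equivalent to packaging the uniform shift bound through Lemma~\ref{Lem:norm:shift} as you do (for $\ell=0$ the shift bound is a trivial equality, so the restriction $\tau>0$ in that lemma is harmless).
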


\begin{proof}
 Due to Lemma~\ref{Lem:seminorm:improved} it suffices to show that $\lsnorm*{k}{\mu}{\chi}{\T\bigl((1-\ee^{-n\cdot})g\bigr)}\leq n\lsnorm*{k}{\mu}{\chi}{\T\bigl((1-\zeta)g\bigr)}$ for $k=1,2$. To see this, we first observer that $(1-\ee^{-zn})=(1-\zeta)\sum_{\ell=0}^{n-1}\ee^{-\ell z}$ which yields
 \begin{equation*}
  \del_{q}^{k}\biggl(\int_{0}^{\infty}(1-\ee^{-nz})g(z)\ee^{-qz}\dz\biggr)=\sum_{\ell=0}^{n-1}\del_{q}^{k}\biggl(\int_{0}^{\infty}(1-\zeta(z))g(z)\ee^{-(\ell+q)z}\dz\biggr).
 \end{equation*}
Together with~\eqref{eq:weight:monotonicity} this relation implies
\begin{equation*}
 \abs*{\del_{q}^{k}\biggl(\int_{0}^{\infty}(1-\ee^{-nz})g(z)\ee^{-qz}\dz\biggr)}\leq \sum_{\ell=0}^{n-1}\frac{\lsnorm*{k}{\mu}{\chi}{\T((1-\zeta)g)}}{(\ell+q)^{k-\rho-\mu}(\ell+q+1)^{\chi+\rho+\mu}}\leq \frac{n\lsnorm*{k}{\mu}{\chi}{\T((1-\zeta)g)}}{q^{k-\rho-\mu}(q+1)^{\chi+\rho+\mu}}
\end{equation*}
from which the claim readily follows.
\end{proof}

\section{The representation formula for $W$}\label{Sec:Proof:Gamma}

In this section, we will on the one hand sketch the proof of Proposition~\ref{Prop:W:representation} and moreover, we will provide several integral estimates on the representation kernel $\Ker$.

\subsection{Proof of Proposition~\ref{Prop:W:representation}}

Note that the statement of Proposition~\ref{Prop:W:representation} only depends on the assumptions on the kernel $W$ and it is in particular completely independent of the kind of self-similar profiles that one considers. In particular the same result has already been used and proved in~\cite{NTV15} while a revised proof is also contained in~\cite{Thr16}. For the sake of completeness, we again sketch the argument by indicating the main steps of the proof while more details may be found either in~\cite{NTV15} or~\cite{Thr16}. The following presentation is based on the version which is contained in~\cite{Thr16}. 

\begin{proof}[Proof of Proposition~\ref{Prop:W:representation}]

By abuse of notation we write $\W(z)\vcc=W(z,1)$ and we recall from Section~\ref{Sec:Assumptions:W} that $\W_{\pm}(z)$ is the restriction of $\W$ to $\HH_{\pm}$. Due to~\eqref{eq:Ass:analytic}, the function $\W$ is analytic in $\Ccut$. Moreover, we note that~\eqref{eq:Ass:analytic} yields 
\begin{equation}\label{eq:growth:G}
 \abs*{\W_{\pm}(z)}\leq C \weight{-\alpha}{-\alpha}\bigl(\abs*{z}\bigr)\qquad \text{and}\qquad \abs*{\W'_{\pm}(z)}\leq C \weight{-\alpha-1}{1-\alpha}\bigl(\abs*{z}\bigr) \qquad \text{for all }z\in\overline{\HH}_{\pm}\setminus\{0\}.
\end{equation}
Before we outline how the statement of Proposition~\ref{Prop:W:representation} can be shown, we give a motivation for the later considerations.

\paragraph{Motivation:}

Assume that we already have the existence of a homogeneous measure $\Ker$ as stated in Proposition~\ref{Prop:W:representation} which satisfies $(x+y)^{-1}W(x,y)=\int_{0}^{\infty}\int_{0}^{\infty}\Ker(\xi,\eta)\ee^{-\xi x-\eta y}\dxi\deta$. Due to homogeneity, one immediately checks that the left-hand side can be rewritten as
\begin{equation}\label{eq:representation:W:rewritten}
 \frac{W(x,y)}{x+y}=\frac{1}{y}\frac{\W(x/y)}{x/y+1}.
\end{equation}
On the other hand, performing the change of variables $\xi\mapsto \xi\eta$ and evaluating the integral in $\xi$ explicitly, one gets for the right-hand side that
\begin{multline*}
 \int_{0}^{\infty}\int_{0}^{\infty}\Ker(\xi,\eta)\ee^{-\xi x-\eta y}\dxi\deta=\int_{0}^{\infty}\int_{0}^{\infty}\Ker(1,\eta)\ee^{-\xi(x+\eta y)}\dxi\deta\\*
 =\int_{0}^{\infty}\frac{\Ker(1,\eta)}{x+\eta y}\deta=\frac{1}{y}\int_{0}^{\infty}\frac{\Ker(1,\eta)}{x/y+\eta}\deta.
\end{multline*}
These relations suggest to look for a solution $\phi$ of the integral equation
\begin{equation*}\label{eq:construct:Ker:formal}
 \frac{\W(z)}{z+1}=\int_{0}^{\infty}\frac{\phi(\eta)}{\eta+z}\deta
\end{equation*}
in order to construct $\Ker$. Since our argument relies crucially on methods from complex analysis, it turns out that we also have to remove the pole at $z=-1$ on the right-hand side such that we should look instead for a function $\phi\colon(0,\infty)\to\R$ which satisfies
\begin{equation}\label{eq:prob:G}
 \frac{\W(z)-\W(-1)}{1+z}=\int_{0}^{\infty}\frac{\phi(\eta)}{\eta+z}\deta\quad \text{for all }z\in\C.
\end{equation}
The constant $\W(-1)$ is well-defined due to Remark~\ref{Rem:existence:Wminusone}. Note that it is exactly this term where the singular part of the measure $\Ker$ stems from. More precisely, $\widehat{\phi}(\cdot)=\delta(\cdot-1)$ is the explicit solution to the equation
\begin{equation*}
 \frac{1}{1+z}=\int_{0}^{\infty}\frac{\widehat{\phi}(\eta)}{\eta+z}\deta.
\end{equation*}

In order to detect an appropriate candidate for $\phi$ we note that the Sokhotski-Plemelj formula of complex analysis suggests for sufficiently regular $\phi$ that
\begin{equation*}
 \begin{split}
  \lim_{\nu\to 0^{+}}\frac{\W(z_0+\nu\im)-\W(-1)}{z_0+\nu\im +1}&=-\pi\im\phi\bigl(\abs*{z_0}\bigr)+\mathrm{p.v.}\int_{0}^{\infty}\frac{\phi(\eta)}{\eta-\abs*{z_0}}\deta\\
  \lim_{\nu\to 0^{+}}\frac{\W(z_0-\nu\im)-\W(-1)}{z_0-\nu\im +1}&=\pi\im\phi\bigl(\abs*{z_0}\bigr)+\mathrm{p.v.}\int_{0}^{\infty}\frac{\phi(\eta)}{\eta-\abs*{z_0}}\deta
 \end{split}
\end{equation*}
for $z_{0}\in(-\infty,0)$. Note that $\frac{1}{\eta-\abs*{z_0}+\nu\im}\to -\pi\im \delta(\cdot-\abs*{z_0})+\mathrm{p.v.}\bigl(\frac{1}{\eta-\abs*{z_0}}\bigr)$ for $\nu\to 0$ and $\mathrm{p.v.}$ denotes 'principle value'.

This suggest that $\phi$ is given by
\begin{equation}\label{eq:representation:phi:formal}
 \phi(s)=\frac{1}{2\pi\im(1-s)}\Bigl[\,\lim_{\nu\to 0^{+}}\bigl(\W(-s-\nu\im)-\W(-1)\bigr)-\lim_{\nu\to 0^{+}}\bigl(\W(-s+\nu\im)-\W(-1)\bigr)\,\Bigr].
\end{equation}

The proof of Proposition~\ref{Prop:W:representation} can be divided in essentially four main steps. In the first one, $\phi$ is precisely defined and suitable bounds for small and large values are provided. In the second step one has to show that $\phi$ really has the required regularity to make the formal considerations above precise, i.e.\@ $\phi$ is locally Hölder continuous (in the same sense as in~\cref{eq:Ass:hoelder:1,eq:Ass:hoelder:2}). The third step consists of the verification that $\phi$ really satisfies~\eqref{eq:prob:G} while in the fourth step the proof is concluded by showing that $\Ker$ yields the desired representation formula for $W$ and has the claimed properties.

\paragraph{Step 1:}

Based on~\eqref{eq:representation:phi:formal} we define $\phi\colon (0,\infty)\to\R$ as
\begin{equation}\label{eq:phi:rep:1}
 \phi(s)\vcc=\frac{\W_{-}(-s)-\W_{-}(-1)-(\W_{+}(-s)-\W_{+}(-1))}{2\pi\im (1-s)}=\frac{\W_{-}(-s)-\W_{+}(-s)}{2\pi\im (1-s)}.
\end{equation}
Here we also used that $\W(-1)=\W_{+}(-1)=\W_{-}(-1)$ (see Remark~\ref{Rem:existence:Wminusone}). Note that $\phi$ is well-defined due to the regularity assumptions~\cref{eq:Ass:hoelder:1,eq:Ass:hoelder:2}. Moreover, we immediately obtain that $\phi(1)=\frac{1}{2\pi\im}\bigl(\W'_{-}(-1)-\W'_{+}(-1)\bigr)$.

We claim that $\phi$ satisfies the bound
\begin{equation}\label{eq:phi:decay}
 \abs*{\phi(s)}\leq C\weight{-\alpha}{1-\alpha}(s).
\end{equation}
This estimate can be verified by a straightforward computation from~\eqref{eq:growth:G}. More precisely, one first shows that
 \begin{equation}\label{eq:Gpm:decay}
  \abs*{\frac{\W_{\pm}(z)-\W_{\pm}(-1)}{z+1}}\leq C\weight{-\alpha}{1-\alpha}(\abs*{z})\qquad\text{for all }z\in \overline{\HH}_{\pm}\setminus\{0\}
 \end{equation}
 by considering the regions $\abs*{z}\geq 2$, $\abs*{z+1}\leq 1/2$ and $\{z\in \overline{\HH}_{\pm}\setminus\{0\}\;|\; \abs*{z+1}\geq 1/2 \text{ and  } \abs*{z}\leq 2\}$ separately. The estimate~\eqref{eq:phi:decay} then follows together with~\eqref{eq:phi:rep:1}.

\paragraph{Step 2:}

As the second step one has to verify that $\phi$ satisfies a local Hölder condition. The precise meaning of this is that it holds
\begin{equation}\label{eq:phi:hoel}
 \frac{\abs*{\phi(s)-\phi(t)}}{\abs*{s-t}^{\hol}}\leq C\begin{cases}
                                                        \min\{s,t\}^{-\alpha-\hol} & \text{if } \min\{s,t\}\leq 1\\
                                                        \min\{s,t\}^{\alpha-1-\hol} & \text{if } \min\{s,t\}\geq 1
                                                       \end{cases}
\qquad \text{for }\abs*{s-t}\leq \frac{\min\{s,t\}}{2} \text{ and }s,t>0
\end{equation}
with $\hol\in(0,1)$ as given by~\eqref{eq:Ass:hoelder:1}. This regularity then ensures that $\int_{0}^{\infty}\frac{\phi(\eta)}{z+\eta}\deta$ can also be extended to $z\in(-\infty,0)$ either in $\overline{\HH}_{+}\setminus \{0\}$ or $\overline{\HH}_{-}\setminus \{0\}$ due to the Sokhotski-Plemelj formula.

Again, the verification of~\eqref{eq:phi:hoel} is a lengthy but straightforward computation which essentially relies on the regularity properties~\cref{eq:Ass:hoelder:1,eq:Ass:hoelder:2}. Therefore, we only mention that one has to consider several different cases depending on the values of $s$ and $t$ while moreover the relation 
\begin{multline}\label{eq:phi:diff:rep}
  \frac{\W_{\pm}(-s)-\W_{\pm}(-1)}{1-s}-\frac{\W_{\pm}(-t)-\W_{\pm}(-1)}{1-t}=\int_{t}^{s}\del_{\tau}\biggl(\frac{\W_{\pm}(-\tau)-\W_{\pm}(-1)}{1-\tau}\biggr)\dtau\\
  =\int_{t}^{s}-\frac{\W_{\pm}'(-\tau)}{1-\tau}+\frac{\W_{\pm}(-\tau)-\W_{\pm}(-1)}{(1-\tau)^2}\dtau=\int_{t}^{s}\frac{\int_{-1}^{-\tau}\W_{\pm}'(\xi)-\W_{\pm}'(-\tau)\dxi}{(1-\tau)^2}\dtau
\end{multline}
turns out to be useful. More details may be found in~\cite{NTV15,Thr16}.

\paragraph{Step 3:}

This step is devoted to the verification of the relation~\eqref{eq:prob:G} which will rely essentially on standard arguments of complex analysis. More precisely, we consider the function
\begin{equation*}
 \Phi(z)\vcc=\frac{\W(z)-\W(-1)}{1+z}-\int_{0}^{\infty}\frac{\phi(\eta)}{z+\eta}\deta.
\end{equation*}
The general strategy is then to show on the one hand that $\Phi$ is an entire function, i.e.\@ analytic on $\C$ and on the other hand that $\abs*{\Phi(z)}\leq C\weight{0}{1-\alpha}(\abs*{z})$. Due to Liouville's Theorem we may then conclude that $\Phi$ is constant while the latter estimate for $\abs*{z}\to\infty$ directly yields $\Phi\equiv 0$. This then proves the desired relation~\eqref{eq:prob:G}.

To prove the analyticity of $\Phi$ and the corresponding estimate, we first note that immediately verifies that $\Phi$ is analytic in $\Ccut$ and that $\frac{\W(z)-\W(-1)}{1+z}$ is analytic in $\C\setminus\{0\}$. 

Moreover, we denote by $\Phi_{\pm}$ the restriction of $\Phi$ to $\HH_{\pm}$ and note that the considerations of Step~2 allow to extend $\Phi_{\pm}$ to $\overline{\HH}_{\pm}\setminus\{0\}$ since for $\int_{0}^{\infty}\frac{\phi(\eta)}{z\pm\im\nu+\eta}\deta$, the limit $\nu\to 0^{+}$ exists for all $z\in(-\infty,0)$. Moreover, $\Phi_{+}(z)=\Phi_{-}(z)$ for all $z\in(-\infty,0)$ due to the construction of $\phi$. Morera's Theorem thus yields the analyticity of $\Phi$ in $\C\setminus\{0\}$. 

To conclude it thus suffices to show that $\abs*{\Phi(z)}\leq C\weight{-\alpha}{1-\alpha}(\abs*{z})$. To see that this is really sufficient, we recall that $\Phi$ is analytic in $\C\setminus\{0\}$ and thus Riemann's Theorem together with $\abs*{\Phi(z)}\leq C\weight{-\alpha}{1-\alpha}(\abs*{z})$ yields that $\Phi$ has an analytic extension over zero which thus in addition also gives that we in fact have $\abs*{\Phi(z)}\leq C\weight{0}{1-\alpha}(\abs*{z})$.

Thus, it remains to verify that $\abs*{\Phi(z)}\leq C\weight{-\alpha}{1-\alpha}(\abs*{z})$, while the definition of $\Phi$ together with~\eqref{eq:Gpm:decay} implies that it suffices to prove that
\begin{equation}\label{eq:int:phi:decay}
 \abs*{\int_{0}^{\infty}\frac{\phi(\eta)}{z+\eta}\deta}\leq C\weight{-\alpha}{1-\alpha}(\abs*{z}).
\end{equation}
The verification of this estimate is rather straightforward and relies essentially on~\eqref{eq:phi:decay}. However, since there are again several case distinctions necessary which make the computations lengthy, we omit further details at this point and instead refer to~\cite{NTV15,Thr16}. 
 
 \paragraph{Step 4:}
 
 We can now conclude the proof and for this, we define the representation kernel $\Ker$ as
  \begin{equation*}
  \widetilde{\Ker}(\xi,\eta)\vcc=\frac{\phi(\eta/\xi)}{\xi}\qquad \text{and}\qquad \Ker(\xi,\eta)\vcc=\widetilde{\Ker}(\xi,\eta)+\W(-1)\delta(\xi-\eta).
 \end{equation*}
 By construction of $\phi$ together with~\eqref{eq:Ass:analytic} it follows that $\Ker$ is symmetric. Moreover, $\Ker$ is homogeneous of degree $-1$ by definition. Furthermore, the estimate
 \begin{equation*}
  \abs*{\widetilde{\Ker}(\xi,\eta)}\leq C\frac{1}{(\eta+\xi)^{1-\alpha}}\biggl(\frac{1}{\xi^{\alpha}}+\frac{1}{\eta^{\alpha}}\biggr)
 \end{equation*}
 directly follows from~\eqref{eq:phi:decay}. To conclude the proof we compute the integral
 \begin{equation*}
  \int_{0}^{\infty}\int_{0}^{\infty}\Ker(\xi,\eta)\ee^{-\xi x-\eta y}\deta\dxi=\int_{0}^{\infty}\int_{0}^{\infty}\frac{\phi(\eta/\xi)}{\xi}\ee^{-x\xi-y\eta}\deta\dxi+\W(-1)\int_{0}^{\infty}\ee^{-\xi(x+y)}\dxi.
 \end{equation*}
 The second integral on the right-hand side can be directly evaluated, while in the first one, we first change variables $\eta\mapsto\xi\eta$ and evaluate the integral in $\xi$ which yields
 \begin{equation*}
   \int_{0}^{\infty}\int_{0}^{\infty}\Ker(\xi,\eta)\ee^{-\xi x-\eta y}\deta\dxi=\int_{0}^{\infty}\frac{\phi(\eta)}{x+\eta y}\deta+\frac{\W(-1)}{x+y}=\frac{1}{y}\biggl(\int_{0}^{\infty}\frac{\phi(\eta)}{\frac{x}{y}+\eta}\deta+\frac{\W(-1)}{\frac{x}{y}+1}\biggr).
 \end{equation*}
 Since $\phi$ satisfies~\eqref{eq:prob:G}, we then obtain for $z=x/y$ that
\begin{equation*}
 \begin{split}
  \int_{0}^{\infty}\int_{0}^{\infty}\Ker(\xi,\eta)\ee^{-\xi y-\eta z}\deta\dxi&=\frac{1}{y}\frac{\W(x/y)}{x/y+1}=\frac{\W(x/y)}{x+y}.
 \end{split}
\end{equation*}
The claimed representation formula for $W$ then follows from the relation~\ref{eq:representation:W:rewritten}.
 \end{proof}
 
 \subsection{Integral estimates on $\Ker$}
 
 In this section, we collect several estimates on integrals involving the representation kernel $\Ker$ for $W$ as given by Proposition~\ref{Prop:W:representation}.

\begin{lemma}\label{Lem:Ker:est:most:general}
 Assume $\alpha\in(0,1/2)$ and let $a_1, a_2\in[0,1-\alpha)$ and $b_1, b_2\in(0,\infty)$ be given such that
 \begin{equation*}
  a_1+b_1+a_2+b_2>1,\quad a_1+a_2<1\quad \text{and}\quad a_k+b_k>\alpha\quad \text{for }k=1,2.
 \end{equation*}
 Then, there exists a constant $C>0$ such that it holds
 \begin{equation*}
  \int_{0}^{\infty}\int_{0}^{\infty}\frac{\abs*{\Ker(\xi,\eta)}}{\xi^{a_1}(\xi+1)^{b_1}\eta^{a_2}(\eta+1)^{b_2}}\deta\dxi\leq C
 \end{equation*}
 with $\Ker$ as given by Proposition~\ref{Prop:W:representation}.
\end{lemma}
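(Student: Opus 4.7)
My plan is to exploit the decomposition $\Ker(\xi,\eta)=\widetilde{\Ker}(\xi,\eta)+W_{\pm}(-1)\delta(\xi-\eta)$ provided by Proposition~\ref{Prop:W:representation} and estimate the two contributions separately. The Dirac part reduces immediately to the one-dimensional integral
\[
\int_{0}^{\infty}\frac{d\xi}{\xi^{a_1+a_2}(\xi+1)^{b_1+b_2}},
\]
whose finiteness at the origin is ensured by the hypothesis $a_1+a_2<1$ and whose finiteness at infinity follows from $a_1+a_2+b_1+b_2>1$.

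For the contribution of $\widetilde{\Ker}$ I would insert the pointwise bound
\[
\abs{\widetilde{\Ker}(\xi,\eta)}\leq \frac{C}{(\xi+\eta)^{1-\alpha}}\Bigl(\frac{1}{\xi^{\alpha}}+\frac{1}{\eta^{\alpha}}\Bigr)
\]
and observe that the two summands on the right are exchanged by the involution $(\xi,\eta,a_1,b_1,a_2,b_2)\leftrightarrow(\eta,\xi,a_2,b_2,a_1,b_1)$, under which all of the hypotheses are symmetric. It therefore suffices to bound
\[
J\vcc=\int_{0}^{\infty}\!\!\int_{0}^{\infty}\frac{d\eta\,d\xi}{(\xi+\eta)^{1-\alpha}\,\xi^{a_1+\alpha}(\xi+1)^{b_1}\,\eta^{a_2}(\eta+1)^{b_2}}.
\]
I would then partition the quadrant $(0,\infty)^{2}$ into the four regions $\{\xi,\eta\leq1\}$, $\{\xi\geq 1,\eta\leq 1\}$, $\{\xi\leq 1,\eta\geq 1\}$ and $\{\xi,\eta\geq 1\}$. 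On the three non-compact regions the crude replacement $(\xi+\eta)^{1-\alpha}\geq\max(\xi,\eta)^{1-\alpha}$ decouples the two variables, and the convergence of the resulting product integrals reduces to the one-variable conditions $a_2<1$, $a_1<1-\alpha$ and $a_k+b_k>\alpha$ on the mixed strips, and, after a further split $\xi\gtrless\eta$ on the region $\{\xi,\eta\geq 1\}$, to the joint condition $a_1+a_2+b_1+b_2>1$ together with $a_k+b_k>0$; every one of these is included in the hypotheses.

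The genuinely delicate step is the compact square $\{\xi,\eta\leq 1\}$, where the diagonal singularity of $(\xi+\eta)^{\alpha-1}$ couples non-trivially to the weights $\xi^{-a_1-\alpha}\eta^{-a_2}$ at the origin and no such decoupling inequality is strong enough. I would handle it by the scaling $\eta=\xi t$ (Jacobian $\xi$), which rewrites this contribution as
\[
\int_{0}^{1}\xi^{-a_1-a_2}\int_{0}^{1/\xi}\frac{dt}{(1+t)^{1-\alpha}t^{a_2}}\,d\xi,
\]
and then split the inner $t$-integral at $t=1$. The piece over $[0,1]$ is bounded by a finite constant because $a_2<1$, while the tail $\int_{1}^{1/\xi}t^{\alpha-1-a_2}dt$ is $O(1)$ when $a_2>\alpha$ and of order $\xi^{a_2-\alpha}$ when $a_2<\alpha$ (logarithmic in the borderline case). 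In the first sub-case the outer integral becomes $\int_{0}^{1}\xi^{-a_1-a_2}d\xi$, finite by $a_1+a_2<1$; in the second it collapses to $\int_{0}^{1}\xi^{-a_1-\alpha}d\xi$, whose finiteness is precisely $a_1<1-\alpha$. This is the only place in the proof where the strict upper bound $a_k<1-\alpha$ plays a genuine role, and it completes the argument.
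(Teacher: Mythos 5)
Your proof is correct and takes a genuinely different route from the paper's in the core estimate. Both arguments begin in the same way: peel off the Dirac part (whose one-dimensional integral converges precisely because $a_1+a_2<1$ controls the origin and $a_1+a_2+b_1+b_2>1$ controls infinity), insert the pointwise bound $\abs{\widetilde{\Ker}(\xi,\eta)}\leq C(\xi+\eta)^{\alpha-1}(\xi^{-\alpha}+\eta^{-\alpha})$, and use symmetry to reduce to a single double integral. The paper then decouples the whole double integral in one stroke: it fixes a parameter $\lambda$ in the interval $\bigl(\max\{1-\alpha-a_1-b_1,\,a_2-\alpha\},\,\min\{a_2+b_2-\alpha,\,1-\alpha-a_1\}\bigr)\cap(0,1-\alpha)$ and applies $(\xi+\eta)^{\alpha-1}\leq\xi^{-\lambda}\eta^{\alpha+\lambda-1}$, which turns the double integral into a product of two elementary convergent one-variable integrals. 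You instead partition the quadrant into four regions, use the cruder bound $(\xi+\eta)^{1-\alpha}\geq\max\{\xi,\eta\}^{1-\alpha}$ away from the compact square $[0,1]^2$, and resolve the near-origin square via the homogeneous substitution $\eta=\xi t$. Your route is longer and needs more case-checking, but it requires no parameter tuning and makes transparent which hypothesis enters in which region. It also quietly sidesteps a small delicacy in the paper's argument: the specific midpoint formula the paper writes down for $\lambda$ can land outside $(0,1-\alpha)$ for admissible data (take $a_1$ close to $1-\alpha$, $a_2=0$ and $b_1,b_2$ large, so that the $\max$ is negative and pulls the midpoint below zero), even though the interval of valid $\lambda$ is always nonempty under the hypotheses; your region-by-region estimate never has to make such a choice.
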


\begin{proof}
 The estimate provided by Proposition~\ref{Prop:W:representation} already yields
 \begin{multline}\label{eq:Ker:est:most:general}
  \int_{0}^{\infty}\int_{0}^{\infty}\frac{\abs*{\Ker(\xi,\eta)}}{\xi^{a_1}(\xi+1)^{b_1}\eta^{a_2}(\eta+1)^{b_2}}\deta\dxi\\*
  \leq C\int_{0}^{\infty}\frac{1}{\xi^{\alpha+a_1}(\xi+1)^{b_1}}\int_{0}^{\infty}\frac{1}{(\xi+\eta)^{1-\alpha}}\frac{1}{\eta^{a_2}(\eta+1)^{b_2}}\deta\dxi\\*
  +C\int_{0}^{\infty}\frac{1}{\eta^{\alpha+a_2}(\eta+1)^{b_2}}\int_{0}^{\infty}\frac{1}{(\xi+\eta)^{1-\alpha}}\frac{1}{\xi^{a_1}(\xi+1)^{b_1}}\dxi\deta\\*
  +C\int_{0}^{\infty}\frac{1}{\xi^{a_1+a_2}(\xi+1)^{b_1+b_2}}\dxi.
 \end{multline}
 The third integral on the right-hand side can be estimated directly by a constant due to the assumptions on $a_k$ and $b_k$. Moreover, since these conditions are symmetric under the change of indices, we see that the first and second integral on the right-hand side can be estimated in the same way. Thus, it suffices to estimate the first integral and for this, we note that the restrictions on $a_k$ and $b_k$ imply that
 \begin{equation*}
  \max\{1-\alpha-a_1-b_1,a_2-\alpha\}<\min\{a_2+b_2-\alpha,1-\alpha-a_1\}.
 \end{equation*}
 Thus, we may define
 \begin{equation*}
  \lambda\vcc=\frac{1}{2}\Bigl(\max\{1-\alpha-a_1-b_1,a_2-\alpha\}+\min\{a_2+b_2-\alpha,1-\alpha-a_1\}\Bigr)
 \end{equation*}
 and one can verify by a case distinction that it holds $\lambda\in(0,1-\alpha)$. We then use this parameter to split the exponent $1-\alpha=\lambda+(1-\alpha-\lambda)$ which yields the corresponding estimate $(\xi+\eta)^{\alpha-1}\leq \xi^{-\lambda}\eta^{\alpha+\lambda-1}$. With this, we estimate the first integral on the right-hand side of~\eqref{eq:Ker:est:most:general} as
 \begin{multline*}
   \int_{0}^{\infty}\frac{1}{\xi^{\alpha+a_1}(\xi+1)^{b_1}}\int_{0}^{\infty}\frac{1}{(\xi+\eta)^{1-\alpha}}\frac{1}{\eta^{a_2}(\eta+1)^{b_2}}\deta\dxi\\*
   \leq \int_{0}^{\infty}\frac{1}{\xi^{\alpha+a_1+\lambda_1}(\xi+1)^{b_1}}\dxi\int_{0}^{\infty}\frac{1}{\eta^{a_2+(1-\alpha-\lambda)}(\eta+1)^{b_2}}\deta\leq C.
  \end{multline*}
In the last step we used that the choice of $\lambda$ implies that both integrals on the right-hand side are bounded by a constant which only depends on the values $\alpha$ and $a_k,b_k$ for $k=1,2$.
\end{proof}

\begin{lemma}\label{Lem:kernel:est:partial:0}
 For each $\alpha\in(0,1)$ there exists a constant $C>0$ such that it holds
 \begin{equation*}
  \int_{0}^{\infty}\abs*{\Ker(\xi,\eta)}\frac{1}{(\eta+1)^{\theta}}\deta\leq C(\xi^{-\alpha}+\xi^{\alpha-\theta})
 \end{equation*}
 with $\theta$ as given in~\eqref{eq:choice:theta:standard}.
\end{lemma}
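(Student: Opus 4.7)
The approach is straightforward case analysis based on the explicit decomposition of $\Ker$ from Proposition~\ref{Prop:W:representation}, namely $\Ker(\xi,\eta)=\widetilde{\Ker}(\xi,\eta)+W_{\pm}(-1)\delta(\xi-\eta)$ with the pointwise bound $\abs*{\widetilde{\Ker}(\xi,\eta)}\leq C(\xi+\eta)^{\alpha-1}(\xi^{-\alpha}+\eta^{-\alpha})$. The delta contribution evaluates explicitly to $\abs*{W_{\pm}(-1)}(\xi+1)^{-\theta}$, which is clearly bounded by $C\xi^{-\alpha}$ for $\xi\leq 1$ and by $C\xi^{-\theta}\leq C\xi^{\alpha-\theta}$ for $\xi\geq 1$, so this piece is already of the right form. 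For the absolutely continuous part, I split the integral as
\begin{equation*}
 \int_{0}^{\infty}\frac{1}{(\xi+\eta)^{1-\alpha}(\eta+1)^{\theta}}\biggl(\frac{1}{\xi^{\alpha}}+\frac{1}{\eta^{\alpha}}\biggr)\deta=\vcc \xi^{-\alpha} I_{1}(\xi)+I_{2}(\xi)
\end{equation*}
and estimate each piece separately.

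For $I_{1}$, I split at $\eta=1$. When $\xi\leq 1$, a direct computation shows $\int_{0}^{1}(\xi+\eta)^{\alpha-1}\deta$ is uniformly bounded and $\int_{1}^{\infty}\eta^{\alpha-1-\theta}\deta<\infty$ because $\theta>\alpha$ (cf.~\eqref{eq:choice:theta:standard}); hence $\xi^{-\alpha}I_{1}(\xi)\leq C\xi^{-\alpha}$. When $\xi\geq 1$, I split instead at $\eta=\xi$: on $[0,\xi]$ I bound $(\xi+\eta)^{\alpha-1}\leq \xi^{\alpha-1}$ and compute $\int_{0}^{\xi}(\eta+1)^{-\theta}\deta\leq C\xi^{1-\theta}$, while on $[\xi,\infty)$ I drop $\xi$ from the first factor to obtain $\int_{\xi}^{\infty}\eta^{\alpha-1-\theta}\deta\leq C\xi^{\alpha-\theta}$. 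Both pieces yield $\xi^{-\alpha}I_{1}(\xi)\leq C\xi^{-\theta}\leq C\xi^{\alpha-\theta}$ when $\xi\geq 1$.

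The expected sticking point is $I_{2}$ in the regime $\xi\leq 1$: after dropping $(\eta+1)^{\theta}$ on $[0,1]$ one is left with $\int_{0}^{1}(\xi+\eta)^{\alpha-1}\eta^{-\alpha}\deta$, and the naive split at $\eta=\xi$ controls the contribution from $[0,\xi]$ uniformly but produces $\int_{\xi}^{1}\eta^{-1}\deta\asymp -\log\xi$ from $[\xi,1]$ (the change of variables $\eta=\xi u$ makes this transparent). This logarithmic blow-up is exactly what determines the threshold: it is harmless because $-\log\xi\leq C_{\alpha}\xi^{-\alpha}$ for $\xi\in(0,1]$ and any $\alpha>0$, so the full contribution still fits under $C\xi^{-\alpha}$. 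The tail on $[1,\infty)$ reduces to $\int_{1}^{\infty}(\xi+\eta)^{\alpha-1}\eta^{-\alpha-\theta}\deta$, bounded by a constant when $\xi\leq 1$ (since $\alpha+\theta<1$ by the choice of $\theta$) and by $C\xi^{\alpha-\theta}$ when $\xi\geq 1$ via the scaling $\eta=\xi u$, using once more $\alpha<\theta<1$ to secure convergence of the resulting $u$-integral at both endpoints.

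Collecting the four cases and the Dirac contribution yields the claimed bound $C(\xi^{-\alpha}+\xi^{\alpha-\theta})$. No finer argument is needed beyond the standard two-scale splittings, and the only delicate point is the logarithmic divergence in $I_{2}$ for small $\xi$, which is absorbed into the singular target bound $\xi^{-\alpha}$.
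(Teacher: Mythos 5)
Your argument is correct in substance but takes a more laborious route than the paper. The paper dispatches the lemma in two lines by applying the pointwise inequalities $(\xi+\eta)^{\alpha-1}\leq\eta^{\alpha-1}$ (paired with the $\xi^{-\alpha}$ factor in the bound on $\widetilde{\Ker}$) and $(\xi+\eta)^{\alpha-1}\leq\xi^{\alpha-\theta}\eta^{\theta-1}$ (paired with the $\eta^{-\alpha}$ factor); this immediately reduces the claim to the two $\xi$-independent integrals $\int_{0}^{\infty}\eta^{\alpha-1}(\eta+1)^{-\theta}\deta$ and $\int_{0}^{\infty}\eta^{\theta-\alpha-1}(\eta+1)^{-\theta}\deta$, both finite precisely because $0<\alpha<\theta$, with the Dirac contribution $(\xi+1)^{-\theta}\leq\xi^{-\alpha}$ handled in one line. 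The key difference is that your choice $(\xi+\eta)^{\alpha-1}\leq\eta^{\alpha-1}$ in the piece with $\eta^{-\alpha}$ forces the logarithmic divergence $\int_{\xi}^{1}\eta^{-1}\deta$ that you then have to absorb into $\xi^{-\alpha}$; the paper's sharper split $(\xi+\eta)^{\alpha-1}\leq\xi^{\alpha-\theta}\eta^{\theta-1}$ shifts just enough of the negative exponent onto $\xi$ to make the $\eta$-integral convergent at the origin, so no logarithm appears and the $\xi\lessgtr 1$ casework disappears entirely. Two small blemishes in your write-up: in the treatment of $I_2$ you do not explicitly address the contribution from $\eta\in[0,1]$ when $\xi\geq 1$ (it is easy: bound $(\xi+\eta)^{\alpha-1}\leq\xi^{\alpha-1}$ and use $\int_{0}^{1}\eta^{-\alpha}\deta<\infty$, giving $C\xi^{\alpha-1}\leq C\xi^{\alpha-\theta}$, but you should say so); and the parenthetical appeal to $\alpha+\theta<1$ for the $\xi\leq 1$ tail estimate is misplaced — there you only need $\theta>0$ after dropping $\xi$ — that condition is what you actually use later, at the lower endpoint of the $u$-integral after rescaling in the $\xi\geq 1$ case.
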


\begin{proof}
 With Proposition~\ref{Prop:W:representation} and the elementary estimates $(\xi+\eta)^{\alpha-1}\leq \eta^{\alpha-1}$ and $(\xi+\eta)^{\alpha-1}\leq \xi^{\alpha-\theta}\eta^{\theta-1}$ we find that
 \begin{multline*}
  \int_{0}^{\infty}\abs*{\Ker(\xi,\eta)}\frac{1}{(\eta+1)^{\theta}}\deta\leq C\xi^{-\alpha}\int_{0}^{\infty}\frac{1}{\eta^{1-\alpha}(\eta+1)^{\theta}}\deta\\*
  +C\xi^{\alpha-\theta}\int_{0}^{\infty}\frac{1}{\eta^{1+\alpha-\theta}(\eta+1)^{\theta}}\deta+C(\xi+1)^{-\theta}.
 \end{multline*}
Since the remaining integrals on the right-hand side can be estimated by a constant and $(\xi+1)^{-\theta}\leq \xi^{-\alpha}$ for all $\xi>0$ the claim immediately follows.
\end{proof}

\begin{lemma}\label{Lem:kernel:est:partial:1}
 For each $\alpha\in(0,1)$ there exists a constant $C>0$ such that it holds
 \begin{equation*}
  \int_{0}^{\infty}\abs*{\Ker(\xi,\eta)}\frac{1}{\eta^{1-\rho}(1+\eta)^{\theta+\rho}}\deta\leq C\Bigl(\bigl(\xi^{\rho-1}+\xi^{-\alpha}\bigr)\abs*{\log(\xi)}\chi_{\{\chi<1\}}+\xi^{\alpha-1}\chi_{\{\xi\geq 1\}}\Bigr)
 \end{equation*}
with $\theta$ as given in~\eqref{eq:choice:theta:standard}
\end{lemma}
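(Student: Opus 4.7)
My plan is to estimate the integral by inserting the bound from Proposition~\ref{Prop:W:representation}, which splits $\Ker$ into an absolutely continuous part and a Dirac mass on the diagonal. The contribution of the Dirac part $W_{\pm}(-1)\delta(\xi-\eta)$ is immediate: it yields $C\xi^{\rho-1}(1+\xi)^{-\theta-\rho}$, which is bounded by $C\xi^{\rho-1}$ for $\xi\leq 1$ and by $C\xi^{-1-\theta}\leq C\xi^{\alpha-1}$ for $\xi\geq 1$ since $\theta>\alpha$ by~\eqref{eq:choice:theta:standard}. So it suffices to estimate
\[
 I(\xi)\vcc=\int_{0}^{\infty}\frac{1}{(\xi+\eta)^{1-\alpha}}\biggl(\frac{1}{\xi^{\alpha}}+\frac{1}{\eta^{\alpha}}\biggr)\frac{1}{\eta^{1-\rho}(1+\eta)^{\theta+\rho}}\deta.
\]

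The main idea is a case distinction on the relative size of $\xi$ and $\eta$ (i.e.\ which of $\xi$, $\eta$ dominates in $(\xi+\eta)^{1-\alpha}$), combined with a splitting of the $\eta$-integral at $\eta=1$ to decide which factor in $\eta^{1-\rho}(1+\eta)^{\theta+\rho}$ dominates. Concretely, for $\xi\leq 1$ I will split $I(\xi)$ as $\int_{0}^{\xi}+\int_{\xi}^{1}+\int_{1}^{\infty}$. On $(0,\xi)$ one has $(\xi+\eta)^{\alpha-1}\leq \xi^{\alpha-1}$ and $(1+\eta)^{\theta+\rho}\sim 1$, and elementary integration of $\eta^{\rho-1}$ and $\eta^{\rho-1-\alpha}$ produces a $\xi^{\rho-1}$ contribution. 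On $(\xi,1)$ one has $(\xi+\eta)^{\alpha-1}\leq \eta^{\alpha-1}$, which gives integrals of $\eta^{\alpha+\rho-2}$ (times $\xi^{-\alpha}$) and $\eta^{\rho-2}$; the first of these is bounded by $C\xi^{\rho-1}$ if $\alpha+\rho<1$, by $C\xi^{-\alpha}$ if $\alpha+\rho>1$, and produces exactly the logarithm $|\log\xi|$ in the borderline case $\alpha+\rho=1$, while the second is bounded by $C\xi^{\rho-1}$. On $(1,\infty)$ the factor $(1+\eta)^{\theta+\rho}$ takes over and the integrals decay as $\eta^{-\theta-2+\alpha}$ and $\eta^{-\theta-2}$, producing only a bounded contribution, which is dominated by $\xi^{-\alpha}$. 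Overall one obtains the bound $C(\xi^{\rho-1}+\xi^{-\alpha})|\log\xi|$ on $\{\xi<1\}$.

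For $\xi\geq 1$ I split $I(\xi)$ as $\int_{0}^{1}+\int_{1}^{\xi}+\int_{\xi}^{\infty}$. On $(0,1)$ one uses $(\xi+\eta)^{\alpha-1}\leq \xi^{\alpha-1}$ together with the integrability of $\eta^{\rho-1}$ and $\eta^{\rho-1-\alpha}$ at zero (which requires $\rho>\alpha$, as guaranteed by~\eqref{eq:choice:theta:standard}); this yields $C\xi^{\alpha-1}$. On $(1,\xi)$ one still has $(\xi+\eta)^{\alpha-1}\leq \xi^{\alpha-1}$ and the remaining integrals of $\eta^{-\theta-1}$ and $\eta^{-\theta-1-\alpha}$ are uniformly bounded, so this region contributes $C\xi^{\alpha-1}$ as well. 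Finally, on $(\xi,\infty)$ one uses $(\xi+\eta)^{\alpha-1}\leq \eta^{\alpha-1}$ and both integrals decay as $\eta^{-\theta-2+\alpha}$ or faster, yielding $C\xi^{-\theta-1}\leq C\xi^{\alpha-1}$. Combining all contributions finishes the proof.

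I do not expect any genuine obstacle: the proof is a careful but standard bookkeeping of power laws. The only subtle point is the borderline exponent $\alpha+\rho=1$ occurring on the middle region $(\xi,1)$ when $\xi\leq 1$, which is precisely why the logarithmic factor appears in the statement; this has to be tracked uniformly rather than assumed away, and the resulting bound $|\log\xi|$ is then trivially subsumed into the stated right-hand side via $(\xi^{\rho-1}+\xi^{-\alpha})|\log\xi|$.
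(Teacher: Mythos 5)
Your proof is correct and follows essentially the same strategy as the paper: insert the pointwise bound on $\Ker$ from Proposition~\ref{Prop:W:representation}, peel off the Dirac contribution, and split the $\eta$-integral at $\eta=\xi$ and $\eta=1$ to control $(\xi+\eta)^{\alpha-1}$ region by region, with the exponent $\alpha+\rho-2$ on the middle range $(\xi,1)$ producing the logarithm in the borderline case $\alpha+\rho=1$. The only cosmetic difference is that for $\xi\geq 1$ the paper avoids the split at $\eta=\xi$ and simply uses $(\xi+\eta)^{\alpha-1}\leq\xi^{\alpha-1}$ globally; the power counting is identical.

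One small inaccuracy is worth flagging. What your computation actually establishes for $\xi<1$ is a bound of the form $C(\xi^{\rho-1}+\xi^{-\alpha})(1+\abs{\log\xi})$, not $C(\xi^{\rho-1}+\xi^{-\alpha})\abs{\log\xi}$ as in the statement: the contributions from $(0,\xi)$ and $(1,\infty)$, and the non-borderline case of $(\xi,1)$, do not carry a $\abs{\log\xi}$ factor, and since $\abs{\log\xi}\to 0$ as $\xi\to 1^-$ the stated right-hand side cannot dominate the (nonzero) integral there. So your closing remark that the non-logarithmic contributions are ``trivially subsumed'' by $(\xi^{\rho-1}+\xi^{-\alpha})\abs{\log\xi}$ is not quite correct. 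The paper's own proof has exactly the same slippage (it arrives at $C(\xi^{\rho-1}+\xi^{-\alpha})$ plus the $\abs{\log\xi}$ term and then simply asserts the claim), and the discrepancy is harmless where the lemma is used in Lemma~\ref{Lem:kernel:est:small:1}, but it is a gap in the statement as printed rather than something your estimate magically closes.
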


\begin{proof}
 From Proposition~\ref{Prop:W:representation} we first deduce
 \begin{equation}\label{eq:Ker:single:int:1}
  \int_{0}^{\infty}\frac{\abs*{\Ker(\xi,\eta)}}{\eta^{1-\rho}(1+\eta)^{\theta+\rho}}\deta\leq C\int_{0}^{\infty}\frac{1}{(\xi+\eta)^{1-\alpha}}\biggl(\frac{1}{\xi^{\alpha}}+\frac{1}{\eta^{\alpha}}\biggr)\frac{1}{\eta^{1-\rho}(1+\eta)^{\theta+\rho}}\deta+\frac{C}{\xi^{1-\rho}(1+\xi)^{\theta+\rho}}.
 \end{equation}
 We first treat the case $\xi\geq 1$ which is easier and we exploit $(\xi+\eta)^{\alpha-1}\leq \xi^{\alpha-1}$ and $(\xi+1)^{-\theta-\rho}\leq \xi^{-\theta-\rho}$ to get
 \begin{multline*}
  \int_{0}^{\infty}\abs*{\Ker(\xi,\eta)}\frac{1}{\eta^{1-\rho}(1+\eta)^{\theta+\rho}}\deta\\*
  \leq \frac{C}{\xi}\int_{0}^{\infty}\frac{1}{\eta^{1-\rho}(1+\eta)^{\theta+\rho}}\deta+\frac{C}{\xi^{1-\alpha}}\int_{0}^{\infty}\frac{1}{\eta^{1+\alpha-\rho}(1+\eta)^{\theta+\rho}}\deta+\frac{C}{\xi^{1+\theta}}.
 \end{multline*}
 The integrals on the right-hand side can be estimated by a constant which only depends on the fixed values of $\alpha,\rho$ and $\theta$. Thus, taking also into account that we have $\xi^{-1},\xi^{-\theta-1}\leq \xi^{\alpha-1}$ for $\xi\geq 1$ it follows
 \begin{equation*}
  \int_{0}^{\infty}\abs*{\Ker(\xi,\eta)}\frac{1}{\eta^{1-\rho}(1+\eta)^{\theta+\rho}}\deta\leq C\xi^{\alpha-1}\qquad \text{for }\xi\geq 1.
 \end{equation*}
  This shows the claim for $\xi\geq 1$.
  
  To treat the case $\xi\leq 1$, we have to split the integral on the right-hand side of~\eqref{eq:Ker:single:int:1} which gives together with $(\xi+1)^{-\theta-\rho}\leq 1$ and $\eta^{\rho-1}(\eta+1)^{-\theta-\rho}\leq\weight{\rho-1}{\theta+1}(\eta)$ that 
 \begin{multline*}
  \int_{0}^{\infty}\abs*{\Ker(\xi,\eta)}\frac{1}{\eta^{1-\rho}(1+\eta)^{\theta+\rho}}\deta\\*
   \leq \frac{C}{\xi^{\alpha}}\biggl(\int_{0}^{\xi}\frac{1}{(\xi+\eta)^{1-\alpha}\eta^{1-\rho}}\deta+\int_{\xi}^{1}\eta^{\alpha+\rho-2}\deta+\int_{1}^{\infty}\eta^{\alpha-\theta-2}\deta\biggr)\\*
  +C\biggl(\int_{0}^{1}\frac{1}{(\xi+\eta)^{1-\alpha}}\eta^{1+\alpha-\rho}\deta+\int_{1}^{\infty}\eta^{-2-\theta}\deta\biggr)+C\xi^{\rho-1}.
 \end{multline*}
 The third and fifth integral on the right-hand side can be estimated by a constant, while in the first and fourth one, we make a change of variables $\eta\mapsto \xi\eta$ which leads to
 \begin{multline*}
  \int_{0}^{\infty}\abs*{\Ker(\xi,\eta)}\frac{1}{\eta^{1-\rho}(1+\eta)^{\theta+\rho}}\deta\leq C\xi^{\rho-1}\int_{0}^{1}\frac{\eta^{\rho-1}}{(\eta+1)^{1-\alpha}}\deta+C\xi^{-\alpha}\int_{\xi}^{1}\eta^{\alpha+\rho-2}\deta+C\xi^{-\alpha}\\*
  +C\xi^{\rho-1}\int_{0}^{\infty}\frac{1}{(\eta+1)^{1-\alpha}\eta^{1+\alpha-\rho}}\deta+C+C\xi^{\rho-1}.
 \end{multline*}
 We again estimate the first and third integral on the right-hand side by a constant and we use $\xi\leq 1$ to further simplify the estimate as
 \begin{equation}\label{eq:Ker:single:int:2}
  \int_{0}^{\infty}\abs*{\Ker(\xi,\eta)}\frac{1}{\eta^{1-\rho}(1+\eta)^{\theta+\rho}}\deta\leq C(\xi^{\rho-1}+\xi^{-\alpha})+C\xi^{-\alpha}\int_{\xi}^{1}\eta^{\alpha+\rho-2}\deta.
 \end{equation}
 Depending on whether $\rho+\alpha=1$ or not the remaining integral on the right-hand side has a different scaling and we find
 \begin{equation*}
  \int_{\xi}^{1}\eta^{\alpha+\rho-2}\deta=\abs*{\log(\xi)}\quad \text{if }\rho+\alpha=1 \qquad \text{and}\qquad \int_{\xi}^{1}\eta^{\alpha+\rho-2}\deta\leq C(1+\xi^{\rho+\alpha-1}) \quad \text{if }\rho+\alpha\neq 1.
 \end{equation*}
Combining this with~\eqref{eq:Ker:single:int:2} the claim follows also for $\xi\leq 1$.
\end{proof}

\begin{lemma}\label{Lem:Ker:est:large:1}
 For each $\alpha\in(0,1)$ and $k\in\N_{0}$ there exists a constant $C>0$ such that
 \begin{equation*}
  \int_{0}^{\infty}\int_{0}^{\infty}\frac{\abs*{\Ker(\xi,\eta)}}{(\xi+r)^{k+\theta}\eta^{1-\rho}(1+\eta)^{\theta+\rho}}\deta\dxi\leq \frac{C}{r^{k+\theta-\alpha}}\quad \text{for all }r\geq 1
 \end{equation*}
with $\theta$ as given in~\eqref{eq:choice:theta:standard}
\end{lemma}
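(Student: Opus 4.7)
The plan is to split the domain of the outer integration at $\xi=r$ and in each subregion control $(\xi+r)^{k+\theta}$ by whichever of the two summands is smaller; then the $\eta$-integral is handled by Lemma~\ref{Lem:kernel:est:partial:1}, which already encapsulates all the delicate behaviour of $\Ker$ in the $\eta$ variable.

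More precisely, let $\mathcal{J}(\xi)\vcc=\int_{0}^{\infty}\abs*{\Ker(\xi,\eta)}\eta^{\rho-1}(1+\eta)^{-\theta-\rho}\deta$. Lemma~\ref{Lem:kernel:est:partial:1} gives $\mathcal{J}(\xi)\leq C\bigl(\xi^{\rho-1}+\xi^{-\alpha}\bigr)\abs{\log \xi}$ for $\xi<1$ and $\mathcal{J}(\xi)\leq C\xi^{\alpha-1}$ for $\xi\geq 1$. In the region $\xi\leq r$ I would use $(\xi+r)^{k+\theta}\geq r^{k+\theta}$ to obtain
\begin{equation*}
\int_{0}^{r}\int_{0}^{\infty}\frac{\abs*{\Ker(\xi,\eta)}\deta\dxi}{(\xi+r)^{k+\theta}\eta^{1-\rho}(1+\eta)^{\theta+\rho}}\leq \frac{1}{r^{k+\theta}}\biggl(\int_{0}^{1}\mathcal{J}(\xi)\dxi+\int_{1}^{r}\mathcal{J}(\xi)\dxi\biggr);
\end{equation*}
the first integral is a finite constant and the second is bounded by $C\int_{1}^{r}\xi^{\alpha-1}\dxi\leq Cr^{\alpha}$, so the whole contribution is controlled by $Cr^{\alpha-k-\theta}$ since $r\geq 1$. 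In the region $\xi\geq r\geq 1$ I would use $(\xi+r)^{k+\theta}\geq \xi^{k+\theta}$ and the large-$\xi$ branch of Lemma~\ref{Lem:kernel:est:partial:1} to get
\begin{equation*}
\int_{r}^{\infty}\frac{\mathcal{J}(\xi)}{\xi^{k+\theta}}\dxi\leq C\int_{r}^{\infty}\xi^{\alpha-1-k-\theta}\dxi\leq Cr^{\alpha-k-\theta},
\end{equation*}
where the integrability at infinity comes from the fact that $\theta>\alpha$, an inequality always guaranteed by the standard range~\eqref{eq:choice:theta:standard}. Adding the two contributions gives the desired bound.

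There is essentially no hard step here: all the delicate analysis of $\Ker$ (in particular the singular diagonal contribution and the optimal splitting of $(\xi+\eta)^{\alpha-1}$) is absorbed into Lemma~\ref{Lem:kernel:est:partial:1}. The only point requiring care is to verify that the exponent $\xi^{\alpha-1-k-\theta}$ produced in the outer region is strictly integrable at infinity for every $k\in\N_{0}$, which is exactly what $\theta>\alpha$ ensures; without this inequality the case $k=0$ would fail, and no analogous estimate could hold uniformly in $r$.
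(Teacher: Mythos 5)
Your proof is correct, and it takes a genuinely different route from the paper. The paper proves this lemma directly from the pointwise bound on $\Ker$ given in Proposition~\ref{Prop:W:representation}, using the exponent-splitting $1-\alpha=\max\{1-2\alpha,0\}+\min\{\alpha,1-\alpha\}$ of the factor $(\xi+\eta)^{\alpha-1}$ to decouple the double integral into a product of one-dimensional integrals, followed by the rescaling $\xi\mapsto r\xi$. You instead reuse Lemma~\ref{Lem:kernel:est:partial:1} to collapse the $\eta$-integration into the explicit profile $\mathcal{J}(\xi)$ and then split the $\xi$-integral at $\xi=r$, controlling $(\xi+r)^{k+\theta}$ by $r^{k+\theta}$ in the inner region and by $\xi^{k+\theta}$ in the outer one. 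Your route is shorter and more modular, since the delicate handling of the singular diagonal of $\Ker$ and the near-origin/near-infinity behaviour in $\eta$ is already packaged in Lemma~\ref{Lem:kernel:est:partial:1}; the paper's direct computation is self-contained but has to redo the kernel estimate from scratch. All the checks you flag are indeed the right ones: $\int_0^1 \mathcal{J}(\xi)\,d\xi<\infty$ since $\rho>0$ and $\alpha<1$ make both $\xi^{\rho-1}\abs{\log\xi}$ and $\xi^{-\alpha}\abs{\log\xi}$ integrable near zero, $\int_1^r\xi^{\alpha-1}\,d\xi\leq Cr^{\alpha}$, and the convergence at infinity in the outer region needs $k+\theta>\alpha$, which indeed holds for all $k\in\N_0$ precisely because~\eqref{eq:choice:theta:standard} guarantees $\theta>\alpha$.
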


\begin{proof}
 Proposition~\ref{Prop:W:representation} implies
 \begin{multline*}
  \int_{0}^{\infty}\int_{0}^{\infty}\frac{\abs*{\Ker(\xi,\eta)}}{(\xi+r)^{k+\theta}\eta^{1-\rho}(1+\eta)^{\theta+\rho}}\deta\dxi\\*
  \leq C\int_{0}^{\infty}\frac{1}{(\xi+r)^{k+\theta}\xi^{\alpha}}\int_{0}^{\infty}\frac{1}{(\xi+\eta)^{1-\alpha}\eta^{1-\rho}(1+\eta)^{\theta+\rho}}\deta\dxi\\*
  +C\int_{0}^{\infty}\frac{1}{(\xi+r)^{k+\theta}}\int_{0}^{\infty}\frac{1}{(\xi+\eta)^{1-\alpha}\eta^{1+\alpha-\rho}(\eta+1)^{\theta+\rho}}\deta\dxi\\*
  +C\int_{0}^{\infty}\frac{1}{(\xi+r)^{k+\theta}\xi^{1-\rho}(1+\xi)^{\theta+\rho}}\dxi.
 \end{multline*}
 The splitting $1-\alpha=\max\{1-2\alpha,0\}+\min\{\alpha,1-\alpha\}$ induces the estimate $(\xi+\eta)^{\alpha-1}\leq \xi^{-\max\{1-2\alpha,0\}}\eta^{-\min\{\alpha,1-\alpha\}}$. Thus, we infer together with $(\xi+\eta)^{\alpha-1}\leq \xi^{\alpha-1}$ and $(\xi+r)^{-k-\theta}\leq r^{-k-\theta}$ that
 \begin{multline*}
  \int_{0}^{\infty}\int_{0}^{\infty}\frac{\abs*{\Ker(\xi,\eta)}}{(\xi+r)^{k+\theta}\eta^{1-\rho}(1+\eta)^{\theta+\rho}}\deta\dxi\\*
  \leq C\int_{0}^{\infty}\frac{1}{(\xi+r)^{k+\theta}\xi^{\max\{1-\alpha,\alpha\}}}\dxi\int_{0}^{\infty}\frac{1}{\eta^{\min\{\alpha,1-\alpha\}+1-\rho}(\eta+1)^{\theta+\rho}}\deta\\*
  +C\int_{0}^{\infty}\frac{1}{(\xi+r)^{k+\theta}\xi^{1-\alpha}}\dxi\int_{0}^{\infty}\frac{1}{\eta^{1+\alpha-\rho}(\eta+1)^{\theta+\rho}}\deta+\frac{1}{r^{k+\theta}}\int_{0}^{\infty}\frac{1}{\xi^{1-\rho}(\xi+1)^{\theta+\rho}}\dxi.
 \end{multline*}
 We estimate the integrals in $\eta$ and the last one in $\xi$ by a constant and change variables $\xi\mapsto r\xi$ in the remaining ones to get
 \begin{equation*}
  \begin{split}
   &\phantom{{}\leq{}}\int_{0}^{\infty}\int_{0}^{\infty}\frac{\abs*{\Ker(\xi,\eta)}}{(\xi+r)^{k+\theta}\eta^{1-\rho}(1+\eta)^{\theta+\rho}}\deta\dxi\\
   &\leq \frac{C}{r^{k+\theta+\max\{1-\alpha,\alpha\}-1}}\int_{0}^{\infty}\frac{1}{(\xi+1)^{k+\theta}\xi^{\max\{1-\alpha,\alpha\}}}\dxi+\frac{C}{r^{k+\theta-\alpha}}\int_{0}^{\infty}\frac{1}{(\xi+1)^{k+\theta}\xi^{1-\alpha}}\dxi+\frac{1}{r^{k+\theta}}\\
   &\leq \frac{C}{r^{k+\theta-\alpha}}.
  \end{split}
 \end{equation*}
 Note that in the last step we exploited that $r\geq 1$ and that the integrals in $\xi$ are finite because $\alpha\in(0,1)$ and $\theta>\alpha$.
\end{proof}

\begin{lemma}\label{Lem:Ker:est:large:2}
 For each $\alpha\in(0,1)$ there exists a constant $C>0$ such that it holds
 \begin{equation*}
  \int_{0}^{\infty}\int_{0}^{\infty}\frac{\abs*{\Ker(\xi,\eta)}}{(\xi+r)^{2+\theta}(\eta+1)^{\theta}}\deta\dxi\leq \frac{C}{r^{1+\theta+\frac{\theta+\alpha}{2}}}\quad \text{for all }r\geq 1
 \end{equation*}
with $\theta$ as given in~\eqref{eq:choice:theta:standard}
\end{lemma}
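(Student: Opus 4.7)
The approach is to substitute the representation of $\Ker$ from Proposition~\ref{Prop:W:representation}, split the resulting double integral into three pieces (one for each term in the bound on $\widetilde{\Ker}$ plus the contribution from the diagonal Dirac measure), and then reduce each double integral to a product of one-dimensional integrals by a suitable splitting of the exponent of $(\xi+\eta)^{1-\alpha}$. Concretely, by Proposition~\ref{Prop:W:representation}, the left-hand side is bounded by
\begin{equation*}
 C(I_1+I_2) + W_{\pm}(-1)\,I_3,
\end{equation*}
where
\begin{equation*}
 I_1=\int_0^\infty\!\!\int_0^\infty\frac{d\eta\,d\xi}{\xi^{\alpha}(\xi+\eta)^{1-\alpha}(\xi+r)^{2+\theta}(\eta+1)^{\theta}},\quad
 I_2=\int_0^\infty\!\!\int_0^\infty\frac{d\eta\,d\xi}{\eta^{\alpha}(\xi+\eta)^{1-\alpha}(\xi+r)^{2+\theta}(\eta+1)^{\theta}},
\end{equation*}
and $I_3=\int_0^\infty (\xi+r)^{-2-\theta}(\xi+1)^{-\theta}\dxi$.

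The key trick for $I_1$ and $I_2$ is the elementary inequality $(\xi+\eta)^{1-\alpha}\geq \xi^{\lambda}\eta^{1-\alpha-\lambda}$ for $\lambda\in[0,1-\alpha]$. For $I_1$ I would choose $\lambda=(\theta-\alpha)/2$, which is strictly positive and satisfies $\alpha+\lambda=(\alpha+\theta)/2<1$ as well as $1-\alpha-\lambda+\theta>1$ since $\theta>\alpha$ by~\eqref{eq:choice:theta:standard}; this factorises $I_1$ as a product of a $\xi$-integral and an $\eta$-integral, and the change of variable $\xi\mapsto r\xi$ in the $\xi$-integral produces the prefactor $r^{-1-\theta-\alpha-\lambda}=r^{-1-\theta-(\theta+\alpha)/2}$. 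For $I_2$ I would instead take $\lambda=(\theta+\alpha)/2\in(0,\theta)$, which is admissible because $\theta<1/2<1-\alpha$, and the same factorisation argument yields $I_2\leq Cr^{-1-\theta-\lambda}=Cr^{-1-\theta-(\theta+\alpha)/2}$.

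For the Dirac contribution $I_3$, I would split the integration at $\xi=r$. On $[0,r]$ one uses $(\xi+r)^{2+\theta}\geq r^{2+\theta}$ and integrates $(\xi+1)^{-\theta}$ to get $Cr^{1-\theta}/r^{2+\theta}=Cr^{-1-2\theta}$ (using $\theta<1$); on $[r,\infty)$ with $r\geq 1$ one bounds $(\xi+1)^{-\theta}\leq \xi^{-\theta}$ and integrates $\xi^{-2-2\theta}$ to get the same rate $Cr^{-1-2\theta}$. Since $\theta\geq\alpha$ (again by~\eqref{eq:choice:theta:standard}) one has $2\theta\geq\theta+(\theta+\alpha)/2$, so $I_3$ satisfies the required bound as well.

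The main obstacle is simply ensuring that the chosen exponents $\lambda$ lie in the intersection of all the admissibility ranges coming from integrability at $0$ and $\infty$ in both variables, and checking that these constraints are compatible with $\alpha<1$ and the standing condition $\theta\in(\alpha,\min\{\rho,1/2\})$; apart from this bookkeeping the estimates are routine. Combining the three bounds yields the claim.
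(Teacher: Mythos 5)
Your proof is correct and follows essentially the same strategy the paper indicates: bound $\abs*{\Ker}$ via Proposition~\ref{Prop:W:representation}, split $(\xi+\eta)^{1-\alpha}\geq\xi^{\lambda}\eta^{1-\alpha-\lambda}$ to factorise each double integral into a product of one-dimensional integrals, rescale $\xi\mapsto r\xi$ to extract the power of $r$, and treat the diagonal Dirac part separately. Your choice $\lambda=\frac{\theta-\alpha}{2}$ for $I_1$ coincides with the paper's first displayed splitting $\alpha-1=-\frac{\theta-\alpha}{2}+\frac{\alpha+\theta}{2}-1$, and your bookkeeping of the admissibility and integrability constraints is accurate.

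One point worth flagging: for $I_2$ the paper's second displayed splitting $\alpha-1=-\frac{\theta}{2}+\alpha+\frac{\theta}{2}-1$ puts exponent $\frac{\theta}{2}$ on $\xi$, and after rescaling this yields only $I_2\leq Cr^{-1-\frac{3\theta}{2}}$. Since $\alpha>0$ one has $\frac{3\theta}{2}<\theta+\frac{\theta+\alpha}{2}$, so this rate is strictly weaker than the claimed bound $r^{-1-\theta-\frac{\theta+\alpha}{2}}$ for $r\geq 1$. Your choice $\lambda=\frac{\theta+\alpha}{2}$ is what actually produces the stated rate (and $\lambda<\theta$, required for the $\eta$-integral to converge at infinity, holds precisely because $\theta>\alpha$). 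In effect you have corrected what appears to be a small slip in the paper's hint, where the intended $\xi$-exponent was presumably $\frac{\theta+\alpha}{2}$ rather than $\frac{\theta}{2}$. Since the invocations of this lemma in \cref{Lem:N:est:large,Lem:N1:difference} only use the weaker rate $r^{-1-\theta}$, nothing downstream in the paper is affected either way, but your argument is the one that establishes the lemma as written.
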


\begin{proof}
 The estimate follows similarly to that one in Lemma~\ref{Lem:Ker:est:large:1} while here one can use one time the splitting $\alpha-1=-\frac{\theta-\alpha}{2}+\frac{\alpha+\theta}{2}-1$ and one time $\alpha-1=-\frac{\theta}{2}+\alpha+\frac{\theta}{2}-1$ to obtain corresponding estimates for $(\xi+\eta)^{\alpha-1}$.
\end{proof}

\begin{lemma}\label{Lem:kernel:est:small:1}
 For any $\alpha\in(0,1)$ and $\mu\in[0,\mu_{*})$ there exists a constant $C_{\mu,\alpha}>0$ such that it holds
 \begin{equation*}
  \int_{0}^{\infty}\int_{0}^{\infty}\abs*{\Ker(\xi,\eta)}\frac{1}{(\xi+r)^{1-\rho}(\xi+r+1)^{\theta+\rho}}\frac{1}{\eta^{1-\rho}(\eta+1)^{\theta+\rho}}\deta\dxi\leq C_{\mu,\alpha}r^{\rho+\mu-1}
 \end{equation*}
 for every $r\in(0,1)$ and $\theta$ as given in~\eqref{eq:choice:theta:standard}
\end{lemma}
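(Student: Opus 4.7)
The plan is to reduce the statement to an application of Lemma~\ref{Lem:Ker:est:most:general} after extracting the desired power $r^{\rho+\mu-1}$ by an elementary interpolation. The only place where $r$ appears on the left-hand side is in the factor $(\xi+r)^{\rho-1}(\xi+r+1)^{-\theta-\rho}$. Since $r\in(0,1)$, the second term can simply be dropped up to a constant, i.e.\@ $(\xi+r+1)^{-\theta-\rho}\leq C(\xi+1)^{-\theta-\rho}$, so the task is really to control $(\xi+r)^{\rho-1}$.

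The key interpolation I will use is
\begin{equation*}
 (\xi+r)^{\rho-1}=(\xi+r)^{\rho+\mu-1}(\xi+r)^{-\mu}\leq r^{\rho+\mu-1}\,\xi^{-\mu},
\end{equation*}
which is valid for all $\xi,r>0$ and $\mu\in[0,\mu_{*})$: indeed, $\mu<\mu_{*}\leq 1-\rho$ ensures $\rho+\mu-1<0$, so $(\xi+r)^{\rho+\mu-1}\leq r^{\rho+\mu-1}$, while $(\xi+r)^{-\mu}\leq \xi^{-\mu}$ is immediate. (For the endpoint $\mu=0$ one just uses $(\xi+r)^{\rho-1}\leq r^{\rho-1}$ without the $\xi^{-\mu}$ factor.)

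Plugging this into the integrand and using $(\xi+r+1)^{-\theta-\rho}\leq C(\xi+1)^{-\theta-\rho}$, the left-hand side is bounded by
\begin{equation*}
 C\,r^{\rho+\mu-1}\int_{0}^{\infty}\int_{0}^{\infty}\frac{\abs*{\Ker(\xi,\eta)}}{\xi^{\mu}(\xi+1)^{\theta+\rho}\,\eta^{1-\rho}(\eta+1)^{\theta+\rho}}\deta\dxi.
\end{equation*}
It thus remains to show that the double integral above is finite uniformly in the parameters, which is exactly the situation of Lemma~\ref{Lem:Ker:est:most:general} with $a_{1}=\mu$, $b_{1}=\theta+\rho$, $a_{2}=1-\rho$, $b_{2}=\theta+\rho$. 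I will simply verify the hypotheses of that lemma: $a_{1}=\mu<\mu_{*}\leq 1-\rho<1-\alpha$ (since $\rho>\alpha$ by~\eqref{eq:choice:theta:standard}), $a_{2}=1-\rho<1-\alpha$; $a_{1}+a_{2}=\mu+1-\rho<1$ because $\mu<\mu_{*}\leq\rho$; $a_{1}+b_{1}+a_{2}+b_{2}=\mu+2\theta+\rho+1>1$; and both $a_{k}+b_{k}>\alpha$ hold because $\theta>\alpha$ and $b_{2}+a_{2}=1+\theta>\alpha$.

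Honestly, there is no real obstacle here: the statement is a purely technical weighted integral bound, and the only thing to do is to choose the right interpolation for $(\xi+r)^{\rho-1}$ so that the residual double integral falls into the scope of Lemma~\ref{Lem:Ker:est:most:general}. The mildly delicate point is that one must ensure the interpolation neither destroys the integrability of $\xi^{-\mu}$ at zero nor violates the hypotheses $a_{1}+a_{2}<1$ and $a_{1}<1-\alpha$; this is precisely why the assumption $\mu\in[0,\mu_{*})$ is imposed, since $\mu_{*}=\min\{\rho,1-\rho\}$ keeps both $\mu<\rho$ and $\mu<1-\rho\leq 1-\alpha$.
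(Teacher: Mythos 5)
Your interpolation trick is slick and correct as far as it goes: the bound $(\xi+r)^{\rho-1}\leq r^{\rho+\mu-1}\xi^{-\mu}$ is valid for $\mu\in[0,1-\rho)$, the shift $(\xi+r+1)^{-\theta-\rho}\leq(\xi+1)^{-\theta-\rho}$ is immediate, and all of the parameter checks for Lemma~\ref{Lem:Ker:est:most:general} that you do carry out ($a_k<1-\alpha$, $a_1+a_2<1$, $a_k+b_k>\alpha$, $\sum>1$) are right. This route also differs from the paper's: the paper first integrates in $\eta$ via Lemma~\ref{Lem:kernel:est:partial:1}, which produces logarithmic corrections near $\xi=0$ that are then absorbed into the spare power $r^{\mu-\mu_*}$; your version avoids logs entirely by pre-extracting the $r$-power and reducing to a single constant double integral, which is cleaner.

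However, there is a genuine gap in the range of $\alpha$. Lemma~\ref{Lem:Ker:est:most:general} carries the standing hypothesis $\alpha\in(0,1/2)$, and that hypothesis is not decorative: the proof fixes $\lambda$ as the midpoint of $[\max\{1-\alpha-a_1-b_1,\,a_2-\alpha\},\,\min\{a_2+b_2-\alpha,\,1-\alpha-a_1\}]$ and then claims $\lambda\in(0,1-\alpha)$, and this can fail once $\alpha\geq 1/2$ (e.g.\ for your parameters $\lambda=1-\alpha-(\rho+\mu)/2$, which is positive iff $\rho+\mu<2(1-\alpha)$, so not in general for $\alpha\geq 1/2$). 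Lemma~\ref{Lem:kernel:est:small:1}, by contrast, is asserted for all $\alpha\in(0,1)$, and the paper's proof via Lemma~\ref{Lem:kernel:est:partial:1} (which does hold on $(0,1)$) actually delivers that. To close the gap, you would need either to verify directly that the residual double integral $\int\int\abs{\Ker(\xi,\eta)}\,\xi^{-\mu}(\xi+1)^{-\theta-\rho}\eta^{\rho-1}(\eta+1)^{-\theta-\rho}\,\deta\dxi$ is finite for all $\alpha\in(0,1)$ under~\eqref{eq:choice:theta:standard} (which it in fact is, if one picks a \emph{different} splitting exponent $\lambda$ for the $\xi^{-\alpha}$- and $\eta^{-\alpha}$-parts of $\Ker$ instead of the midpoint), or to state explicitly that your argument only covers $\alpha\in(0,1/2)$ and fall back on the paper's argument for the remaining range. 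Either fix is straightforward, but as written the step "apply Lemma~\ref{Lem:Ker:est:most:general}" is not justified over the full stated range of $\alpha$.
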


\begin{proof}
 We use $(\xi+r+1)^{-\theta-\rho}\leq 1$ and $(\xi+r)^{\rho-1}(\xi+r+1)^{-\theta-\rho}\leq \xi^{-1-\theta}$ to deduce together with Lemma~\ref{Lem:kernel:est:partial:1} that
 \begin{multline}\label{eq:kernel:est:small:1}
  \int_{0}^{\infty}\int_{0}^{\infty}\abs*{\Ker(\xi,\eta)}\frac{1}{(\xi+r)^{1-\rho}(\xi+r+1)^{\theta+\rho}}\frac{1}{\eta^{1-\rho}(\eta+1)^{\theta+\rho}}\deta\dxi\\*
  \leq C\int_{0}^{1}\frac{1}{(\xi+r)^{1-\rho}}\biggl(\frac{1}{\xi^{1-\rho}}+\frac{1}{\xi^{\alpha}}\biggr)\abs*{\log(\xi)}\dxi+C\int_{1}^{\infty}\frac{1}{\xi^{1+\theta}\xi^{1-\alpha}}\dxi\\*
  \leq C\int_{0}^{1}\frac{1}{(\xi+r)^{1-\rho}}\frac{\abs*{\log(\xi)}}{\xi^{1-\rho}}\dxi+C\int_{0}^{1}\frac{1}{(\xi+r)^{1-\rho}}\frac{\abs*{\log(\xi)}}{\xi^{\alpha}}\dxi+C.
 \end{multline}
The constant term on the right-hand side can be estimated trivially by $r^{\rho+\mu-1}$ since $r\leq 1$ and $\mu<1-\rho$. Thus, it suffices to consider the two remaining integrals. For the second one, one immediately gets
\begin{equation}\label{eq:kernel:est:small:2}
 \int_{0}^{1}\frac{1}{(\xi+r)^{1-\rho}}\frac{\abs*{\log(\xi)}}{\xi^{\alpha}}\dxi\leq C
\end{equation}
since $\alpha<\rho$. The first integral can be estimated for example by
\begin{equation}\label{eq:kernel:est:small:3}
 \int_{0}^{1}\frac{1}{(\xi+r)^{1-\rho}}\frac{\abs*{\log(\xi)}}{\xi^{1-\rho}}\dxi\leq C\frac{1+\abs*{\log(r)}^2}{r^{1-\rho-\mu_{*}}}.
\end{equation}
The derivation of this estimate is in principle elementary but also a bit lengthy since one has to consider the three cases $\rho<1/2$, $\rho=1/2$ and $\rho>1/2$ separately. Therefore, we omit the details which are contained in~\cite{Thr16} and only remark that one can derive this using the change of variables $\xi\mapsto r\xi$ together with the estimate $\abs*{\log(\xi r)}\leq (1+\abs*{\log(\xi)})(1+\abs*{\log(r)})$.

Combining the estimates~\cref{eq:kernel:est:small:1,eq:kernel:est:small:2,eq:kernel:est:small:3} the claim follows readily from the choice of $\mu$ and the fact that $\abs*{\log(r)}$ can be estimated by any small power of $r$ for $r\leq 1$.
\end{proof}

\begin{lemma}\label{Lem:kernel:est:primitive}
 For each $\alpha\in(0,1)$ there exists a constant $C>0$ such that it holds
 \begin{equation*}
  \int_{0}^{\xi}\abs*{\Ker(s,\eta)}\ds\leq C\biggl(\frac{\xi^{\alpha}}{\eta^{\alpha}}+1\biggr).
 \end{equation*}
 for all $\xi,\eta\in(0,\infty)$.
\end{lemma}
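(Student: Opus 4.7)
The plan is to invoke the decomposition $\Ker(s,\eta)=\widetilde{\Ker}(s,\eta)+W_{\pm}(-1)\delta(s-\eta)$ from Proposition~\ref{Prop:W:representation} and estimate the two parts separately. The contribution of the Dirac part to $\int_{0}^{\xi}\abs*{\Ker(s,\eta)}\ds$ is either $0$ or $\abs*{W_{\pm}(-1)}$, depending on whether $\eta\in(0,\xi)$, so it is uniformly bounded by a constant and absorbed into the second term on the right-hand side.

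For the continuous part, I would use the estimate $\abs*{\widetilde{\Ker}(s,\eta)}\leq C(s+\eta)^{\alpha-1}\bigl(s^{-\alpha}+\eta^{-\alpha}\bigr)$ to split
\begin{equation*}
 \int_{0}^{\xi}\abs*{\widetilde{\Ker}(s,\eta)}\ds\leq C\int_{0}^{\xi}\frac{\ds}{(s+\eta)^{1-\alpha}s^{\alpha}}+\frac{C}{\eta^{\alpha}}\int_{0}^{\xi}\frac{\ds}{(s+\eta)^{1-\alpha}}=\vcc I_{1}+I_{2}.
\end{equation*}
The integral $I_{2}$ is explicit: $I_{2}=\frac{C}{\alpha\eta^{\alpha}}\bigl((\xi+\eta)^{\alpha}-\eta^{\alpha}\bigr)$, and the elementary subadditivity $(\xi+\eta)^{\alpha}\leq \xi^{\alpha}+\eta^{\alpha}$ for $\alpha\in(0,1)$ yields $I_{2}\leq \frac{C}{\alpha}\xi^{\alpha}/\eta^{\alpha}$, which already fits into the right-hand side of the claim.

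For $I_{1}$ I would rescale $s=\eta u$ to obtain $I_{1}=\int_{0}^{\xi/\eta}u^{-\alpha}(u+1)^{\alpha-1}\du$. The integrand is integrable both at $0$ (like $u^{-\alpha}$ with $\alpha<1$) and at $\infty$ (like $u^{-1}$, giving only a logarithmic contribution), so I would make the case distinction $\xi\leq \eta$ versus $\xi>\eta$: in the first case $(u+1)^{\alpha-1}\leq 1$ and $I_{1}\leq \frac{(\xi/\eta)^{1-\alpha}}{1-\alpha}\leq \frac{1}{1-\alpha}$; in the second case one splits $\int_{0}^{1}+\int_{1}^{\xi/\eta}$, bounds the first integral by a constant, and the second by $C\log(\xi/\eta)\leq C_{\alpha}(\xi/\eta)^{\alpha}$. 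Putting everything together yields the bound $I_{1}\leq C_{\alpha}\bigl(\xi^{\alpha}/\eta^{\alpha}+1\bigr)$, and combining with the estimates for $I_{2}$ and the Dirac contribution proves the lemma.

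There is no real obstacle here; the argument is essentially a direct computation from the pointwise bound on $\widetilde{\Ker}$ given in Proposition~\ref{Prop:W:representation}. The only minor point requiring care is the logarithmic case $\xi\gg\eta$ in $I_{1}$, which must be absorbed into the power $(\xi/\eta)^{\alpha}$; this is harmless because $\log r\leq C_{\alpha}r^{\alpha}$ for $r\geq 1$.
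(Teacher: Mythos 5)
Your proof is correct and follows essentially the same route as the paper: decompose $\Ker$ via Proposition~\ref{Prop:W:representation}, bound the Dirac part trivially, rescale $s\mapsto \eta s$ in the continuous part, and absorb the logarithm from the $u^{-\alpha}(1+u)^{\alpha-1}$ integral into the power $(\xi/\eta)^{\alpha}$. The only cosmetic difference is that you evaluate the second integral $I_2$ explicitly and invoke subadditivity, whereas the paper rescales both integrals together and estimates $(1+\xi/\eta)^{\alpha}\leq 1+(\xi/\eta)^{\alpha}$ afterwards; the content is identical.
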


\begin{proof}
 Proposition~\ref{Prop:W:representation} yields together with the change of variables $s\mapsto \eta s$ that
 \begin{equation*}
  \begin{split}
   \int_{0}^{\xi}\abs*{\Ker(s,\eta)}\ds&\leq C\int_{0}^{\xi}\frac{1}{(s+\eta)^{1-\alpha}}\biggl(\frac{1}{s^{\alpha}}+\frac{1}{\eta^{\alpha}}\biggr)\ds+C\int_{0}^{\infty}\delta(s-\eta)\ds\\
   &\leq C\int_{0}^{\frac{\xi}{\eta}}\frac{1}{(1+s)^{1-\alpha}}\biggl(\frac{1}{s^{\alpha}}+1\biggr)\ds+C\\
   &=C\int_{0}^{\frac{\xi}{\eta}}\frac{1}{(1+s)^{1-\alpha}s^{\alpha}}+C\biggl(1+\frac{\xi}{\eta}\biggr)^{\alpha}+C.
  \end{split}
 \end{equation*}
 Since $\alpha\in(0,1)$ one immediately verifies that $\int_{0}^{\xi/\eta}(1+s)^{\alpha-1}s^{-\alpha}\ds\leq C+\abs*{\log(\xi/\eta)}$ if one considers the cases $\xi/\eta\leq 1$ and $\xi/\eta\geq 1$ separately. Exploiting that $(1+\xi/\eta)^{\alpha}\leq 1+(\xi/\eta)^{\alpha}$ as well as $\abs*{\log(\xi/\eta)}\leq C(1+(\xi/\eta)^{\alpha})$ the claim directly follows.
\end{proof}

\section*{Acknowledgements} 
This article is based on parts of the PhD thesis of the author which has been supported through the CRC 1060 \textit{The mathematics of emergent effects} at the University of Bonn that is funded through the German Science Foundation (DFG). Moreover, partial support by a Lichtenberg Professorship Grant of the VolkswagenStiftung awarded to Christian Kühn is acknowledged.


\begin{thebibliography}{10}

\bibitem{AcF97}
Azmy~S. Ackleh and Ben~G. Fitzpatrick.
\newblock Modeling aggregation and growth processes in an algal population
  model: analysis and computations.
\newblock {\em J. Math. Biol.}, 35(4):480--502, 1997.

\bibitem{CaM11}
Jos{\'e}~A. Ca{\~n}izo and St{\'e}phane Mischler.
\newblock Regularity, local behavior and partial uniqueness for self-similar
  profiles of {S}moluchowski's coagulation equation.
\newblock {\em Rev. Mat. Iberoam.}, 27(3):803--839, 2011.

\bibitem{Dra72}
R.~Drake.
\newblock A general mathematical survey of the coagulation equation.
\newblock In G.~M. Hidy, editor, {\em Topics in current aerosol research (part
  2)}, International Reviews in Aerosol Physics and Chemistry, pages 203--376.
  Pergamon Press, Oxford, 1972.

\bibitem{EMR05}
M.~Escobedo, S.~Mischler, and M.~Rodriguez~Ricard.
\newblock On self-similarity and stationary problem for fragmentation and
  coagulation models.
\newblock {\em Ann. Inst. H. Poincar\'e Anal. Non Lin\'eaire}, 22(1):99--125,
  2005.

\bibitem{FoL05}
Nicolas Fournier and Philippe Lauren{\c{c}}ot.
\newblock Existence of self-similar solutions to {S}moluchowski's coagulation
  equation.
\newblock {\em Comm. Math. Phys.}, 256(3):589--609, 2005.

\bibitem{FoL06a}
Nicolas Fournier and Philippe Lauren{\c{c}}ot.
\newblock Well-posedness of {S}moluchowski's coagulation equation for a class
  of homogeneous kernels.
\newblock {\em J. Funct. Anal.}, 233(2):351--379, 2006.

\bibitem{Fri00}
Sheldon~K. Friedlander.
\newblock {\em Smoke, Dust, and Haze: Fundamentals of Aerosol Dynamics}.
\newblock Topics in Chemical Engineering. Oxford University Press, 2000.

\bibitem{HNV17}
Michael Herrmann, Barbara Niethammer, and Juan J.~L. Vel\'azquez.
\newblock Instabilities and oscillations in coagulation equations with kernels
  of homogeneity one.
\newblock {\em Quart. Appl. Math.}, 75(1):105--130, 2017.

\bibitem{LaM04}
Philippe Lauren{\c{c}}ot and St{\'e}phane Mischler.
\newblock On coalescence equations and related models.
\newblock In {\em Modeling and computational methods for kinetic equations},
  Model. Simul. Sci. Eng. Technol., pages 321--356. Birkh\"auser Boston,
  Boston, MA, 2004.

\bibitem{LNV16}
Philippe Lauren{\c{c}}ot, Barbara Niethammer, and Juan J.~L. Vel{\'a}zquez.
\newblock Oscillatory dynamics in {S}moluchowski's coagulation equation with
  diagonal kernel.
\newblock {\em Preprint arXiv:1603.02929}, March 2016.

\bibitem{MeP04}
Govind Menon and Robert~L. Pego.
\newblock Approach to self-similarity in {S}moluchowski's coagulation
  equations.
\newblock {\em Comm. Pure Appl. Math.}, 57(9):1197--1232, 2004.

\bibitem{NTV15}
B.~Niethammer, S.~Throm, and J.~J.~L. Vel{\'a}zquez.
\newblock A revised proof of uniqueness of self-similar profiles to
  {S}moluchowski's coagulation equation for kernels close to constant.
\newblock {\em Preprint arXiv:1510.03361v2}, October 2015.

\bibitem{NTV16a}
B.~Niethammer, S.~Throm, and J.~J.~L. Vel\'azquez.
\newblock Self-similar solutions with fat tails for {S}moluchowski's
  coagulation equation with singular kernels.
\newblock {\em Ann. Inst. H. Poincar\'e Anal. Non Lin\'eaire},
  33(5):1223--1257, 2016.

\bibitem{NTV16}
B.~Niethammer, S.~Throm, and J.~J.~L. Vel{\'a}zquez.
\newblock A uniqueness result for self-similar profiles to {S}moluchowski’s
  coagulation equation revisited.
\newblock {\em J Stat Phys}, 164(2):399–409, Jun 2016.

\bibitem{NiV13a}
B.~Niethammer and J.~J.~L. Vel{\'a}zquez.
\newblock Self-similar solutions with fat tails for {S}moluchowski's
  coagulation equation with locally bounded kernels.
\newblock {\em Comm. Math. Phys.}, 318(2):505--532, 2013.

\bibitem{NiV14a}
B.~Niethammer and J.~J.~L. Vel{\'a}zquez.
\newblock Uniqueness of self-similar solutions to {S}moluchowski's coagulation
  equations for kernels that are close to constant.
\newblock {\em J. Stat. Phys.}, 157(1):158--181, 2014.

\bibitem{Nor99}
James~R. Norris.
\newblock Smoluchowski's coagulation equation: uniqueness, nonuniqueness and a
  hydrodynamic limit for the stochastic coalescent.
\newblock {\em Ann. Appl. Probab.}, 9(1):78--109, 1999.

\bibitem{PrK10}
H.R. Pruppacher and J.D. Klett.
\newblock {\em Microphysics of Clouds and Precipitation}.
\newblock Springer Netherlands, 2010.

\bibitem{Thr16}
Sebastian Throm.
\newblock {\em Self-Similar Solutions with fat tails for {S}moluchowski's
  coagulation equation}.
\newblock PhD thesis, University of Bonn, 2016.

\bibitem{Thr17}
Sebastian Throm.
\newblock Precise tail behaviour of self-similar profiles with infinite mass
  for {S}moluchowski's coagulation equation.
\newblock {\em Preprint arXiv:1703.09192}, March 2017.

\bibitem{Smo17}
Marian von Smoluchowski.
\newblock Versuch einer mathematischen {T}heorie der {K}oagulationskinetik
  kolloider {L}{\"o}sungen.
\newblock {\em Zeitschrift f{\"u}r physikalische Chemie}, 92:129 -- 168, 1917.

\bibitem{Yos78}
K{\^o}saku Yosida.
\newblock {\em Functional analysis}.
\newblock Springer-Verlag, Berlin-New York, fifth edition, 1978.
\newblock Grundlehren der Mathematischen Wissenschaften, Band 123.

\bibitem{Zif80}
Robert~M. Ziff.
\newblock Kinetics of polymerization.
\newblock {\em J Stat Phys}, 23(2):241–263, Aug 1980.

\end{thebibliography}
\end{document}